\documentclass[10pt, a4paper, oneside]{article}

\usepackage[T1]{fontenc}
\usepackage{todonotes}
\usepackage[english]{babel}
\usepackage[utf8]{inputenc}
\usepackage{indentfirst}				% rientra il primo capoverso di ogni sezione
\usepackage{booktabs}				% tabelle
\usepackage{multirow}					
\usepackage{tabularx}					% tabelle di larghezza prefissata
\usepackage{graphicx}					% immagini
\usepackage{caption}					% didascalie
\usepackage{amsmath,amssymb,amsthm,mathrsfs,mathtools}	% matematica
\usepackage{bm,braket,stmaryrd,esint}
% colori
\usepackage{hyperref}					% collegamenti ipertestuali
\usepackage{bookmark}					% segnalibri
\usepackage{fullpage}
\usepackage[capitalise,noabbrev]{cleveref}
\usepackage{enumitem, color, comment}

%\usepackage[backend=bibtex,style=alphabetic,sorting=nyt,isbn=false,url=false,doi=true,maxalphanames=10,minalphanames=4,mincitenames=4,maxcitenames=10,minnames=4,maxnames=10,giveninits=true,maxbibnames=99]{biblatex}
%\renewbibmacro{in:}{}
%\usepackage{csquotes}
%\addbibresource{Biblicombi.bib}

%\theoremstyle{definition}
\newtheorem{thm}{Theorem}[section]
\newtheorem{prop}[thm]{Proposition}

\newtheorem{rem}[thm]{Remark}
\newtheorem{lem}[thm]{Lemma}

\newtheorem{cor}[thm]{Corollary}
\newtheorem*{ex}{Example}

\newtheorem*{que*}{\textcolor{BrickRed}{Question}}

\def\bd{\begin{defn}}
\def\ed{\end{defn}}
\def\br{\begin{rem}}
\def\er{\end{rem}}
\def\bex{\begin{ex}}
\def\eex{\end{ex}}
\theoremstyle{plain}
\newtheorem{defn}[thm]{Definition}
\newcommand{\dd}{\textup{d}}
\DeclareMathOperator*{\Res}{Res}

\newcommand{\todoSeverin}[1]{\textcolor{red}{~\\SC: #1\\}}

\newcommand{\resp}{respectively{ }}

\begin{document}

\title{\textsc{Topological recursion for Orlov--Scherbin tau functions, and constellations with internal faces}}

\date{\vspace{-5ex}}
 
\author{
	\setcounter{footnote}{0}
	Valentin Bonzom\footnote{Universit\'{e} Sorbonne Paris Nord, LIPN, 99 avenue Jean-Baptiste Cl\'ement, 93430 Villetaneuse, France.}\,\,,
	\setcounter{footnote}{1}
	Guillaume Chapuy\footnote{Universit\'{e} Paris Cit\'{e}, IRIF, 8 Place Aur\'{e}lie Nemours, 75205 Paris Cedex 13, France.}\,\,,
	S\'everin Charbonnier\footnotemark[2]\,\,,
	Elba Garcia-Failde\footnote{Sorbonne Universit\'{e} and Universit\'{e} Paris Cit\'{e}, CNRS, IMJ-PRG, Place Jussieu, 75252 Paris Cedex 05, France.\newline This project (GC, SC, EGF) has received funding from the European Research Council (ERC) under the European Union's Horizon 2020 research and innovation programme (grant agreement No.~ERC-2016-STG 716083 ``CombiTop'').  GC is supported by the grants ANR-19-CE48-0011 ``COMBIN\'{E}'' and ANR-18-CE40-0033 ``DIMERS''. VB is supported by the grants ANR-20-CE48-0018 ``3DMaps'' and ANR-21-CE48-0017 ``LambdaComb''. VB is grateful to IRIF for the working environment which he enjoys as an associated member.}
}

\maketitle

\begin{abstract}
	We study the correlators $W_{g,n}$ arising from Orlov--Scherbin 2-Toda tau functions with rational content-weight $G(z)$, at arbitrary values of the two sets of time parameters. Combinatorially, they correspond to generating functions of weighted Hurwitz numbers and $(m,r)$-factorisations of permutations. When the weight function is polynomial, they are generating functions of constellations on surfaces in which two full sets of degrees (black/white) are entirely controlled, and in which internal faces are allowed in addition to boundaries. %These are the most general correlators one can build from these functions.

	We give the spectral curve (the ``disk'' function $W_{0,1}$, and the ``cylinder'' function $W_{0,2}$) for this model, generalising Eynard's solution of the 2-matrix model which corresponds to $G(z)=1+z$, by the addition of arbitrarily many free parameters.
	Our method relies both on the Albenque--Bouttier combinatorial proof
	%GC: no citation allowed in abstract!
	%\cite{AlbenqueBouttier2022} 
	of Eynard's result 
	%\cite{Eynard2002}
	by slice decompositions, which is strong enough to handle the polynomial case, and on algebraic arguments. 
	%We also derive combinatorially the ``cylinder'' function $W_{0,2}$, recycling again slice technology from work of Albenque--Bouttier, and combinatorics of rational generating functions and paths.

	Building on this, we establish the topological recursion (TR) for the model.  Our proof relies 
on the fact that TR is already known at time zero (or, combinatorially, when the underlying graphs have only boundaries, and no internal faces) by work of Bychkov--Dunin-Barkowski--Kazarian--Shadrin (or Alexandrov--Chapuy--Eynard--Harnad  for the polynomial case),	and on the general idea of deformation of spectral curves due to Eynard and Orantin, which we make explicit in this case.
As a result of TR, we obtain strong structure results for all fixed-genus generating functions.

Our techniques also cover the case where $G(z)$ is a rational function times an exponential (containing in particular the case of classical Hurwitz numbers).
%Making use of the variational formulas of the topological recursion, we develop a method to obtain topological recursion for certain combinatorial objects with internal faces from the topological recursion for those objects without internal faces. We apply this technique to recover the topological recursion for maps with internal faces and to prove that constellations with internal faces are also governed by the topological recursion. This implies that the rationality scheme for constellations without internal faces is preserved for constellations with internal faces and of any topology.
\end{abstract}
% infinitely many internal faces??

\section{Introduction}

\subsection{Context}

In the last decades, there has been an immense interest in combinatorics, mathematical physics, and enumerative geometry, in the study of Hurwitz numbers and their variants, which enumerate various families of branched coverings of the sphere according to their ramification profile, see e.g.~\cite{GouldenJackson1997,Okounkov2000,EkedahlLandoShapiroVainshtein2001,GouldenJacksonVakil2005,OkounkovPandharipande2006, BousquetSchaeffer2000,AlexandrovChapuyEynardHarnad2020, GouldenGuayPaquetNovak:polynomiality}.
A remarkable feature of the field is that grand generating functions of Hurwitz numbers of various kinds give rise to tau functions of integrable hierarchies (such as the KP or the 2-Toda hierarchy), see~\cite{Okounkov2000, GouldenJackson2008,Guay-PaquetHarnad2017}.
Branched coverings are also in bijection with certain factorisations of permutations in the symmetric group, and with certain graphs embedded on orientable surfaces called \emph{maps}, whose enumeration has been a question of interest in combinatorics much before the connection to Hurwitz numbers was noticed (e.g.~\cite{Tutte1954,Tutte1962a,BenderCanfield1986,BenderCanfield1991,BenderCanfield1994}), and is still a subject of active study today, see~\cite{BouttierDiFrancescoGuitter2004, Chapuy2009, BouttierGuitterMiermont2021, BernardiFusy2012, AlbenquePoulalhon2015, AlbenqueLepoutre2019, AlbenqueBouttier2022, Chapuy:PTRF,Eynard:book} for entry points.
Today, Hurwitz numbers have become a unifying topic among these fields, being at the origin of many new developments but also of transfers of techniques between those different domains.

A central question of the field is to understand the structure of these enumerative problems, when one fixes the underlying surface. A unifying answer to such questions has been progressively given in the light of the Chekhov--Eynard--Orantin \emph{Topological Recursion (TR)}~\cite{EO}, a universal procedure which enables one to compute certain invariants, usually denoted by $\omega_{g,n}$, attached to the surface of genus $g$ with $n$ boundaries, from a small initial data consisting of an algebraic object called the \emph{spectral curve}. It has been gradually observed along the years that many natural problems in combinatorics or enumerative geometry were actually instances of the topological recursion, in the sense that the correlation functions $W_{g,n}$ naturally associated to these counting problems could be recovered by this procedure, with an appropriate spectral curve~\cite{EO, EynardOrantinWeilPetersson, BorotEynard2011, EMS11, Borot13, AC14, KZ15, DOPS17, AC18, ABCDGLW19, BCG21}. Although it was initially developed to provide solutions to the topological expansion of matrix models~\cite{Eynard04, AMM05, CE06, ChekhovEynardOrantin2006}, and in the closely related context of the enumeration of maps in combinatorics~\cite{Eynard:book}, the topological recursion has grown into a vast field of research with deep mathematical connections in geometry and mathematical physics~\cite{BKMP09, Eynard2014, DOSS, BE15, BE17, FLZ20, EGMO21}.
It is important to notice that, although many examples of models giving rise to TR have now been proved to exist, the proofs are often model-dependent, and the question of the unification of methods, of results, and of giving conceptual explanations for TR to hold in vast generality is a subject of active interrogation.

\smallskip

A vast class of models which contains many previously studied variants of Hurwitz numbers was introduced a few years ago by Guay-Paquet and Harnad under the name \emph{weighted Hurwitz numbers}~\cite{Guay-PaquetHarnad2017}. These numbers are parametrised by a certain \emph{weight function}, denoted by $G$. The grand partition function of weighted Hurwitz numbers is defined by an explicit expansion as a sum over integer partitions involving Schur functions (see~\eqref{eq:tau} below), in which $G$ controls a certain product-weight attached to the content of the underlying partitions. In this framework, generating functions of weighted Hurwitz numbers are archaetypal examples of hypergeometric tau functions of the 2-Toda and KP hierarchies, also called Orlov--Scherbin tau functions, see~\cite{OrlovScherbin2000}. At the combinatorial level, weighted Hurwitz numbers count the number of solutions to certain factorisation problems in symmetric groups, which have a natural branched covering interpretation, see~\cite{Guay-PaquetHarnad2017} or Appendix~\ref{sec:WHN}. 
Many examples of weighted Hurwitz numbers are accessible by simple choices of the function $G$, for example: bipartite maps (also called hypermaps, dessins d'enfants, Belyi curves, in the literature) for $G(z)=(1+z)$, constellations (also called Bousquet-M\'elou--Schaeffer numbers after~\cite{BousquetSchaeffer2000}) for $G(z)=(1+z)^m$ or more generally a polynomial $G(z)$, or monotone Hurwitz numbers~\cite{GouldenGuayPaquetNovak:polynomiality} for $G(z)=(1\pm z)^{-1}$. 
%Many examples of weighted Hurwitz numbers are accessible by simple choices of the function $G$, for example: bipartite maps (also called hypermaps, dessins d'enfants, Belyi curves, in the literature) for $G(z)=(1+z)$, constellations (also called Bousquet-Schaeffer numbers after~\cite{BousquetSchaeffer2000}) for $G(z)=(1+z)^m$ or more generally a polynomial $G(z)$, or weighted Hurwitz numbers for $G(z)=(1\pm z)^{-1}$. 
We also mention that weighted Hurwitz numbers have been generalised to the context of $\beta$-deformations and non-orientable surfaces~\cite{ChapuyDolega2020, BonzomChapuyDolega2021} and have become a tool to approach Jack polynomials~\cite{BenDali:Integrality}. Studying the structure of topological expansions of these deformations is a very natural question, which however goes much beyond the present work.

At this stage it is important to notice that in all these models, two natural families of ``time'' variables (noted $p_i$ and $q_i$ in this paper) give control on two families of ``degree parameters'' on the associated combinatorial objects, corresponding to so-called \emph{double} (weighted) Hurwitz numbers.
However, working with the two families of parameters makes problems much more difficult, even in genus zero, and many of the works connecting the subject to TR restrict attention to \emph{single} numbers or variants, in which one of the two families is degenerated. Eynard's solution of the 2-matrix model \cite{Eynard2002} in the planar case, and with TR at all genus \cite{EO05, ChekhovEynardOrantin2006}, is a notable exception.

\smallskip

Motivated by this discussion, a natural goal for the unification of topological recursion approaches to Hurwitz problems, would be to prove a general TR statement for weighted Hurwitz numbers. This would encapsulate many of the previous results, giving in particular a unified proof for them. This project was initiated in the papers~\cite{AlexandrovChapuyEynardHarnadJ2018,AlexandrovChapuyEynardHarnad2020}, which proved TR for weighted Hurwitz in the case where $G(z)$ is polynomial, \emph{under the assumption that the first family of time parameters is equal to zero}. Combinatorially, this assumption amounts to forbidding the associated objects to have internal faces in addition to the $n$ boundaries accounted for in the generating function $W_{g,n}$. The unification was further advanced in~\cite{BDBKS1, BDBKS2} which covers the case of a rational function $G$, with the same assumption on times.

\subsection{Overview}

In this paper, we prove the topological recursion for weighted Hurwitz numbers of rational weight-function $G(z)$, for arbitrary values  of the time parameters, under minimal simplicity assumptions (without which  the regular version of TR is not expected to hold). This is among the most general\footnote{At least if one wants to stay in the realm of algebraic curves, i.e.~to keep a spectral curve that is defined by polynomial equations. In Section \ref{sec:exp} we address the case of $G$ being the product of a rational function and an exponential. In this case $X$ is no longer a polynomial but $X'/X$ still is.} result one could expect regarding TR and weighted Hurwitz numbers, completing the project initiated in~\cite{AlexandrovChapuyEynardHarnadJ2018,AlexandrovChapuyEynardHarnad2020} and concluding the efforts of dozens of papers in the last two decades addressing different particular cases. We note that this goal of research was also explicitly proposed recently  as part of Conjecture 4.4 in~\cite{BDBKS3}.

 More precisely, the main results of this paper are  the expression of the disk generating function $W_{0,1}$ (Theorem~\ref{thm:Discrat}),  the expression of the cylinder generating function $W_{0,2}$ (Theorem~\ref{thm:cylinder}),  the topological recursion for this model (Theorem~\ref{thm:main:result}) and the structure result it implies for the fixed-genus generating functions $W_{g,n}$ (Corollary~\ref{cor:structure}).

Beyond the results, maybe it is worth commenting about the structure of our paper and our proof. Our proofs for $W_{0,1}$ and $W_{0,2}$ when $G(z)$ is polynomial (Section~\ref{sec:constellations}) are purely combinatorial. They rely on the combinatorial expertise developed by the bijective school of map enumeration over the years. In particular, we use the combinatorial approach of Albenque and Bouttier \cite{AlbenqueBouttier2022}, originally developed to prove combinatorially Eynard's solution of the 2-matrix model \cite{Eynard2002} (which is $G(z)=1+z$ in our language) in the planar case. While the calculations of generating functions done in \cite{AlbenqueBouttier2022} to derive Eynard's solution from their bijections are not trivial, they are fortunately, easily promoted to the general polynomial case (as explained in Section \ref{sec:constellations}). %Readers interested only in pure combinatorics will find this part accessible. 
Maybe surprisingly, we are able to deduce the general case from the polynomial one (Section~\ref{sec:rational}) using an algebraic approach.

Finally, let us comment on how we obtain TR. In most works on the subject, TR is proved by first obtaining strong structure results on the generating functions (showing that they have poles only at the branchpoints of the spectral curve, once expressed in the spectral variables), and on a number of additional equations (typically the linear and quadratic loop equations). In particular, the structure of generating functions is not deduced from TR, but proved before, or simultaneously. In our work, we proceed differently. We use the idea of \emph{deformation of spectral curves}, taken from Eynard and Orantin's original paper~\cite{EO} (but which we make mathematically explicit in our case). It proves TR for arbitrary times from the fact that it was already proved when times are equal to zero (and from our solution in genus $0$). In particular, we obtain the structure of generating functions as a byproduct of TR, and not the converse.

We hope that our paper, using techniques going from pure bijective combinatorics, algebra, to fine analysis of the topological recursion and to deformation of spectral curves, does justice to the idea that Hurwitz counting problems are an inexhaustible source of connections between those fields.

On the day the first version of this paper was released, another article by Bychkov, Dunin-Barkowski, Kazarian and Shadrin was made public \cite{BDBKS4}, containing results strongly related to ours, obtained with different techniques. See also Section~\ref{sec:exp}.

We now proceed with the formal definition of our main objects of study, before stating formally our main results.

\subsection{Definition of the model}

Throughout the paper we let $m, r\geq0$ with $m+r\geq 1$, and $M=m+r$. 
We let $\mathbf{u}=(u_0,\dots,u_{M-1})$ be a sequence of indeterminates which will serve as free parameters in our model. We consider the rational function
\begin{align}\label{eq:tildeG}
G(\cdot)  =  \frac{\prod_{i=0}^{m-1} ( 1+\cdot\, u_i)}{\prod_{j=m}^{M-1} (1+ \cdot\, u_j)} =  \frac{\prod_{i \in I} ( 1+\cdot\, u_i)}{\prod_{j\in J} (1+ \cdot\, u_j)},
\end{align}
where everywhere in the paper we will write $I=\{0,1,\dots,m-1\}$ and $J=\{m,m+1,\dots,M-1\}$. Note that $m=|I|$ and $r=|J|$ are respectively the number of parameters appearing in the numerator and denominator of $G$. We can have  $r=0$ or $m=0$, in which case $J$ and $I$ are empty, respectively (corresponding to $G$ being a polynomial or the inverse of a polynomial).

Given indeterminates $\mathbf{p}=(p_1,p_2,\dots)$, $\mathbf{q}=(q_1,q_2,\dots)$ and $t$, we form the formal power series
\begin{align}\label{eq:tau}
\tau \equiv \tau^G(\mathbf{p},\mathbf{q}; \mathbf{u}; t) \coloneqq
\sum_{\lambda} t^{|\lambda|} s_\lambda(\mathbf{p}) s_\lambda(\mathbf{q}) \prod_{\Box \in \lambda} G(c(\Box)),
\end{align}
where the sum is taken over all integer partitions, where the symbol $s_\lambda$ denotes the Schur function of index $\lambda$ expressed as a polynomial in its power sums variables, where the product is taken over all boxes of the partition $\lambda$ and $c(\Box)$ is the content of the box $\Box$ inside $\lambda$, see~\cite{Guay-PaquetHarnad2017}. This expression makes $\tau$ a hypergeometric Orlov--Sherbin tau function of the 2-Toda (and KP) hierarchy, see~\cite{OrlovScherbin2000,GouldenJackson2008}.

The coefficients of $\tau$, i.e.~the numbers $H(\lambda, \mu;\ell_I; \ell_J)=H(\lambda, \mu;\ell_0,\dots,\ell_{m-1}; \ell_m \dots,\ell_{M-1})$, defined by the series expansion in the $p_k$s and $q_l$s (here for a partition $\lambda$ we denote $p_\lambda = \prod_{k=1}^{\ell(\lambda)} p_{\lambda_k}$ the product of the $p_k$s corresponding to the parts of $\lambda$, and similarly for $q_\mu = \prod_{l=1}^{\ell(\mu)} q_{\mu_l}$, where $\ell(\lambda)$ and $\ell(\mu)$ denote the lengths of the partitions)
$$
\tau \equiv \tau(\mathbf{p},\mathbf{q}; u_0,\dots,u_{M-1}; t)
\coloneqq 1+ \sum_{d\geq 1}\frac{t^d}{d!} \sum_{\substack{\lambda,\mu\vdash d\\ \ell_0,\dots,\ell_{M-1}}}
H(\lambda, \mu;\ell_I; \ell_J)
p_\lambda q_\mu u_0^{\ell_0} \dots u_{M-1}^{\ell_{M-1}},
$$
are the \emph{unconnected weighted Hurwitz numbers} of weight-function $G$, whose combinatorial and geometric interpretation we recall at the end of the introduction.
Of even greater interest are the (connected) \emph{weighted Hurwitz numbers}, defined as the coefficients in the expansion of the logarithm:
$$\log \tau 
=\sum_{d\geq 1}\frac{t^d}{d!} \sum_{\substack{\lambda,\mu\vdash d\\ \ell_0,\dots,\ell_{M-1}}}
H^\circ(\lambda, \mu;\ell_I; \ell_J)
p_\lambda q_\mu u_0^{\ell_0} \dots u_{M-1}^{\ell_{M-1}}.
$$

For $g\geq0$, we define the generating function
\begin{align}\label{eq:Fg}
	{F}_g 
	&= [N^{2g-2}] \big(\mathrm{log} \tau \big) \Big|_{p_i\rightarrow p_i/N, q_i\mapsto q_i/N, u_i\mapsto u_i N}
	\\
	&=  \sum_{d\geq 1}\frac{t^d}{d!} \sum_{\substack{\lambda,\mu\vdash d\\ \ell_0,\dots,\ell_{M-1}\\ 2g=\sum_{k=0}^{M-1}\ell_k+2-\ell(\lambda)-\ell(\mu)}}
H^\circ(\lambda, \mu;\ell_0,\dots,\ell_{M-1})
p_\lambda q_\mu u_0^{\ell_0} \dots u_{M-1}^{\ell_{M-1}}.
\end{align}
In the interpretation as coverings (or maps) $F_g$ is the contribution of surfaces of genus $g$ to the connected function $\log \tau$ (this is an instance of the Euler--Riemann--Hurwitz formula).
%\todoElba{Another notation for this? Confusing ell for both length and colength?}
We introduce the following ``rooting operator'', parametrised by a formal variable $x$,
$$
\nabla_x \coloneqq \sum_{i\geq 1} i x^{-i-1} \frac{\partial}{\partial p_i}.
$$
On constellations (see Section~\ref{sec:constellations} and also Appendix~\ref{sec:WHN}), 
%similar interpretations are possible for the general rational case
%\footnote{In this paper, we will only present a combinatorial model of graphs drawn on surfaces for the polynomial case of constellations (for the rational case, we only give the combinatorial-algebraic model on permutations). We leave as an exercise to the reader the task of describing an explicit model of graphs for the rational case, and as an open problem the task of giving one which is natural enough (see also Remark~\ref{rem:canOnePromoteTheBijectiveProofToAGeneralRationalModel}).}), 
we interpret the operator $\nabla_x$ as marking a face of given colour (white in our conventions) together with a ``root'' vertex inside it, and changing the weighting convention from $p_i$ to $x^{-1-i}$ if this face has degree $i$. Such a face is thought of as a ``boundary'' (as opposed to an ``internal face'' counted with weight $p_i$). 
The use of the inverse variable $1/x$ may be surprising to combinatorialists but is convenient for comparison with the literature in enumerative geometry. 

Throughout the paper, we fix two arbitrary positive integers $D_1,D_2$, which will play the role of maximum allowed degrees for internal faces of each colour.
For $n\geq 1$, and variables $x_1,\dots, x_n$, we introduce the generating function
\begin{align}\label{eq:Wgn}
W_{g,n}(x_1,\dots,x_n) \coloneqq  \Big( \nabla_{x_n} \dots \nabla_{x_1} F_g \Big) \Big|_{\substack{p_{i}=0, i> D_1\\q_{j}=0, j> D_2}}.
\end{align}
Anticipating on the next paragraph, one can think combinatorially of $W_{g,n}$ as the generating function of weighted Hurwitz numbers on a surface of genus $g$ with $n$ boundaries (but arbitrarily many internal faces, whose degrees are bounded by $D_1, D_2$, depending on their colour). We view it as a formal power series in $t$ whose coefficients are polynomials in the other variables, i.e.~an element of $\mathbb{Q}[\mathbf{p},\mathbf{q}, \mathbf{u}, \bar{\mathbf{x}}][[t]]$ in standard notation recalled below.
It depends implicitly on the parameters $m,r,\mathbf{u}$ (i.e.~on the function $G$) and on $D_1, D_2$, although we will not indicate it in the notation.

\medskip
{\noindent \bf Combinatorial interpretation of the generating functions.}
We briefly recall here the combinatorial interpretation of weighted Hurwitz numbers~\cite{Guay-PaquetHarnad2017}
(see Appendix~\ref{sec:WHN} for details).
Call a \emph{monotone run of transpositions} of length $\ell$,  an $\ell$-tuple of transpositions in $\mathfrak{S}_d$ of the form
$$
\rho = ((j_1,i_1), \dots, (j_\ell,i_\ell)),
$$
with $j_k<i_k$ for all $k$, and $i_1\leq \dots \leq i_\ell$.
Then the number $H(\lambda, \mu;\ell_I;\ell_J)$ defined above is equal to $(-1)^{\sum_{j\in J} \ell_j}$ times the  number of tuples 
$$(\sigma_{-2}, \sigma_{-1}, \sigma_0,\dots,\sigma_{m-1}, \rho^{(m)}, \dots, \rho^{(M-1)}),$$
where each $\sigma_k$ is a permutation in $\mathfrak{S}_d$ (here $d=|\lambda|=|\mu|$) with $\sigma_{-2}$ of cycle-type $\lambda$, $\sigma_{-1}$ of type $\mu$, with $\sigma_i$ having $d-\ell_i$ cycles for $i\in I$, where each $\rho^{(j)}$ for $j\in J$ is a monotone run of transpositions of length $\ell_j$, and where the total product is equal to the identity,
$$\sigma_{-2} \sigma_{-1} \sigma_0\dots \sigma_{m-1} \underline{\rho^{(m)}} \dots \underline{\rho^{(M-1)}} = \mathbf{1}_{\mathfrak{S}_d},$$
where the underlined notation $\underline{\rho}=(j_1,i_1) \dots (j_\ell,i_\ell)$ denotes the product of elements in the run.
We call such a tuple an \emph{$(m,r)$-factorisation}.

The connected number $H^\circ(\lambda, \mu;\ell_I;\ell_J)$ counts the same objects with the additional requirement that the subgroup of $\mathfrak{S}_d$ generated by all permutations $\sigma_k$ and all transpositions appearing in the runs $\rho^{(j)}$ acts transitively on $[d]$. It is well known that factorisations of the identity in permutations are in bijection with branched coverings of the sphere (see e.g.~\cite{LandoZvonkin2004}), and this property corresponds to the connectedness of the covering surface, hence the terminology.

All the generating functions defined above can thus be considered as generating functions of $(m,r)$-factorisations with different weights. In $\tau$ (or $\log\tau$ for the connected case) the weights $p_i$, $q_i$ are attached respectively to the cycles of length $i$ of $\sigma_{-2}$ and $\sigma_{-1}$, while the variable $u_i$ for $i\in I$ is attached to ``missing'' cycles of $\sigma_i$ compared to the identity (weight $u_i^{n-\ell}$ if $\sigma_i$ has $\ell$ cycles), and $u_j$ for $j\in J$ to transpositions in the monotone run $\rho^{(j)}$. The series $F_g$ is the contribution to $\log\tau$ of coverings whose underlying surface has genus $g$. In the function $W_{g,n}$ the same objects are considered, but $n$ cycles of $\sigma_{-2}$ have been distinguished, and carry a distinguished element, and receive a different weighting.

In the case $r=0$, these factorisations have a well-known combinatorial interpretation as certain embedded graphs called \emph{constellations}, which will recall and use in Section~\ref{sec:constellations}.
In this interpretation, cycles of $\sigma_{-2}$ and $\sigma_{-1}$ are regarded as \emph{white/black faces} respectively. The distinguished cycles/faces are regarded as \emph{boundaries} and the other ones as \emph{internal}. By analogy we will sometimes use this terminology also in the case of general  $(m,r)$-factorisations.

%whose product is equal to the identity, such that $\sigma^{-2}$, $\sigma^{-1}$ have respective cycle type $\lambda, \mu$, and the remaining permutations have  $n-\ell_0,\dots,n-\ell_{m-1}$ cycles, respectively. This number is also equal to the number of $n$-sheeted ramified coverings of the sphere by an oriented surface with $(m+2)$ numbered ramification points (possibly trivial), where $\lambda$ and $\mu$ control the full ramification profile of the first two points, and $n-\ell_0,\dots,n-\ell_{m-1}$ count  preimages of the remaining points. It also counts certain combinatorial graphs on surfaces called constellations, which are classical in the case $\mu=[1^n]$ and which we generalize in this paper, see Section~\ref{sec:constellations}. In these graphs, the faces are coloured in black and white and the partitions $\lambda,\mu$ control the face-degrees in each colour, while vertices carry colours in $\{0,\dots,m-1\}$ and $\ell_i$ is the number of vertices of colour $i$.
%The number $H^\circ$ have the same interpretation as $H$ except that they count only those tuples such that the permutations $\sigma^{-2},\dots,\sigma^{m-1}$ act transitively on $[n]$ (this corresponds to constellations on connected surfaces). In the case $r\neq 0$, a more complicated interpretation exists and is now classical, related to factorisations in monotone runs of transpositions, see~\cite{HarnadGuayPaquet-WH} and Section~\ref{sec:quickRecallWeightedHurwitz}.

\medskip
{\noindent \bf Notation.}
We will use, for any variable $x$, the notation $\bar{x}\coloneqq 1/x$.
We use bold letters to denote (finite or infinite) sequences of variables, for example $\mathbf{p}=(p_1,p_2,\dots)$, $\mathbf{q}=(q_1,q_2,\dots)$, $\mathbf{u}=(u_0,\dots,u_{M-1})$, 
$\mathbf{x}=(x_1,x_2,\dots)$, $\bar{\mathbf{x}}=(\bar{x}_1,\bar{x}_2,\dots)$, etc.
We also write $\mathbf{u}_S = (u_i, i\in S)$, with $S$ a set of indices, and we will repeatedly use it with $\mathbf{u}_I$ and $\mathbf{u}_J$.

Throughout the paper we use the notations $[\cdot],(\cdot),[[\cdot]],((\cdot))$ respectively for polynomials, rational functions, formal power series, formal Laurent series, over a ring or field. We let $\mathbb{K}$ be an algebraic closure of $\mathbb{Q}(\mathbf{p},\mathbf{q},\mathbf{u})$ and we let $\mathbb{K}((t^*))$ be the (algebraically closed) field of Puiseux series (formal Laurent series in fractional powers of $t$) over this field.

We write $[x^k] f(x)$ for the coefficient of $x^k$ in the formal series $f(x)$ (which can be a formal series in $x$ or a formal series in another variable whose coefficients are Laurent polynomials in $x$). 
If $f(z)=\sum_{k\geq k_0} f_k z^k$ is a Laurent series in $z$, we write
$$[f(z)]^{<} \coloneqq \sum_{k=k_0}^{-1} f_k z^k.$$
%for the positive part of a Laurent series or polynomial -- 
This notation will always be used with the variable ``$z$'' in this paper. We use similar notation $[f(z)]^{>}$, $[f(z)]^{\leq}$, $[f(z)]^{\geq}$ with clear analogous definitions. If $g(z)=\sum_{k\geq k_0} g_k z^{-k}$ is a Laurent series in $z^{-1}$, we write 
\begin{equation*}
    \{g(z)\}^{\geq} \coloneqq \sum_{k=k_0}^{0} g_k z^{-k}.
\end{equation*}
%Finally, we write $[[m]]\coloneqq\{0,1,\dots,m-1\}$ for the integers from $0$ to $m-1$. 
%This set is thought of as congruence classes modulo $m$ and addition over elements of this set is always taken modulo $m$. 

\medskip
{\noindent \bf Acknowledgements.} We thank Marie Albenque and Jérémie Bouttier for sharing with us the results of the paper~\cite{AlbenqueBouttier2022}. We recently learned that Boris Bychkov, Petr Dunin-Barkowski, Maxim Kazarian and Sergey Shadrin are preparing the paper entitled {\it ``Symplectic duality for topological recursion''}, where they obtain results strongly related to ours with very different techniques. The latter was released \cite{BDBKS4} on the same day as this article. We are grateful to them for mentioning this project in preparation to us.

\section{Main results} \label{sec:MainResults}

\noindent The main results of this paper are:  the expression of the disk generating function $W_{0,1}$ (Theorem~\ref{thm:Discrat});  the expression of the cylinder generating function $W_{0,2}$ (Theorem~\ref{thm:cylinder});  the topological recursion for this model (Theorem~\ref{thm:main:result}) and the structure result it implies for the fixed-genus generating functions $W_{g,n}$ (Corollary~\ref{cor:structure}).

\subsection{Main result I: Disk generating function $W_{0,1}$ and spectral curve}

Our first result is an explicit algebraic parametrisation of the function $W_{0,1}$. In the case $G(z)=(1+ u z)$, what follows is Eynard's leading-order solution
%\footnote{a combinatorial proof of which is due to Albenque and Bouttier; as we will explain, the Albenque-Bouttier bijection can in fact handle the case where $G$ is a polynomial and this is a key element of our proof.}
of the 2-matrix model~\cite{Eynard2002} (which can be given different formulations, see~\cite{AlbenqueBouttier2022,BouttierCarrance}). The notation and the general form of the equations in this section follow closely the ones of \cite{AlbenqueBouttier2022}.
%}\footnote{Albenque and Bouttier have obtained several years ago a combinatorial proof of Eynard's result, by developping for this model a combinatorial technology of ``slices''. They have presented their results and techniques in several talks, and in particular we found that  the slides of a talk in Oberwolfach in 2018 contain enough details so that the proofs here can be reconstructed, with some work, by a combinatorially-educated reader, see~\cite{todo:BouttierTalk}, which was our main source to prepare this paper. Their manuscript has recently appeared~\cite{AB:slices}. In Section~\ref{sec:constellations} we will recall what we need of their techniques for our proofs.}. In fact, as we will see in Section~\ref{sec:constellations}, our result follows from the bijection of Albenque--Bouttier (although it has arbitrarily more parameters to it in general) thanks to an embedding of the general-$m$ model into the $m=1$ case with appropriate additional weights, see~\ref{sec:constellations}.

To state the result, we need to introduce the following quantities (which have nice combinatorial interpretation at least in the polynomial case\footnote{In the polynomial case, $A_k^{(c)}, B_k^{(c)}$ are \resp the generating functions of white-based and black-based elementary slices of increment $mk-1$ and initial colour $c$, see Section~\ref{sec:constellations}. This interpretation is essentially due to \cite{AlbenqueBouttier2022}.}).

\begin{defn}[Polynomials $A^{(c)}(z), B^{(c)}(z)$ and functions $A_k^{(c)}, B_k^{(c)}$]\label{def:WkBk}
For $c\in I \cup J$ we define the Laurent polynomials
$$A^{(c)}(z) = \sum_{k=0}^{D_2} A_k^{(c)} z^k \, ,  \ \ 
B^{(c)}(z) = 1 + \sum_{k=1}^{D_1} B^{(c)}_k z^{-k},$$ 
uniquely determined as elements of 
$\mathbb{Q}[\mathbf{p},\mathbf{q},\mathbf{u}][[t]][z]$ 
and 
$\mathbb{Q}[\mathbf{p},\mathbf{q},\mathbf{u}][[t]][z^{-1}]$ 
respectively, by the following system of equations:
\begin{align}
  \label{eq:defWratconv2}
    A^{(c)}(z) &= 1 + u_c \sum_{s=1}^{D_2} q_{s} t^s   \biggl\{z^s \frac{\prod_{i\in I}  B^{(i)}(z)^s}{\prod_{j\in J}  B^{(j)}(z)^s}\frac{1}{B^{(c)}(z)}\biggr\}^{\geq},\\
    \label{eq:defBratconv2}
    B^{(c)}(z) &= 1 + u_c \sum_{s=1}^{D_1} p_s \biggl[z^{-s} \frac{\prod_{i \in I} A^{(i)}(z)^s}{\prod_{j \in J} A^{(j)}(z)^s}\frac{1}{A^{(c)}(z)}\biggr]^{<},
\end{align}
where we recall that the nonnegative and negative parts $\{\cdot\}^\geq$, $[\cdot]^<$ are  with respect to a Laurent series in $z^{-1}$ and to a Laurent series in $z$, respectively.
\end{defn}
Note that the previous definition can be reformulated as a polynomial system of equations relating the series $A^{(c)}_k, B^{(c)}_k$ together from which all coefficients of these series can be computed recursively, order by order in $t$ (which justifies the definition).
%	For $c \in [[m]]$ we consider the%
%	%formal power series $Z^{(c)}\equiv Z^{(c)}(x)\in \mathbb{Q}[\mathbf{p},\mathbf{q},\mathbf{u},\bar{x}^{1/m}][[t]]-TODO$ defined by the equation
%	\begin{align}\label{eq:defZc}
%	Z^{(c)} 
%	= \bar{x}^{1/m} W^{(c)} \left(\frac{\prod_{i \in I} Z^{(i)}}{\prod_{j \in J} Z^{(j)}}\right).
%\end{align}
%Note that $Z^{(c)}(x) = u_{c} \bar{x}^{1/m}+O(t)$, and that any power of this function (positive or negative), defines a valid formal Laurent series in  
%$\mathbb{Q}[\mathbf{p},\mathbf{q},\mathbf{u},\bar{\mathbf{u}},x^{1/m},x^{-1/m}]((t)).$

Of great importance for the present paper is the series $Z(x)$ defined by the following equation
\begin{align} \label{eq:ZasExcursionsrat}
Z(x) = \bar{x} \frac{\prod_{i\in I} A^{(i)}(Z)}{\prod_{j\in J} A^{(j)}(Z)}.
\end{align}
The series $Z$ (element of $\mathbb{Q}[\mathbf{p},\mathbf{q},\mathbf{u},\bar{x}][[t]]$) has an expansion of the form
\begin{align}\label{eq:expansionZrat}
Z = \bar{x} + O(t).
\end{align}

\begin{defn}\label{def:Hrat}
	Define the Laurent polynomial 
	$H^{(c)}(z) \in \mathbb{Q}[\mathbf{p},\mathbf{q},\mathbf{u},\bar{\mathbf{u}}][[t]][z,z^{-1}]$ given by 
\begin{equation*}
H^{(c)}(z) \coloneqq \bar{u}_c\left( A^{(c)}(z) B^{(c)}(z) - 1 \right).
\end{equation*}
\end{defn}
\begin{rem}\label{rem:Hsymmetry}
The quantity $H^{(c)}$ involves no negative power of $u_c$, it is symmetric in the variables $\mathbf{u}_I$, symmetric in the $\mathbf{u}_J$, and moreover it is independent of the chosen value of $c \in I\cup J$. These properties are not trivial  and will be proved in Section~\ref{sec:Hsymmetry}.
\end{rem}

We have now defined all quantities needed to introduce our spectral curve.
\begin{defn}[Spectral curve of our model] \label{def:SpectralCurve}
	We consider the system of polynomial equations\footnote{When $\mathbf{p}, \mathbf{q}, \mathbf{u}$ are fixed complex numbers and $|t|$ is small enough, all generating functions $A_k^{(c)}, B_k^{(c)}$ converge and these polynomial equations are defined over complex numbers (see Remark~\ref{rem:convergenceDomain}). One can also consider these polynomial equations formally over the field $\mathbb{Q}(\mathbf{p}, \mathbf{q}, \mathbf{u})[[t]]$.}
	defined by
	\begin{align}\label{eq:spectralX}
		z X(z)&=  \frac{\prod_{i  \in I} A^{(i)}(z)}{\prod_{j \in J} A^{(j)}(z)}
		, \\ \label{eq:spectralY}
		X(z) Y(z) &= H^{(c)}(z),
\end{align}
	where we recall that the quantities $A^{(i)}$ and $H^{(c)}$ are given by Definition~\ref{def:WkBk} and~\ref{def:Hrat}, and where $c$ is any value in $I\cup J$.
\end{defn}

Note that the equation $X(Z)=x$ defines a unique power series $Z\in \mathbb{Q}[\mathbf{p},\mathbf{q},\mathbf{u},\bar{x}][[t]]$, 
which is nothing but the one we introduced in~\eqref{eq:ZasExcursionsrat}, with expansion~\eqref{eq:expansionZrat}. By substitution, the expression~\eqref{eq:spectralY} also defines a unique valid formal series $Y(Z(x))$ in 
$\mathbb{Q}[\mathbf{p},\mathbf{q},\mathbf{u},\bar{\mathbf{u}},x,\bar{x}]((t))$.

The following theorem is proved in \cite{AlbenqueBouttier2022} in the case $M=m=1$. Their approach is in fact the main tool we use to prove the theorem below in the polynomial case, i.e.~for $M=m\geq 1$. In fact (see Section~\ref{sec:constellations}), the proof in this case follows relatively easily for a reader familiar with \cite{AlbenqueBouttier2022} and combinatorics of paths, once understood that the combinatorial objects in the polynomial case can be encoded into constellations with two sets of face weights (Proposition \ref{thm:FactorizationsConstellations}).

\begin{thm}[Disk generating function $W_{0,1}$] \label{thm:Discrat}
	The disk generating function $W_{0,1}(x)$ is, up to an explicit shift, given by the parametrisation $(Y(z),X(z))$ given above. Namely, we have:
	$$W_{0,1}(x) + \sum_{k=1}^{D_1}p_kx^{k-1} = Y(Z(x)),$$
	in $\mathbb{Q}[\mathbf{p},\mathbf{q},\mathbf{u},x,\bar{x}][[t]]$.
\end{thm}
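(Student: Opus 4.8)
The plan is to prove the identity in the two stages that organise the paper: first the polynomial case $r=0$ (so $J=\emptyset$ and $G$ is a genuine polynomial), by a direct combinatorial argument, and then the general rational case, by an algebraic deduction that takes the polynomial identity as input. Throughout, I would use that the rooting operator $\nabla_x$ forces $W_{0,1}(x)$ to be a power series in $\bar x=1/x$ with no non-negative powers of $x$; since the shift $\sum_{k=1}^{D_1} p_k x^{k-1}$ is a polynomial in $x$, the claim is equivalent to saying that the part of $Y(Z(x))$ carrying negative powers of $x$ equals $W_{0,1}(x)$, while the part carrying non-negative powers equals the shift.

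For the polynomial case I would begin with Proposition~\ref{thm:FactorizationsConstellations}, which reinterprets the objects enumerated by $W_{0,1}$ as constellations drawn on the disk carrying two independent, degree-controlled sets of faces (black and white), with one distinguished boundary face rooted at an internal vertex. I would then run the slice decomposition of Albenque--Bouttier~\cite{AlbenqueBouttier2022}: cutting the pointed disk along leftmost geodesics breaks it into elementary slices, and the generating series of white-based and black-based elementary slices are exactly the $A^{(c)}(z)$ and $B^{(c)}(z)$ of Definition~\ref{def:WkBk}. Peeling one step off a slice produces a recursion which, after extracting the appropriate $\{\cdot\}^{\geq}$ and $[\cdot]^{<}$ parts, is precisely the system~\eqref{eq:defWratconv2}--\eqref{eq:defBratconv2}; reading the boundary contour as a sequence of elementary steps identifies $Z(x)$ of~\eqref{eq:ZasExcursionsrat} as the generating function of contour excursions, with expansion~\eqref{eq:expansionZrat}. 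Finally, reconstructing the disk by concatenating slices around the rooted boundary and resumming expresses its generating function as the parametrised value $Y(Z(x))$ of Definition~\ref{def:SpectralCurve}, the shift collecting the polynomial-in-$x$ (``potential'') part of $Y(Z(x))$ that does not correspond to genuinely rooted constellations. As the introduction indicates, for a reader fluent in~\cite{AlbenqueBouttier2022} and in lattice-path combinatorics this stage is largely bookkeeping, the one new ingredient being the second colour of controlled degrees.

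For the rational case the slice picture is no longer available, since the denominator factors of $G$ correspond to signed monotone runs rather than to honest geometric slices; I would therefore argue purely algebraically. The key point is that the defining equations~\eqref{eq:defWratconv2}--\eqref{eq:defBratconv2}, the curve equations~\eqref{eq:spectralX}--\eqref{eq:spectralY}, and the combinatorial definition~\eqref{eq:Wgn} of $W_{0,1}$ all make sense verbatim for rational $G$ and determine every quantity uniquely, order by order in $t$. The strategy is to isolate a single characterising relation for $W_{0,1}$ -- for instance a functional/loop equation obtained by eliminating $Z$ between~\eqref{eq:spectralX} and~\eqref{eq:spectralY} -- that is stable under enlarging $J$, to check that $Y(Z(x))$ satisfies it using only the defining equations of $A^{(c)},B^{(c)}$ and the normalisation $Z=\bar x+O(t)$, and to conclude by uniqueness of the solution. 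This step also requires the non-trivial symmetry and $c$-independence of $H^{(c)}$ recorded in Remark~\ref{rem:Hsymmetry} and established in Section~\ref{sec:Hsymmetry}, without which the relation $X(z)Y(z)=H^{(c)}(z)$ -- and hence $Y(Z(x))$ itself -- would not even be well defined.

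The main obstacle is exactly this rational stage. The combinatorial proof supplies positivity and finiteness that make the slice recursions transparent, and these are precisely the features lost once $G$ has poles: the series $A^{(c)},B^{(c)},Y$ then involve the inverses $1/A^{(c)},1/B^{(c)}$ essentially, and the lack of any direct combinatorial model means the identity must be forced from the algebra alone. Pinning down the correct characterising relation and proving that it, together with the expansion $Z=\bar x+O(t)$, determines $W_{0,1}$ uniformly in $G$ is where the real work lies; the symmetry of $H^{(c)}$ is an indispensable companion input rather than a routine check.
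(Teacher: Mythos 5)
Your outline of the polynomial case follows the paper: Proposition~\ref{thm:FactorizationsConstellations}, the Albenque--Bouttier slice decomposition producing the series $A^{(c)},B^{(c)}$ of Definition~\ref{def:WkBk}, excursions giving $Z$, and a final resummation. That part is sound, although the paper's argument is really a two-step one --- a bijective identity (Proposition~\ref{thm:BijectiveW01}), obtained by a delicate rooted-only (not pointed-rooted) difference argument, followed by an algebraic matching with $Y(Z(x))$ --- which your sketch compresses into ``bookkeeping.''

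The genuine gap is in the rational stage, exactly where you admit ``the real work lies.'' Your plan needs a functional or loop equation that the \emph{combinatorially defined} $W_{0,1}$ provably satisfies for rational $G$, so that you can conclude by uniqueness. You never indicate how to establish such an equation, and that is precisely the obstruction: for rational $G$ there is no slice decomposition and no Tutte-style recursion available (the paper even records, at the end of Appendix~\ref{sec:WHN}, that a combinatorial theory of this kind for $(m,r)$-factorisations is an open problem). Checking that $Y(Z(x))$ satisfies some algebraic relation is the easy half; checking that $W_{0,1}$ satisfies the \emph{same} relation is the whole difficulty, and your proposal gives no route to it. The paper's Section~\ref{sec:rational} sidesteps the need for any characterisation of $W_{0,1}$: it shows that at each order $t^k$ both sides of the identity are polynomials in $\mathbf{u}_I,\mathbf{u}_J$, symmetric in each set, with the degree bound coming from the Riemann--Hurwitz homogeneity, and \emph{stable} under increasing $m$ and $r$ (Lemma~\ref{lem:projectiveLimit} and its analogue for the right-hand side); since the identity is already known on the locus where the variables $\mathbf{u}_J$ are specialised to a subset of $\mathbf{u}_I$ (there $G$ collapses to a polynomial, by Remark~\ref{rem:artificialPoles}), the purely algebraic divisibility argument of Lemma~\ref{lemma:algebraic} forces it to hold identically. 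Finally, you have a dependency backwards: the symmetry and $c$-independence of $H^{(c)}$ (Remark~\ref{rem:Hsymmetry}) is not an input you may assume --- in the paper it is \emph{deduced from} Theorem~\ref{thm:Discrat} in Section~\ref{sec:Hsymmetry}. The paper first proves the identity for the fixed choice $c=0$ (with $I\neq\emptyset$), obtains $c$-independence as a consequence (because the left-hand side does not depend on $c$), and only then reaches $c\in J$ by adjoining artificial poles; treating the symmetry as a prerequisite, as you do, would be circular relative to how it is actually established.
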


The following remark will be of great importance.
\begin{rem}[Artificial poles]\label{rem:artificialPoles}
Note that if $u_i = u_j$ for some $(i,j) \in I \times J$, then by definition we have $A^{(i)}=A^{(j)}$ and $B^{(i)}=B^{(j)}$, and the contribution of these quantities to equations~\eqref{eq:defWratconv2}-\eqref{eq:defBratconv2}-\eqref{eq:ZasExcursionsrat} simplifies. Consequently, if the expression of $G(\cdot)$ is artificially replaced by $G(\cdot ) \times \frac{1+u\cdot }{1+u \cdot}$, one obtains the same definition for all the quantities defined in these equations, and for the spectral curve. Therefore our spectral curve depends only on the rational function $G$, and not of a particular way to express it (with possible artificial extra poles).
\end{rem}

\subsection{Main result II: Cylinder generating function $W_{0,2}$}

Once formulated in the ``change of variables'' $x \leftrightarrow Z$, the cylinder generating function has the universal expression already encountered in many other models of enumerative geometry. Indeed we have:
\begin{thm}[Cylinder generating function $W_{0,2}$]\label{thm:cylinder}
	The cylinder generating function $W_{0,2}$ is
	$$W_{0,2}(x_1,x_2)= \frac{Z'(x_1)Z'(x_2)}{(Z(x_1)-Z(x_2))^2} - \frac{1}{(x_1-x_2)^2},$$
	in $\mathbb{Q}[\mathbf{p},\mathbf{q}, \mathbf{u}](x_1,x_2)[[t]]$.
\end{thm}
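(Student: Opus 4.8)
The plan is to reduce the statement to a pure identity on the spectral curve, using that $\nabla_{x_2}$ is a loop-insertion operator marking a second boundary. By definition $W_{0,2}(x_1,x_2)=\big(\nabla_{x_2}\nabla_{x_1}F_0\big)\big|_{0}$, where $|_{0}$ denotes the restriction $p_i=0\,(i>D_1)$, $q_j=0\,(j>D_2)$. I first work with all times (the case $D_1=D_2=\infty$, which is legitimate since each coefficient in $t$ involves only finitely many time variables, and then restrict at the end). In this setting Theorem~\ref{thm:Discrat} reads $\nabla_{x_1}F_0=Y(Z(x_1))-\sum_{k\ge1}p_k x_1^{k-1}$. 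Applying $\nabla_{x_2}=\sum_{j\ge1}jx_2^{-j-1}\partial_{p_j}$ to the explicit shift produces exactly $\sum_{j\ge1}jx_1^{j-1}x_2^{-j-1}=\tfrac{1}{(x_1-x_2)^2}$, so that
$$W_{0,2}(x_1,x_2)=\Big(\nabla_{x_2}\,Y(Z(x_1))\Big)\Big|_{0}-\frac{1}{(x_1-x_2)^2}.$$
Thus the universal $-\tfrac{1}{(x_1-x_2)^2}$ term appears for free, and it remains to prove the full-times identity
$$\nabla_{x_2}\,Y(Z(x_1))=\frac{Z'(x_1)\,Z'(x_2)}{(Z(x_1)-Z(x_2))^2}\qquad(\star)$$
and then restrict (which turns every object into its truncated counterpart, in particular $Z$ into the series of Theorem~\ref{thm:cylinder}).

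To prove $(\star)$ I would compute the variation of the spectral curve under the derivation $\nabla_{x_2}$. Writing $z_1=Z(x_1)$ and differentiating $X(Z(x_1))=x_1$ and $Y(Z(x_1))$ by the chain rule gives
$$\nabla_{x_2}Z(x_1)=-\frac{(\nabla_{x_2}X)(z_1)}{X'(z_1)},\qquad \nabla_{x_2}\,Y(Z(x_1))=(\nabla_{x_2}Y)(z_1)-\frac{Y'(z_1)}{X'(z_1)}\,(\nabla_{x_2}X)(z_1),$$
where $(\nabla_{x_2}X)(z),(\nabla_{x_2}Y)(z)$ denote variations of the coefficients at fixed $z$, which are in turn governed by $\nabla_{x_2}A^{(c)}$ and $\nabla_{x_2}B^{(c)}$. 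Applying $\nabla_{x_2}$ to~\eqref{eq:defWratconv2}--\eqref{eq:defBratconv2}, the key simplification is that $z^{-s}\prod_{i\in I}A^{(i)}(z)^s/\prod_{j\in J}A^{(j)}(z)^s=X(z)^s$, so the only explicit $p$-dependence, carried by $\sum_s p_s[\,\cdot\,]^{<}$ in~\eqref{eq:defBratconv2}, yields upon $\nabla_{x_2}$ a \emph{source} term that resums to a Cauchy-type kernel $u_c\big[A^{(c)}(z)^{-1}X(z)/(x_2-X(z))^2\big]^{<}$; everything else is linear in the unknowns $\nabla_{x_2}A^{(c)},\nabla_{x_2}B^{(c)}$. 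This closed linear system is uniquely solvable order by order in $t$.

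Substituting the solution back, I would then view $\varpi(z_1,z_2):=\nabla_{x_2}Y(Z(x_1))\,dx_1\,dx_2$ as a symmetric bidifferential on $\mathbb{P}^1\times\mathbb{P}^1$ in the global coordinate $z$ (the curve is rationally parametrised by $z$, hence of genus zero). The identity $(\star)$ is equivalent to $\varpi=\frac{dz_1\,dz_2}{(z_1-z_2)^2}$, the Bergman kernel, and to recognise it I would check three things: that $\varpi$ is symmetric (immediate from the definition of $W_{0,2}$); that along the diagonal $z_1=z_2$ it has a double pole with leading coefficient $1$ and no simple-pole part (exactly the normalisation produced by the resummed source above); and that it is holomorphic everywhere else. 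Granting these, the difference with the Bergman kernel is a holomorphic bidifferential on $\mathbb{P}^1\times\mathbb{P}^1$, hence zero, and re-expressing in $x$ through $Z'(x_i)\,dx_i=dz_i$ yields the theorem.

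The main obstacle is the holomorphicity of $\varpi$ at the branch points of $X$ (the zeros of $X'$): the term $Y'(z_1)(\nabla_{x_2}X)(z_1)/X'(z_1)$ has an \emph{a priori} pole there which must cancel — this is the $n=2$ linear loop equation, and it is precisely where the fine structure of the variational system enters. An alternative, which also supplies the base case for the rational continuation, is a purely combinatorial proof in the polynomial case $r=0$: decomposing the annulus into a cyclic chain of Albenque--Bouttier elementary slices separating the two boundaries (see Section~\ref{sec:constellations}) and resumming the resulting geometric series directly produces the double-ratio form, in the spirit of the Bouttier--Guitter two-point function. In either approach, the rational case $r>0$ then follows from the polynomial one by the same algebraic-continuation argument used for $W_{0,1}$ in Section~\ref{sec:rational}, invoking Remark~\ref{rem:artificialPoles} to reduce to a polynomial weight.
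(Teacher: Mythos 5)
Your reduction is sound as far as it goes: applying $\nabla_{x_2}$ to Theorem~\ref{thm:Discrat} at all times, the shift $-\sum_k p_k x_1^{k-1}$ does produce the universal term $-\tfrac{1}{(x_1-x_2)^2}$ (modulo routine care with the $D_1=D_2=\infty$ completion, which is justifiable coefficientwise; the paper keeps $D_1,D_2$ finite, and the same observation appears, in the reverse direction, in Remark~\ref{rem:derivation:insertion}). But the identity $(\star)$ you reduce to \emph{is} the entire content of the theorem, and your primary argument for it has three genuine gaps. (i) The linear system for $\nabla_{x_2}A^{(c)},\nabla_{x_2}B^{(c)}$ is set up (the source term $u_c\bigl[A^{(c)}(z)^{-1}X(z)/(x_2-X(z))^2\bigr]^{<}$ is computed correctly) but never solved, and nothing is extracted from it. (ii) Viewing $\varpi$ as a global rational bidifferential in the coordinate $z$ is not a bookkeeping step: a priori $W_{0,2}(X(z_1),X(z_2))$ is only a convergent series for $z_1,z_2$ near $0$, so global meromorphicity is part of what must be proved, not a framework one may assume. (iii) Regularity at the zeros of $X'$ is exactly the hard point, and you explicitly leave it as ``the main obstacle''. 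Note that within this paper's logic you cannot appeal to loop equations to fill (iii): the topological recursion (Theorem~\ref{thm:main:result}) is proved by the deformation argument which takes Theorem~\ref{thm:cylinder} as an \emph{input}, so deriving $W_{0,2}$ from the $n=2$ loop equation would be circular unless you prove that loop equation independently--which is precisely the unsupplied step.

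Your fallback is, in substance, the paper's actual proof, but as written it is a pointer rather than a proof. For polynomial $G$ the paper cuts the annulus along the unique shortest cycle separating the two boundary faces, uses the bijection between $p$-(strict) annular constellations and white slices of increment $\mp mp$ (Proposition~\ref{thm:Annular}), encodes slices as coloured $m$-paths, and gives the gluing multiplicity $p$, yielding Proposition~\ref{prop:W02sum}; the resummation over $p$ is then \emph{not} a simple geometric series: it goes through the small-root/large-root partial fraction analysis of $F(x,z)=(1-\bar{x}\tilde{A}(z)/z)^{-1}$, where the identity $D(x)=-x\tilde{Z}'(x)/\tilde{Z}(x)$ for the small-root contribution is what produces the double-pole form $Z'Z'/(Z_1-Z_2)^2$. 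The passage to rational $G$ is then the degree-plus-symmetry continuation of Section~\ref{sec:rational} (a simpler variant of Lemma~\ref{lemma:algebraic}), not an application of Remark~\ref{rem:artificialPoles}, which only guarantees that artificially repeated factors do not change the spectral curve. So you have correctly identified the viable route, but the colour-weighted annular bijection and the resummation are the actual mathematical content, and they remain to be carried out.
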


\subsection{Main result III: Topological recursion}

In abbreviated form, our main result states that the correlators $W_{g,n}(x_1,\dots,x_n)$ obey the Eynard--Orantin topological recursion, with spectral curve~\eqref{eq:spectralX}--\eqref{eq:spectralY}. To state this properly (Theorem~\ref{thm:main:result} in Section~\ref{sec:toprecresult}), we need some preliminary discussion.

\subsubsection{Discussion on ramification points and convergence}
\label{sec:tr:rampoints}
We start by a general remark about convergence of formal series.

\begin{rem}[Convergence]\label{rem:convergenceDomain}
For arbitrary sequences of complex numbers $\mathbf{p},\mathbf{q},\mathbf{u}$, when the variable $t$ is in a small enough neighbourhood of zero, all series $A_k^{(c)}, B_k^{(c)}$ are absolutely convergent (as follows from the well-founded system of algebraic equations~\eqref{eq:defWratconv2}-\eqref{eq:defBratconv2}).
In particular, the equations~\eqref{eq:spectralX}-\eqref{eq:spectralY} define a rational parametrisation $(X(z), Y(z))$ of a complex algebraic curve -- the \emph{spectral curve} from which we will define the topological recursion.

Moreover, the generating function
$W_{g,n}(x_1,\dots,x_n) \in \mathbb{Q}[\mathbf{p}, \mathbf{q}, \mathbf{u}, \bar{\mathbf{x}}][[t]]$ is absolutely convergent when $t,\bar{x}_1,\dots,\bar{x}_n$ are in a small-enough neighbourhood of $0 \in \mathbb{C}$, which can be taken independently of~$n$ and~$g$ (this is a consequence of the asymptotic growth  of weighted Hurwitz numbers of fixed genus, see Lemma~\ref{lemma:absoluteConvergence} in the appendix).
This neighbourhood can be chosen uniformly in $\mathbf{p},\mathbf{q}, \mathbf{u}$ in a compact set.

%In what follows, quantities of interest will sometimes be considered as formal series, or as complex numbers (or complex functions) in which case variables will be assumed to be close enough to zero. We will refer to these two situations as the \emph{formal setting} or \emph{complex setting}.
\end{rem}

In particular, the quantities
$X(z)$
and
$W_{g,n}(X(z_1),\dots,X(z_n))$
can be considered as analytic functions of $z$ and $z_1,\dots,z_n$ in a neighbourhood of zero (provided $t$ is in such a neighbourhood).
This justifies the following definition:
\begin{defn}[Differential forms $\omega_{g,n}$]\label{def:omegagn}
We define, for $\mathbf{p},\mathbf{q},\mathbf{u}$ in a compact set and $t$ in a neighbourhood $V$ of zero,
$\omega_{0,1}(z_1) = Y(z_1)\dd X(z_1)$, and for $g\in\mathbb{Z}_{\geq 0}$, $n\in \mathbb{Z}_{\geq 1}$ such that $(g,n)\neq(0,1)$:
\[
\omega_{g,n}(z_1,\dots,z_n) = W_{g,n}(X(z_1),\dots,X(z_n))\dd X(z_1)\dots \dd X(z_n) +\delta_{g,0}\delta_{n,2} \frac{\dd X(z_1)\,\dd X(z_2)}{(X(z_1)-X(z_2))^2}.
\]
We view these objects as differential forms for $z_i$ in a neighbourhood of zero, which are moreover analytic in such neighbourhood (with the exception of the diagonal $z_1=z_2$ in the case of $\omega_{0,2}$).
\end{defn}

We now need a discussion about (formal or complex) ramification points of the spectral curve. First consider the polynomial equation $X'(b)=0$, or equivalently (by logarithmic differentiation),
\begin{align}\label{eq:ramification points}
-b^{-1} + \sum_{i \in I} \frac{{A^{(i)}}'(b)}{A^{(i)}(b)} -\sum_{j \in J} \frac{{A^{(j)}}'(b)}{A^{(j)}(b)}=0. 
\end{align}
%\todoElba{Rational case}

The ramification points of this equation near $t=0$ will be important for us:
\begin{defn}[Initial ramification points]\label{def:initialramification points}
An initial ramification point is a zero $a$ of the polynomial equation
\begin{align}\label{eq:ramification points0}
-a^{-1} + \sum_{i\in I} \frac{\sum_{k=1}^{D_2} k q_k a^{k-1}}{1/u_i + \sum_{k=1}^{D_2} q_k a^k}-\sum_{j\in J} \frac{\sum_{k=1}^{D_2} k q_k a^{k-1}}{1/u_j + \sum_{k=1}^{D_2} q_k a^k} =0,
\end{align}
or equivalently
\begin{align}\label{eq:ramification points0bis}
\frac{d}{da} \left(a^{-1} G(\sum_{k=1}^{D_2} q_k a^k) \right) =0.
\end{align}
We number these zeroes $a_1,a_2, \dots, a_{MD_2}$.
%{\it In the complex setting, we will always assume that $\mathbf{p},\mathbf{q},\mathbf{u}$ are chosen such that these zeroes are distinct (simplicity assumption).} Note that this assumption is true for generic choices of these parameters.
In the formal setting, we view the $a_i$ as (distinct) elements of~$\mathbb{K}$. In the complex setting, the $a_i$ are complex numbers.
\end{defn}

\begin{prop}[Formal ramification points]
The ramification point equation~\eqref{eq:ramification points} has $MD_2$ solutions $b^{\textup{formal}}_i(t), 1\leq i \leq MD_2$ which are distinct formal Laurent series, with expansion
\begin{align}\label{eq:devramification point}
b^{\textup{formal}}_i (t) = \frac{a_i}{t} +O(1) \ \ \in \mathbb{K}((t)),
\end{align}
where $a_i\in \mathbb{K}$ are (formal) initial ramification points.
\end{prop}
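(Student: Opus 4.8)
The plan is to reduce the whole statement to a Newton-polygon analysis of a single polynomial equation whose rescaled leading term is exactly the initial ramification equation \eqref{eq:ramification points0}. First I would clear denominators in \eqref{eq:ramification points} by multiplying by $z\prod_{c\in I\cup J}A^{(c)}(z)$, producing
$$
P(z) = -\prod_{c\in I\cup J}A^{(c)}(z) + z\Bigl(\sum_{i\in I}{A^{(i)}}'(z)\!\!\prod_{c\neq i}\!A^{(c)}(z) - \sum_{j\in J}{A^{(j)}}'(z)\!\!\prod_{c\neq j}\!A^{(c)}(z)\Bigr)\in\mathbb{K}[[t]][z],
$$
with the products $\prod_{c\neq i}$, $\prod_{c\neq j}$ ranging over $I\cup J$ minus the indicated index; the solutions of \eqref{eq:ramification points} away from $z=0$ are exactly the roots of $P$, and since each $A^{(c)}$ has $z$-degree at most $D_2$, we have $\deg_z P\le MD_2$. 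The only combinatorial input I need from Definition~\ref{def:WkBk} is the leading $t$-order of the coefficients $A^{(c)}_l$: reading \eqref{eq:defWratconv2} modulo $t^{l+1}$ (all $B^{(c)}=1+O(t)$) gives $A^{(c)}_l = u_c q_l t^l + O(t^{l+1})$ for $1\le l\le D_2$ and $A^{(c)}_0=1$. Equivalently, writing $Q(\beta)=\sum_{k=1}^{D_2}q_k\beta^k$, one has $A^{(c)}(\beta/t)\to 1+u_cQ(\beta)$ as $t\to 0$.

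Next I would run the Newton polygon argument in the $t$-adic valuation $v$ on $\mathbb{K}((t))$. The estimate above shows that both $A^{(c)}(z)$ and $z{A^{(c)}}'(z)$ have the property that the coefficient of $z^l$ has valuation at least $l$, and this property is stable under products; hence, writing $P(z)=\sum_{k=0}^{MD_2}P_kz^k$, we get $v(P_k)\ge k$ for all $k$. Two endpoint coefficients saturate this bound: $P_0=-\prod_cA^{(c)}(0)=-1$, so $v(P_0)=0$, while a direct computation of the top coefficient gives $P_{MD_2}=\bigl(D_2(m-r)-1\bigr)q_{D_2}^M\prod_c u_c\,t^{MD_2}+O(t^{MD_2+1})$, so $v(P_{MD_2})=MD_2$ (using that $\mathbf q,\mathbf u$ are indeterminates and $D_2(m-r)\neq 1$). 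Therefore the Newton polygon of $P$ is the single segment from $(0,0)$ to $(MD_2,MD_2)$ of slope $1$, so $P$ has exactly $MD_2$ roots in the Puiseux closure $\mathbb{K}((t^*))$, all of $t$-adic valuation $-1$; equivalently every solution is of the form $z=\beta/t$ with $v(\beta)=0$.

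To identify the leading terms and promote the roots to genuine Laurent series, I would substitute $z=\beta/t$: since $v(P_k)\ge k$, the series $\widehat P(\beta,t):=P(\beta/t)=\sum_k (P_kt^{-k})\beta^k$ lies in $\mathbb{K}[[t]][\beta]$, and its reduction at $t=0$ is
$$
\widehat P(\beta,0) = -\prod_{c}\bigl(1+u_cQ(\beta)\bigr) + \beta Q'(\beta)\Bigl(\sum_{i\in I}u_i\!\!\prod_{c\neq i}\!\bigl(1+u_cQ\bigr) - \sum_{j\in J}u_j\!\!\prod_{c\neq j}\!\bigl(1+u_cQ\bigr)\Bigr),
$$
which is precisely the polynomial obtained by clearing denominators in \eqref{eq:ramification points0}, i.e.\ (up to the nonzero factor $\beta\prod_c(1+u_cQ(\beta))$) the equation $\tfrac{d}{d\beta}\bigl(\beta^{-1}G(Q(\beta))\bigr)=0$ of \eqref{eq:ramification points0bis}. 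Its roots are by definition the initial ramification points $a_1,\dots,a_{MD_2}$, which for generic (equivalently, indeterminate) $\mathbf u,\mathbf q$ are simple and distinct — separability being the nonvanishing of the discriminant, a nonzero element of $\mathbb{Q}(\mathbf q,\mathbf u)$ checkable on a single numerical specialisation. Hensel's lemma (the formal implicit function theorem over $\mathbb{K}[[t]]$) then lifts each simple root $a_i$ of $\widehat P(\cdot,0)$ to a unique $\beta_i(t)\in\mathbb{K}[[t]]$ with $\beta_i(0)=a_i$, and $b_i^{\textup{formal}}(t):=\beta_i(t)/t$ gives $MD_2$ distinct Laurent series with the announced expansion \eqref{eq:devramification point}; being power-series lifts of \emph{simple} roots they involve no fractional powers of $t$, and being $MD_2$ in number they exhaust the roots of $P$.

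The hard part is the sharp valuation bookkeeping: one must prove not merely $v(P_k)\ge k$ but that the two endpoint coefficients saturate the bound, so that the Newton polygon is a single slope-$1$ segment and the associated graded polynomial $\widehat P(\beta,0)$ is exactly the initial ramification equation. The computation of $P_{MD_2}$ requires care and surfaces the integer constant $D_2(m-r)-1$; the degenerate case $D_2(m-r)=1$ (possible only for $D_2=1$, $m=r+1$), where this constant vanishes and one ramification point escapes to infinity, should be excluded or treated separately. The remaining ingredient — simplicity and distinctness of the $a_i$ — is a genericity statement on $\mathbf u,\mathbf q$ and is precisely the type of minimal simplicity assumption under which the regular topological recursion is expected to hold.
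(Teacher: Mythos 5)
Your proof is correct (modulo the same genericity assumptions the paper itself makes) and, at its core, follows the same route as the paper's: extract the leading behaviour $A^{(c)}_k=u_cq_kt^k+O(t^{k+1})$, $A^{(c)}_0=1+O(t)$ from \eqref{eq:defWratconv2}, rescale $b=\beta/t$, observe that at $t=0$ the equation becomes the initial ramification equation \eqref{eq:ramification points0}, and lift each root $a_i$. The difference is in how the lifting and the counting are justified. The paper's proof stops at ``a full expansion with initial seed $a_i/t$ can be computed recursively'', which implicitly uses simplicity of the $a_i$ and leaves unaddressed both why the resulting series involve no fractional powers of $t$ and why these $MD_2$ series exhaust all solutions of \eqref{eq:ramification points}. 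Your clearing of denominators, the valuation bound $v(P_k)\ge k$ with saturation at both endpoints, the resulting single-slope Newton polygon, and Hensel's lemma settle precisely these points, so your argument is the more complete one. It also buys something the paper misses: the top-coefficient computation $P_{MD_2}=(D_2(m-r)-1)\,q_{D_2}^M\prod_c u_c\,t^{MD_2}+O(t^{MD_2+1})$ shows the statement degenerates when $D_2=1$ and $m=r+1$; for instance for $(m,r,D_2)=(1,0,1)$, equation \eqref{eq:ramification points} collapses to $-A^{(0)}_0=0$, which has no solutions even though $MD_2=1$, so the proposition is false as stated in that case and the exclusion you propose is genuinely needed. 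Two small inaccuracies do not affect your conclusions: $A^{(c)}_0$ and $P_0$ equal $1+O(t)$ and $-1+O(t)$ rather than exactly $1$ and $-1$ (only the valuations matter), and the distinctness/simplicity of the $a_i$ over $\mathbb{K}$, which you defer to a discriminant check on a single specialisation, is likewise asserted rather than proved by the paper (it is built into Definition~\ref{def:initialramification points}).
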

\begin{proof}
Observe that $A_k^{(c)} = u_c q_k t^k + O(t^{k+1})$ for $k\geq 1$ and $A_0^{(c)} = 1 + O(t)$.
The equation~\eqref{eq:ramification points} thus gives
\begin{align}\label{eq:ramification points0ter}
-b^{-1} + \sum_{i\in I} \frac{\sum_{k=1}^{D_2} k q_k b^{k-1} ( t^k + O(t^{k+1}) )}{\bar{u}_i + O(t) +  \sum_{k=1}^{D_2} q_k b^k (t^k + O(t^{k+1}) )}-\sum_{j\in J} \frac{\sum_{k=1}^{D_2} k q_k b^{k-1} ( t^k + O(t^{k+1}) )}{\bar{u}_j + O(t) +  \sum_{k=1}^{D_2} q_k b^k (t^k + O(t^{k+1}) )} =0.
\end{align}
%This follows directly from~\eqref{eq:ramification points0}, from which a
Therefore for each $i\in\{1,\dots,MD_2\}$, a full expansion of $b^{\textup{formal}}_i(t)$ with initial seed $b^{\textup{formal}}_i (t) = \frac{a_i}{t}+\dots$ can be computed recursively.
\end{proof}

In the complex setting,~\eqref{eq:devramification point} and absolute convergence of Puiseux expansions of roots of polynomials also imply that, when $t$ goes to zero, the complex ramification points (complex zeroes of~\eqref{eq:ramification points}, which we denote by $b_1,\dots,b_{MD_2}$) behave as 
\begin{align}\label{eq:complexdevramification point}
b_i (t) \sim \frac{a_i}{t}, \ \ \ t\rightarrow 0.
\end{align}
In particular, if the $a_i$ are distinct, so are the $b_i$ for $t$ in a neighbourhood of zero.
%and in particular, that under the simplicity assumption {\it all complex ramification points are distinct for $t$ in a neighbourhood of zero}.

\begin{rem}
For parameters $\mathbf{p},\, \mathbf{q},\,\mathbf{u}$ chosen generically, $Y'(b_i)\neq 0$ (equivalently $H^{(c)}{}^{'} (b_i)\neq 0$) for all $i\in\{1,\dots,MD_2\}$. Indeed, at first orders in $t$, we have $B_\ell^{(c)} = u_c p_\ell + O(t)$ for $\ell\geq 1$, so
\[\begin{split}
    u_c H^{(c)}(z) &= 
    %(1+u_c)
    \Big(1+O(t)+u_c\sum\limits_{k=1}^{D_2}q_k z^k(t^k+O(t^{k+1}))\Big)\Big(1+u_c\sum\limits_{\ell=1}^{D_1} p_{\ell}z^{-\ell}(1+O(t))\Big)-1.
    %\\
    %&= \frac{1+u_c}{u_c} + \frac{\prod_{i\in I}(1+u_i)}{\prod_{j\in J}(1+u_j)}\sum\limits_{\ell=1}^{D_1} p_{\ell}z^{-\ell} + O(t)
\end{split}\]
Substituting $z=b_i(t)$ in the derivative of this expression (which is a Laurent polynomial in $z$), we have by~\eqref{eq:complexdevramification point} that  $zu_c {H^{(c)}}'(z) \Big|_{z=b_i(t)}$ is equal to
\[\begin{split}
    \Big(u_c\sum\limits_{k=1}^{D_2}kq_k a_i^{k} +O(t)\Big)\Big(1+u_c\sum\limits_{\ell=1}^{D_1} p_{\ell}a_i^{-\ell}t^\ell+O(t)\Big)
    -
        \Big(1+\sum\limits_{k=1}^{D_2}q_k a_i^k+O(t)\Big)\Big(u_c\sum\limits_{\ell=1}^{D_1} \ell p_{\ell}a_i^{-\ell}t^\ell +O(t)\Big).
        %\\
    %&= \frac{1+u_c}{u_c} + \frac{\prod_{i\in I}(1+u_i)}{\prod_{j\in J}(1+u_j)}\sum\limits_{\ell=1}^{D_1} p_{\ell}z^{-\ell} + O(t)
\end{split}\]
When $t$ goes to zero this quantity tends to 
\begin{align}\label{eq:nozeroesY}
u_c\sum\limits_{k=1}^{D_2}kq_k a_i^{k},
\end{align}
which is indeed nonzero generically.
%$Therefore, formal power series in $t$ that are solutions of $H^{c}{}^{'}(z)=0$ do not have expansions starting with $a_i/t$ for generic %$\mathbf{p}$, $\mathbf{u}$. We can then assume that generically, $Y'(b_i)\neq 0$.
\end{rem}

We are now ready to clarify the analytic assumptions needed for our results:

\begin{defn}[Analytic assumptions]\label{def:assumptions}
By \emph{the analytic assumptions}, we will refer to the following properties:
\begin{itemize}
    \item 
The variables $\mathbf{p},\mathbf{q},\mathbf{u}$ are complex numbers living in an arbitrary (but fixed) compact of $\mathbb{C}$.  The variables in $\mathbf{u}$ are nonzero\footnote{This last assumption is not strictly necessary but makes intermediate equations easier to write.}.
%Moreover $u_i\neq 0,\, i\in [[m]]$. 
\item The variables $t$, $z_1,\dots, z_n, \dots$ are constrained to a small enough neighbourhood $V$ of zero which is such that all series $A_k^{(c)}, B_k^{(c)}$, and $W_{g,n}(X(z_1),\dots,X(z_n))$ are absolutely convergent.
\item The variables $\mathbf{q}, \mathbf{u}$ are such that the initial ramification points $a_i$ (Definition~\ref{def:initialramification points}) are all distinct and different from zero. Moreover, they are such that $\sum_{k=1}^{D_2}kq_k a_i^{k}\neq 0$. This is true in particular if these variables are chosen generically.
\item The neighbourhood $V$ is reduced, if necessary, so that all ramification points $b_i(t)$ (Equation~\eqref{eq:complexdevramification point}) are distinct, for any $t$ in $V\setminus\{0\}$, and such that $Y'(b_i(t))\neq 0$ everywhere in this neighbourhood.
%\item The function $Y'(z)$ does not vanish at the ramification points $b_i(t)$. From above remark, this is true for generic parameters.
\end{itemize}
\end{defn}
Note that the last assumption is made possible by the last remark, and by the fact that we assume that $\sum_{k=1}^{D_2}kq_k a_i^{k}\neq 0$.

\subsubsection{Main statement, topological recursion}
\label{sec:toprecresult}

\begin{thm}[Main result -- topological recursion for $\omega_{g,n}$]\label{thm:main:result}
Under the analytic assumptions, the following is true.

For $g\in\mathbb{Z}_{\geq 0}$, $n\in \mathbb{Z}_{\geq 1}$ such that $2g-2+n\geq 1$, $\omega_{g,n}$ can be analytically continued to a meromorphic $n$-differential on $\mathbb{CP}^1$, with possible poles only 
%
%Let $g\in\mathbb{Z}_{\geq 0}$, $n\in \mathbb{Z}_{\geq 1}$ such that $2g-2+n\geq 1$. The $n$-differential $\omega_{g,n}$ is meromorphic with poles 
at the ramification points $b_1,\dots,b_{MD_2}$.
%.of the branched cover $\mathcal{X}$.

They satisfy the topological recursion~\cite{EO} with spectral curve 
$$
\mathcal{S} =\Big(\mathbb{P}^1,\, X(z),\, Y(z),\, \omega_{0,2}(z_1,z_2) = \frac{\dd z_1 \dd z_2}{(z_1-z_2)^2} \Big).
$$
%The family $(\omega_{g,n})_{\substack{g\geq 0 \\ n\geq 1}}$ satisfy topological recursion on the spectral curve $\mathcal{S}$. 
\end{thm}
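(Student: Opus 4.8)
The plan is to prove that the combinatorially defined forms $\omega_{g,n}$ coincide with the Eynard--Orantin invariants $\omega^{\mathrm{TR}}_{g,n}$ of the spectral curve $\mathcal S$, and to read off the meromorphy and pole structure as a consequence. The invariants $\omega^{\mathrm{TR}}_{g,n}$ are, by the very construction of TR on a rational curve with simple ramification (which holds under the analytic assumptions), honest meromorphic $n$-differentials on $\mathbb{P}^1$ with poles only at the $b_i$; thus the identification yields both assertions of the theorem at once, the globally defined $\omega^{\mathrm{TR}}_{g,n}$ furnishing the meromorphic continuation of the germ $\omega_{g,n}$ (which a priori lives only near $z=0$). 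The identification is carried out by \emph{deformation in the times} $\mathbf p$. At $\mathbf p=0$ one has $B^{(c)}\equiv 1$, the curve degenerates to the weighted-Hurwitz curve with no internal white faces, and TR is known there by~\cite{AlexandrovChapuyEynardHarnad2020} (polynomial $G$) and~\cite{BDBKS1} (rational $G$); moreover $\omega_{0,1}=Y\,dX$ by definition and $\omega_{0,2}=\tfrac{dz_1\,dz_2}{(z_1-z_2)^2}$ for \emph{all} $\mathbf p$ by Theorems~\ref{thm:Discrat} and~\ref{thm:cylinder}. It therefore suffices to show that $\{\omega_{g,n}\}$ and $\{\omega^{\mathrm{TR}}_{g,n}\}$ solve the same first-order deformation system in the variables $p_k$, $1\le k\le D_1$, with identical value at $\mathbf p=0$.

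The combinatorial half of this system comes directly from the rooting operator. Since $\nabla_x=\sum_{i\ge1}i\,x^{-i-1}\partial_{p_i}$ commutes with the other $\nabla$'s and with the specialisation $p_{i>D_1}=q_{j>D_2}=0$, one obtains, for $1\le k\le D_1$ and at fixed $x_1,\dots,x_n$,
\[
\partial_{p_k}W_{g,n}(x_1,\dots,x_n)=\tfrac1k\,[x^{-k-1}]\,W_{g,n+1}(x_1,\dots,x_n,x).
\]
Writing the coefficient extraction as an integral over a contour around $x=\infty$, and recalling $X(z)\sim z^{-1}$ near $z=0$ so that $x=\infty$ corresponds to $z=0$, this becomes a contour integral of $\omega_{g,n+1}$ around $z=0$ with kernel $\tfrac1k X(z)^k$. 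To promote it to a statement about the forms $\omega_{g,n}=W_{g,n}(X(z_1),\dots)\prod_j dX(z_j)$, one differentiates at fixed $z_j$: the chain rule produces, besides the explicit term above, contributions from the $p_k$-dependence of each $X(z_j)$ and of the measures $dX(z_j)$, which must be reorganised into the same contour integral. I expect this reorganisation to be the delicate computational core.

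On the TR side one invokes the deformation theory of~\cite{EO}. The infinitesimal deformation generated by $\partial_{p_k}$ is encoded by the meromorphic $1$-form $\Omega_k(z):=\partial_{p_k}\bigl(Y(z)\,dX(z)\bigr)$ at fixed $z$; using $Y(Z(x))=W_{0,1}(x)+\sum_\ell p_\ell x^{\ell-1}$ from Theorem~\ref{thm:Discrat}, one checks that $\Omega_k$ is meromorphic on $\mathbb P^1$ with a single pole at $z=0$, and computes a primitive $\Lambda_k$ with $d\Lambda_k=\Omega_k$ and leading behaviour $\Lambda_k\sim\tfrac1k X(z)^k$. The Eynard--Orantin variational formula then gives
\[
\partial_{p_k}\omega^{\mathrm{TR}}_{g,n}(z_1,\dots,z_n)=\oint_{z\sim 0}\Lambda_k(z)\,\omega^{\mathrm{TR}}_{g,n+1}(z_1,\dots,z_n,z)\qquad(2g-2+n\ge1).
\]
The task is to verify that this kernel and contour match exactly those produced combinatorially in the previous paragraph; the leading terms agree (both equal to $\tfrac1k X(z)^k$ near $z=0$), and checking that the remaining pieces, coming from $\partial_{p_k}W_{0,1}$ and from the $\mathbf p$-dependence of $dX$, cancel against the chain-rule corrections is precisely where Theorems~\ref{thm:Discrat}--\ref{thm:cylinder} and the explicit shape of the curve enter.

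With both systems established, the theorem follows by a descending induction requiring no new input. Fix $g$ and work at order $t^d$: an object contributing at this order has at most $d$ white faces, so $W_{g,n}$ vanishes at order $t^d$ for $n>d$, and each $W_{g,n}|_{t^d}$ is a polynomial in $(p_1,\dots,p_{D_1})$. Setting $\Delta_{g,n}:=\omega_{g,n}-\omega^{\mathrm{TR}}_{g,n}$, the two systems give $\partial_{p_k}\Delta_{g,n}=\oint_{z\sim0}\Lambda_k\,\Delta_{g,n+1}$, while $\Delta_{g,n}|_{\mathbf p=0}=0$. Decreasing $n$ from the (finite) value beyond which $\Delta_{g,n}=0$ down to the first $n$ with $2g-2+n\ge1$, each step yields $\partial_{p_k}\Delta_{g,n}=0$ for all $k$, hence $\Delta_{g,n}=0$ since it is a polynomial in $\mathbf p$ with vanishing value and gradient at the origin; the base cases $(0,1)$ and $(0,2)$ are never reached as targets of the deformation formula and are already handled directly. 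Thus $\omega_{g,n}=\omega^{\mathrm{TR}}_{g,n}$, proving both the topological recursion and the pole structure. The main obstacle is the faithful, rigorous implementation of the Eynard--Orantin deformation formula in this mixed formal/analytic setting and, above all, the exact matching of its kernel $\Lambda_k$ with the combinatorial coefficient-extraction relation, including the chain-rule terms arising from the $\mathbf p$-dependence of $X$ itself.
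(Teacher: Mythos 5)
Your overall strategy is the same as the paper's: identify the combinatorial germs $\omega_{g,n}$ with the Eynard--Orantin invariants of $\mathcal{S}$ by deforming in the times $\mathbf{p}$, anchored at $\mathbf{p}=0$ where TR is known and using Theorems~\ref{thm:Discrat}--\ref{thm:cylinder} to match $(0,1)$ and $(0,2)$ for all $\mathbf{p}$. However, your implementation of the identification has a genuine gap, beyond the fact that you explicitly defer the ``delicate computational core'' (the variational formula taken at fixed $X$ rather than fixed $z$, the Rauch formula for the Bergman kernel, and the matching of the kernel $\Lambda_k$ against the chain-rule corrections). That deferred core is precisely where the paper invests its work: the operator $\mathbf{D}$ of Definition~\ref{def:differentiation}, Lemma~\ref{lem:Rauch} (whose applicability requires checking that $\mathbf{D}\tilde{\omega}_{0,1}/\dd Y$ is pole-free at the $b_i$, using the analytic assumptions $Y'(b_i)\neq 0$ and $a_i\neq 0$), and Lemma~\ref{lem:deformation:kernel} on the deformation of the recursion kernel.

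The step that would actually fail is your descending induction. Its base case asserts $\Delta_{g,n}\big|_{t^d}=0$ for $n>d$, but you only know this for the combinatorial half: $W_{g,n}$ vanishes at order $t^d$ for $n>d$ because each boundary is a cycle of $\sigma_{-2}$. Nothing is known a priori about the $t$-expansion of the TR invariants $\omega^{\mathrm{TR}}_{g,n}$; indeed it is not even clear that they are analytic in $t$ at $t=0$, since they are defined by residues at the ramification points $b_i(t)\sim a_i/t$, which escape to infinity (and the analytic assumptions only guarantee distinctness of the $b_i(t)$ for $t\in V\setminus\{0\}$). So the induction has no starting point, and running it at fixed $t\neq 0$ instead removes the grading that made $\Delta_{g,n}$ vanish for large $n$. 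The paper's proof is structured precisely to avoid this: instead of a first-order system at general $\mathbf{p}$ closed by a termination argument, it expands \emph{both} families in the rescaling parameter $\alpha$ around $\alpha=0$, where every Taylor coefficient is expressed purely in terms of the known $\alpha=0$ invariants $\chi_{g,n}$ --- via insertion operators on the combinatorial side (Proposition~\ref{prop:taylor:combi}) and via a recursion anchored at the $\alpha=0$ ramification points $\tilde{a}_i$ on the TR side (Proposition~\ref{prop:taylor:TR}) --- and then proves the coefficients agree by an \emph{ascending} induction on $(2g-2+n,\ell)$ (Proposition~\ref{prop:egalite:taylor}), concluding by uniqueness of Taylor expansions. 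If you want to salvage your scheme, you should replace the descending induction by this iterated-derivative comparison at $\mathbf{p}=0$, which is exactly what iterating your first-order system and evaluating at the origin produces.
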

We refer to Definition~\ref{def:differentials} below for the expanded statement of the topological recursion (the $\omega_{g,n}$ are denoted by $\tilde{\omega}_{g,n}$ there).

\medskip

Topological recursion implies strong structure results for the differentials $\omega_{g,n}$, see e.g.~\cite[Proposition 4.1]{Eynard04} or \cite[Theorem 3.7]{DOSS}.
For our most combinatorially inclined readers, we provide the following formal version. It vastly generalises structure results from~\cite{BenderCanfield1986, BenderCanfield1991, Gao1993, Chapuy2009, ChapuyFang, GouldenGuayPaquetNovak:polynomiality}. Note that it appeals for a combinatorial interpretation.

\begin{cor}[Structure of generating functions]
\label{cor:structure}
For $2g-2+n\geq 1$,  the formal series 
\[
W_{g,n}(X(z_1),\dots, X(z_n))X'(z_1) \dots X'(z_n) 
%= 
%\frac{P_{g,n}}{\prod\limits_{i=1}^{MD_2}\prod\limits_{j=1}^{n}(z_j-b^{\textup{formal}}_i(t))^{a_{g,n}}},
\]
can be written as a rational function; in each variable, the poles are located at the ramification points $b_i^{\textup{formal}}(t)$ and are of order at most $6g-4+2n$. More precisely, using the short hand notations $Y^{(k)}/Y'$, $X^{(k)}/X''$ for $Y^{(k)}(b_i^{\textup{formal}}(t))/Y'(b_i^{\textup{formal}}(t))$ and $ X^{(k)}(b_i^{\textup{formal}}(t))/X''(b_i^{\textup{formal}}(t))$ respectively, it belongs to the polynomial ring:
\[\mathbb{Q}\Big[\frac{1}{z_j-b_i^{\textup{formal}}(t)},\,b_i^{\textup{formal}}(t),\,\frac{1}{b_i^{\textup{formal}}(t)-b_{k}^{\textup{formal}}(t)},\, \frac{1}{Y'(b_i^{\textup{formal}}(t))},\, \frac{Y^{(k)}}{Y'},\,\frac{1}{X''(b_i^{\textup{formal}}(t))},\, \frac{X^{(k)}}{X''}\Big],\]
where for each $(g,n)$ a finite number of derivatives of $X$ and $Y$ at the ramification points contribute.
\end{cor}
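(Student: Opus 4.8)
The plan is to obtain Corollary~\ref{cor:structure} as the formal counterpart of the standard structure theorem for topological recursion invariants \cite[Proposition 4.1]{Eynard04}, \cite[Theorem 3.7]{DOSS}, now that Theorem~\ref{thm:main:result} identifies the $\omega_{g,n}$ as genuine TR differentials of the spectral curve $\mathcal{S}$. The first step is a bookkeeping observation: for $2g-2+n\geq 1$ we have $(g,n)\neq(0,1),(0,2)$, so by Definition~\ref{def:omegagn} there is no correction term and $\omega_{g,n}(z_1,\dots,z_n) = \big(W_{g,n}(X(z_1),\dots,X(z_n))\prod_{j=1}^{n}X'(z_j)\big)\,\dd z_1\cdots \dd z_n$. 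Hence the series in the corollary is precisely the coefficient of $\omega_{g,n}$ against $\dd z_1\cdots\dd z_n$, and it suffices to read off the pole locations, pole orders, and coefficient ring of $\omega_{g,n}$ directly from the recursion. I would also record at the outset that this coefficient is a well-defined element of $\mathbb{Q}[\mathbf{p},\mathbf{q},\mathbf{u},\mathbf{z},\bar{\mathbf{z}}][[t]]$: from \eqref{eq:spectralX}, $X(z)=\bar z\,\prod_{i\in I}A^{(i)}(z)/\prod_{j\in J}A^{(j)}(z)$ and $X'(z)$ have coefficients that are Laurent polynomials in $z$ (each $A^{(c)}(z)=1+O(t)$ being invertible in the relevant ring), and substituting $\bar x_j=1/X(z_j)$ into the $\bar x$-polynomial coefficients of $W_{g,n}$ keeps us in this ring; this makes the eventual coefficient-wise comparison meaningful.

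Next I would run the usual induction on $2g-2+n$ behind the cited structure theorem. The recursion writes $\omega_{g,n}$ as a sum over $i\in\{1,\dots,MD_2\}$ of residues at $b_i$ of a recursion kernel $K_i$ against a bilinear combination of lower-order differentials. The analytic assumptions guarantee, for $t$ near $0$, that each $b_i$ is a simple ramification point of $X$ (the $MD_2$ initial points $a_i$ are distinct, hence simple, so $X''(b_i)\neq0$) with $Y'(b_i)\neq0$, which legitimises the local involutive coordinate $\zeta_i$ with $\zeta_i(z)^2=X(z)-X(b_i)$ and the associated sheet involution $z\mapsto\sigma_i(z)$ fixing $b_i$. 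The kernel $K_i$ is built from $\omega_{0,2}$ and from $1/\big(2(Y(z)-Y(\sigma_i(z)))\,\dd X(z)\big)$; its Laurent expansion at $b_i$ contributes exactly the factors $1/Y'(b_i)$ and $1/X''(b_i)$, whereas converting between the coordinates $\zeta_i$, $z-b_i$, and the global $z$ produces only $b_i$, the ratios $X^{(k)}(b_i)/X''(b_i)$ and $Y^{(k)}(b_i)/Y'(b_i)$, the simple poles $1/(z_j-b_i)$ coming from $\omega_{0,2}=\dd z_1\,\dd z_2/(z_1-z_2)^2$, and the constants $1/(b_i-b_k)$. A routine tracking of pole orders through the residues yields the bound $6g-4+2n$, and the degree bounds inherent in the kernel data show that only finitely many derivatives of $X$ and $Y$ occur for each $(g,n)$. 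This establishes the announced ring membership, but phrased in terms of the complex ramification points $b_i(t)$.

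The remaining, and in my view only genuinely delicate, step is the transfer of this complex-analytic conclusion to the formal statement over $\mathbb{K}((t^*))$. The difficulty is that $b_i(t)\sim a_i/t$ escapes to infinity as $t\to0$, so no naive expansion at $t=0$ is available; every ingredient must instead be handled as a convergent Laurent series in $t$. I would argue that, by the Proposition on formal ramification points (Equation~\eqref{eq:devramification point}) together with uniqueness of the solutions of \eqref{eq:ramification points}, the convergent Laurent expansion in $t$ of each analytic $b_i(t)$ coincides termwise with the formal series $b_i^{\textup{formal}}(t)$; consequently $X^{(k)}(b_i(t))$, $Y^{(k)}(b_i(t))$, $1/Y'(b_i(t))$, $1/X''(b_i(t))$ and $1/(b_i(t)-b_k(t))$ are convergent Laurent series in $t$ whose expansions are exactly the formal objects appearing in the corollary, the needed inverses existing formally because their leading $t$-coefficients are nonzero (again by the analytic assumptions). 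Since the identity $\omega_{g,n}=\big(\cdots\big)\dd z_1\cdots\dd z_n$ of the previous paragraph holds as an equality of analytic functions for all $t$ in a punctured neighbourhood of $0$, matching Laurent/Puiseux coefficients in $t$ on the two sides upgrades it to an identity of formal series, which is precisely the claimed membership with $b_i(t)$ replaced by $b_i^{\textup{formal}}(t)$. The hard part is thus entirely this analytic-to-formal dictionary for data anchored at the moving, $t$-dependent ramification points, rather than the combinatorics of the recursion itself.
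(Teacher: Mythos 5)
Your proposal is correct and takes essentially the same approach as the paper: an elementary induction on $2g-2+n$ through the TR residue formula, tracking how the local expansions at each ramification point (the kernel contributing $1/Y'(b_i)$, $1/X''(b_i)$, and the derivative ratios $X^{(k)}/X''$, $Y^{(k)}/Y'$) generate the stated polynomial ring, establishing the result first at the complex-analytic level and then passing to the formal level. The only difference is one of emphasis: you spell out the analytic-to-formal dictionary (identifying the convergent Laurent expansion of $b_i(t)$ with $b_i^{\textup{formal}}(t)$ and matching $t$-coefficients), which the paper compresses into the single sentence ``The statement at the formal level follows.''
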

\begin{proof}
The statement is proved by an elementary induction on $2g-2+n$ from the formula of topological recursion which involves residues at the ramification points (see Definition \ref{def:differentials}). Near the ramification point $b_i(t)$, the expansions of the local involution $\tilde{\sigma}_i(z)$ and $Y$ are encoded in coefficients $(\beta^i_k)_{k\geq 1}$ and $(t^{i}_k)_{k\geq 2}$ defined by:
\[\begin{split}
\tilde{\sigma}_i(z)-b_i(t) &= -(z-b_i(t))\Big(1+\sum\limits_{k\geq 2}\beta^{i}_{k}(z-b_i(t))^k\Big),\\
Y(z)-Y(\tilde{\sigma}_i(z))&= 2Y'(b_i(t))\Big(z-b_i(t)+\sum\limits_{k\geq 2}t^{i}_k(z-b_i(t))^k\Big).
\end{split}\]
Those coefficients belong to the following rings:
%\[
%\beta^{i}_{k}\in\mathbb{Q}\Big[\frac{X^{(j)}(b_i^{\textup{formal}}(t))}{X''(b_i^{\textup{formal}}(t))}\Big],\qquad t^{i}_k\in\mathbb{Q}\Big[\frac{Y^{(j)}(b_i^{\textup{formal}}(t))}{Y'(b_i^{\textup{formal}}(t))},\,\,\frac{X^{(j)}(b_i^{\textup{formal}}(t))}{X''(b_i^{\textup{formal}}(t))}\Big],
%\]
\[
\beta^{i}_{k}\in\mathbb{Q}\Big[\frac{X^{(j)}(b_i(t))}{X''(b_i(t))}\Big],\qquad t^{i}_k\in\mathbb{Q}\Big[\frac{Y^{(j)}(b_i(t))}{Y'(b_i(t))},\,\,\frac{X^{(j)}(b_i(t))}{X''(b_i(t))}\Big].
\]
From this the induction follows and one obtains the statement at the complex-analytic level. The statement at the formal level follows.
%from which the induction follows.
  %(see \cite[Proposition 3.5.1]{Eynard:book}).
% The recursion kernel near the branchpoint $b_i(t)$ can be written as:
%\[K_i(z_1,z)=\frac{\dd z_1}{2}\Big(\frac{1}{z_1-z}-\frac{1}{z_1-\tilde{\sigma}_i(z)}\Big)\frac{1}{2Y'(b_i(t))\Big(z-b_i(t) + \sum\limits_{k\geq 2} t^{i}_k (z-b_i(t))^k\Big)}\frac{1}{\dd X(z)}.\]
\end{proof}

\iffalse
\begin{cor}[Structure of generating functions]
\label{cor:structure:1}
For $2g-2+n\geq 1$,  the formal series 
\[
W_{g,n}(X(z_1),\dots, X(z_n))X'(z_1) \dots X'(z_n) 
%= 
%\frac{P_{g,n}}{\prod\limits_{i=1}^{MD_2}\prod\limits_{j=1}^{n}(z_j-b^{\textup{formal}}_i(t))^{a_{g,n}}},
\]
can be written as a rational fraction whose denominator is a product of factors of the form $z_j-b^{\textup{formal}}_i(t)$ with $1\leq j \leq n $ and $1\leq i \leq MD_2$, and whose numerator is a polynomial with coefficients in $\mathbb{Q}$ in $z_1,\dots,z_n$, in $b^{\textup{formal}}_1(t),\dots,b^{\textup{formal}}_{MD_2}(t)$ and in a finite number of derivatives of $X$ and $Y$ at  $b^{\textup{formal}}_1(t),\dots,b^{\textup{formal}}_{MD_2}(t)$.
%This equality is valid in $\mathbb{K}(z_1,\dots,z_n)((t^*))$.
\end{cor}
\fi

%\todoGuillaume{next Corollary still on the todo list!}
%\begin{cor}[Structure of generating functions %$W_{g,n}$]\label{cor:structure}
%For $2g-2+n\geq 1$, the formal series 
%$W_{g,n}(x_1,\dots,x_n)$ has a rational parametrisation of the form
%\[
%W_{g,n}(X(z_1),\dots,X(z_n)) = %\sum_{i=1}^{MD_2} \sum_{j=1}^n
%\frac{P_{g,n}  (z_1,\dots,z_n;b^{\textup{formal}}_1(t),\dots,b^{\textup{formal}}_{MD_2}(t))}{\prod\limits_{i=1}^{MD_2}\prod\limits_{j=1}^{n}(z_j-b^{\textup{formal}}_i(t))^{a_{g,n}}},
%\]
%where $a_{g,n}\in \mathbb{N}$ and $P_{g,n}$ is a polynomial with rational coefficients.
%This equality is valid in $\mathbb{K}(z_1,\dots,z_n)((t^*))$.

%\end{cor}

\section{Deformation of spectral curves and proof of topological recursion}
\label{sec:toprecproof}

Our proof of the topological recursion (TR) relies on the technique of deformations of spectral curves, which is generically introduced in Eynard and Orantin's original paper~\cite{EO}.
We start from the fact that TR is already known when the variables $p_i$ are equal to zero, and from our knowledge of the spectral curve (and in particular of $W_{0,1}$,  $W_{0,2}$) for general $\mathbf{p}$. 
The idea is then to perform a certain ``Taylor expansion'' of the correlators near $\mathbf{p}=0$ (or more precisely near $\alpha=0$ for a certain rescaling parameter $\alpha$ introduced below). On the combinatorial side, this Taylor expansion is obtained through repeated insertions of internal faces via an appropriate operator (see Definition~\ref{def:chi}), while on the TR side, coefficients of this Taylor expansion are accessible \textit{via} iterated residues  (Definition~\ref{def:chitilde}). We then prove by induction that the coefficients on the two sides agree (Proposition~\ref{prop:egalite:taylor}).

%We argue that the general case can be reconstructed from the case $p_i=0$, by repeating an ``insertion operation'', which consists in transforming boundaries into internal faces. This insertion is analytically represented by a certain insertion operator, and an important part of the work consists in studying how this operator "commutes" with the topological recursion.
%More specifically, our main task is to show that, in some sense, the Taylor expansion of the topological recursion of the general spectral curve coincides with a certain expansion of the topological recursion for $p_i=0$ involving insertion operators.
We rely on several ideas of~\cite{EO} (although the technique of deformations was not introduced specifically to handle such situations in that reference) but we provide self-contained proofs. Moreover, to make our proofs more broadly accessible, we refrain from using the diagrammatic formalism of TR (which some experienced readers might find useful to interpret some of the calculations below).

\medskip

In this section we fix the parameters $\mathbf{p}$, $\mathbf{q}$, $\mathbf{u}$, and $t\neq 0$ 
under the analytic assumptions (Definition~\ref{def:assumptions}).
%to a small enough neighbourhood $V_g$ of zero as in Remark~\ref{rem:convergenceDomain}. 
We also introduce a new parameter $\alpha\in \mathbb{C}$, $|\alpha|\leq 1$, and everywhere in this section we (implicitly) rescale all parameters $p_i$ by $\alpha$: 
$$p_i\to \alpha\, p_i.$$ 
This parameter will not be indicated in the notation, except in some cases to insist that it is equal to zero.

%For each $g\geq 0$, $n\geq 1$, the number of constellations of genus $g$with internal faces and $n$ boundaries grows at most exponentially with respect to their size. Therefore, for sufficiently small values of the parameters $\mathbf{p}$, $\mathbf{q}$, $\mathbf{u}$, $\mathbf{x}^{-1}$, $t$, the generating series $W_{g,n}$ are absolutely converging. In this section, we set those parameters to such small values. 

%In this section we fix the parameters $\mathbf{p}$, $\mathbf{q}$, $\mathbf{u}$, $t$ to a small enough neighbourhood $V_g$ of zero as in Remark~\ref{rem:convergenceDomain}. 
%We also introduce a new parameter $\alpha\in \mathbb{C}$, $|\alpha|\leq 1$, and everywhere in this section we (implicitely) rescale all parameters $p_i$ by $\alpha$: 
%$$p_i\to \alpha\, p_i.$$ 
%We make the dependance on $\alpha$ explicit in the generating series by using the notation $W_{g,n}^{(\alpha)}$. 

\subsection{The spectral curve at $\alpha=0$}

We now consider the model where $\alpha$ is equal to zero. In order to make the value of $\alpha$ explicit, the spectral curve is denoted by $X_0$, $Y_0$, and the generating series are denoted $X_{g,n}\coloneqq W_{g,n}\Big|_{\alpha=0}$. 
Note that Definition~\ref{def:WkBk} implies, for $\alpha=0$,
$$
B^{(c)}(z) = 1\quad \text{and}\quad A^{(c)}(z) = 1 + u_c \sum_{k=1}^{D_2} q_k t^k z^k.
$$
The spectral curve~\eqref{eq:spectralX}--\eqref{eq:spectralY} thus takes the form
\begin{align}\label{eq:spectralX0}
	z X_0(z)&=  \frac{\prod_{i  \in I} \left(1 + u_i\sum_{r=1}^{D_2} q_r t^r z^{r}\right)}{\prod_{j  \in J} \left(1 + u_j\sum_{r=1}^{D_2} q_r t^r z^{r}\right)}
=G(Q(t z)),
	\\
	\label{eq:spectralY0}
X_0(z) Y_0(z) &=
 \sum_{i=1}^{D_2}t^i  q_i z^{i}
%	\left(u_c + \sum_{i=1}^{D_2}t^i  q_i z^{i}\right) - u_c
%	\left(1 + \sum_{i=1}^{D_1} B_i^{(c)} z^{-i} \right),
	= Q(tz),
\end{align}
where $Q(z)\coloneqq\sum_{r= 1}^{D_2} q_{r}z^{r}$. 
Note that this is the spectral curve given in~\cite{BDBKS2} up to simple identifications of variables. Indeed, the curve given in that reference reads (we use hats for the quantities in that paper to avoid confusion with ours)
$$
\hat Z \hat X^{-1}  =  G(Q(\hat Z)), \ \  
\hat Y = Q (\hat Z), 
$$
(identifying our quantities $q_i, Q, G$ respectively with $y_i,\hat y, e^{\psi}$ in that reference).
Therefore the dictionary between the two papers is given by
$$
%\gamma =t,  \ \  
\hat Z = t Z_0 ,  \ \  \hat X = X_0^{-1},  \ \  \hat Y = X_0 Y_0, \  \ 1=t.
$$
(the variable $t$ can be reinserted by the scaling $q_r \mapsto t^r q_r$).
Note also that the initial branchpoints $a_1,\dots,a_{MD_2}$ defined above are precisely the zeroes of the equation $X_0'(a) =0$.

For $g\in \mathbb{Z}_{\geq 0}$, $n\in \mathbb{Z}_{\geq 1}$, define the $n$-differential 
\[\chi_{g,n}(z_1,\dots,z_n) = X_{g,n}(X_0(z_1),\dots,X_0(z_n))\dd X_0(z_1) \dots \dd X_0(z_n)+\delta_{g,0}\delta_{n,2}\frac{\dd X_0(z_1)\,\dd X_0(z_2)}{(X_0(z_1)-X_0(z_2))^2}.\] 
Those differentials are a priori defined only in a neighbourhood of zero. Then, we have:
%The main result of~\cite{ACEH:TR-CIMP} is the following:
\begin{prop}[{\cite[Theorem 5.3]{BDBKS2}}, Topological recursion at  $\alpha=0$]\label{thm:TR:time:zero}
For $g\in \mathbb{Z}_{\geq 0}$, $n\in\mathbb{Z}_{\geq 1}$, with $2g-2+n\geq 1$, $\chi_{g,n}(z_1,\dots,z_n)$ can be analytically continued to a meromorphic differential in each variable $z_i\in \mathbb{C}$, with poles only at the  ramification points
$a_1/t,\dots,a_{MD_2}/t$.
%$a\in\mathbb{C}$, $\dd X_0(a)=0$. \\
These differentials $(\chi_{g,n})_{g,n}$ satisfy the topological recursion for the spectral curve
\[
\mathcal{S}_0 = \Bigg(\mathbb{P}^1 ,\, X_0(z),\, Y_0(z),\, \chi_{0,2}(z_1,z_2)= \frac{\dd z_1 \dd z_2}{(z_1-z_2)^2}\Bigg).
\]
\end{prop}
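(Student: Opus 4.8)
The statement is quoted from \cite[Theorem 5.3]{BDBKS2}, so the plan is to deduce it from that reference by making the announced change of variables precise and by checking that it transports the topological recursion correctly. First I would record that setting $\alpha=0$ amounts to $p_i=0$ for all $i$, so that the $\chi_{g,n}$ are the correlation differentials of \emph{single} weighted Hurwitz numbers (no internal white faces), which is exactly the situation covered by \cite{BDBKS2}. The computation leading to \eqref{eq:spectralX0}--\eqref{eq:spectralY0} already shows that in this regime $B^{(c)}\equiv 1$ and $A^{(c)}(z)=1+u_c Q(tz)$, so the spectral curve $\mathcal{S}_0$ takes the closed form $zX_0=G(Q(tz))$, $X_0Y_0=Q(tz)$.

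Next I would make the dictionary explicit and verify that the two curves coincide. With the reparametrisation $\hat Z = tZ_0$ of $\mathbb{P}^1$ and the substitutions $\hat X = X_0^{-1}$, $\hat Y = X_0 Y_0$ (and the $t$-bookkeeping obtained by the rescaling $q_r\mapsto t^r q_r$, i.e.\ effectively setting the reference variable to $1$), the equations $zX_0=G(Q(tz))$ and $X_0Y_0=Q(tz)$ become $\hat Z\hat X^{-1}=G(Q(\hat Z))$ and $\hat Y=Q(\hat Z)$, which is precisely the curve of \cite{BDBKS2}. I would then check that the prescribed bidifferential matches: since $\tfrac{\dd z_1\,\dd z_2}{(z_1-z_2)^2}$ is invariant under the affine change $z\mapsto tz$ (and under Möbius maps in general), $\chi_{0,2}$ coincides with the standard bidifferential used in \cite{BDBKS2}. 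Finally I would translate the ramification data: the branchpoints of \cite{BDBKS2} are the zeros of $\dd\hat X$, located at $\hat Z=a_i$ by Definition~\ref{def:initialramification points}; under $\hat Z=tz$ these become $z=a_i/t$, and because $X_0=1/\hat X$ shares the same critical points (away from $\hat X=0,\infty$), these are exactly the ramification points of $X_0$, which are distinct and simple by the analytic assumptions.

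With the curve, the bidifferential, and the ramification points matched, it remains to transport the correlators and the recursion itself. Here I would identify $\chi_{g,n}$ with the correlation differentials of \cite{BDBKS2} under the dictionary, paying attention to the convention differences: the rooting operator $\nabla_x$ produces expansions in the inverse variable $\bar x=1/x$, which matches the inversion $\hat X=X_0^{-1}$, and one must align the base differential used here ($\dd X_0$, through Definition~\ref{def:omegagn}) with the normalisation of \cite{BDBKS2}. Once the correlators are identified as the same differential forms, the topological recursion---being a relation between forms determined by the spectral-curve data and its ramification structure---transports from the presentation $(\hat X,\hat Y)$ to $(X_0,Y_0)$, and the pole structure (only at the points $a_i/t$) is inherited.

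The main obstacle is precisely this last reconciliation of conventions. The inversion $\hat X=X_0^{-1}$ together with $\hat Y=X_0Y_0$ is \emph{not} an innocuous substitution at the level of the recursion kernel: writing $\omega_{0,1}=Y\,\dd X$, the antisymmetrised denominator $\omega_{0,1}(z)-\omega_{0,1}(\sigma(z))$ transforms by a position-dependent factor under $X\mapsto 1/X$, so that the recursion for $(X_0,Y_0)$ is not formally identical to the one for $(\hat X,\hat Y)$ unless the correlators' base differentials and the $\bar x$-convention of $\nabla_x$ are correctly aligned with those of \cite{BDBKS2}. The delicate point is therefore to pin down these conventions (the $\dd X$ versus $\dd\log X$ normalisation and the attendant overall sign) so that the two towers of differentials genuinely agree; once this is done, the transport of TR and of the pole structure is automatic.
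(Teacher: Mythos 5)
Your proposal takes essentially the same route as the paper: Proposition~\ref{thm:TR:time:zero} is imported from \cite[Theorem 5.3]{BDBKS2} via exactly the dictionary $\hat Z = tZ_0$, $\hat X = X_0^{-1}$, $\hat Y = X_0Y_0$ (with $t$ reinserted by $q_r\mapsto t^r q_r$) that you spell out, and the paper offers no further proof beyond matching the curve, the bidifferential and the branchpoints $a_i/t$. The ``main obstacle'' you flag is real but resolves favourably, and the paper silently relies on this: \cite{BDBKS2} normalises $\omega_{0,1}$ logarithmically, so under the dictionary $\hat Y\,\dd\log\hat X = -Y_0\,\dd X_0$, i.e.\ the substitution acts as the sign rescaling $Y\mapsto -Y$ combined with the sign $(-1)^n$ relating their correlators to $\chi_{g,n}$ (since $k\,x^{-k-1}\dd x = -\dd(\hat X^{k})$), and these signs cancel order by order in the recursion.
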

Namely, for $g\in\mathbb{Z}_{\geq 0}$, $n\in\mathbb{Z}_{\geq 1}$, with $2g-2+n\geq 1$
%with the notation of Theorem~\ref{thm:main:result} and 
we have, with $L=\{z_2,\dots,z_n\}$, 
\begin{equation}\label{eq:TR:time:zero}
\chi_{g,n}(z_1,L) = \sum\limits_{i=1}^{M\, D_2} \underset{z=a_i/t}{\Res}\, K_i(z_1,z)\Big(\chi_{g-1,n+1}(z,\sigma_i(z),L)+\sum\limits_{\substack{h+h'=g \\ C\sqcup C'=L}}^{'}\chi_{h,1+|C|}(z,C)\chi_{h',1+|C'|}(\sigma_i(z),C')\Big),
\end{equation}
%where $\sum^{'}$ means that the terms $(h,J)=(0,\emptyset),\, (g,I)$ are excluded from the sum.
%The branched covering $X_0:\mathbb{P}^1\to \mathbb{P}^1$ has $m\, D_2$ ramification points $a_1,\dots,\,a_{m\, D_2}$. Near the ramification point $a_i$:
where 
\begin{itemize}
	\item $\sigma_i$ denotes the local involution around the branchpoint $a_i/t$:
	\[
	X_0(\sigma_i(z))=X_0(z),\qquad \sigma_i(a_i/t)=a_i/t,\qquad \sigma_i(z)\neq z\,\, \textup{for }z\neq a_i/t;
	\]
	\item the recursion kernel is denoted by $K_i(z_1,z)$ and reads
	\[
	K_i(z_1,z) \coloneqq \frac{1}{2}\frac{\int_{w=\sigma_i(z)}^{z}\chi_{0,2}(z_1,w)}{\chi_{0,1}(z)-\chi_{0,1}(\sigma_i(z))};
	\]
	\item $\sum^{'}$ means that the terms $(h,C)=(0,\emptyset),\, (g,L)$ are excluded from the sum.
\end{itemize}

From now on, we use the notation $\tilde{a}_i \coloneqq a_i/t$ for $i\in\{1,\dots,MD_2 \}$.
%Let . We denote by $I$ the set $\{z_2,\dots,z_n\}$. The theorem states that $\chi_{g,n}$ is computed by the following formula:
%\begin{equation}\label{eq:TR:time:zero}
%\chi_{g,n}(z_1,I) = \sum\limits_{i=1}^{m\, D_2} \underset{z=a_i}{\Res}\, %K_i(z_1,z)\Big(\chi_{g-1,n+1}(z,\sigma_i(z),I)+\sum\limits_{\substack{h+h'=g \\ J\sqcup J'=I}}^{'}\chi_{h,1+|J|}(z,J)\chi_{h',1+|J'|}(\sigma_i(z),J')\Big).
%\end{equation}
%where $\sum^{'}$ means that the terms $(h,J)=(0,\emptyset),\, (g,I)$ are excluded from the sum.

\subsection{Taylor expansions of differentials}
In Definition~\ref{def:omegagn} we have defined the families of differentials $(\omega_{g,n})_{\substack{g\geq 0\\ n\geq 1}}$ from the combinatorial series $W_{g,n}$. They are a priori defined only for $z_i$ in a neighbourhood of zero.
We now define another family $(\tilde{\omega}_{g,n})_{\substack{g\geq 0\\ n\geq 1}}$ through the topological recursion.
\begin{defn}\label{def:differentials}
Let $\tilde{\omega}_{0,1}(z)=Y(z)\dd X(z)$.
%For $g\in\mathbb{Z}_{\geq 0}$, $n\in\mathbb{Z}_{\geq 1}$ with $2g-2+n\geq 0$, define
%\begin{equation}\label{eq:omega:combi}
%\omega_{g,n}(z_1,\dots,z_n) = W_{g,n}(X(z_1),\dots,X(z_n))\dd X(z_1)\dots \dd X(z_n) %+\delta_{g,0}\delta_{n,2} \frac{\dd X(z_1)\,\dd X(z_2)}{(X(z_1)-X(z_2))^2}. 
%\end{equation}
Let $\tilde{\mathcal{S}}$ be the spectral curve:
\[
\tilde{\mathcal{S}} = \Big(\mathbb{P}^1,\,X(z),\,Y(z),\, \tilde{\omega}_{0,2}(z_1,z_2)=\frac{\dd z_1 \dd z_2}{(z_1-z_2)^2}\Big).
\]
The $M D_2$ ramification points of $X$ are the roots of~\eqref{eq:ramification points} (with expansion~\eqref{eq:complexdevramification point}) and denoted $ b_1,\dots, b_{M D_2}$. 
\begin{itemize}
    \item The local involution near $b_i$ is denoted $\tilde{\sigma}_i$:
    \[
    X(\tilde{\sigma}_i(z))=X(z),\qquad \tilde{\sigma}_i(b_i)=b_i, \qquad \tilde{\sigma}_i(z)\neq z, \ \textup{ for }z\neq b_i.
    \]
    \item The recursion kernel near $b_i$ is denoted by $\tilde{K}_i(z_1,z)$:
    \[
    \tilde{K}_i(z_1,z)\coloneqq \frac{1}{2}\frac{\int_{w=\tilde{\sigma}_i(z)}^{z}\tilde{\omega}_{0,2}(z_1,w)}{\tilde{\omega}_{0,1}(z)-\tilde{\omega}_{0,1}(\tilde{\sigma}_i(z))}.
    \]
\end{itemize}
%The recursion kernel and the local involution near $ b_i$ are denoted $\tilde{K}_{i}(z_1,z)$ and $\tilde{\sigma}_i$ \resp.

For $2g-2+n\geq 1$, we define the differentials $\tilde{\omega}_{g,n}$ as the meromorphic differentials built by running topological recursion of~\cite{EO} on the spectral curve $\tilde{\mathcal{S}}$, namely,
\begin{equation}\label{eq:TR:generic:alpha}
\tilde{\omega}_{g,n}(z_1,L) = \sum\limits_{i=1}^{M\, D_2} \underset{z=b_i}{\Res}\, \tilde{K}_i(z_1,z)\Big(\tilde{\omega}_{g-1,n+1}(z,\tilde{\sigma}_i(z),L)+\sum\limits_{\substack{h+h'=g \\ C\sqcup C'=L}}^{'}\tilde{\omega}_{h,1+|C|}(z,C)\tilde{\omega}_{h',1+|C'|}(\tilde{\sigma}_i(z),C')\Big),
\end{equation}
where $L=\{z_2,\dots,z_n\}$ and the notation for $\sum^{'}$ is as above.
\end{defn}

From Theorems \ref{thm:Discrat} and \ref{thm:cylinder}, we have $\omega_{0,1}(z) = \tilde{\omega}_{0,1}(z)$ and 
$\omega_{0,2}(z_1,z_2) = \frac{\dd z_1\, \dd z_2}{(z_1-z_2)^2}=\tilde{\omega}_{0,2}(z_1,z_2)$ in a neighbourhood of zero. This can be used to analytically continue $\omega_{0,1}$ and $\omega_{0,2}$, which gives the equality between spectral curves $\mathcal{S} = \tilde{\mathcal{S}}$. Note that $\omega_{0,2}$ is the fundamental bidifferential of the second kind on the Riemann sphere, typically called Bergman kernel in the topological recursion literature.

%Note that, as said above, since the generating series $W_{g,n}(x_1,\dots,x_n)$ are well defined for $x_i$ near $\infty$, the $n$-differentials $\omega_{g,n}(z_1,\dots,z_n)$ are well-defined for $z_i$ near $0$.
The differentials $\tilde{\omega}_{g,n}$, for $2g+n>2$, are defined through the formula of the topological recursion and therefore are defined globally (for all complex $z_i$ avoiding branchpoints). The rest of Section~\ref{sec:toprecproof} is dedicated to prove that $\omega_{g,n}(z_1,\dots,z_n)=\tilde{\omega}_{g,n}(z_1,\dots,z_n)$, when $z_i$ are close to zero. This will prove Theorem \ref{thm:main:result}. 

\medskip
%\todoGuillaume{need to fix notation for branchpoints, SC and GC used different ones ($b_1$ vs $\tilde{a}_i$, not harmonized yet).}
%\todoGuillaume{LaTeX command \textbackslash{}resp for ``respectively''... Seriously?? SC: this rocks!}

\begin{defn}[Insertion operator $\Gamma_{x}$]
Let $f(x)$ be a series in $\bar{x}$ which is absolutely convergent in a neighbourhood of $\bar{x}=0$.
We define 
%the insertion operator $\Gamma_{x}$ acting on series in $x^{-1}$:
\[
\Gamma_x f(x)= \sum\limits_{k=1}^{D_1} \frac{p_k}{k}\, [x^{-k-1}] f(x).
\]
%\[\Gamma_z f(z)= \sum\limits_{k=1}^{D_1}\frac{p_k}{k}\, \underset{z=0}{\Res} X_0(z)^k f(z).\]
%
%In the case where $f(X_0(z))$ is absolutely converging near $z=0$, then $\Gamma_{x}$ has the following expression:
Equivalently,
\begin{equation}\label{eq:insertion:operator:res}
\Gamma_{x} f(x)= \sum\limits_{k=1}^{D_1}\frac{p_k}{k}\, \underset{z=0}{\Res}\, X_0(z)^k\, f\big(X_0(z)\big)\, \dd X_0(z).
\end{equation}
\end{defn}
The second expression of $\Gamma_x f(x)$ comes from the fact that $X_0(z)^{-1} =z + O(z^2)$ and from the change of variables $x=X_0(z)$ in Cauchy's integral formula (which is made possible by absolute convergence).

\begin{rem}
When $\Gamma_x$ acts on $X_{g,n+1}(x_1,\dots,x_n,x)$, one obtains the generating series of $(m,r)$-factorisations of genus $g$ with $n$ boundaries and one internal white face (carrying the weight $p_j$ recording its degree). Therefore, $\Gamma_x$ transforms a boundary into an internal white face, hence its name (it ``inserts'' an internal face).
\end{rem}

We now introduce auxiliary sets of differentials $(\chi^{\ell}_{g,n})_{\substack{g,\ell\geq 0\\ n\geq 1}}$ and $(\tilde{\chi}^{\ell}_{g,n})_{\substack{g,\ell\geq 0\\ n\geq 1}}$ on $\mathbb{P}^1$ that will be related to the Taylor expansions (in $\alpha$) of the differentials $\omega_{g,n}$, $\tilde{\omega}_{g,n}$ in Propositions \ref{prop:taylor:combi}, \ref{prop:taylor:TR} respectively.
\begin{defn}[Differentials $\chi^{\ell}_{g,n}$]\label{def:chi}
For $g\geq 0$, $n\geq 1$, $\ell\geq 0$,
%. \\
%For the family $(\chi^{\ell}_{g,n})_{\substack{g,\ell\geq 0\\ n\geq 1}}$ we set:
we define
\[\begin{split}
\chi^{\ell}_{g,n}(z_1,\dots,z_n) \coloneqq &\,\dd X_0(z_1)\dots\dd X_0(z_n)\, \Gamma_{x_1}\dots\Gamma_{x_{\ell}} X_{g,n+\ell}\big(X_0(z_1),\dots,X_0(z_n),x_1,\dots,x_{\ell}\big)\\
& +\delta_{g,0}\delta_{n,1}\delta_{\ell,1} \dd X_0(z_1) \Gamma_{x_1} \frac{1}{(X_{0}(z_1)-x_1)^2} + \delta_{g,0}\delta_{n,2}\delta_{\ell,0} \frac{\dd X_0(z_1)\dd X_0(z_2)}{(X_0(z_1)-X_0(z_2))^2}.
\end{split}
\]
\end{defn}
\begin{defn}[Differentials $\tilde{\chi}^{\ell}_{g,n}$]\label{def:chitilde}
%For the family $(\tilde{\chi}^{\ell}_{g,n})_{\substack{g,\ell\geq 0\\ n\geq 1}}$, we set:
We define $(\tilde{\chi}^{\ell}_{g,n})_{\substack{g,\ell\geq 0\\ n\geq 1}}$ by the following formulas. First set 
\[
\tilde{\chi}^{\ell}_{0,1}(z_1)\coloneqq \chi^{\ell}_{0,1}(z_1),\qquad \tilde{\chi}^{\ell}_{0,2}(z_1,z_2)\coloneqq \chi^{\ell}_{0,2}(z_1,z_2), \qquad \tilde{\chi}^{0}_{g,n}(z_1,\dots,z_n)\coloneqq \chi_{g,n}(z_1,\dots,z_n).
\]
Then, for $2g-2+n\geq 1$ and $\ell\geq 1$, use the following recursive definition -- the recursion being on $(2g-2+n,\ell)$ with the lexicographic order:
\begin{equation}\label{eq:chi:TR}
\begin{split}
    \tilde{\chi}_{g,n}^{\ell}(z_1,L) = \sum\limits_{i=1}^{M\, D_2} \underset{z= \tilde{a}_i}{\Res}\, K_i(z_1&,z) \Bigg(\sum\limits_{k=1}^{\ell}{\ell \choose k}\big(\tilde{\chi}^{k}_{0,1}(z)\, \tilde{\chi}^{\ell-k}_{g,n}(\sigma_i(z),L)+\tilde{\chi}^{k}_{0,1}(\sigma_i(z)\, \tilde{\chi}^{\ell-k}_{g,n}(z,L)\big)\\ 
    &+\tilde{\chi}^{\ell}_{g-1,n+1}(z,\sigma_i(z),L) + \sum\limits_{\substack{h+h'=g\\ C\sqcup C'=L\\ k+k'=\ell}}^{'}{\ell \choose k} \tilde{\chi}_{h,1+|C|}^{k}(z,C) \tilde{\chi}_{h',1+|C'|}^{k'}(\sigma_i(z),C')\Bigg),
\end{split}
\end{equation}
where $L=\{z_2,\dots,z_n\}$.
\end{defn}
%\todoElba{Change $I$ to $L$ and $J$ to $C$?}
%\todoGuillaume{Need to replace $a_i$ by $a_i/t$ in whole section, or to change notation!}

\begin{rem}
Being constructed from the case $\alpha=0$, for which the  topological recursion is known (Proposition~\ref{thm:TR:time:zero}) the differentials $\chi^{\ell}_{g,n}$ and $\tilde{\chi}^{\ell}_{g,n}$ are defined globally for the parameters $z_i$ (not only in a neighbourhood of zero). In particular they are defined near the $\tilde{a}_i$ and the recursive definition makes sense.
\end{rem}

To state the Taylor expansions, we first need a lemma.
\begin{lem}[Local inversion of $X(z)$ and $X_0(z)$]\label{lemma:inversion}
The function $z\mapsto X(z)$ defines a bijection from a pointed neighbouhood of zero to a pointed neighbourhood of infinity. The same is true for $z\mapsto X_0(z)$. 
\end{lem}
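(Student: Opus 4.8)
The plan is to show that both $X$ and $X_0$ have a simple pole at $z=0$ and then invert them by applying the holomorphic inverse function theorem to their reciprocals. I work under the analytic assumptions of Definition~\ref{def:assumptions}, so that by Remark~\ref{rem:convergenceDomain} all the series $A_k^{(c)}$ converge, the parameters $\mathbf{p},\mathbf{q},\mathbf{u}$ are fixed, and $t$ ranges over a small neighbourhood $V$ of zero.

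First I treat $X$. Writing $R(z):=\frac{\prod_{i\in I}A^{(i)}(z)}{\prod_{j\in J}A^{(j)}(z)}$, the spectral-curve equation~\eqref{eq:spectralX} reads $zX(z)=R(z)$. Each $A^{(c)}(z)$ is a polynomial in $z$ whose coefficients converge and whose constant term is $A^{(c)}(0)=A_0^{(c)}=1+O(t)$ by Definition~\ref{def:WkBk} (the correction term in~\eqref{eq:defWratconv2} carries a factor $t^s$ with $s\geq 1$). Hence, for $t\in V$ small enough, the denominator $\prod_{j\in J}A^{(j)}(z)$ does not vanish near $z=0$, so $R$ is holomorphic in a fixed neighbourhood of $z=0$, and $R(0)=\frac{\prod_{i\in I}A_0^{(i)}}{\prod_{j\in J}A_0^{(j)}}$ is close to $1$, hence nonzero. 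Consequently $\phi(z):=\bar{X}(z)=z/R(z)$ is holomorphic near $0$, with $\phi(0)=0$ and $\phi'(0)=1/R(0)\neq 0$. By the inverse function theorem $\phi$ restricts to a biholomorphism from a neighbourhood $U$ of $0$ onto a neighbourhood of $0$; composing with $w\mapsto 1/w$, the map $X=1/\phi$ is a bijection from the pointed neighbourhood $U\setminus\{0\}$ onto a pointed neighbourhood of infinity, as claimed.

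The argument for $X_0$ is identical. By~\eqref{eq:spectralX0} we have $zX_0(z)=G(Q(tz))$ with $Q(z)=\sum_{r=1}^{D_2}q_rz^r$, so that $Q(0)=0$ and $G(0)=1$. Thus $\phi_0(z):=\bar{X}_0(z)=z/G(Q(tz))$ is holomorphic near $0$ with $\phi_0(0)=0$ and $\phi_0'(0)=1/G(0)=1\neq 0$, and the inverse function theorem yields the desired bijection exactly as above.

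The only point requiring care — and the step I would double-check — is the non-vanishing of the leading coefficient $R(0)$, equivalently that $z=0$ is a genuine simple pole of $X$ rather than a higher-order or removable point, together with the fact that the neighbourhood can be chosen consistently with the analytic assumptions and uniformly in the fixed parameters. Both are guaranteed by Remark~\ref{rem:convergenceDomain} and by the relation $A_0^{(c)}=1+O(t)$; no genuine difficulty arises beyond this bookkeeping.
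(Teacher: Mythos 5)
Your proof is correct. It takes a mildly but genuinely different route from the paper's: the paper disposes of the lemma in one line by invoking the series $Z(x)$ already introduced in~\eqref{eq:ZasExcursionsrat}, namely that $Z(x)\in\mathbb{C}[[\bar{x}]]$ is absolutely convergent with expansion $Z(x)=\bar{x}+O(\bar{x}^2)$ (see~\eqref{eq:expansionZrat}), so the compositional inverse of $X$ is already exhibited as a holomorphic map from a pointed neighbourhood of infinity to a pointed neighbourhood of zero, and the same one-liner handles $X_0$. You instead make no use of $Z$ and invert $X$ from scratch: you verify that $z=0$ is a simple pole of $X$, via $A^{(c)}(0)=A_0^{(c)}=1+O(t)$ and the resulting non-vanishing of $\prod_{j\in J}A^{(j)}$ near $z=0$, and then apply the holomorphic inverse function theorem to $\phi=1/X$. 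The trade-off is clear: your argument is self-contained at the level of the coefficients $A^{(c)}_k$ (whose convergence is exactly what Remark~\ref{rem:convergenceDomain} provides) and does not require knowing that the solution $Z$ of the functional equation~\eqref{eq:ZasExcursionsrat} is itself convergent, whereas the paper's argument is shorter but leans on that convergence, which it needs elsewhere anyway (e.g.\ in Proposition~\ref{prop:taylor:combi}). Your identification of $R(0)\neq 0$ as the only delicate point is accurate, and your treatment of $X_0$ via $zX_0(z)=G(Q(tz))$ with $G(0)=1$ matches the structure of~\eqref{eq:spectralX0}; both routes yield exactly the statement of the lemma.
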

\begin{proof}
The series $Z(x) \in \mathbb{C}[[\bar x]]$ is absolutely convergent and has an expansion of the form $Z(x) = \bar{x} + O(\bar{x}^{2})$,
%The constant $c$, viewed as a function of the parameter $t$, is itself an absolutely convergent series which has an expansion of the form $c= u_0\dots u_{m-1} + O(t)$. Therefore, when $t$ is small enough (reduce the neighbourhood $V$ if necessary), $c$ is nonzero (recall that none of the $u_i$ is zero)
and the result about $X(z)$ follows. The argument for $X_0(z)$ is precisely the same.
\end{proof}
\begin{prop}[Combinatorial expansion]\label{prop:taylor:combi}
For small enough $z_1,\dots,z_n$ and $\alpha$:
\[
\omega_{g,n}(\tilde{z}_1,\dots,\tilde{z}_n) = \sum\limits_{\ell\geq 0} \frac{\alpha^{\ell}}{\ell!}\, \chi^{\ell}_{g,n}(z_1,\dots,z_n),
\]
where $z_i$ and $\tilde{z}_i$ are related by the equation $X(\tilde{z}_i)= X_{0}(z_i)$. 
\end{prop}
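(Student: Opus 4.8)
The plan is to prove the claimed expansion order by order in $\alpha$, by reducing everything to a single generating-function statement about the insertion operator $\Gamma$ and then checking separately the two unstable corrections built into Definition~\ref{def:chi}. First I would set up the change of variables: for $z_i$ small, Lemma~\ref{lemma:inversion} lets me define $\tilde z_i \coloneqq Z(X_0(z_i))$, the unique small solution of $X(\tilde z_i)=X_0(z_i)$, so that at $\alpha=0$ one has $X=X_0$ and $\tilde z_i=z_i$. Since $X(\tilde z_i)=X_0(z_i)$ holds identically in $z_i$, differentiating gives $dX(\tilde z_i)=dX_0(z_i)$. Substituting into Definition~\ref{def:omegagn}, for $(g,n)\neq(0,2)$ I obtain
\[
\omega_{g,n}(\tilde z_1,\dots,\tilde z_n)=W_{g,n}\big(X_0(z_1),\dots,X_0(z_n)\big)\,dX_0(z_1)\cdots dX_0(z_n),
\]
where the entire $\alpha$-dependence is now isolated in $W_{g,n}$ (the evaluation points $X_0(z_i)$ being $\alpha$-independent). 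For $(g,n)=(0,1)$ I would additionally invoke Theorem~\ref{thm:Discrat}, which turns $\omega_{0,1}(\tilde z)=Y(\tilde z)\,dX_0(z)$ into $\big(W_{0,1}(X_0(z))+\sum_{k=1}^{D_1}\alpha p_k X_0(z)^{k-1}\big)dX_0(z)$; for $(g,n)=(0,2)$ the Bergman correction survives as the $\alpha$-independent term $dX_0(z_1)\,dX_0(z_2)/(X_0(z_1)-X_0(z_2))^2$.

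The heart of the argument is the equality, as series in formal boundary variables $w_1,\dots,w_n$,
\[
[\alpha^\ell]\,W_{g,n}(w_1,\dots,w_n)=\frac{1}{\ell!}\,\Gamma_{x_1}\cdots\Gamma_{x_\ell}\,X_{g,n+\ell}(w_1,\dots,w_n,x_1,\dots,x_\ell),
\]
which I would establish combinatorially. After the rescaling $p_i\to\alpha p_i$, the coefficient $[\alpha^\ell]W_{g,n}$ is exactly the generating function of genus-$g$ $(m,r)$-factorisations with $n$ labelled boundaries and precisely $\ell$ \emph{unlabelled} internal white faces, a face of degree $k$ carrying weight $p_k$. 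On the other side $X_{g,n+\ell}=W_{g,n+\ell}\big|_{\alpha=0}$ counts the same objects with $n+\ell$ labelled boundaries and no internal white face; a boundary of degree $k$ carried by $x_j$ contributes the factor $k\,x_j^{-k-1}$ produced by $\nabla_{x_j}$, the $k$ marking a root corner. The operator $\Gamma_{x_j}f=\sum_k\frac{p_k}{k}[x_j^{-k-1}]f$ extracts this degree-$k$ contribution, cancels the factor $k$ (erasing the root) and reweights by $p_k$, thereby turning the $j$-th boundary into an unmarked internal white face. Applying the $\ell$ distinct operators $\Gamma_{x_1},\dots,\Gamma_{x_\ell}$ thus produces genus-$g$ objects with $n$ boundaries and $\ell$ internal white faces that are now \emph{labelled} by the index of the variable that created them. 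Since the $\ell$ internal white faces of such an object are distinct cycles and hence always distinguishable, there are exactly $\ell!$ labellings of each unlabelled object, which yields the factor $\ell!$.

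Combining these two steps, for $(g,n)\notin\{(0,1),(0,2)\}$ I immediately get $[\alpha^\ell]\omega_{g,n}(\tilde z)=\frac{1}{\ell!}\chi^\ell_{g,n}(z)$, which is the claim. For $(g,n)=(0,2)$ the core identity handles the $W_{0,2}$ part for every $\ell$, while the surviving Bergman term contributes only at $\ell=0$ and matches the $\delta_{g,0}\delta_{n,2}\delta_{\ell,0}$ correction of $\chi^0_{0,2}$. For $(g,n)=(0,1)$ the extra shift $\sum_k\alpha p_k X_0(z)^{k-1}$ contributes only at order $\alpha^1$; a one-line coefficient computation gives $[x^{-k-1}](X_0(z)-x)^{-2}=k\,X_0(z)^{k-1}$, hence $\Gamma_x\frac{1}{(X_0(z)-x)^2}=\sum_k p_k X_0(z)^{k-1}$, so this shift is exactly the $\delta_{g,0}\delta_{n,1}\delta_{\ell,1}$ correction term of $\chi^1_{0,1}$. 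All orders then match, proving the proposition; convergence of the $\alpha$-expansion for small $z_i,\alpha$ is guaranteed by the analytic assumptions and Remark~\ref{rem:convergenceDomain}.

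The genuinely delicate part will be the bookkeeping in the middle step and its interaction with the two unstable corrections: one must keep straight the root-marking factor $k$ produced by $\nabla$ against the compensating $\tfrac1k$ in $\Gamma$, argue carefully that distinct $\Gamma$-insertions yield genuinely distinguishable (hence $\ell!$-fold) labellings of the internal faces, and verify that the two \emph{ad hoc} correction terms in Definition~\ref{def:chi} are precisely the residual pieces left over by the shift of Theorem~\ref{thm:Discrat} and by the Bergman kernel once the change of variables has absorbed the $\alpha$-dependence of $\tilde z_i$.
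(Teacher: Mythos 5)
Your proposal is correct and follows essentially the same route as the paper's proof: the combinatorial identity $[\alpha^\ell]W_{g,n}=\frac{1}{\ell!}\Gamma_{x_1}\cdots\Gamma_{x_\ell}X_{g,n+\ell}$ (the paper phrases it as the decomposition $W_{g,n}=\sum_\ell W^\ell_{g,n}$ by number of internal white faces, with the $\ell!$ compensating insertion order), the observation that $X(\tilde z_i)=X_0(z_i)$ forces $\dd X(\tilde z_i)=\dd X_0(z_i)$, and the separate treatment of $(0,1)$ via Theorem~\ref{thm:Discrat} together with $\Gamma_x\frac{1}{(X_0(z)-x)^2}=\sum_k p_k X_0(z)^{k-1}$, and of $(0,2)$ via the Bergman correction at $\ell=0$. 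Your extra bookkeeping (the factor $k$ from $\nabla$ against $1/k$ in $\Gamma$, and the distinguishability of internal cycles giving the $\ell!$) only makes explicit what the paper states in one line.
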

Note that the relation between $z_i$ and $\tilde{z}_i$ makes sense by Lemma~\ref{lemma:inversion}.
\begin{proof}
Let $\tilde{x}_1,\dots,\tilde{x}_n \in \mathbb{P}^1$ near $\infty$ such that $W_{g,n}(\tilde{x}_1,\dots,\tilde{x}_n)$ is absolutely converging. Since $W_{g,n}$ is the generating series of $(m,r)$-factorisations with internal faces, it is equal to the following sum:
\[
W_{g,n}(\tilde{x}_1,\dots,\tilde{x}_n)=\sum\limits_{\ell\geq 0} W^{\ell}_{g,n}(\tilde{x}_1,\dots,\tilde{x}_n),
\]
where $W^{\ell}_{g,n}(\tilde{x}_1,\dots,\tilde{x}_n)$ is the generating series of $(m,r)$-factorisations with $\ell$ internal white faces. From the definition of the insertion operator $\Gamma_{x}$, we get:
\[
W^{\ell}_{g,n}(\tilde{x}_1,\dots,\tilde{x}_n)=\frac{\alpha^{\ell}}{\ell!}\, \Gamma_{x_1}\dots\Gamma_{x_{\ell}} X_{g,n+\ell}(\tilde{x_1},\dots,\tilde{x}_n,x_1,\dots,x_{\ell}).
\]
The factor $\frac{1}{\ell!}$ is there to compensate the order in which internal cycles are inserted.
%from the fact that the insertion operator inserts a labeled internal face, while $W^{\ell}_{g,n}$ enumerates constellations with unlabeled internal faces.\\

Let us take $\tilde{z}_1,\dots,\tilde{z}_n\in \mathbb{P}^1$ and $z_1,\dots,z_n\in \mathbb{P}^1$ near $0$ so that $X(\tilde{z}_i)=X_{0}(z_i)=\tilde{x}_i$. For $(g,n)\neq (0,1),\, (0,2)$, we get
\[
\begin{split}
    \omega_{g,n}(\tilde{z}_1,\dots,\tilde{z}_n)=& W_{g,n}(\tilde{x}_1,\dots,\tilde{x}_n) \dd X(\tilde{z}_1)\dots \dd X(\tilde{z}_n) \\
    =& W_{g,n}(\tilde{x}_1,\dots,\tilde{x}_n) \dd X_0(z_1)\dots \dd X_0(z_n)\\
    =& \sum\limits_{\ell\geq 0}\dd X_0(z_1)\dots \dd X_0(z_n) \frac{\alpha^{\ell}}{\ell!}\, \Gamma_{x_1}\dots\Gamma_{x_{\ell}} X_{g,n+\ell}(X_0(z_1),\dots,X_0(z_n),x_1,\dots,x_{\ell}) \\
    =& \sum\limits_{\ell \geq 0} \frac{\alpha^{\ell}}{\ell!}\, \chi^{\ell}_{g,n}(z_1,\dots,z_n).
\end{split}
\]
For $(g,n)=(0,1)$, we have on one hand:
\[\begin{split}
\omega_{0,1}(\tilde{z}_1)&=\Big(W_{0,1}(\tilde{x_1})+\alpha \sum\limits_{k=1}^{D_1}p_k \tilde{x}_1^{k-1}\Big)\dd X(\tilde{z}_1) \\
&= \Big(W_{0,1}(X_0(z_1))+\alpha \sum\limits_{k=1}^{D_1}p_k X_0(z_1)^{k-1}\Big)\dd X_0(z_1).
\end{split}\]
On the other hand, using that $\Gamma_{x_1}\frac{1}{(\tilde{x}_1-x_1)^2}= \sum\limits_{k=1}^{D_1}p_k \, \tilde{x}_1^{k-1}$:
\[
\begin{split}
    \sum\limits_{\ell\geq 0}\frac{\alpha^{\ell}}{\ell!}\, \chi_{0,1}^{\ell}(z_1)=&\Big( \sum\limits_{\ell\geq 0} \frac{\alpha^{\ell}}{\ell!}\, \Gamma_{x_1}\dots \Gamma_{x_{\ell}} X_{0,1+\ell}(\tilde{x}_1,x_1,\dots,x_{\ell}) +\alpha\,\Gamma_{x_1}\frac{1}{(X_0(z_1)-x_1)^2}\Big)\dd X_0(z_1)\\
    =&\Big(W_{0,1}(X_0(z_1))+\alpha \sum\limits_{k=1}^{D_1}p_k X_0(z_1)^{k-1}\Big)\dd X_0(z_1),
\end{split}
\]
so we get $\omega_{0,1}(\tilde{z}_1)= \sum\limits_{\ell\geq 0}\frac{\alpha^{\ell}}{\ell!}\, \chi_{0,1}^{\ell}(z_1)$.\\
Last, for $(g,n)=(0,2)$:
\[\begin{split}
\omega_{0,2}(\tilde{z}_1,\tilde{z}_2)&= \Big(W_{0,2}(\tilde{x}_1,\tilde{x}_2)+\frac{1}{(\tilde{x}_1-\tilde{x}_2)^2}\Big)\dd X(\tilde{z}_1) \dd X(\tilde{z}_2)\\ &=\Big(W_{0,2}(X_0(z_1),X_0(z_2))+\frac{1}{(X_0(z_1)-X_0(z_2))^2}\Big)\dd X_0(z_1) \dd X_0(z_2)
\end{split}
\]
and
\[\begin{split}
\sum\limits_{\ell\geq 0} \frac{\alpha^{\ell}}{\ell!}\, \chi_{0,2}^{\ell}(z_1,z_2) =& \Big(\sum\limits_{\ell\geq 0} \frac{\alpha^{\ell}}{\ell!} \Gamma_{x_1}\dots \Gamma_{x_{\ell}}X_{0,2+\ell}(X_0(z_1),X_0(z_2),x_1,\dots,x_{\ell})\\ & + \frac{1}{(X_0(z_1)-X_0(z_2))^2}\Big)\dd X_0(z_1) \dd X_0(z_2)\\
=&\Big(W_{0,2}(X_0(z_1),X_0(z_2))+\frac{1}{(X_0(z_1)-X_0(z_2))^2}\Big)\dd X_0(z_1) \dd X_0(z_2),
\end{split}\]
so we have $\omega_{0,2}(\tilde{z}_1,\tilde{z}_2)= \sum\limits_{\ell\geq 0}\frac{\alpha^{\ell}}{\ell!}\, \chi_{0,2}^{\ell}(z_1,z_2)$.
\end{proof}

\begin{prop}[$\alpha$-Expansion of topological recursion]\label{prop:taylor:TR}
For small enough $\alpha$, we have: 
\[
\tilde{\omega}_{g,n}(\tilde{z}_1,\dots,\tilde{z}_n) = \sum\limits_{\ell\geq 0} \frac{\alpha^{\ell}}{\ell!}\, \tilde{\chi}^{\ell}_{g,n}(z_1,\dots,z_n),
\]
where $z_i$ and $\tilde{z}_i$ are related by the equation $X(\tilde{z}_i)= X_{0}(z_i)$. 
\end{prop}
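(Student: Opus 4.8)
The plan is to prove, by induction on $(2g-2+n,\ell)$ for the lexicographic order, the refined identity $\ell!\,[\alpha^\ell]\,\tilde{\omega}_{g,n}(\tilde{z}_1,\dots,\tilde{z}_n)=\tilde{\chi}^{\ell}_{g,n}(z_1,\dots,z_n)$, where the left-hand side is read as a differential in the reference variables $z_i$ through the substitution $X(\tilde{z}_i)=X_0(z_i)$ (licit by \Cref{lemma:inversion}); summing over $\ell$ then gives the proposition. For this to make sense one first notes that $\tilde{\omega}_{g,n}$ is analytic in $\alpha$ near $0$: the spectral curve data, the branchpoints $b_i(\alpha)$ (distinct for small $\alpha$), the involution $\tilde{\sigma}_i$ and the kernel $\tilde{K}_i$ all depend analytically on $\alpha$, hence so do the finitely many residues in \eqref{eq:TR:generic:alpha}.

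The base cases are \Cref{prop:taylor:combi} in disguise. For $\ell=0$, specialising $\alpha=0$ turns $\tilde{\mathcal{S}}$ into $\mathcal{S}_0$, sends $b_i\mapsto\tilde{a}_i$, $\tilde{\sigma}_i\mapsto\sigma_i$, $\tilde{K}_i\mapsto K_i$, so that \eqref{eq:TR:generic:alpha} degenerates to \eqref{eq:TR:time:zero}; by uniqueness of the output of topological recursion (\Cref{thm:TR:time:zero}) we obtain $\tilde{\omega}_{g,n}|_{\alpha=0}=\chi_{g,n}=\tilde{\chi}^0_{g,n}$. For $(g,n)\in\{(0,1),(0,2)\}$ and all $\ell$, the identities $\tilde{\omega}_{0,1}=\omega_{0,1}$ and $\tilde{\omega}_{0,2}=\omega_{0,2}$ (\Cref{thm:Discrat,thm:cylinder}) combined with \Cref{prop:taylor:combi} give $\ell!\,[\alpha^\ell]\tilde{\omega}_{0,m}=\chi^\ell_{0,m}=\tilde{\chi}^\ell_{0,m}$ for $m\in\{1,2\}$, using the definitions $\tilde{\chi}^\ell_{0,1}=\chi^\ell_{0,1}$ and $\tilde{\chi}^\ell_{0,2}=\chi^\ell_{0,2}$.

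For the inductive step I would apply $\ell!\,[\alpha^\ell]$ to the topological recursion \eqref{eq:TR:generic:alpha}. The residue at the moving point $b_i(\alpha)$ should first be transported to the fixed point $\tilde{a}_i$ by the local coordinate change $\phi_i$, characterised near $\tilde{a}_i$ by $X(\phi_i(w))-X(b_i)=X_0(w)-X_0(\tilde{a}_i)$ and $\phi_i(\tilde{a}_i)=b_i$; since both sides vanish to second order with nonvanishing Hessian, $\phi_i$ is a biholomorphism with $\phi_i|_{\alpha=0}=\mathrm{id}$ which intertwines $\tilde{\sigma}_i$ with $\sigma_i$, so that pulling back rewrites $\Res_{z=b_i}$ as $\Res_{w=\tilde{a}_i}$ with $\tilde{\sigma}_i(z)$ becoming $\sigma_i(w)$. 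Inside the residue, each correlator $\tilde{\omega}_{h,1+|C|}$ of smaller complexity is replaced through the inductive hypothesis by $\sum_k\tfrac{\alpha^k}{k!}\tilde{\chi}^k_{h,1+|C|}$, and the Leibniz rule applied to the bilinear products produces exactly the binomial coefficients $\binom{\ell}{k}$ of \eqref{eq:chi:TR}, while the linear term $\tilde{\omega}_{g-1,n+1}$ contributes $\tilde{\chi}^\ell_{g-1,n+1}$.

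The technical heart, and the step I expect to be the main obstacle, is to show that after extracting the $\alpha^\ell$-coefficient all the remaining geometric $\alpha$-dependence collapses onto the frozen $\alpha=0$ data appearing in \eqref{eq:chi:TR}. Concretely, one must prove that the expansions of the kernel numerator $\int\tilde{\omega}_{0,2}$, of the branchpoint displacement and local involution, and of the coordinate change $\phi_i$ contribute no net residue at $\tilde{a}_i$ beyond the $\alpha=0$ kernel $K_i$, whereas the denominator $\tilde{\omega}_{0,1}(z)-\tilde{\omega}_{0,1}(\tilde{\sigma}_i(z))=[\chi_{0,1}(w)-\chi_{0,1}(\sigma_i(w))]+\sum_{k\geq1}\tfrac{\alpha^k}{k!}[\tilde{\chi}^k_{0,1}(w)-\tilde{\chi}^k_{0,1}(\sigma_i(w))]$ of $\tilde{K}_i$ produces precisely the explicit product terms $\tilde{\chi}^k_{0,1}(z)\,\tilde{\chi}^{\ell-k}_{g,n}(\sigma_i(z),L)+\tilde{\chi}^k_{0,1}(\sigma_i(z))\,\tilde{\chi}^{\ell-k}_{g,n}(z,L)$ with $k\geq1$. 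The conceptual reason is that only the order-zero part $\chi_{0,1}$ of $\tilde{\omega}_{0,1}$ can remain hidden inside the frozen kernel $K_i$ (this is the mechanism behind the prime excluding the $\omega_{0,1}$-splittings in ordinary topological recursion), so the higher orders $\tilde{\chi}^k_{0,1}$ with $k\geq1$ must resurface as the reinstated $(0,\emptyset)$ and $(g,L)$ splittings; a consistency check is that $\tilde{\chi}^\ell_{g,n}$ carries all its external $z_1$-dependence through $K_i$ alone, which forbids any surviving numerator correction. Verifying this reorganisation by a careful residue analysis at $\tilde{a}_i$ exploiting the $z\leftrightarrow\sigma_i(z)$ symmetry yields exactly the right-hand side of \eqref{eq:chi:TR}, namely $\tilde{\chi}^\ell_{g,n}$, which closes the induction.
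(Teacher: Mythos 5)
Your overall strategy coincides with the paper's: induction on $(2g-2+n,\ell)$ identifying the $\alpha$-Taylor coefficients of $\tilde{\omega}_{g,n}$, taken at $X$ fixed, with the quantities $\tilde{\chi}^{\ell}_{g,n}$ defined by the recursion \eqref{eq:chi:TR} (the paper packages $\ell!\,[\alpha^\ell](\cdot)\big|_{\alpha=0}$ as the operator $\mathbf{D}^\ell$ of Definition~\ref{def:differentiation}), and your base cases are treated correctly. However, the step you yourself flag as ``the technical heart'' is exactly where your argument has a genuine gap, and the mechanism you propose for it is wrong. You claim that the kernel numerator $\int\tilde{\omega}_{0,2}$, the branchpoint displacement and the local involution contribute ``no net residue at $\tilde{a}_i$ beyond the $\alpha=0$ kernel $K_i$'', so that all correction terms arise from expanding the denominator $\tilde{\omega}_{0,1}(z)-\tilde{\omega}_{0,1}(\tilde{\sigma}_i(z))$. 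This is false: although $\tilde{\omega}_{0,2}(z_1,z_2)=\frac{\dd z_1\,\dd z_2}{(z_1-z_2)^2}$ is $\alpha$-independent in the coordinate $z$, its pullback to the fixed frame (points identified by the value of $X$) varies nontrivially with $\alpha$, and this variation is given by the Rauch variational formula (Lemma~\ref{lem:Rauch}),
\begin{equation*}
\mathbf{D}\tilde{\omega}_{0,2}(z_1,z_2)=-\sum_{j=1}^{MD_2}\underset{v=b_j}{\Res}\,\frac{\mathbf{D}\tilde{\omega}_{0,1}(v)\,\tilde{\omega}_{0,2}(v,z_1)\,\tilde{\omega}_{0,2}(v,z_2)}{\dd X(v)\,\dd Y(v)},
\end{equation*}
a generically nonzero sum of contributions at \emph{all} branchpoints. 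In the paper's proof (Lemma~\ref{lem:deformation:kernel}) this numerator variation produces double residues at pairs $(b_i,b_j)$, and only after it is combined with the denominator variation (rewritten via Cauchy's formula as residues at $v=z$ and $v=\tilde{\sigma}_i(z)$) through the residue-exchange identity
\begin{equation*}
\sum_{i,j}\underset{z=b_i}{\Res}\,\underset{v=b_j}{\Res}\;+\;\sum_i\Bigl(\underset{z=b_i}{\Res}\,\underset{v=z}{\Res}+\underset{z=b_i}{\Res}\,\underset{v=\tilde{\sigma}_i(z)}{\Res}\Bigr)\;=\;\sum_{i,j}\underset{v=b_j}{\Res}\,\underset{z=b_i}{\Res}
\end{equation*}
does one obtain the correction terms which, after the inner residues are resummed by the recursion itself, become the $\binom{\ell}{k}\,\tilde{\chi}^{k}_{0,1}\,\tilde{\chi}^{\ell-k}_{g,n}$ terms of \eqref{eq:chi:TR}. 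Dropping the numerator contribution loses precisely these cross-branchpoint terms, and the induction step does not close.

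A secondary flaw: your transport map $\phi_i$, defined by $X(\phi_i(w))-X(b_i)=X_0(w)-X_0(\tilde{a}_i)$, is a \emph{shifted} identification, different from the one used for the external variables ($X(\tilde{z}_j)=X_0(z_j)$, no shift), so your integrand mixes two incompatible trivialisations. The paper avoids any transport of the residue point altogether: since the branchpoints $b_i(\alpha)$ stay inside fixed, $\alpha$-independent contours $\gamma_i$ for $|\alpha|$ small, the operator $\mathbf{D}$ commutes with $\underset{z=b_i}{\Res}$, and the whole computation can be done in the single global fixed-$X$ frame, with Lemmas~\ref{lem:Rauch} and~\ref{lem:deformation:kernel} supplying exactly the missing analytic content.
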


In order to prove this proposition, we need the following lemmas, which first require a definition.
In this definition (and there only), in order to make the dependence on $\alpha$ explicit, we use the notation $X(\alpha;z)$ and $b_j(\alpha)$  for $X$ and $b_j$, respectively.
\begin{defn}[Differentiation at $X$ fixed] \label{def:differentiation}
Let $f(\alpha;z_1,\dots,z_n)$ be a function or a differential, depending on $\alpha$ and variables $z_1,\dots,z_n$. Let $\alpha\in\mathbb{C}$ and $z_1,\dots, z_n \in \mathbb{CP}^1$ such that $z_i\neq b_j(\alpha)$ for all $i\in\{1,\dots,n\}$, $j\in\{1,\dots,MD_2\}$. For $i\in\{1,\dots,n\}$, let $\phi_{\alpha;z_i}:\, \mathbb{CP}^1 \to \mathbb{CP}^1$ be the differentiable function defined in a neighbourhood of $\alpha$ by:
\[
\phi_{\alpha;z_i}(\alpha)=z_i,\qquad X(\alpha';\phi_{\alpha;z_i}(\alpha'))=X(\alpha;z_i) .
\]
We define $\mathbf{D}$ to be the operator of differentiation with respect to $\alpha$ at $X$ fixed:
\begin{equation}
\mathbf{D}f(\alpha;z_1,\dots,z_n)=\frac{\partial}{\partial \alpha'}f(\alpha'; \phi_{\alpha;z_1}(\alpha'),\dots,\phi_{\alpha;z_n}(\alpha'))\big|_{\alpha'=\alpha}.
\end{equation}
\end{defn}
To justify the existence of $\phi_{\alpha;z_i}$ for $z_i\neq b_j(\alpha)$ in this definition, note that outside of the branchpoints, $X$ is a local coordinate (a statement which generalises Lemma \ref{lemma:inversion}). 
%The latter was sufficient for the previous proposition, but not for the following ones, since we want to consider variables that are close to ramification points.}
\begin{rem}\label{rem:derivation:insertion}
On the generating function $W_{0,1}(X(z_1))$, the derivation operator $\mathbf{D}$ acts as marking an internal cycle of an $(m,r)$-factorisation. Equivalently, $\mathbf{D}W_{0,1}(X(z_1))$ is obtained by applying the insertion operator to $W_{0,2}(X(z_1),x_2)$:
\[
\mathbf{D}W_{0,1}(X(z_1))= \Gamma_{x_2}W_{0,2}(X(z_1),x_2).
\]
We also have $\mathbf{D}\Big(\alpha\sum_{k=1}^{D_1}p_k\, X(z_1)^{k-1}\Big)= \Gamma_{x_2}1/(X(z_1)-x_2)^2 $, so:
\[
\mathbf{D}Y(z_1) = \Gamma_{x_2} \Big(W_{0,2}(X(z_1),x_2)+\frac{1}{(X(z_1)-x_2)^2}\Big).
\]
\end{rem}

\begin{lem}[Rauch variational formula]\label{lem:Rauch} %Let $\Omega(v)\coloneqq -\mathbf{D}\tilde{\omega}_{0,1}(v)$ denote the variation of $\tilde{\omega}_{0,1}$ with respect to $\alpha$. 
Suppose that $\mathbf{D}\tilde{\omega}_{0,1}(z)/\dd Y(z)$ does not have poles at ramification points. The variation of the Bergman kernel $\tilde{\omega}_{0,2}$ is given by
\begin{equation}\label{eq:deformation:Bergman}
\mathbf{D} \tilde{\omega}_{0,2}(z_1,z_2)=-\sum\limits_{j=1}^{M\, D_2} \underset{v= b_j}{\Res}\, \frac{\mathbf{D}\tilde{\omega}_{0,1}(v)\,\tilde{\omega}_{0,2}(v,z_1)\,\tilde{\omega}_{0,2}(v,z_2)}{\dd X(v)\, \dd Y(v)}.
\end{equation}
\end{lem}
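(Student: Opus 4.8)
The plan is to prove the Rauch variational formula (Lemma~\ref{lem:Rauch}) by the standard strategy of showing that the two sides of~\eqref{eq:deformation:Bergman} are meromorphic one-forms in $z_1$ (by symmetry, the argument in $z_2$ is identical) with the same poles, the same principal parts, and the same normalisation, so that their difference is a globally holomorphic differential on $\mathbb{CP}^1$, hence zero. First I would fix $z_2$ and regard both sides as differentials in $z_1$. The left-hand side $\mathbf{D}\tilde\omega_{0,2}(z_1,z_2)$ is obtained by differentiating the Bergman kernel $\tilde\omega_{0,2}(z_1,z_2)=\frac{\dd z_1\,\dd z_2}{(z_1-z_2)^2}$ at $X$ fixed; here one must be careful that the $z_i$ are \emph{not} held fixed under $\mathbf{D}$ — rather $X(z_i)$ is — so $\mathbf{D}$ picks up contributions from the $\alpha$-dependence of the local coordinate through the functions $\phi_{\alpha;z_i}$ of Definition~\ref{def:differentiation}.

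The key computation is to identify the pole structure of $\mathbf{D}\tilde\omega_{0,2}$. Away from the ramification points $b_j$, the map $z\mapsto X(z)$ is a local coordinate, and in the coordinate $X$ the Bergman kernel is deformation-invariant; consequently $\mathbf{D}\tilde\omega_{0,2}(z_1,z_2)$ can have poles in $z_1$ only where the change of coordinate degenerates, i.e.\ precisely at the ramification points $b_j$, where $\dd X$ vanishes. Near each $b_j$ I would use the local involution $\tilde\sigma_j$ and expand in the local coordinate $\zeta_j$ with $\zeta_j^2=X(z)-X(b_j)$; the essential input is that the insertion-type quantity $\mathbf{D}\tilde\omega_{0,1}/\dd Y$ has no pole at the $b_j$ (this is exactly the hypothesis of the lemma, and Remark~\ref{rem:derivation:insertion} identifies $\mathbf{D}\tilde\omega_{0,1}$ with an insertion of $\tilde\omega_{0,2}+\text{diagonal}$, so the integrand on the right-hand side is manufactured to have a genuine pole at $v=b_j$ coming from $\dd X(v)$ in the denominator). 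I would then check that the residue on the right-hand side reproduces exactly the singular part of the left-hand side at each $b_j$, using that $\tilde\omega_{0,2}(v,z_1)$ has a double pole at $v=z_1$ but is regular at $v=b_j$, so the only contribution is from the simple zero of $\dd X(v)$.

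The normalisation step is then to verify the two sides have no other poles and the same behaviour at the (potential) diagonal $z_1=z_2$ and at $z_1\to\infty$: the double pole of $\tilde\omega_{0,2}$ on the diagonal is annihilated by $\mathbf{D}$ because, after passing to the $X$-coordinate, its coefficient is the constant bidifferential, so $\mathbf{D}\tilde\omega_{0,2}$ is regular on the diagonal; on the right-hand side the diagonal $v=z_1$ (or $v=z_2$) lies at a generic point, not at a $b_j$, hence is not enclosed by the residues and contributes nothing. Finally, a residue-theorem argument (the sum of all residues of a meromorphic differential on $\mathbb{CP}^1$ vanishes) fixes the absence of a residual discrepancy, so the difference of the two sides is a holomorphic differential on the sphere and therefore identically zero. \textbf{The main obstacle} I anticipate is the careful bookkeeping of the deformation operator $\mathbf{D}$ acting at $X$ fixed: one must rewrite $\mathbf{D}\tilde\omega_{0,2}$ in a form where its poles and principal parts at the $b_j$ are manifest, which amounts to a Rauch-type calculation controlling how $\zeta_j$ and $\dd X$ vary with $\alpha$ near the ramification points, and checking that the hypothesis ``$\mathbf{D}\tilde\omega_{0,1}/\dd Y$ regular at ramification points'' is exactly what prevents spurious higher-order poles from appearing in the variation.
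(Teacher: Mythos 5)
The first thing to note is that there is no in-paper proof to match your argument against: the paper does not prove Lemma~\ref{lem:Rauch} at all. It quotes the formula from the literature (Rauch, Fay, and equation (5-4) of \cite{EO}), pointing to \cite[Theorem 4.4]{Bertola--Korotkin19} for a complete proof covering the case needed, and the only thing the authors verify themselves is the \emph{hypothesis} of the lemma, namely that $\mathbf{D}\tilde{\omega}_{0,1}(z)/\dd Y(z)$ is regular at the ramification points (using Remark~\ref{rem:derivation:insertion}, Equation~\eqref{eq:insertion:operator:res}, and the analytic assumptions). Your plan — prove the formula from scratch by showing both sides are meromorphic differentials in $z_1$ on $\mathbb{CP}^1$ with the same principal parts, then invoke the absence of nonzero holomorphic differentials on the sphere — is a legitimate and standard route, essentially the one followed in the references the paper cites.

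However, as written your proposal has a genuine gap, which you flag yourself: the entire analytic content of the Rauch formula is the local computation at the ramification points, and that is exactly the step you defer as ``the main obstacle'' rather than carry out. Concretely, a complete argument along your lines must (i) identify the velocity field $v(z)=\mathbf{D}z=-\partial_\alpha X/\partial_z X$ of the flow $\phi_{\alpha;z}$ of Definition~\ref{def:differentiation}, and check it is meromorphic with poles only at the $b_j$ (this requires an argument at the poles of $X$, e.g.\ at $z=0$, where both $\partial_\alpha X$ and $\partial_z X$ blow up and one must check the ratio stays finite); (ii) write $\mathbf{D}\tilde{\omega}_{0,2}$ as the Lie derivative of the Bergman kernel along $v$, acting in both variables, and expand it near each $b_j$, keeping track of the motion of $b_j(\alpha)$ itself; and (iii) match the resulting principal part (a double pole at $z_1=b_j$, coming from the simple pole of $v$ there) with the one produced by the residue on the right-hand side — this is where the hypothesis on $\mathbf{D}\tilde{\omega}_{0,1}/\dd Y$ enters, preventing higher-order poles. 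Steps (ii)--(iii) \emph{are} the lemma; the parts of your outline that are actually executed (regularity on the diagonal, absence of other poles, the residue-theorem conclusion) are the routine wrapping. A smaller but related flaw: your justification ``in the coordinate $X$ the Bergman kernel is deformation-invariant'' is not right as stated — the kernel written in the $X$-coordinate does change with $\alpha$; what is true is that the kernel on the $z$-sphere is $\alpha$-independent and $\mathbf{D}$ acts only through the flow $\phi_{\alpha;z}$, whose velocity field is holomorphic away from the $b_j$. The conclusion you draw (poles confined to the ramification points) is correct, but it needs this argument, not the one you give. As it stands the proposal is a proof plan, not a proof; either the local computation must be supplied or, as the paper does, the statement must be attributed to the cited references.
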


This result is based on the original work of Rauch \cite{Rauch} (see also \cite[p. 57]{Fay}) and appeared in the form of Lemma~\ref{lem:Rauch} in the context of topological recursion already in \cite[equation (5-4)]{EO}. A complete proof covering the case we need here is given in~\cite[Theorem 4.4]{Bertola--Korotkin19} (see also \cite[Proposition 7.1]{Baraglia--Huang17}, \cite[Lemma A.1]{Chaimanowong--Norbury--Swaddle--Tavakol20} for different proofs in various settings).

In our case, it is true that $\mathbf{D}\tilde{\omega}_{0,1}(z)/\dd Y(z)$ has no pole at ramification points, indeed,
\begin{itemize}
    \item from the analytic assumptions (Definitions \ref{def:assumptions}) $Y'$ does not vanish at ramification points;
    \item from Remark \ref{rem:derivation:insertion} and Equation \eqref{eq:insertion:operator:res}:
\[\mathbf{D}\tilde{\omega}_{0,1}(z) = \sum\limits_{k=1}^{D_1} \frac{p_k}{k} \, \underset{z'=0}{\Res}\, \tilde{\omega}_{0,2}(z,z') \, X(z')^k = \dd z \sum\limits_{k=1}^{D_1}\frac{p_k}{k!} \left.\frac{\dd^{k-1}}{\dd z'^{k-1}} \frac{G\big(Q(tz')\big)^k}{(z-z')^2}\right|_{z'=0}.\]
Since the ramification points are supposed to be different from zero, all the terms of the sum are finite when $z=b_i(t)$, so $\mathbf{D}\tilde{\omega}_{0,1}(z)$ does not have any pole at the ramification points. 
\end{itemize}
Therefore the conclusion~\eqref{eq:deformation:Bergman} of the last lemma holds.

\begin{lem}\label{lem:deformation:kernel}
Let $f$ be a symmetric bidifferential on $\mathbb{CP}^1$. Then, for all $i\in\{1,\dots,M D_2\}$:
\begin{equation}\label{eq:deformation:kernel}
\begin{split}
\underset{z=b_i}{\Res}\,\mathbf{D}\,\Big( \tilde{K}_i(z_1&,z) f(z,\tilde{\sigma}_i(z))\Big) = \underset{z= b_i}{\Res}\,\tilde{K}_{i}(z_1,z) \mathbf{D} f(z,\tilde{\sigma}_i(z)) \\
&+\sum\limits_{j=1}^{M\, D_2} \underset{v= b_j}{\Res}\,\underset{z= b_i}{\Res}\, \tilde{K}_j(z_1,v)\Big(\mathbf{D}\big(\tilde{\omega}_{0,1}(v) \big) \tilde{K}_{i}(\tilde{\sigma}_{j}(v),z)+\mathbf{D}\big(\tilde{\omega}_{0,1}(\tilde{\sigma}_j(v))\big) \tilde{K}_{i}(v,z)\Big) f(z,\tilde{\sigma}_i(z)).
\end{split}
\end{equation}
\end{lem}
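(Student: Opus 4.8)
The plan is to reduce the identity, via the Leibniz rule for the derivation $\mathbf{D}$, to a formula for the \emph{variation of the recursion kernel}, which I then evaluate with the Rauch formula (Lemma~\ref{lem:Rauch}). Since $\mathbf{D}$ is a derivation and taking a residue is linear,
\[
\underset{z=b_i}{\Res}\,\mathbf{D}\big(\tilde{K}_i(z_1,z)f(z,\tilde{\sigma}_i(z))\big)=\underset{z=b_i}{\Res}\,\big(\mathbf{D}\tilde{K}_i(z_1,z)\big)f(z,\tilde{\sigma}_i(z))+\underset{z=b_i}{\Res}\,\tilde{K}_i(z_1,z)\,\mathbf{D}f(z,\tilde{\sigma}_i(z)),
\]
and the second term is precisely the first term on the right-hand side of the lemma. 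Hence it suffices to prove that $\underset{z=b_i}{\Res}\,(\mathbf{D}\tilde{K}_i)\,f$ equals the double sum.

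Next I would compute $\mathbf{D}\tilde{K}_i$. Two preliminary simplifications, both immediate from Definition~\ref{def:differentiation} since $X(\alpha';\phi_{\alpha;z}(\alpha'))$ is constant in $\alpha'$, are $\mathbf{D}X=0$ and $\mathbf{D}(\dd X)=0$; consequently $\mathbf{D}\tilde{\omega}_{0,1}=(\mathbf{D}Y)\,\dd X$ (a form that, by the discussion preceding Lemma~\ref{lem:Rauch}, is regular at every $b_i$), and the $\mathbf{D}$-transport moves $z$ and $\tilde{\sigma}_i(z)$ jointly along the level sets of $X$, preserving $X(z)=X(\tilde{\sigma}_i(z))$. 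Writing $\tilde{K}_i=\tfrac12\,B/\Delta$ with $B(z_1,z)=\int_{\tilde{\sigma}_i(z)}^{z}\tilde{\omega}_{0,2}(z_1,\cdot)$ and $\Delta(z)=\tilde{\omega}_{0,1}(z)-\tilde{\omega}_{0,1}(\tilde{\sigma}_i(z))$, the derivative $\mathbf{D}\tilde{K}_i$ splits into three pieces: (I) the Rauch variation of the Bergman kernel inside $B$; (II) the motion of the integration endpoints (equivalently, of the ramification point and of the involution); and (III) the variation of the denominator $\Delta$.

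The heart of the argument is that piece (I) already produces the full double sum. Inserting Lemma~\ref{lem:Rauch} for $\mathbf{D}\tilde{\omega}_{0,2}(z_1,\cdot)$, exchanging the $w$-integral with the $v$-residue, and recognising $\tfrac{1}{2\Delta(z)}\int_{\tilde{\sigma}_i(z)}^{z}\tilde{\omega}_{0,2}(v,\cdot)=\tilde{K}_i(v,z)$, gives
\[
\text{(I)}=-\sum_{j=1}^{MD_2}\underset{v=b_j}{\Res}\,\frac{\mathbf{D}\tilde{\omega}_{0,1}(v)\,\tilde{\omega}_{0,2}(v,z_1)}{\dd X(v)\,\dd Y(v)}\,\tilde{K}_i(v,z).
\]
Then, near each $b_j$, one rewrites the factor $\tilde{\omega}_{0,2}(v,z_1)/(\dd X(v)\dd Y(v))$ through the kernel $\tilde{K}_j(z_1,\cdot)$ and symmetrises the simple pole at $b_j$ under $v\leftrightarrow\tilde{\sigma}_j(v)$; this is exactly what creates the two summands $\mathbf{D}\tilde{\omega}_{0,1}(v)\,\tilde{K}_i(\tilde{\sigma}_j(v),z)$ and $\mathbf{D}\tilde{\omega}_{0,1}(\tilde{\sigma}_j(v))\,\tilde{K}_i(v,z)$ of the target. (I would treat the diagonal $j=i$, where $\tilde{K}_i(v,z)$ itself is singular at $b_i$, by the same residue computation, retaining the relevant subleading orders.) It then remains to show that $(\text{II})+(\text{III})$ contributes nothing to $\underset{z=b_i}{\Res}\,[\,\cdot\,]f$. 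Using $\mathbf{D}\tilde{\omega}_{0,1}=(\mathbf{D}Y)\dd X$ and $\dd X(\tilde{\sigma}_i(z))=\dd X(z)$ the denominator variation collapses to $(\text{III})=-\tilde{K}_i(z_1,z)\,\frac{\mathbf{D}Y(z)-\mathbf{D}Y(\tilde{\sigma}_i(z))}{Y(z)-Y(\tilde{\sigma}_i(z))}$, and the endpoint piece (II) carries the same transport velocity and the same pole pattern at $b_i$; the plan is to check that they either cancel or combine into a form that is odd under $z\mapsto\tilde{\sigma}_i(z)$, whose residue at the fixed point $b_i$ against the quadratic differential $f(z,\tilde{\sigma}_i(z))$, which is invariant under $z\mapsto\tilde{\sigma}_i(z)$, vanishes.

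The hard part will be this last step. The difficulty is genuinely that $\mathbf{D}$ does not commute naively with the residue at the \emph{moving} point $b_i(\alpha)$ nor with the $\alpha$-dependent involution $\tilde{\sigma}_i$, so the endpoint and denominator contributions must be controlled beyond leading order: one has to match the double pole that $\mathbf{D}Y$ develops at $b_i$ (of order $1/X'$) against the simple zeros of $Y(z)-Y(\tilde{\sigma}_i(z))$ and of $\dd X$. A clean way to organise the bookkeeping is to realise $\underset{z=b_i}{\Res}$ as an integral over a small fixed contour encircling $b_i$, so that $\mathbf{D}$ passes under the integral sign, and to keep every variation expressed through the single regular deformation form $\mathbf{D}\tilde{\omega}_{0,1}=(\mathbf{D}Y)\,\dd X$, whose regularity at the ramification points is exactly the hypothesis under which Lemma~\ref{lem:Rauch} was applied.
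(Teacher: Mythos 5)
Your setup (Leibniz rule, isolating $\Res_{z=b_i}\tilde{K}_i\,\mathbf{D}f$, and feeding the Rauch formula into the numerator integral) follows the paper's proof, and your formula for piece (I) is correct. The fatal gap is in the last step: the claim that (II)+(III) contribute nothing is false, and in fact (III) is exactly what makes the lemma true. After your manipulations, piece (I) yields
\begin{equation*}
\Res_{z=b_i}\,\mathrm{(I)}\,f \;=\; \sum_{j}\;\Res_{z=b_i}\,\Res_{v=b_j}\,
\tilde{K}_j(z_1,v)\Big(\mathbf{D}\tilde{\omega}_{0,1}(v)\,\tilde{K}_i(\tilde{\sigma}_j(v),z)+\mathbf{D}\tilde{\omega}_{0,1}(\tilde{\sigma}_j(v))\,\tilde{K}_i(v,z)\Big)f(z,\tilde{\sigma}_i(z)),
\end{equation*}
where the $v$-residue is taken \emph{first} (innermost), whereas the lemma asserts the iterated residues in the \emph{opposite} order, $\Res_{v=b_j}\Res_{z=b_i}$. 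These two orderings are not equal here: the integrand contains $\tilde{K}_i(v,z)$ and $\tilde{K}_i(\tilde{\sigma}_j(v),z)$, which as functions of $v$ have simple poles at $v=z$ and $v=\tilde{\sigma}_i(z)$, and for $j=i$ (with $z$ on a small contour around $b_i$) these poles lie between the two contours. Exchanging the order therefore produces the commutator terms $\sum_i \Res_{z=b_i}\big(\Res_{v=z}+\Res_{v=\tilde{\sigma}_i(z)}\big)$, and the paper's proof shows — by rewriting the denominator variation (your (III)) through Cauchy's formula — that (III) equals precisely these commutator terms. So (III) is nonzero and indispensable; treating the double residues as if they commute is exactly the point where your argument breaks.

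Your fallback plan (showing (II)+(III) is odd under $z\mapsto\tilde{\sigma}_i(z)$, hence has vanishing residue at $b_i$) cannot work either. The endpoint piece (II) is vacuous once $\mathbf{D}$ is defined as transport at $X$ fixed (the endpoints $z$ and $\tilde{\sigma}_i(z)$ are transported consistently, which is why the paper simply has $\mathbf{D}\int=\int\mathbf{D}$), so one must show (III) itself vanishes — but
\begin{equation*}
\mathrm{(III)}\;=\;-\,\tilde{K}_i(z_1,z)\,
\frac{\mathbf{D}\tilde{\omega}_{0,1}(z)-\mathbf{D}\tilde{\omega}_{0,1}(\tilde{\sigma}_i(z))}{\tilde{\omega}_{0,1}(z)-\tilde{\omega}_{0,1}(\tilde{\sigma}_i(z))}\,f(z,\tilde{\sigma}_i(z))
\end{equation*}
is \emph{invariant} (even), not odd, under $z\mapsto\tilde{\sigma}_i(z)$: each of the three factors is separately invariant ($\tilde{K}_i(z_1,\tilde{\sigma}_i(z))=\tilde{K}_i(z_1,z)$ since numerator and denominator both flip sign, the middle ratio likewise, and $f(z,\tilde{\sigma}_i(z))$ by symmetry of $f$). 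And invariance does not force the residue at the fixed point to vanish: in the local coordinate $\zeta=\sqrt{X(z)-X(b_i)}$, with $\tilde{\sigma}_i:\zeta\mapsto-\zeta$, the invariant differential $\dd\zeta/\zeta$ has residue $1$; only \emph{anti}-invariant differentials automatically have zero residue there. The correct conclusion of the proof is the paper's: express (III) via Cauchy's formula as $\big(\Res_{v=z}+\Res_{v=\tilde{\sigma}_i(z)}\big)$ of the same integrand, and then use the contour-exchange identity
\begin{equation*}
\sum_{i,j}\Res_{z=b_i}\Res_{v=b_j}\;+\;\sum_{i}\Big(\Res_{z=b_i}\Res_{v=z}+\Res_{z=b_i}\Res_{v=\tilde{\sigma}_i(z)}\Big)\;=\;\sum_{i,j}\Res_{v=b_j}\Res_{z=b_i}
\end{equation*}
to combine it with (I) and obtain the right-hand side of the lemma.
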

\begin{proof}
The proof follows the same lines as in \cite[Lemma 5.1]{EO}. We have:
\begin{equation}\label{eq:variation:kernel}\begin{split}
    \mathbf{D}\, \Big( \tilde{K}_i(z_1,z) f(z,\tilde{\sigma}_i(z))\Big) &= \tilde{K}_i(z_1,z) \, \mathbf{D}\, f(z,\tilde{\sigma}_i(z)) -\tilde{K}_i(z_1,z)\frac{\mathbf{D}\tilde{\omega}_{0,1}(z)-\mathbf{D}\tilde{\omega}_{0,1}(\tilde{\sigma}_i(z))}{\tilde{\omega}_{0,1}(z)-\tilde{\omega}_{0,1}(\tilde{\sigma}_i(z))} f(z,\tilde{\sigma}_i(z)) \\
    & + \frac{1}{2}\frac{f(z,\tilde{\sigma}_i(z))}{\tilde{\omega}_{0,1}(z)-\tilde{\omega}_{0,1}(\tilde{\sigma}_i(z))} \mathbf{D}\Big(\int_{w=\tilde{\sigma}_i(z)}^{z}\tilde{\omega}_{0,2}(z_1,w)\Big). 
\end{split}\end{equation}
To treat the third term, we first re-write \eqref{eq:deformation:Bergman} as
\begin{align}
    \mathbf{D} \tilde{\omega}_{0,2}(z_1,w)& =-\sum\limits_{j=1}^{M\, D_2} \underset{v= b_j}{\Res}\, \frac{\mathbf{D}(\tilde{\omega}_{0,1}(v))\,\tilde{\omega}_{0,2}(v,z_1)\,\tilde{\omega}_{0,2}(v,z_2)}{\dd X(v)\, \dd Y(v)}\label{eq:def_Berg}\\
    &=\sum\limits_{j=1}^{M\, D_2} \underset{v= b_j}{\Res}\, \frac{\mathbf{D}(\tilde{\omega}_{0,1}(v))\,\tilde{\omega}_{0,2}(v,z_1)\,\tilde{\omega}_{0,2}(\tilde{\sigma}_j(v),w)}{\dd X(v)\, \dd Y(v)}\nonumber\\
    &=2\sum\limits_{j=1}^{M\, D_2} \underset{v= b_j}{\Res}\, \tilde{K}_j(z_1,v)\,\mathbf{D}(\tilde{\omega}_{0,1}(v))\,\tilde{\omega}_{0,2}(\tilde{\sigma}_j(v),w)\nonumber\\
    &=\sum\limits_{j=1}^{M\, D_2} \underset{v= b_j}{\Res}\, \tilde{K}_j(z_1,v)\,(\mathbf{D}(\tilde{\omega}_{0,1}(v))\,\tilde{\omega}_{0,2}(\tilde{\sigma}_j(v),w)+\mathbf{D}(\tilde{\omega}_{0,1}(\tilde{\sigma}_j(v)))\,\tilde{\omega}_{0,2}(v,w)),\nonumber
\end{align}
where in the second equality we use that $\tilde{\omega}_{0,2}(v,w)+\tilde{\omega}_{0,2}(\tilde{\sigma}_j(v),w)=0$ at $v=b_j$, in the third equality we use $\tilde{K}_j(z_1,v)=\frac{1}{2}\frac{\tilde{\omega}_{0,2}(v,z_1)}{\dd X(v)\, \dd Y(v)}(1+O(v-b_j))$ and in the last equality we use $\tilde{K}_j(z_1,v)=\tilde{K}_j(z_1,\tilde{\sigma}_j(v))$. %$\tilde{\omega}_{0,2}(v,z_2)\mathbf{D}\tilde{\omega}_{0,1}(\tilde{\sigma}_j(v))=\tilde{\omega}_{0,2}(\tilde{\sigma}_j(v),z_2)\mathbf{D}\tilde{\omega}_{0,1}(z)$. 

By integrating \eqref{eq:def_Berg} from $w=\tilde{\sigma}_i(z)$ to $w=z$ along a contour that does not pass through the ramification point $b_i$, we obtain 
%that the third term on the right hand side of \eqref{eq:variation:kernel} is equal to:
\begin{align}
& \frac{1}{2}\frac{f(z,\tilde{\sigma}_i(z))}{\tilde{\omega}_{0,1}(z)-\tilde{\omega}_{0,1}(\tilde{\sigma}_i(z))} \int_{w=\tilde{\sigma}_i(z)}^{z}\mathbf{D}\tilde{\omega}_{0,2}(z_1,w) = \nonumber\\
& \sum\limits_{j=1}^{M\, D_2} \underset{v= b_j}{\Res}\, \tilde{K}_j(z_1,v)\Big(\mathbf{D}\tilde{\omega}_{0,1}(v) \tilde{K}_i(\tilde{\sigma}_j(v),z) + \mathbf{D}\tilde{\omega}_{0,1}(\tilde{\sigma}_j(v)) \tilde{K}_i(v,z)\Big) f(z,\tilde{\sigma}_i(z)). \label{eq:third:term}
\end{align}
%To relate \eqref{eq:third:term} to the third term on the right hand side of \eqref{eq:variation:kernel}, we observe that
%\begin{equation}\label{eq:commutation3rd}
%\mathbf{D}\Big(\int_{w=\tilde{\sigma}_i(z)}^{z}\tilde{\omega}_{0,2}(z_1,w)\Big)= \int_{w=\tilde{\sigma}_i(z)}^{z}\mathbf{D}\tilde{\omega}_{0,2}(z_1,w) + \frac{\tilde{\omega}_{0,2}(z_1,z)}{\dd z}\frac{\partial z}{\partial\alpha}-\frac{\tilde{\omega}_{0,2}(z_1,\tilde{\sigma}_i(z))}{\dd \tilde{\sigma}_i(z)}\frac{\partial\tilde{\sigma}_i(z)}{\partial\alpha},
%\end{equation}
%where the last two terms cancel each other when we take the residue at the ramification point $z=b_i$.
We can rewrite the second term on the right hand side of \eqref{eq:variation:kernel} by using Cauchy formula:
\[
\begin{split}
    -\tilde{K}_i(z_1,z)\frac{\mathbf{D}\tilde{\omega}_{0,1}(z)}{\tilde{\omega}_{0,1}(z)-\tilde{\omega}_{0,1}(\tilde{\sigma}_i(z))} =& \,\underset{v=z}{\Res}\,\tilde{K}_i(z_1,v)\mathbf{D}\tilde{\omega}_{0,1}(v) \tilde{K}_i(\tilde{\sigma}_i(v),z) \\
    &+\underset{v=\tilde{\sigma}_i(z)}{\Res}\,\tilde{K}_i(z_1,v)\mathbf{D}\tilde{\omega}_{0,1}(\tilde{\sigma}_i(v)) \tilde{K}_i(v,z), \\
    \tilde{K}_i(z_1,z)\frac{\mathbf{D}\tilde{\omega}_{0,1}(\tilde{\sigma}_i(z))}{\tilde{\omega}_{0,1}(z)-\tilde{\omega}_{0,1}(\tilde{\sigma}_i(z))}  =&\,\underset{v=z}{\Res}\,\tilde{K}_i(z_1,v)\mathbf{D}\tilde{\omega}_{0,1}(\tilde{\sigma}_i(v)) \tilde{K}_i(v,z) \\
    &+\underset{v=\tilde{\sigma}_i(z)}{\Res}\,\tilde{K}_i(z_1,v)\mathbf{D}\tilde{\omega}_{0,1}(v) \tilde{K}_i(\tilde{\sigma}_i(v),z).
\end{split}
\]
In the end, the second term of \eqref{eq:variation:kernel} can be expressed as:
\begin{equation}\label{eq:second:term}
\Big(\underset{v=z}{\Res}+\underset{v=\tilde{\sigma}_i(z)}{\Res}\Big)\tilde{K}_i(z_1,v)\Big(\mathbf{D}\tilde{\omega}_{0,1}(v) \tilde{K}_i(\tilde{\sigma}_i(v),z)+\mathbf{D}\tilde{\omega}_{0,1}(\tilde{\sigma}_i(v)) \tilde{K}_i(v,z)\Big) f(z,\tilde{\sigma}_i(z)).
\end{equation}
%, \eqref{eq:commutation3rd}
Gathering formulas \eqref{eq:variation:kernel}, \eqref{eq:third:term} and \eqref{eq:second:term} and using 
\[
\sum\limits_{i,j=1}^{M\, D_2}\underset{z= b_i}{\Res}\,\underset{v= b_j}{\Res}\, + \sum\limits_{i=1}^{M\,D_2} \Big(\underset{z= b_i}{\Res}\,\underset{v=z}{\Res}\, + \underset{z= b_i}{\Res}\,\underset{v=\tilde{\sigma}_i(z)}{\Res}\Big) =  \sum\limits_{i,j=1}^{M\, D_2}\underset{v= b_j}{\Res}\,\underset{z= b_i}{\Res},
\]
we get the lemma.
\end{proof}

\begin{proof}[Proof of Proposition \ref{prop:taylor:TR}]
First, the cases $(g,n)=(0,1), \, (0,2)$ undergo the same treatment as in the proof of Proposition \ref{prop:taylor:combi}, since the definitions of $\omega_{0,i}$, $\chi_{0,i}^{\ell}$ on one hand, and of $\tilde{\omega}_{0,i}$, $\tilde{\chi}_{0,i}^{\ell}$ on the other hand, coincide for $i=1,\,2$.\\

Second, let $\alpha_0$ small enough so that there exist $M\, D_2$ pairwise disjoint closed contours $\gamma_{1},\dots,\gamma_{M\, D_2}\subset \mathbb{P}^1$ not depending on $\alpha$, such that $\gamma_i$ encloses $ b_i$ (and no other $ b_j$ for $j\neq i$) for all $|\alpha|\leq |\alpha_0|$. Such contours exist because for $\alpha=0$, the ramification points $b_i|_{\alpha=0} = \tilde{a}_i$ are disjoint, and because they are continuous functions at $\mathbf{p}=0$ (since all sums converge absolutely).
%move continuously as $\mathbf{p}$ varies while staying in the radius of convergence of the series $b_i$. 
With this assumption, we can then write the residues at $b_i$ as integrals over contours that do not depend on $\alpha$:
\[
\underset{z= b_i}{\Res}\, f(z) = \int_{z\in\gamma_i} f(z).
\]
Therefore, the operator $\mathbf{D}$ passes through the residue:
\[
\mathbf{D}\,\underset{z= b_i}{\Res}\, f(z) = \underset{z= b_i}{ \Res}\, \mathbf{D} \,f(z).
\]
We show by induction on $(2g-2+n,\ell)$ the following identity:
\begin{equation}\label{eq:derivation:TR}
\begin{split}
    \mathbf{D}^{\ell}\tilde{\omega}_{g,n}(z_1,L)= \sum\limits_{i=1}^{M\, D_2}&\underset{z= b_i}{\Res}\,\tilde{K}_{i}(z_1,z)\Bigg(\sum\limits_{k=1}^{\ell}{\ell \choose k}\Big( \mathbf{D}^{k}\tilde{\omega}_{0,1}(z)\mathbf{D}^{\ell-k}\tilde{\omega}_{g,n}(\tilde{\sigma}_i(z),L)\\
    &+\mathbf{D}^{k}\tilde{\omega}_{0,1}(\tilde{\sigma}_i(z))\mathbf{D}^{\ell-k}\tilde{\omega}_{g,n}(z,L)\Big) \\
    & \mathbf{D}^{\ell}\Big(\tilde{\omega}_{g-1,n+1}(z,\tilde{\sigma}_i(z),L) + \sum\limits_{\substack{h+h'=g\\ C\sqcup C'=L}}^{'}\tilde{\omega}_{h,1+|C|}(z,C)\tilde{\omega}_{h',1+|C'|}(\tilde{\sigma}_i(z),C')\Big)\Bigg).
\end{split}
\end{equation}
For $\ell=0$ we recover the formula of topological recursion for the spectral curve $\tilde{\mathcal{S}}$, so we get $\mathbf{D}^{0}\tilde{\omega}_{g,n}=\tilde{\omega}_{g,n}$ by induction on $2g-2+n$. Let us now assume that the formula holds up to order $(2g-2+n,\ell)$. We write:
\[
    \mathbf{D}^{\ell+1}\tilde{\omega}_{g,n}(z_1,\dots,z_n) = \mathbf{D}\cdot \mathbf{D}^{\ell}\tilde{\omega}_{g,n}(z_1,\dots,z_n).
\]
We can use the induction hypothesis -- equation \eqref{eq:derivation:TR} -- to write $\mathbf{D}^{\ell}\tilde{\omega}_{g,n}(z_1,\dots,z_n)$. Then, passing $\mathbf{D}$ through the residues, and using lemma \ref{lem:deformation:kernel} and the identity ${\ell \choose k}+{\ell \choose k-1} = {\ell+1 \choose k}$, we recover equation \eqref{eq:derivation:TR} for the order $(2g-2+n,\ell+1)$. This completes the proof of equation \eqref{eq:derivation:TR}.\\
To prove proposition \ref{prop:taylor:TR}, it is enough to prove that $\mathbf{D}^{\ell}\tilde{\omega}_{g,n}(z_1,\dots,z_n)|_{\alpha = 0} =\tilde{\chi}_{g,n}^{\ell}(z_1,\dots,z_n)$. Since they satisfy the same recursive relation (equations \eqref{eq:chi:TR} and \eqref{eq:derivation:TR}) and they coincide for $(g,n)=(0,1),\, (0,2)$, $\ell\geq 0$, the equality holds for all $g,\,n,\,\ell$. 
\end{proof}

%\todoSeverin{Idea of the proof: the $\tilde{\chi}^{\ell}_{g,n}$ are morally the $\ell$th derivation of TR with respect to $\alpha$ when $\alpha$ is near $0$. Proof by induction on $(2g-2+n,\ell)$.}

We now prove that the Taylor expansions are actually the same.
\begin{prop}\label{prop:egalite:taylor}
For all $g\geq 0,\, n\geq 1,\, \ell\geq 0$ and $z_1,\dots,z_n\in \mathbb{CP}^1$, we have:
\[
\chi_{g,n}^{\ell}(z_1,\dots,z_n)=\tilde{\chi}_{g,n}^{\ell}(z_1,\dots,z_n).
\]
\end{prop}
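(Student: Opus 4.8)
The plan is to prove Proposition~\ref{prop:egalite:taylor} by induction on $(2g-2+n,\ell)$ in lexicographic order, reducing the equality to the single statement that the combinatorial differentials $\chi^{\ell}_{g,n}$ obey the \emph{same} recursion~\eqref{eq:chi:TR} that \emph{defines} $\tilde{\chi}^{\ell}_{g,n}$. The base data already coincide: $\tilde{\chi}^{\ell}_{0,1}=\chi^{\ell}_{0,1}$ and $\tilde{\chi}^{\ell}_{0,2}=\chi^{\ell}_{0,2}$ for all $\ell$ by Definition~\ref{def:chitilde}, while $\tilde{\chi}^{0}_{g,n}=\chi^{0}_{g,n}=\chi_{g,n}$ are the time-zero correlators, which satisfy topological recursion by Proposition~\ref{thm:TR:time:zero}. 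For $2g-2+n\ge 1$ and $\ell\ge 1$, every term on the right-hand side of~\eqref{eq:chi:TR} carries indices strictly smaller than $(2g-2+n,\ell)$ (or is one of the coinciding base cases), so by the induction hypothesis the $\tilde{\chi}$ appearing there may be replaced by the corresponding $\chi$. Hence it suffices to show that $\chi^{\ell}_{g,n}$ itself satisfies~\eqref{eq:chi:TR}.

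To establish this last point I would run a secondary induction on $\ell$, valid simultaneously for all $(g,n)$ with $2g-2+n\ge 1$. Its engine is the elementary identity $\chi^{\ell}_{g,n}(z_1,\dots,z_n)=\Gamma_{z_0}\,\chi^{\ell-1}_{g,n+1}(z_1,\dots,z_n,z_0)$, immediate from Definition~\ref{def:chi} once the insertion operator is read as the residue $\Gamma_{z_0}=\sum_{k}\tfrac{p_k}{k}\Res_{z_0=0}X_0(z_0)^k(\cdot)$ of~\eqref{eq:insertion:operator:res} acting on the last boundary variable. The base case $\ell=0$ is exactly~\eqref{eq:TR:time:zero}. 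For the step I would substitute the order-$(\ell-1)$ recursion~\eqref{eq:chi:TR} for $\chi^{\ell-1}_{g,n+1}$ (in which $z_0$ is a spectator variable) and push $\Gamma_{z_0}$ inside the residue $\Res_{z=\tilde{a}_i}K_i(z_1,z)$, which it may cross because $\tilde{a}_i\neq 0$ separates the $z_0=0$ contour from the ramification contour. Acting term by term, $\Gamma_{z_0}$ raises by one the insertion order of whichever factor carries $z_0$; the crucial point is that when that factor is a $(0,2)$-differential one has $\Gamma_{z_0}\chi^{a}_{0,2}(z,z_0)=\chi^{a+1}_{0,1}(z)$, a direct computation using $\Gamma_{x}\tfrac{1}{(X_0(z)-x)^2}=\sum_k p_k X_0(z)^{k-1}$ exactly as in the proof of Proposition~\ref{prop:taylor:combi}. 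This produces precisely the first-line terms $\chi^{k}_{0,1}(z)\,\chi^{\ell-k}_{g,n}(\sigma_i(z),L)$ (and their $\sigma_i(z)\leftrightarrow z$ counterparts) of~\eqref{eq:chi:TR}, while $\Gamma_{z_0}$ acting on $\chi^{\ell-1}_{g-1,n+2}$ and on the remaining product factors yields the $\chi^{\ell}_{g-1,n+1}$ term and the product terms. The binomials recombine through Pascal's rule $\binom{\ell-1}{k-1}+\binom{\ell-1}{k}=\binom{\ell}{k}$, just as in the proof of Proposition~\ref{prop:taylor:TR}.

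The main obstacle I anticipate is the bookkeeping of this last step rather than any conceptual difficulty: one must justify carefully that $\Gamma_{z_0}$ commutes with the ramification residue (a contour-separation argument, since the $z_0$-poles relevant to $\Res_{z_0=0}$ sit away from $z=\tilde{a}_i$), track the placement of $z_0$ among the subsets $C\sqcup C'$ together with the two ways a $(0,2)$-factor can attach to $z$ or to $\sigma_i(z)$, and check that the excluded cases of the primed sum create no spurious contributions. This is the combinatorial counterpart, on the time-zero side, of the kernel-deformation computation of Lemma~\ref{lem:deformation:kernel}: both mechanisms manufacture the same first-line $\chi_{0,1}$-terms, which is ultimately why $\chi^{\ell}_{g,n}$ and $\tilde{\chi}^{\ell}_{g,n}$ satisfy one and the same recursion and are therefore equal, as claimed.
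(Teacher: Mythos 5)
Your proposal is correct and takes essentially the same route as the paper's proof: the same lexicographic induction replaces the $\tilde{\chi}$'s on the right-hand side of~\eqref{eq:chi:TR} by $\chi$'s, and the remaining identity is established by writing the $\chi^{\ell}_{g,n}$ as insertion operators (the paper's $\widehat{\Gamma}_{z'}$, your $\Gamma_{z_0}$) applied to time-zero correlators, commuting these operators through the ramification residues, and invoking the time-zero topological recursion of Proposition~\ref{thm:TR:time:zero}. The only difference is granularity: the paper applies all $\ell$ insertions at once and recognises the recursion formula for $\chi_{g,n+\ell}$ in one stroke, via the identity re-summing the binomial-weighted products $\sum_{k}\binom{\ell}{k}\chi^{k}_{h,1+|C|}\chi^{\ell-k}_{h',1+|C'|}$ into sums over distributions $U\sqcup U'$ of the inserted variables, whereas you peel off one insertion per step of a secondary induction on $\ell$ and recombine coefficients with Pascal's rule --- an incremental rendering of the same computation.
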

\begin{proof}
First, notice that for any $\ell\geq 0$, the cases $(g,n)=(0,1),\, (0,2)$ are true by definition. Also, from the definition of $\tilde{\chi}_{g,n}^{0}$, we have $\tilde{\chi}_{g,n}^{0}=\chi_{g,n}=\chi_{g,n}^{0}$. \\
We assume that the equality is proved up to order $(2g-2+n,\ell-1)$, where $2g-2+n\geq 1$ and $\ell\geq 1$, and we prove that the equality holds at order $(2g-2+n,\ell)$. From equation \eqref{eq:chi:TR}, and the induction hypothesis, we have:
\[
\begin{split}
    \tilde{\chi}_{g,n}^{\ell}(z_1,L) = \sum\limits_{i=1}^{M\, D_2} \underset{z= \tilde{a}_i}{\Res}\,& K_i(z_1,z) \Bigg(\sum\limits_{k=1}^{\ell}{\ell \choose k}\big(\chi^{k}_{0,1}(z)\, \chi^{\ell-k}_{g,n}(\sigma_i(z),L)+\chi^{k}_{0,1}(\sigma_i(z)\, \chi^{\ell-k}_{g,n}(z,L)\big)\\ 
    &\!\!\!\!+\chi^{\ell}_{g-1,n+1}(z,\sigma_i(z),L) + \sum\limits_{\substack{h+h'=g\\ C\sqcup C'=L\\ k+k'=\ell}}^{'}{\ell \choose k} \chi_{h,1+|C|}^{k}(z,C) \chi_{h',1+|C'|}^{k'}(\sigma_i(z),C')\Bigg).
\end{split}
\]
For a differential $f(z')$ defined near $z'=0$, we define the modified insertion operator $\widehat{\Gamma}_{z'}$:
\[
\widehat{\Gamma}_{z'}f(z')\coloneqq \sum\limits_{k=1}^{D_1}\frac{p_k}{k}\, \underset{z'=0}{\Res}\, f(z')\, X_0(z')^k.
\]
From the definition of $\chi_{g,n}^{\ell}$ and formula \eqref{eq:insertion:operator:res} for the insertion operator, we can write:
\[\begin{split}
\chi_{g,n}^{\ell}(z_1,\dots,z_n) =& \sum\limits_{k_1,\dots,k_{\ell}=1}^{D_1}\frac{p_{k_1}}{k_1}\dots\frac{p_{k_{\ell}}}{k_{\ell}}\underset{z'_1=0}{\Res}\dots\underset{z'_{\ell}=0}{\Res}\, \chi_{g,n+\ell}(z_1,\dots,z_n,z'_1,\dots,z'_{\ell})\, X_{0}(z'_1)^{k_1}\dots X_{0}(z'_{\ell})^{k_{\ell}}\\
=&\,\, \widehat{\Gamma}_{z'_1}\dots\widehat{\Gamma}_{z'_{\ell}}\, \chi_{g,n+\ell}(z_1,\dots,z_n,z'_1,\dots,z'_{\ell}). 
\end{split}\]
Hence:
\[\begin{split}
\sum\limits_{k=0}^{\ell}{\ell\choose k} \chi_{h,1+|C|}^{k}(z,C)\chi_{h',1+|C'|}^{\ell-k}(\sigma_i(z),C') &=\\ &\!\!\!\!\!\!\!\!\!\!\!\!\widehat{\Gamma}_{z'_1}\dots\widehat{\Gamma}_{z'_{\ell}} \sum\limits_{\substack{U\sqcup U' =\\ \{z'_1,\dots,z'_{\ell}\}}}\chi_{h,1+|C\sqcup U|}(z,C,U)\,\chi_{h',1+|C'\sqcup U'|}^{\ell-k}(\sigma_i(z),C',U').
\end{split}\]
Applying this identity in the formula for $\tilde{\chi}_{g,n}^{\ell}$, we have:
\[
\begin{split}
    \tilde{\chi}_{g,n}^{\ell}(z_1,L) =\widehat{\Gamma}_{z'_1}\dots\widehat{\Gamma}_{z'_{\ell}} \sum\limits_{i=1}^{M\, D_2} \underset{z= \tilde{a}_i}{\Res}\, K_i(z_1,z) \Bigg(& \chi^{\ell}_{g-1,n+1+\ell}(z,\sigma_i(z),L,z'_1,\dots,z'_{\ell})\\
    &+ \sum\limits_{\substack{h+h'=g\\ C\sqcup C'=L\\ U\sqcup U'=\\\{z'_1,\dots,z'_{\ell}\}}}^{'} \chi_{h,1+|C\sqcup U|}(z,C,U)\, \chi_{h',1+|C'\sqcup U'|}(\sigma_i(z),C',U')\Bigg)
\end{split}
\]
(we could exchange the residue $\underset{z=\tilde{a}_i}{\Res}$ and the operators $\widehat{\Gamma}_{z'_j}$ since the integrand do not have poles at $z'_j= z, \, \sigma_i(z)$). Note that in this formula, the primed sum means that the terms $(h,C,U)=(0,\emptyset, \emptyset), \,(g,L,\{z'_1,\dots,z'_{\ell}\})$ are discarded from the sum. We recognise on the right hand side the topological recursion formula for $\chi_{g,n+\ell}$, so we get:
\[
\tilde{\chi}_{g,n}^{\ell}(z_1,\dots,z_n)= \widehat{\Gamma}_{z'_1}\dots\widehat{\Gamma}_{z'_{\ell}}\, \chi_{g,n+\ell}(z_1,\dots,z_n,z'_1,\dots,z'_{\ell}) = \chi_{g,n}^{\ell}(z_1,\dots,z_n),
\]
which ends the induction. 
\end{proof}

We can now finish the proof of our main theorem.
\begin{proof}[Proof of Theorem~\ref{thm:main:result}]
As a result of propositions \ref{prop:taylor:combi}, \ref{prop:taylor:TR} and \ref{prop:egalite:taylor}, we have the following facts:
\begin{itemize}
    \item $\omega_{g,n}(z_1,\dots,z_n)$ is analytic and well defined near $z_1,\dots,z_n=0$; $\tilde{\omega}_{g,n}(z_1,\dots,z_n)$ is well-defined for all $z_1,\dots,z_n\in\mathbb{CP}^1$.
    \item Where they are defined and for small enough $\alpha$, $\omega_{g,n}$ and $\tilde{\omega}_{g,n}$ admit Taylor expansions with respect to $\alpha$, which coincide. 
    \item By uniqueness of the expansion, we conclude that $\omega_{g,n}(z_1,\dots,z_n)$ is equal to $\tilde{\omega}_{g,n}(z_1,\dots,z_n)$ in a neighbourhood of zero. In particular, it can be globally analytically continued. 
    \item As a byproduct, the family $(\omega_{g,n})_{\substack{g\geq 0\\ n\geq 1}}$ satisfies the topological recursion for the spectral curve $\tilde{\mathcal{S}}$. 
\end{itemize}
The proof of Theorem \ref{thm:main:result} is complete.
\end{proof}

\section{Constellations and proofs for $W_{0,1}$ and $W_{0,2}$ using Albenque--Bouttier techniques}
\label{sec:constellations}

In this section we focus on the case where $G$ is a polynomial, i.e.~$J=\emptyset$ and $G(\cdot) = \prod_{i\in I}(1+\cdot u_i)$. It will be more convenient to use a different parametrisation, namely $\tilde{G}(\cdot) = \prod_{i\in I} (\bar{u}_i + \cdot) = \left(\prod_{i\in I} \bar{u}_i\right) G(\cdot)$, with $\bar{u}_i\equiv u_i^{-1}$. The effect of this parametrisation is that
\begin{equation}\label{eq:changeOfVariables}
    \tau^G(\mathbf{p}, \mathbf{q}; u_0, \dotsc, u_{m-1}; t) = \tau^{\tilde{G}}(\mathbf{p}, \mathbf{q}; u_0, \dotsc, u_{m-1}; \tilde{t}), \quad \text{with $\tilde{t} = \Bigl(\prod_{i\in I} u_i\Bigr) t$}.
\end{equation}
%These series are thus the same up to a rescaling of $t$ on one side. For the purposes of this section (and this section only) we will henceforth consider $\tau^{\tilde{G}}$.

We first give some context for the relation of our work with the one of Albenque and Bouttier~\cite{AlbenqueBouttier2022} which is the main tool of this section. These two authors announced in talks a few years ago a purely combinatorial proof of Eynard's solution of the two-matrix model for the planar leading order (i.e.~what is here Theorem~\ref{thm:Discrat} for $(m,r)=(1,0)$) using a generalisation to hypermaps of the slice-decomposition of planar maps~\cite{BouttierGuitterContinuedFractions, BouttierGuitter2014}. 
%Their proof has not been made fully public until~\cite{AlbenqueBouttier2022} but sufficiently many ideas have circulated in talks so that most of the proofs could be reconstructed with some work by experienced researchers in the field (we found in particular the slides of a talk in Oberwolfach from 2018 available online to provide many critical details; an earlier version of their construction announced without details in~\cite{AlbenqueBouttier:fpsac12} and covering a subcase of what we call here constellations was also helpful).
Although we prove in this section a more general result (since we prove Theorem~\ref{thm:Discrat} for $r=0$ and any value of $m$), at the combinatorial level, the only tool we need is the Albenque--Bouttier approach of~\cite{AlbenqueBouttier2022}. Indeed, maybe surprisingly, the combinatorial objects underlying the model for arbitrary $m\geq1$ form in fact a \emph{subset} of the objects considered in \cite{AlbenqueBouttier2022} for $m=1$. 

However, because these objets need to be considered with more general weights (related to a notion of colour of vertices), it is necessary to go through the whole construction of~\cite{AlbenqueBouttier2022} in order to verify that it gives rise to the spectral curve we aim for. The main originality of this section is 
%not the bijection itself (which is essentially~\cite{AlbenqueBouttier2022} specialized to the case where faces have degrees multiple of $m$), but 
the embedding of the combinatorial objects into the case $m=1$, and the introduction of appropriate generating functions in the presence of colour weights.

In addition to the slice decomposition itself for $m=1$, \cite{AlbenqueBouttier2022} explains how to use it to derive the spectral curve in a closed form. It turns out that it is quite straightforward to introduce the colour weights in these calculations. For readers well acquainted with combinatorics of paths, calculations done in the present section (once Proposition~\ref{thm:FactorizationsConstellations} is known) essentially follow \cite{AlbenqueBouttier2022}, inserting colour weights where necessary. We therefore refer to~\cite{AlbenqueBouttier2022} for proofs of bijectivity of the decompositions and the details of the calculations. %Some familiarity with~\cite{AlbenqueBouttier2022} or at least with~\cite{BouttierGuitterContinuedFractions, BouttierGuitter2014} is not strictly required but might be helpful.  

%To understand the relation with what we do here, the following observation is critical: Although the case $m=1$ can \emph{a priori} be thought of a special case of the problem, it is in fact possible to view the problem for general $m$ as a refinement of the case of $m=1$.
%Our proofs in this section will be purely combinatorial, making use of a construction via so-called \emph{slice decompositions} due to Albenque and Bouttier that will be published in~\cite{AlbenqueBouttier2022}. To prepare this paper, we have found the slides 

%We simply follow \cite{AlbenqueBouttier2022} and adapt it to the need of our model. In particular, our proof is very close to specializing all $p_k, q_k$ of \cite{AlbenqueBouttier2022} to 0 except for $k$ multiples of $m$. In addition, we introduce colors on vertices. The refined enumeration of constellations taking into account the numbers of vertices of each color cannot be obtained by specializing \cite{AlbenqueBouttier2022}. Therefore, introducing colors requires going through all the proofs of \cite{AlbenqueBouttier2022} and adapting them. We recover their result in the case $m=1$ (and $M=m$).

\subsection{Constellations}

A \emph{map} is a connected graph (with possible multiple edges or loops), embedded without edge-crossing in an oriented surface and considered up to orientation preserving homeomorphisms.
A \emph{bicolored map} is a map whose faces can be colored black or white such that a black face can share an edge only with a white face, and the other way around. Bicolored maps have a \emph{canonical orientation}, which orients edges so that black faces are to their left (and white faces to their right), see Figure \ref{fig:Faces}. Notice that there exists an oriented path between any two vertices.

\begin{defn}[$m$-Constellations]
Let $m\geq 1$. An $m$-constellation is a bicolored map equipped with its canonical orientation, such that 
\begin{itemize}
    \item each vertex has a color $c\in[0..m-1]$;
    \item if $v$ is a vertex of color~$c$, then any edge outgoing from $c$ points to a vertex of color $c-1\mod m$.
\end{itemize}
\end{defn}
\noindent As a consequence, black and white faces have degrees multiple of $m$.

\begin{figure}
    \centering
    \includegraphics[scale=.4]{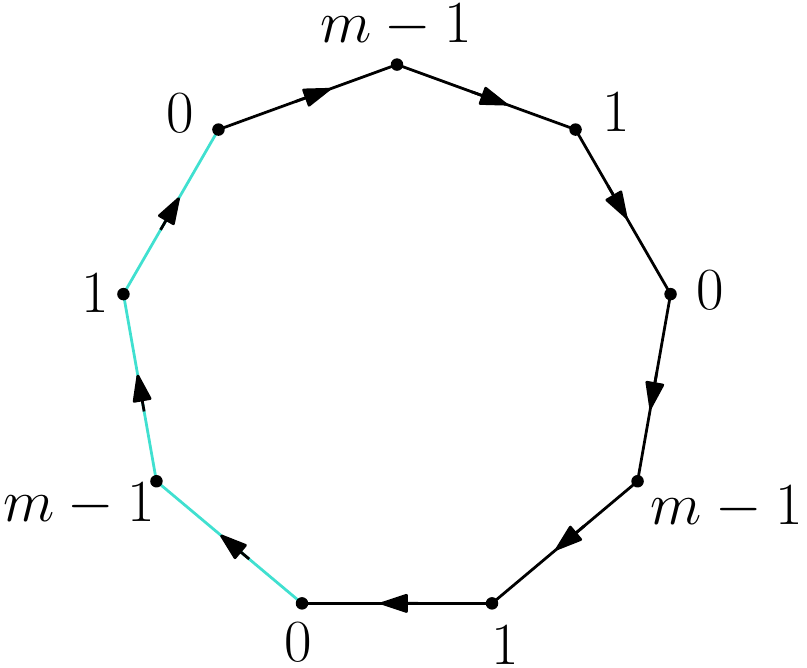} \hspace{2cm} \includegraphics[scale=.4]{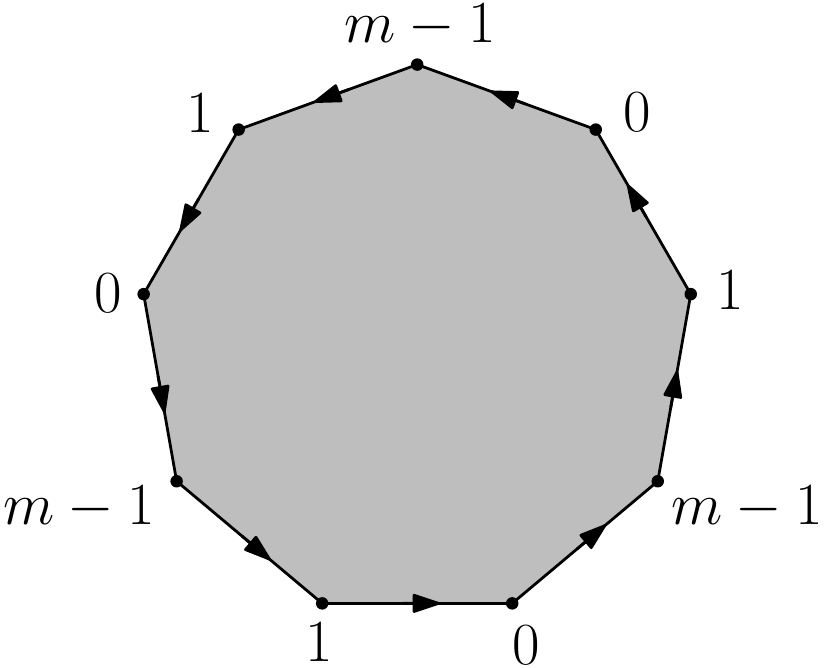} \hspace{2cm}
    \includegraphics[scale=.4]{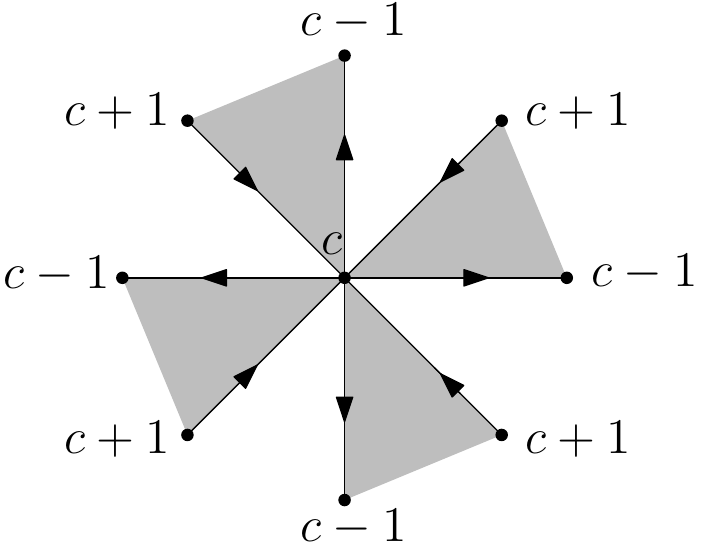}
    \caption{A white face with a marked right path, a black face (both of degree $3m$), and a vertex of color~$c$. Edges receive their canonical orientation so that the black faces are to their left.}
    \label{fig:Faces}
\end{figure}

A map is \emph{pointed} if it has a marked vertex. In a pointed bicolored map, there is a canonical function $\ell_{v^*}$, $v^*$ being the pointed vertex, from the set of vertices to $\mathbb{N}$, where $\ell_{v^*}(v)$ is the ``distance'', i.e.~the number of edges of a shortest oriented path from $v$ to $v^*$ (note that there is a single vertex mapped to 0, which is $v^*$). Those distances cannot decrease by more than 1 along an edge, i.e.~if there is an edge from $v$ to $v'$, then $\ell_{v^*}(v')\geq \ell_{v^*}(v)-1$. In an $m$-constellation, %since face degrees are multiples of $m$, 
%since colors vary incrementally by $1$ along each edge of each oriented path,
since the length of any oriented path is congruent to the difference in colours of its endpoints,
one has more specifically $\ell_{v^*}(v') \in \ell_{v^*}(v) -1 + m\mathbb{N}$.

A \emph{left path} (\resp \emph{right path}) is an oriented sequence of $m$ connected edges from a vertex of color $0$ to the next vertex of color $0$ along a black face (\resp white face). The size of an $m$-constellation is the number of left (equivalently, right) paths, see Figure \ref{fig:Faces}.

A labeled $m$-constellation of size $n$ is an $m$-constellation whose right paths are all labeled, from 1 to~$n$.
Recall that a factorisation of the identity in the symmetric group $\mathfrak{S}_n$ is called \emph{transitive} if its factors altogether generate a transitive subgroup of $\mathfrak{S}_n$.

\begin{prop} \label{thm:FactorizationsConstellations}
There is a one-to-one correspondence between labeled $m$-constellations of size $n$ and transitive factorisations of the form $\sigma_{-2}\sigma_{-1}\sigma_0 \dotsm \sigma_{m-1} = 1\in\mathfrak{S}_n$ which maps the number of vertices of color $c\in[0..m-1]$ to the number of cycles of $\sigma_c$, and cycles of $\sigma_{-2}$ (\resp $\sigma_{-1}$) of length $r$ to black (\resp white) faces of degree $mr$ (\resp $ms$).
\end{prop}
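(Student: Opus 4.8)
The plan is to realise the stated correspondence as an instance of the classical dictionary between transitive factorisations in $\mathfrak{S}_n$ and rotation systems of bicolored maps (see \cite{LandoZvonkin2004}), carefully adapted to the vertex-colouring of $m$-constellations and to the presence of \emph{two} face colours. The ground set of the permutations will be the set $[n]$ of labels, which I identify with the right paths of the constellation (equivalently, with the $n$ edges outgoing from colour-$0$ vertices, since each right path begins with exactly one such edge, and $n = E/m$ where $E$ is the number of edges). The two directions of the bijection are: from a labeled constellation, read off a rotation system and hence the permutations; conversely, from a factorisation, glue a ribbon graph whose corners are prescribed by the elements and whose local cyclic orders are given by the factors, then check that it carries the required colouring and canonical orientation.

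For the forward direction I would first fix orientation conventions (rotations taken in a fixed sense, black faces to the left as in the canonical orientation) and then define the factors geometrically. For $c \in \{0,\dots,m-1\}$, let $\sigma_c$ be the permutation whose cycles are the colour-$c$ vertices, acting by the local rotation that sends a right path meeting a colour-$c$ vertex to the next one around that vertex; for $c \geq 1$ each right path meets a unique colour-$c$ vertex, while for $c=0$ one uses the colour-$0$ vertex at which the path starts. Let $\sigma_{-1}$ be the rotation of right paths around white faces. I would then \emph{verify} the product relation $\sigma_{-2}\sigma_{-1}\sigma_0\cdots\sigma_{m-1} = 1$ by the standard corner-chasing argument: following a single corner through one full local turn, vertex by vertex and then across the incident white face, and checking it returns to its starting position. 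This forces the remaining factor $\sigma_{-2} \coloneqq (\sigma_{-1}\sigma_0\cdots\sigma_{m-1})^{-1}$ to be exactly the rotation of right paths around black faces.

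Once the permutations are in place, the statistics follow almost immediately. The cycles of $\sigma_c$ are by construction the colour-$c$ vertices, giving the count correspondence. A face (white or black) of degree $d$ is bounded by exactly $d/m$ consecutive right paths, so its rotation is a single cycle of length $d/m$; this yields the correspondences ``length-$r$ cycle of $\sigma_{-2}$ $\leftrightarrow$ black face of degree $mr$'' and ``length-$s$ cycle of $\sigma_{-1}$ $\leftrightarrow$ white face of degree $ms$''. Transitivity of the factorisation is equivalent to connectedness of the underlying map: the subgroup generated by all factors acts transitively on $[n]$ if and only if any two right paths are joined by a chain of shared vertices and faces, i.e. if and only if the map is connected. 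The gluing construction supplies the inverse map, and mutual inverseness is checked by comparing the two rotation systems.

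The main difficulty I anticipate is not conceptual but a matter of bookkeeping in the product-relation verification: pinning down the orientation and composition conventions so that the corner-chasing telescopes correctly, and cleanly resolving the fact that each right path meets a colour-$0$ vertex \emph{twice} (at its start and at its end). This ambiguity must be fixed by a consistent convention — attaching the path to its starting colour-$0$ vertex for $\sigma_0$ and verifying that the terminal colour-$0$ vertex is correctly accounted for by the composition of the adjacent factors. The degree-to-length bookkeeping ($d \mapsto d/m$) and the check that the glued ribbon graph is genuinely an $m$-constellation (correct edge colour-transitions $c \to c-1 \bmod m$ and canonical orientation) are then routine; for the details underlying the slice and rotation decompositions I would follow \cite{AlbenqueBouttier2022} and \cite{LandoZvonkin2004}.
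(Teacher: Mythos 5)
Your overall architecture is exactly the paper's: read the factors off the constellation as rotations (colour-$c$ vertices for $\sigma_c$, white faces for $\sigma_{-1}$), verify the product relation by chasing an edge through the composition, identify the remaining factor with the black-face rotation, and invert by gluing. The gap is in the one place where you made a committed choice: attaching each right path to its \emph{starting} colour-$0$ vertex to define $\sigma_0$. This is not a harmless bookkeeping convention that can be patched by adjusting rotational senses; it makes the stated relation false for $m\geq 2$. The reason is the following. Around a colour-$0$ vertex, the incoming edges (the colour-$1\to$ colour-$0$ final edges of right paths) and the outgoing edges (the colour-$0\to$ colour-$(m-1)$ initial edges) alternate, separated alternately by white and black corners, and the white corner joins an incoming right path to the outgoing right path that succeeds it inside their common white face. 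In other words, the ``successor in the white face'' map, which is $\sigma_{-1}^{\pm 1}$, conjugates the rotation $\sigma_0^{\mathrm{in}}$ of incoming right paths into the rotation $\sigma_0^{\mathrm{out}}$ of outgoing ones: $\sigma_0^{\mathrm{out}}=\sigma_{-1}^{\epsilon}\,\sigma_0^{\mathrm{in}}\,\sigma_{-1}^{-\epsilon}$ for a sign $\epsilon$ fixed by the conventions. The relation that genuinely holds (and that the paper proves, precisely because it attaches right paths to their \emph{terminal} colour-$0$ vertex, ``the right paths which have an edge between a vertex of color $1$ and a vertex of color $0$ incident at the latter'') is $\sigma_{-2}\sigma_{-1}\sigma_0^{\mathrm{in}}\sigma_1\cdots\sigma_{m-1}=1$. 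Substituting, the word you propose to verify satisfies $\sigma_{-2}\sigma_{-1}\sigma_0^{\mathrm{out}}\sigma_1\cdots\sigma_{m-1} = P^{-1}\bigl((\sigma_0^{\mathrm{in}})^{-1}\sigma_{-1}^{\epsilon}\,\sigma_0^{\mathrm{in}}\,\sigma_{-1}^{-\epsilon}\bigr)P$ with $P=\sigma_1\cdots\sigma_{m-1}$, i.e.\ a conjugate of the commutator of $\sigma_0^{\mathrm{in}}$ and $\sigma_{-1}$. So your claimed relation holds if and only if $\sigma_0$ and $\sigma_{-1}$ commute, which fails in general; equivalently, the permutation $(\sigma_{-1}\sigma_0^{\mathrm{out}}\sigma_1\cdots\sigma_{m-1})^{-1}$ that you define to be $\sigma_{-2}$ is \emph{not} the black-face rotation, and its cycle type need not even match the black face degrees.

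A concrete counterexample with $m=2$: the planar, connected $2$-constellation encoded (with the paper's conventions) by $\sigma_{-2}=(13)$, $\sigma_{-1}=(123)$, $\sigma_0^{\mathrm{in}}=(12)$, $\sigma_1=\mathrm{id}$, which has three right paths, two black faces, one white face, two colour-$0$ and three colour-$1$ vertices. With your convention one finds $\sigma_0^{\mathrm{out}}=(123)(12)(132)=(23)$, and the stated product is $(13)(123)(23)=(123)\neq 1$. A short free-group check (eliminate $\sigma_{-2}$ using the true relation) shows that no choice of rotational senses for the individual factors repairs this once $m\geq 2$; with your convention the relation that can be proved is $\sigma_{-2}\,\sigma_0^{\mathrm{out}}\,\sigma_{-1}\,\sigma_1\cdots\sigma_{m-1}=1$, with $\sigma_0$ and $\sigma_{-1}$ in swapped positions. (For $m=1$, the hypermap case, reversing \emph{all} senses does reconcile the two conventions, which is perhaps why the issue is invisible there.) The fix is exactly the paper's choice: define $\sigma_0$ at the colour-$0$ vertex where each right path \emph{ends}; with that convention your corner-chasing closes up and the remainder of your plan (statistics, transitivity $\Leftrightarrow$ connectedness, gluing for the inverse) goes through as in the paper.
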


\begin{proof}
Given a labeled $m$-constellation of size $n$, we define the permutations $\sigma_{-2}, \sigma_{-1}, \dotsc, \sigma_{m-1}$ as follows. 
\begin{itemize}
    \item Each vertex of color $c=1, \dotsc, m-1$ encodes a cycle of $\sigma_c$, such that $\sigma_c$ maps a right path to the next one in the clockwise order at the vertex of $c$ where it is incident to.
    \item Each white face encodes a cycle of $\sigma_{-1}$ by writing down the right paths around it clockwise.
    \item Black faces determine the cycles of $\sigma_{-2}$ by writing down the labels of the right paths which have their edge oriented from the vertex of color 0 to that of color $m$ incident to them.
    \item As for the cycles of $\sigma_0$, they are defined by writing down in the clockwise order the labels of the right paths which have an edge between a vertex of color 1 and a vertex of color 0 incident at the latter.
\end{itemize}
Those rules are represented in Figure \ref{fig:PermutationCycles}. The permutations thus obtained factorise the identity in $\mathfrak{S}_n$, as can be seen in the following representation:
\begin{equation*}
    \begin{array}{c}\includegraphics[scale=.7]{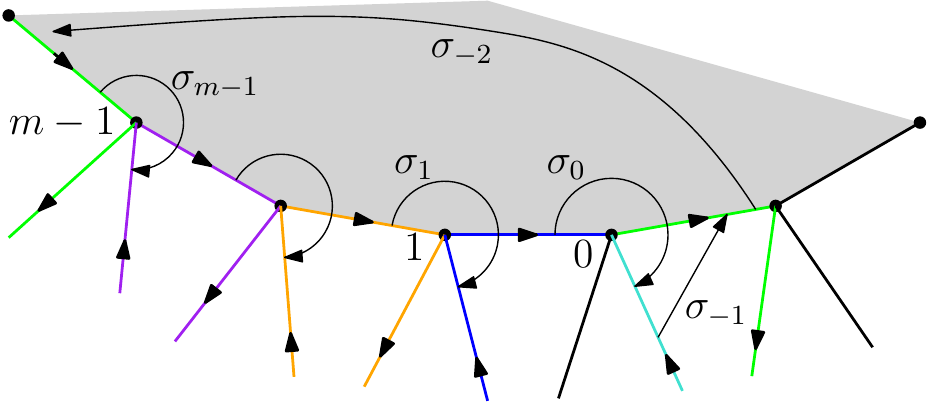} \end{array}
\end{equation*}
Starting at an edge $e$ between vertices of color 0 and $m-1$, the permutations $\sigma_{m-1}$ to $\sigma_1$ move it along the boundary of the black face it is incident to. The permutation $\sigma_0$ then moves to a right path which is not incident to this face, but this is ``corrected'' by $\sigma_{-1}$. It arrives at the next edge, in the counter-clockwise order, between vertices of color 0 and $m-1$ along that face. This edge is in turn mapped to $e$ by $\sigma_{-2}$.

Notice that the edges incident to a vertex of color 0 which are connected to a vertex of color $m-1$ are determined by $\sigma_{-1}$. A set of permutations $\sigma_{-1}, \sigma_0, \dotsc, \sigma_{m-1}$ therefore determines a labeled $m$-constellation, whose white faces are given by $\sigma_{-2}$.
\begin{figure}
    \centering
    \includegraphics[scale=.5]{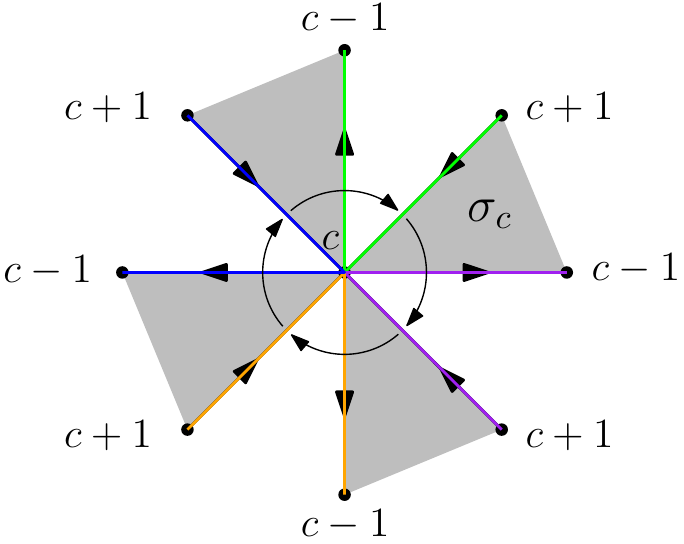} \hspace{.5cm}
    \includegraphics[scale=.4]{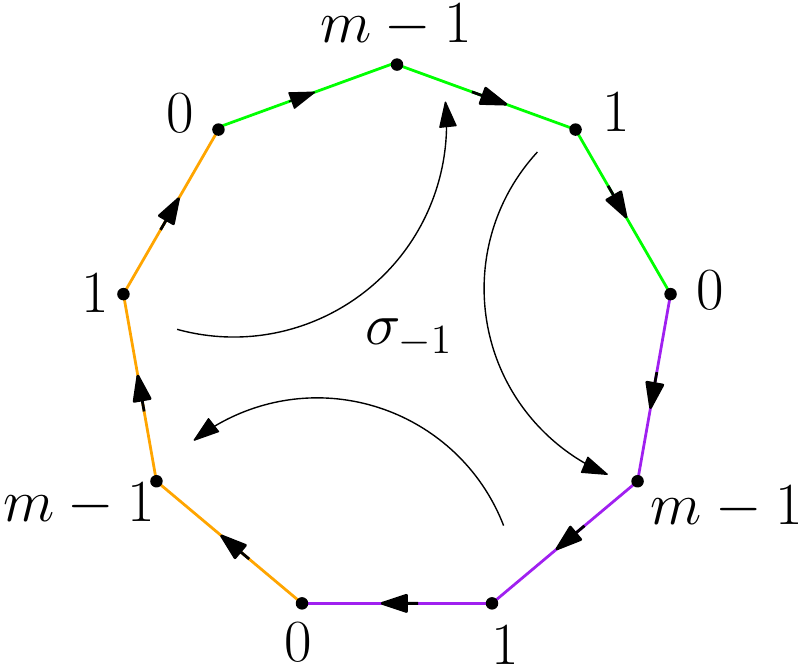}
    \hspace{.5cm}
    \includegraphics[scale=.4]{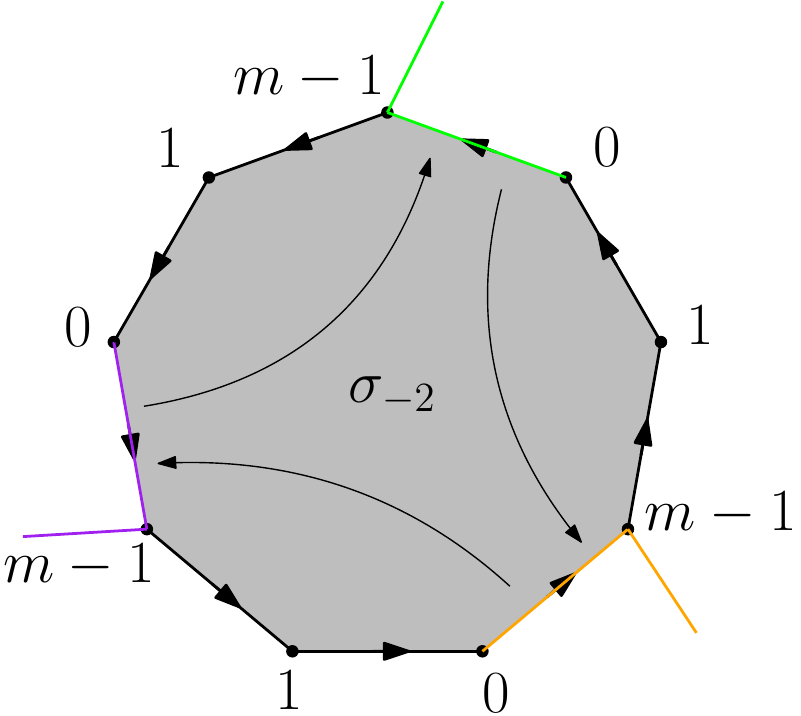}
    \hspace{.5cm}
    \includegraphics[scale=.5]{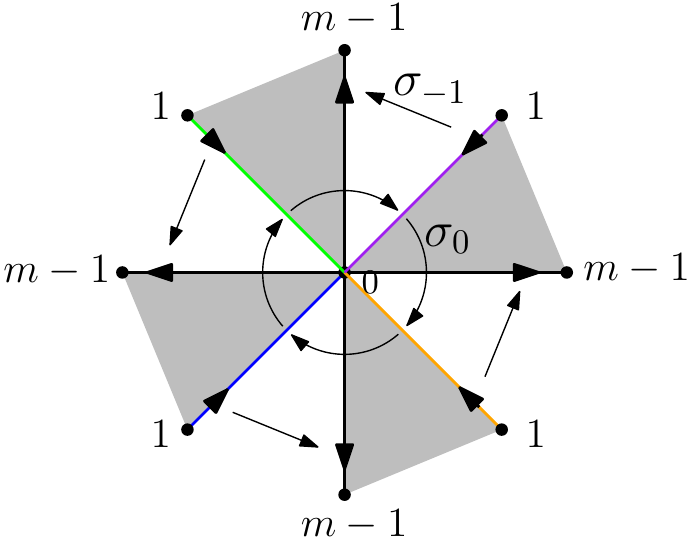}
    \caption{The actual colors represent different right paths. We have added the action of $\sigma_{-1}$ on the right paths incident to vertices of color 0.}
    \label{fig:PermutationCycles}
\end{figure}
\end{proof}
Evidently, one can remove the transitivity assumption from the factorisations in the above proposition and obtain a bijection with labelled \emph{non-necessarily connected} $m$-constellations (with obvious definition).

\begin{rem}
The previous proposition is classical in the combinatorics literature in the case where $\sigma_{-2}$ is the identity, i.e. when the associated $m$-constellation has only black faces of degree $m$ (or ``$m$-hyperedges''), see e.g.~\cite{BousquetSchaeffer2000, Chapuy2009}, but we are not aware of a previous use of the general case stated here. This presentation will be crucial for us since the slice decomposition need to be able to ''cut-out'' faces rather than vertices, and since we want to control \emph{two} full sets of degree parameters (in relation with our variables $\mathbf{p}$ and $\mathbf{q}$). Note also that these $m$-hyperedges are sometimes given different, but equivalent, graphical representations, for example as ``star vertices'' of degree $m$, such as in~\cite{AlexandrovChapuyEynardHarnad2020}.
%@ Valentin: j'ai elenvé cette remarque parce que dans la litérature ce serait plutôt des m+1-constellations. Du coup je pense que ça créée plus de confusion qu'autre chose (il faudrait refaire le dessin et l'argument pour remettre la dernière couleur comme de "type face". Par contre j'ai mis une remarque quand-même.
%Factorisations of the type $\sigma_{-2}\sigma_{-1}\sigma_0 \dotsm \sigma_{m-1} = 1\in\mathfrak{S}_n$ are in correspondence with objects often called $m+2$-constellations in the enumerative geometry literature. They are in fact bicolored maps whose white and black faces all have degree $m+2$, and such that the vertices carry the colors $-2, -1, 0, \dotsc, m-1$ (decreasingly modulo $m+2$ following the canonical orientation). Our $m$-constellations are obviously the same. The relation between both is obtained by the following operation at each edge between the colors $-2$ and $-1$,
%\begin{equation*}
%    \begin{array}{c}\includegraphics[scale=.55]{}\end{array}
%\end{equation*}
\end{rem}

The weight $w(M)$ of an $m$-constellation is defined as
\begin{equation*}
    w(M) = \tilde{t}^{n(M)} \prod_{c=0}^{m-1} \bar{u}_c^{n_c(M)} \prod_{s=1}^{D_1} p_s^{f^{\circ}_s(M)} \prod_{r=1}^{D_2} q_r^{f^{\bullet}_r(M)},
\end{equation*}
where $n(M)$ is the number of right paths (which is the number of edges divided by $m$), $n_c(M)$ the number of vertices of color $c$, $f^\circ_s(M)$ the number of white faces of degree $ms$ and $f^\bullet_r(M)$ the number of black faces of degree $mr$.

A \emph{white rooted} (\resp \emph{black rooted}) constellation is an $m$-constellation with a marked right (\resp left) path. Equivalently, it corresponds to marking a corner for a fixed color $c\in[0..m-1]$ in a white (\resp black) face. The series $W_{g,n}(x_1, \dotsc, x_n)$ then corresponds to the generating series of labelled connected $m$-constellations of genus $g$,
\begin{itemize}
    \item with $n$ labeled, marked right paths such that no two of them lie in the same white face,
    \item counted with $x_i^{-f_i(M)-1}$ where $f_i(M)$ is the degree of the face with the $i$-th marked right path,
    \item with weight $w(M)$ except for the $p_{f_i(M)}$s of the white faces containing the marked right paths.
\end{itemize}

\subsection{Slices and elementary slices}

\begin{quote}{\it From now on, in Section~\ref{sec:constellations}, all constellations will be planar.}
\end{quote}

An $m$-constellation with a virtual boundary is a map satisfying the same definition as $m$-constellations except for one face which is neither black nor white. In particular, its boundary is not necessarily an oriented cycle.

\begin{defn}[Slices, ~\cite{AlbenqueBouttier2022}]
A slice is a planar $m$-constellation with a virtual boundary $f^*$ which has three marked corners, $l, r, o$, which split the boundary of $f^*$ into three connected paths as follows:
\begin{itemize}
    \item The \emph{right boundary} is oriented from $r$ to $o$ (avoiding $l$) and is the unique geodesic from $r$ to $o$.
    \item The \emph{left boundary} is oriented from $l$ to $o$ (avoiding $r$) and is a geodesic from $r$ to $o$.
    \item The \emph{base} between $l$ and $r$.
    \item The left and right boundaries intersect only at the \emph{origin}, which is the vertex where $o$ sits.
\end{itemize}
If the base is oriented from $l$ to $r$ (\resp from $r$ to $l$) we say that it is a \emph{white} (\resp \emph{black}) slice, see Figure \ref{fig:Slice}. If the base consists in a single edge, we say that it is an \emph{elementary} slice.
\end{defn}

\begin{figure}
    \centering
    \includegraphics[scale=.5]{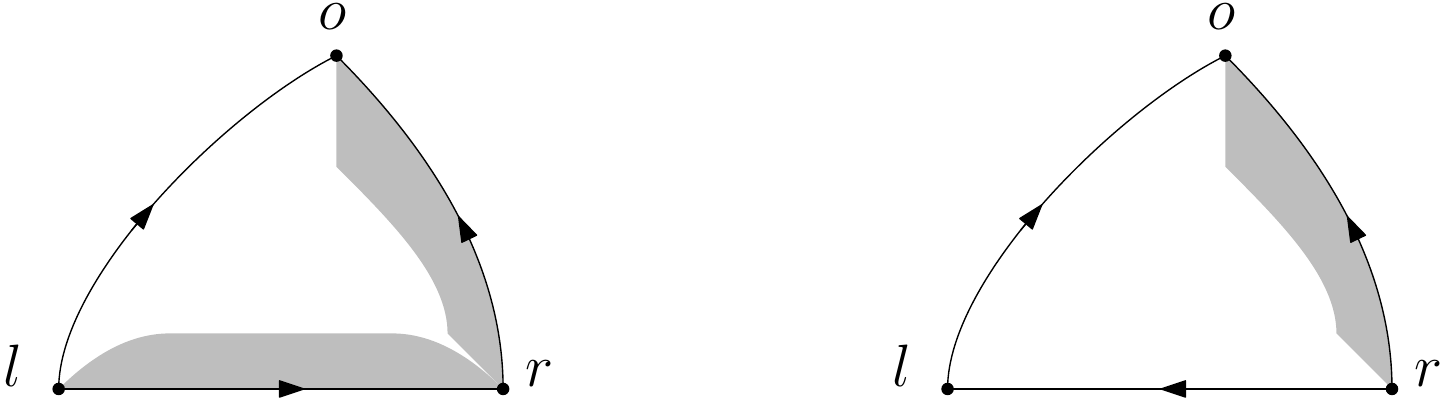}
    \caption{A white (left) and a black (right) slices with their marked corners $0, l, r$.}
    \label{fig:Slice}
\end{figure}

A slice is canonically pointed at its origin and we label vertices with $\ell_o(v)$ (abusing the notation by using the corner $o$ instead of the vertex as subscript). If we denote $\ell_o(l)$ and $\ell_o(r)$ the labels at $l$ and $r$, we call $\ell_o(r)-\ell_o(l)$ the \emph{increment} of white slices and $\ell_o(l)-\ell_o(r)$ the increment of black slices.

Let $\mathcal{A}^{(c)}_{n,k}$ (\resp $\mathcal{B}^{(c)}_{n,k}$) be the set of white (\resp black) slices of increment $k$, whose bases have length $n$ and where $l$ (\resp $r$) sits at a vertex of color $c$. Their generating series are defined as
\begin{equation*}
    A^{(c)}_{n,k} = \sum_{M\in \mathcal{A}^{(c)}_{n,k}} w(M),\qquad B^{(c)}_{n,k} = \sum_{M\in \mathcal{B}^{(c)}_{n,k}} w(M).
\end{equation*}
Let $\mathcal{A}^{(c)}_k$ (\resp $\mathcal{B}^{(c)}_k$) be the set of elementary white (\resp black) slices of increment $mk-1$ where $l$ (\resp $r$) sits at a vertex of color $c\in[0..m-1]$, with generating series $\tilde{A}^{(c)}_k$ (\resp $\tilde{B}^{(c)}_k$) defined similarly as above. The following proposition is essentially proved in \cite{AlbenqueBouttier2022} (we simply add the colour weights here).

\begin{prop} \label{thm:CompositeSlices}
Those series are given by
\begin{equation} \label{CompositeSlices}
\begin{aligned}
A_{n,k}^{(c)} &= [\alpha^k] \alpha^{-n} \prod_{r=1}^{n} \tilde{A}^{(c-r+1)}(\alpha^m),\\
%[\alpha^k] \prod_{r=0}^{n-1} \Bigl(\sum_{p=0}^{D_1} \bar{u}_c A_p^{(c-r)} \alpha^{mp-1}\Bigr),\\
B_{n,k}^{(c)} &= [\alpha^{-k}] \alpha^{n} \prod_{r=1}^{n} \tilde{B}^{(c-r+1)}(\alpha^m),
\end{aligned}
\end{equation}
with
\begin{equation*}
    \tilde{A}^{(c)}(z) = \bar{u}_c A^{(c)}(z \prod_{i\in I} u_i),\qquad \tilde{B}^{(c)}(z) = B^{(c)}(z \prod_{i\in I} u_i),
\end{equation*}
where the polynomials $A^{(c)}(z), B_k^{(c)}(z)$ (in $z$ and $1/z$ respectively) are given by Definition \ref{def:WkBk}, which in the polynomial case reduces to
\begin{equation} \label{eq:PolynomialAB}
    \begin{aligned}
    A^{(c)}(z) &= 1 + u_c \sum_{s=1}^{D_2} q_s t^s \left\{z^s \frac{\prod_{c'=0}^{m-1} B^{(c')}(z)^s}{B^{(c)}(z)}\right\}^{\geq},\\
    B^{(c)}(z) &= 1 + u_c \sum_{s=1}^{D_1} p_s \left[z^{-s} \frac{\prod_{c'=0}^{m-1} A^{(c')}(z)^s}{A^{(c)}(z)}\right]^{<}.
    \end{aligned}
\end{equation}
\end{prop}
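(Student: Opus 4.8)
The plan is to establish the two assertions of Proposition~\ref{thm:CompositeSlices} in turn: first the composite formula~\eqref{CompositeSlices}, which expresses the series $A^{(c)}_{n,k}, B^{(c)}_{n,k}$ of slices with base of length $n$ as coefficients of products of the elementary series $\tilde{A}^{(c)}, \tilde{B}^{(c)}$; and then the fact that, after the change of variables relating $\tilde{A}^{(c)}, \tilde{B}^{(c)}$ to the polynomials $A^{(c)}, B^{(c)}$ of Definition~\ref{def:WkBk}, the elementary series satisfy the system~\eqref{eq:PolynomialAB}. Both steps run parallel to \cite{AlbenqueBouttier2022}, the only new ingredient being the systematic bookkeeping of the vertex-colour weights $\bar{u}_c$ and of the colour shifts $c \mapsto c-1 \bmod m$ imposed by the constellation structure (Proposition~\ref{thm:FactorizationsConstellations}).

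For the composite formula I would invoke the slice decomposition of \cite{AlbenqueBouttier2022}: a white slice whose base $l = v_0 \to v_1 \to \dots \to v_n = r$ has length $n$ is cut uniquely, along the leftmost geodesics issued from the base vertices, into an ordered concatenation of $n$ elementary white slices, the right boundary of each being glued to the left boundary of the next. Granting this bijection, it remains to follow two statistics. The increment is additive under concatenation, so if the $r$-th piece has increment $mk_r-1$ then the total increment is $k = \sum_{r=1}^{n}(mk_r-1) = m\sum_r k_r - n$. The colour is forced: along the base, oriented from $l$ to $r$, each edge leaves a colour-$c'$ vertex and points to a colour-$(c'-1)$ one, so $v_{r-1}$ sits at colour $c-r+1$ and the $r$-th piece contributes the series $\tilde{A}^{(c-r+1)}$. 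Summing over admissible $(k_r)_r$ then gives
\begin{equation*}
A^{(c)}_{n,k} = \sum_{\substack{k_1,\dots,k_n\\ \sum_{r}(mk_r-1)=k}} \prod_{r=1}^{n} \tilde{A}^{(c-r+1)}_{k_r} = [\alpha^{k}]\, \alpha^{-n}\prod_{r=1}^{n} \tilde{A}^{(c-r+1)}(\alpha^{m}),
\end{equation*}
which is the first line of~\eqref{CompositeSlices}; the black case is identical after reversing the orientation of the base.

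For the functional equations I would decompose an elementary slice by removing the (unique) actual face incident to its base edge. By the canonical orientation this face is black for a white slice and white for a black slice, of degree $ms$ for some $s \geq 1$, and it carries the weight $q_s$ (resp.\ $p_s$); its $s$ bounding left paths, whose vertices run through all $m$ colours, each support a composite slice of the opposite type, producing a factor $\prod_{c'=0}^{m-1}\tilde{B}^{(c')}(z)^s$ (resp.\ $\prod_{c'}\tilde{A}^{(c')}(z)^s$) corrected by dividing out one copy at the colour of the base, while the requirement that the left and right sides be geodesics is encoded by the truncations $\{z^s\,\cdot\,\}^{\geq}$ (resp.\ $[z^{-s}\,\cdot\,]^{<}$); the empty slice contributes the additive constant. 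This yields closed functional equations for the families $(\tilde{A}^{(c)})_c$ and $(\tilde{B}^{(c)})_c$ in terms of one another.

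It then remains to substitute $\tilde{A}^{(c)}(z) = \bar{u}_c A^{(c)}(z\prod_{i\in I} u_i)$, $\tilde{B}^{(c)}(z) = B^{(c)}(z\prod_{i\in I} u_i)$ together with $\tilde{t} = (\prod_{i\in I} u_i)\,t$ from~\eqref{eq:changeOfVariables}, and to check that the combinatorial equations collapse exactly onto~\eqref{eq:PolynomialAB}. I expect this last reconciliation to be the main obstacle. The rescaling $z \mapsto z\prod_i u_i$ commutes with the truncations $\{\cdot\}^{\geq}, [\cdot]^{<}$ (since it is multiplication by a $z$-constant and thus preserves the sign of exponents), and one checks that the powers of $\prod_i u_i$ coming from $z^s$ and from $\tilde{t}^s$ cancel; the genuinely delicate point is the colour-weight bookkeeping, namely that the weights $\bar{u}_{c'}$ of the interior boundary vertices of the removed face, together with the asymmetric normalisation ($\bar{u}_c$ appearing for $\tilde{A}^{(c)}$ but not for $\tilde{B}^{(c)}$), reproduce precisely the $u_c$ prefactor and the single base correction $1/B^{(c)}$ (resp.\ $1/A^{(c)}$) of~\eqref{eq:PolynomialAB}. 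Since these are exactly the computations carried out for $m=1$ in \cite{AlbenqueBouttier2022} with the colour weights suppressed, I would verify that each step survives verbatim once the $\bar{u}_c$ are reinstated, referring to that paper for the bijectivity of the decompositions.
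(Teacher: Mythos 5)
Your proposal is correct and follows essentially the same route as the paper's proof: the leftmost-geodesic slice decomposition with colour bookkeeping yields \eqref{CompositeSlices} (the paper phrases your sum over admissible $(k_r)$ equivalently via weighted \L{}ukasiewicz paths), removing the face incident to the base of an elementary slice yields the coupled functional equations \eqref{eq:tildeAtildeB}, and the substitution $\tilde{A}^{(c)}(z)=\bar{u}_c A^{(c)}(z\prod_{i\in I}u_i)$, $\tilde{B}^{(c)}(z)=B^{(c)}(z\prod_{i\in I}u_i)$, $\tilde{t}=\bigl(\prod_{i\in I}u_i\bigr)t$ does collapse them onto \eqref{eq:PolynomialAB} exactly as you anticipate, with the powers of $\prod_i u_i$ cancelling and the asymmetric $\bar{u}_c$ normalisation producing the $u_c$ prefactors. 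The reconciliation you flag as the main obstacle is indeed just this routine check, and like the paper you correctly defer the bijectivity of the decompositions to \cite{AlbenqueBouttier2022}.
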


We will see the generating series of slices as generating series of \L{}ukasiewicz paths appropriately weighted. Along an oriented edge of a slice
\begin{itemize}
    \item the color decreases by $1\mod m$,
    \item and the label $\ell_o(v)$ changes by $mk-1$ for $k\in\mathbb{N}$.
\end{itemize}
This explains why it is the following family of paths which will naturally appear.

\begin{defn}[Weighted $m$-paths]
A white $m$-path starting at color $c\in[0..m-1]$ is a path on $\mathbb{Z}^2$ with steps of type $\omega_p^\circ = (1,mp-1)$ for $p\geq 0$, with associated weight $\tilde{A}^{(c+r-1)}_p$ if the $r$-th step is of type $\omega_p^{\circ}$. A black $m$-path is a path on $\mathbb{Z}^2$ with steps of type $\omega_p^\bullet = (1,1-mp)$ for $p\geq 0$, with associated weight $\tilde{B}^{(c-r+1)}_p$ if the $r$-th step is of type $\omega_p^{\bullet}$. We say that the $r$-th step has color $c-r+1\mod m$.
\end{defn}

Those paths already appeared, without the colour-dependent weight system, in \cite{AlbenqueBouttier2022} and with black paths restricted to $p=0$ in the case of constellations with $q_k = \delta_{k,1}$ in \cite{AlbenqueBouttier:fpsac12}.

Let $\mathcal{L}^{(c)\circ}_{n,k}$ (\resp $\mathcal{L}^{(c)\bullet}_{n,k}$) be the set of $m$-paths which starts at $(0,0)$ with the color $c$ and ends at $(n,k)$ (\resp $(n,-k)$). Let $\tilde{A}^{(c)}(z) = \sum_{i\geq0} \tilde{A}_i^{(c)} z^i$ and $\tilde{B}(z) = 1 + \sum_{i\geq1} \tilde{B}_i^{(c)} z^{-i}$. Then the generating series of $\mathcal{L}^{(c)\circ}_{n,k}$ and $\mathcal{L}^{(c)\bullet}_{n,k}$ are, clearly,
\begin{equation} \label{eq:PathsSeries}
\begin{aligned}
    L^{(c)\circ}_{n,k} &= [\alpha^k] \prod_{r=1}^{n} \Bigl(\sum_{i\geq 0} \tilde{A}^{(c-r+1)}_i \alpha^{mi-1}\Bigr) = [\alpha^k] \alpha^{-n} \prod_{r=1}^{n} \tilde{A}^{(c-r+1)}(\alpha^m),\\
    L^{(c)\bullet}_{n,k} &= [\alpha^{-k}] \prod_{r=1}^{n} \Bigl(\sum_{i\geq 0} \tilde{B}^{(c-r+1)}_i \alpha^{1-mi}\Bigr) = [\alpha^{-k}] \alpha^{n} \prod_{r=1}^{n} \tilde{B}^{(c-r+1)}(\alpha^m).
\end{aligned}
\end{equation}

\begin{proof}[Proof of Proposition \ref{thm:CompositeSlices}]
We follow the slice decomposition of~\cite{AlbenqueBouttier2022}, while keeping track of the colours of vertices.

Let $M^{(c)}_{n,k}$ be a white slice of increment $k$, with $l$ sitting at a vertex of color $c$, and base of length $n$. Let us denote $v_1, \dotsc, v_n$ the sequence of vertices along the base from $l$ to $r$. Then the sequence of their labels can be encoded as a \L{}ukasiewicz path from $\mathcal{L}^{(c)\circ}_{n,k}$. If $M^{(c)}_{n,k}$ is a black slice, it works the same by writing the path from $r$ to $l$.

Furthermore, one can apply the slice decomposition to $M^{(c)}_{n,k}$. It consists in drawing from each vertex of the base the leftmost geodesics to the origin. After cutting the geodesics every time two of them meet, one obtains a concatenation of $n$ elementary white slices, whose bases are the edges of the base between $v_i$ and $v_{i+1})$, $i=1, \dotsc, n-1$, and whose increments are $\ell_o(v_{i+1}) - \ell_o(v_i)$, see Figure \ref{fig:SliceDecomposition}. 

\begin{figure}
    \centering
    \includegraphics[scale=.5]{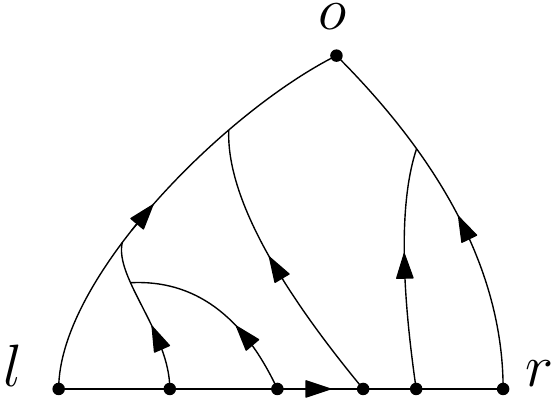}\hspace{2cm}
    \includegraphics[scale=.5]{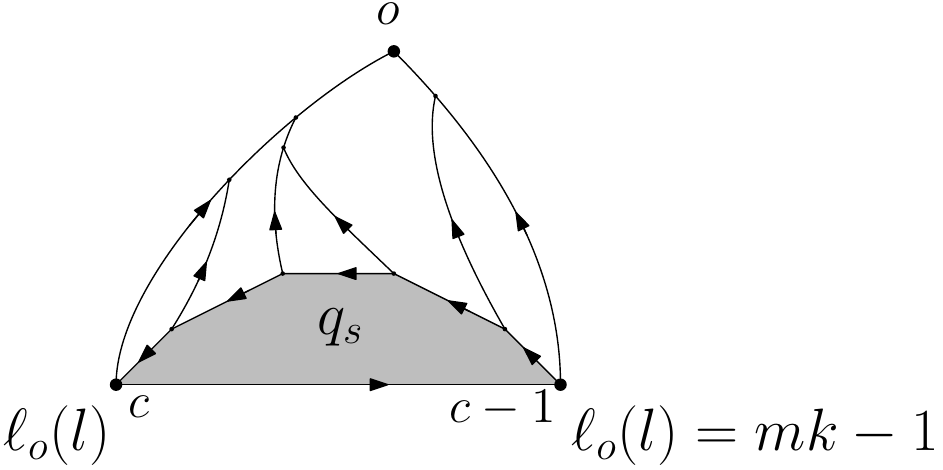}
    \caption{The slice decomposition of a white slice into elementary white slices (left). The slice decomposition applied to an elementary white slice after revealing the black face incident to the base (right).}
    \label{fig:SliceDecomposition}
\end{figure}

In terms of generating series, one can thus decorate the steps of the \L{}ukasiewicz paths of $\mathcal{L}^{(c)\circ}_{n,k}$ (respectively $\mathcal{L}^{(c)\bullet}_{n,k}$) with the series $\tilde{A}^{(c)}_k$s (respectively $\tilde{B}^{(c)}_k$). Notice that the edge of the base between $v_i$ and $v_{i+1}$ has color $c-i+1\mod m$. Therefore, if the $i$-th step is of type $(1,mk_i-1)$, it receives the weight $\tilde{A}^{(c-i+1)}_{k_i}$. This gives for $k\in\mathbb{Z}$
\begin{equation}
A_{n,k}^{(c)} =  L^{(c)\circ}_{n,k}
%\sum_{\mathcal{L}^\circ_{n,k}} \prod_{i=1}^n \tilde{A}^{(c-i+1)}_{k_i}
,\qquad
B_{n,k}^{(c)} =  L^{(c)\bullet}_{n,k}
%\sum_{\mathcal{L}^\bullet_{n,k}} \prod_{i=1}^n \tilde{B}^{(c-i+1)}_{k_i}
.
\end{equation}
Here it is important to notice that elementary slices whose bases have compatible colours can be glued together along their full boundaries, since the colour along each oriented edge decreases by $1$ modulo $m$ (thus if the colours of two boundaries to be glued are compatible at the beginning, they are compatible throughout). We have thus proved \eqref{CompositeSlices}. 

To prove the relations in Definition \ref{def:WkBk} for $\tilde{A}_k^{(c)}$, consider an elementary white slice. For $k=0$, it can be reduced to a single edge, where the corners $o$ and $r$ are the same, with weight $\bar{u}_c$. If not, consider the black face to the left of its base. This face can have arbitrary degree $ms$ for $s\geq 1$. After removing the base, the white elementary slice becomes a black slice with base length $ms-1$. It is therefore found that
\begin{equation*}
    \tilde{A}_k^{(c)} = \bar{u}_c\delta_{k,0} + \sum_{s\geq 1} q_{s} \tilde{t}^{s} B^{(c-1)}_{ms-1,mk-1}.
\end{equation*}
This is illustrated in Figure \ref{fig:SliceDecomposition}. Similarly $\tilde{B}_k^{(c)} = \sum_{s\geq 1} p_{s} A^{(c-1)}_{ms-1,mk-1}$ for $k\geq 1$, while a black slice of increment $-1$ is by definition reduced to its base, implying $B_0^{(c)}=1$. One can thus write
\begin{equation} \label{eq:tildeAtildeB}
    \begin{aligned}
        \tilde{A}^{(c)}(z) &= \bar{u}_c + \sum_{s=1}^{D_2} q_s \tilde{t}^s \left\{ z^s \frac{\prod_{c'=0}^{m-1} \tilde{B}^{(c')}(z)^s}{\tilde{B}^{(c)}(z)}\right\}^{\geq},\\
        \tilde{B}^{(c)}(z) &= 1 + \sum_{s=1}^{D_1} p_s \left[ z^{-s} \frac{\prod_{c'=0}^{m-1} \tilde{A}^{(c')}(z)^s}{\tilde{A}^{(c)}(z)}\right]^{<}.
    \end{aligned}
\end{equation}
By comparing with Definition \ref{def:WkBk}, one finds $\tilde{A}^{(c)}(z) = \bar{u}_c A^{(c)}(z\prod_{i\in I} u_i)$ and $\tilde{B}^{(c)}(z) = B^{(c)}(z\prod_{i\in I} u_i)$.
\end{proof}

\subsection{Expression of $W_{0,1}$}

Here we prove Theorem \ref{thm:Discrat} in the case of polynomial $G$.

\begin{thm} \label{thm:PolynomialW01}
If $J=\emptyset$, then
\begin{equation*}
    W_{0,1}(x) = Y(Z(x)) - \sum_{s=1}^{D_1} p_s x^{s-1},\;\; \text{with}\quad 
    zX(z) = \prod_{c=1}^{m-1} A^{(c)}(z),\;\; X(z)Y(z) = \bar{u}_c (A^{(c)}(z) B^{(c)}(z)-1).
\end{equation*}
\end{thm}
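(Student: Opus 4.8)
The plan is to prove Theorem~\ref{thm:PolynomialW01} entirely combinatorially, building on the slice decomposition machinery from Proposition~\ref{thm:CompositeSlices}. The key combinatorial identity I would aim to establish is that the disk generating function $W_{0,1}(x)$, which counts white-rooted planar $m$-constellations weighted by $x^{-f-1}$ with $f$ the degree of the rooted face, can be re-expressed as a sum over the "excursions" encoded by the $\mathcal{L}$ukasiewicz paths of Definition~\ref{thm:CompositeSlices}. First I would fix the change of variables $\tilde t = (\prod_{i\in I} u_i)\, t$ from~\eqref{eq:changeOfVariables} and work with the tilded series $\tilde A^{(c)}$, $\tilde B^{(c)}$ throughout; the final dictionary $\tilde A^{(c)}(z) = \bar u_c A^{(c)}(z\prod_i u_i)$ lets me translate back at the very end.

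\emph{Main combinatorial step.} The heart of the argument is to decompose a white-rooted constellation by cutting along the \emph{leftmost} and \emph{rightmost} geodesics emanating from the root corner, exactly as in~\cite{AlbenqueBouttier2022}. This splits the object into a slice-like structure whose contour is a closed excursion. Concretely, I expect to show that $W_{0,1}(x)$ (up to the explicit shift $\sum_s p_s x^{s-1}$, which accounts for the "small" degenerate configurations where the marked white face wraps around with no genuine internal structure) is the generating function of closed $m$-paths that return to their starting height, weighted by $x^{-f-1}$. Summing the geometric-type series over all path lengths should produce precisely the equation
\[
Z(x) = \bar x\, \frac{\prod_{i\in I} A^{(i)}(Z)}{\prod_{j\in J} A^{(j)}(Z)},
\]
which in the polynomial case ($J=\emptyset$) reads $Z(x)=\bar x \prod_{c} A^{(c)}(Z)$, i.e.\ $zX(z)=\prod_c A^{(c)}(z)$ after substituting $z=Z(x)$. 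This identifies the change of variable between $x$ and the path-height variable $Z$.

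\emph{Identifying $Y$.} Next I would compute the actual weighted sum defining $W_{0,1}$ in the $Z$-variable. Here the role of $Y$ is to record the "height increment" contribution of the root, and I expect the combinatorics to force $X(z)Y(z) = \bar u_c\,(A^{(c)}(z)B^{(c)}(z)-1)$ for any colour $c$: the product $A^{(c)}B^{(c)}$ arises naturally because the root corner sees both a white elementary slice (contributing $A^{(c)}$) and the black face to its left (contributing $B^{(c)}$), while the $-1$ subtracts the empty/degenerate contribution. The independence of $c$ is guaranteed by Remark~\ref{rem:Hsymmetry}, so I may pick whichever colour is most convenient for the contour argument. Assembling these pieces gives $W_{0,1}(x) + \sum_s p_s x^{s-1} = Y(Z(x))$, which is the claim.

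\emph{Anticipated obstacle.} The main obstacle is bookkeeping the colour weights $\bar u_c$ consistently through the slice decomposition, and in particular verifying that the naive generating-function manipulation with the cyclic index shift $c\mapsto c-r+1 \bmod m$ in~\eqref{eq:PathsSeries} correctly telescopes into the product $\prod_c A^{(c)}$ rather than a single $A^{(c)}$. The second delicate point is pinning down exactly which degenerate configurations contribute the shift $\sum_{s=1}^{D_1} p_s x^{s-1}$; this requires care about whether the marked white face is allowed to coincide with the outer face of the path excursion, and matching the $p_s$-weighting convention (internal versus boundary) against the $x^{-s-1}$ convention introduced by $\nabla_x$. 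Since~\cite{AlbenqueBouttier2022} carries out the $m=1$ version in full, I would lean on their bijective statements and only redo the generating-function algebra with the extra colour variable inserted, as the preamble to this section explicitly licenses.
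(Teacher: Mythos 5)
Your overall architecture (work with the tilded series via the change of variables~\eqref{eq:changeOfVariables}, run the Albenque--Bouttier slice machinery with colour weights inserted, and match against $Y(Z(x))$ algebraically at the end) is the same as the paper's, and your derivation of the fixed-point equation $\tilde Z(x)=\bar x\prod_c \tilde A^{(c)}(\tilde Z(x))$ by first-return/last-passage decompositions is indeed how the paper proceeds. But there is a genuine gap at the heart of your ``main combinatorial step'': you assert that $W_{0,1}(x)$, up to the shift, is directly the generating function of closed $m$-paths obtained by cutting along geodesics ``emanating from the root corner''. Geodesics (and hence slices) are only defined on \emph{pointed} maps --- one needs the distance function $\ell_{v^*}$ --- and what the excursion generating function naturally counts is pointed-rooted maps in which the root label is minimal on the root face. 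The paper's proof (Proposition~\ref{thm:BijectiveW01}) hinges on a difference argument \`a la Bouttier--Guitter: since $\mathcal M=\mathcal M^*_{d=0}$, one writes
$$xW_{0,1}(x)=P^{(c)}(x)-\sum_{d\geq1}P^{(c)}_d(x),$$
where $P^{(c)}$ counts all pointed-rooted maps with root label minimal on the root face (this is the excursion term $\bigl(\tilde A^{(c)}(\tilde Z)/\tilde A^{(c)}_0-1\bigr)\bar u_c$), and the subtracted term $\sum_{d\geq 1}P^{(c)}_d$ --- maps whose pointed vertex does \emph{not} lie on the root face --- is evaluated by a separate root-vertex-splitting argument, yielding $\tilde A^{(c)}(\tilde Z)\sum_{s}p_s\sum_{k>0}\tilde Z^k[\alpha^{mk+1}]\prod_r(\cdots)$. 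This subtraction term is entirely absent from your plan, and it is not the same thing as the shift $\sum_s p_s x^{s-1}$: the shift emerges only in the second, purely algebraic step, when one eliminates $\tilde B^{(c)}$ via its defining equation~\eqref{eq:tildeAtildeB} and uses $\prod_{c'}\tilde A^{(c')}(\tilde Z)=x\tilde Z$ to turn the non-truncated part of $z^{-s}\prod_{c'}\tilde A^{(c')}(z)^s$ at $z=\tilde Z(x)$ into $x^s$. So your attribution of the shift to ``degenerate configurations'' conflates two different terms, and without the pointing/un-pointing step your claimed identification of $W_{0,1}$ with a path generating function is false --- it would count pointed objects, not rooted-only ones.

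A secondary issue: you invoke Remark~\ref{rem:Hsymmetry} to justify the independence of $H^{(c)}$ on $c$, but in the paper that remark is proved (Section~\ref{sec:Hsymmetry}) \emph{as a consequence of} Theorem~\ref{thm:Discrat}, so using it inside this proof would be circular. Fortunately it is also unnecessary: the bijective formula and the algebraic matching are carried out for each fixed $c$ separately, and independence of $c$ then follows for free because the left-hand side $W_{0,1}$ does not depend on $c$.
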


The proof is twofold. The first step relies on a bijective argument originally due, in the case of maps, to Bouttier and Guitter \cite[Section 3.3]{BouttierGuitterContinuedFractions} which provides an expression for $W_{0,1}$. This argument was further used in \cite{AlbenqueBouttier:fpsac12} to derive $W_{0,1}$ in the case of constellations with $J=\emptyset$ and $q_k=\delta_{k,1}$ (in our notation). The second step is a calculation to match the bijective expression of $W_{0,1}$ with the RHS of the above theorem, i.e.~$Y(Z(x)) - \sum_{s=1}^{D_1} p_s x^{s-1}$. This two-step proof is due to \cite{AlbenqueBouttier2022} in the case of hypermaps, which we follow by merely adding color-dependent weights.
 %and the root corner is the corner incident to the root vertex and the root face.\todo{Do I need root corner?}
%, the root vertex and the root face are the vertex and the face incident to the root corner. If the root face is white (\resp black), the map is said to have a white root (\resp black root).

% The latter is obviously also the set of connected, pointed rooted planar $m$-constellations whose pointed vertex is the root vertex.
%In order to prove Theorem \ref{thm:Disc}, we first rewrite $Y(Z(x))- \sum_{s=1}^{D_1} p_s x^{s-1}$ to obtain an expression which we can then prove bijectively to be $W_{0,1}(x)$. As we have seen above, the functions $\tilde{A}^{(c)}(z), \tilde{B}^{(c)}(z)$ are more natural for enumeration purposes than $A^{(c)}(z), B^{(c)}(z)$. 

%\begin{lem}
%From Definitions \ref{def:WkBk}, \ref{def:H} and %\ref{def:SpectralCurve},
%\begin{equation} \label{eq:lemma:SpectralCurve}
%    xY(Z(x)) - \sum_{s=1}^{D_1} p_s x^s = \tilde{A}^{(c)}(\tilde{Z}(x)) - \bar{u}_c - \tilde{A}^{(c)}(\tilde{Z}(x)) \sum_{s=1}^{D_1} p_s \sum_{k\geq 0} \tilde{Z}(x)^k [\alpha^{mk+1}] \prod_{r=1}^{ms-1} \Bigl(\sum_{i=0}^{D_2} \tilde{A}^{(c)}_i \alpha^{mi-1}\Bigr)
%\end{equation}
%\end{lem}

%To interpret bijectively the RHS of \eqref{eq:lemma:SpectralCurve}, we start with describing the combinatorial interpretation of some series appearing in it in terms of $m$-paths.

An \emph{$m$-excursion} is an $m$-path starting at $(0,0)$, ending at $(n,0)$ for some $n$ and staying above~$0$. Our $m$-excursions will always be weighted like $m$-paths, with an extra weight $x^{-1/m}$ per step. That is to say, the weights are $x^{-1/m}\tilde{A}^{(c')}_k$ for each step at color $c'$ of type $(1,mk-1)$, $k\geq 0$.

Notice that if an $m$-path crosses the horizontal line of height $h$ at the color $c$, then it can only cross it again with the same color. This is because along each step of the path the variation of abscissa and height are equal modulo $m$ (to $-1$), therefore they are also equal along any interval.
%if the total height variation is zero, one has $\sum_{j=1}^p (mk_j-1)=0$ for some $k_j\geq0$ which implies that $p$ is a multiple of $m$. 
%Since colors decreases by 1 modulo $m$ along oriented edges, we conclude that it is always the same color which is encountered at a given height.

Let $Z^{(c)}$ be the generating series of $m$-excursions with an additional down step below 0. We can write schematically
\begin{equation*}
    Z^{(c)} = \begin{array}{c} \includegraphics[scale=.5]{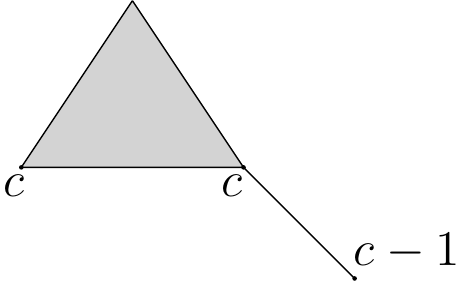} \end{array}
\end{equation*}
Let $\tilde{Z}$ then be the generating series of $m$-paths starting at $(0,0)$ with an arbitrary color $c$ and finishing at $(ms,-m)$ while staying above $-m$ until the last step, also counted with weight $x^{-1/m}\tilde{A}^{(c')}_k$ for each step at color $c'$ of type $(1,mk-1)$.% We will show that $\hat{Z} = \tilde{Z}$, where the latter is defined by \eqref{eq:Ztilde}.

We perform a first passage decomposition. Such a path starts with an excursion at color, say $c$, until it goes below 0 for the first time with a down step. The associated weight is $Z^{(c)}$. The down step goes from color $c$ to $c-1$. The path then follows an excursion starting at color $c-1$ until it goes below $-1$ for the first time and so on. There are exactly $m$ excursions of this type to reach height $-m$. This gives
\begin{equation*}
    \tilde{Z} = \begin{array}{c} \includegraphics[scale=.5]{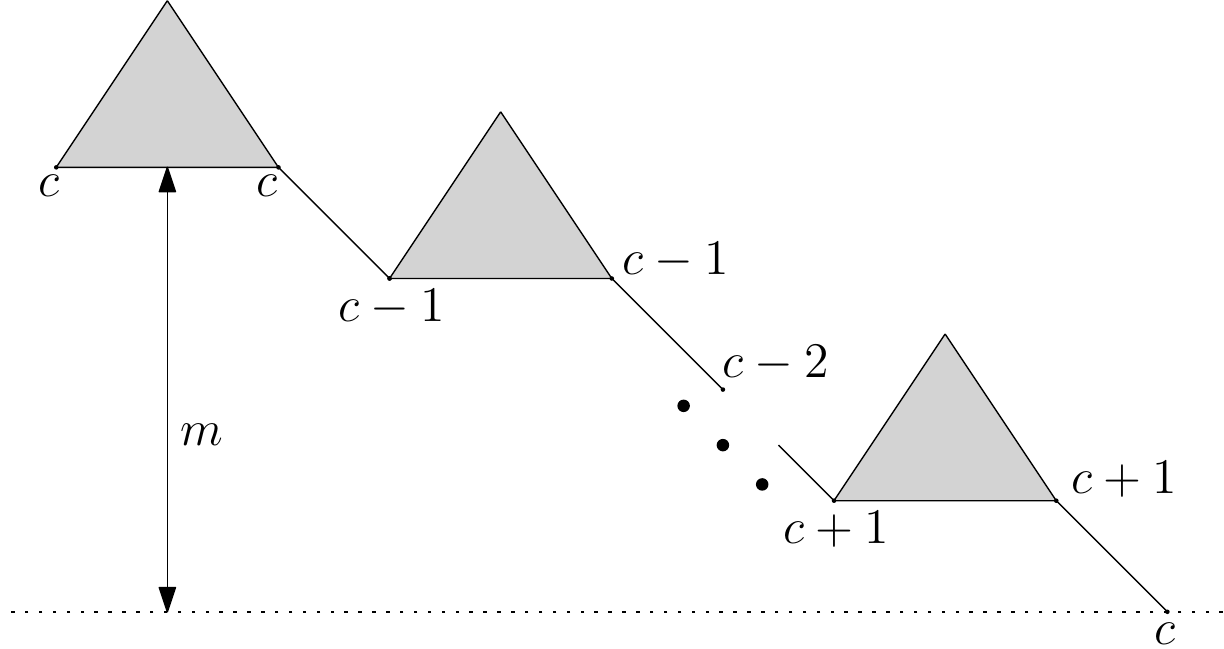}\end{array} =  \prod_{r=1}^{m} Z^{(c-r)}.
\end{equation*}

We then perform a first return decomposition to write an equation on $Z^{(c)}$. An excursion counted by $Z^{(c)}$ starts with an arbitrary step of type $(1,mk-1)$. If $k=0$, the $m$-excursion is over. If $k>0$, 
we perform again a last passage decomposition, decomposing the path into excursions above heights $c+mk-1, c+mk-2, \dots, c$.
These $mk$ excursions can be arranged in consecutive groups of $m$ in which each group is formed by an excursion starting at colour $i$ for all $i \in[0..m-1]$. The generating function for each group is precisely the object counted by $\tilde{Z}$ (up to circular permutation of excursions). Pictorially, we have
%, as on the following figure 

\begin{center} \includegraphics[scale=.5]{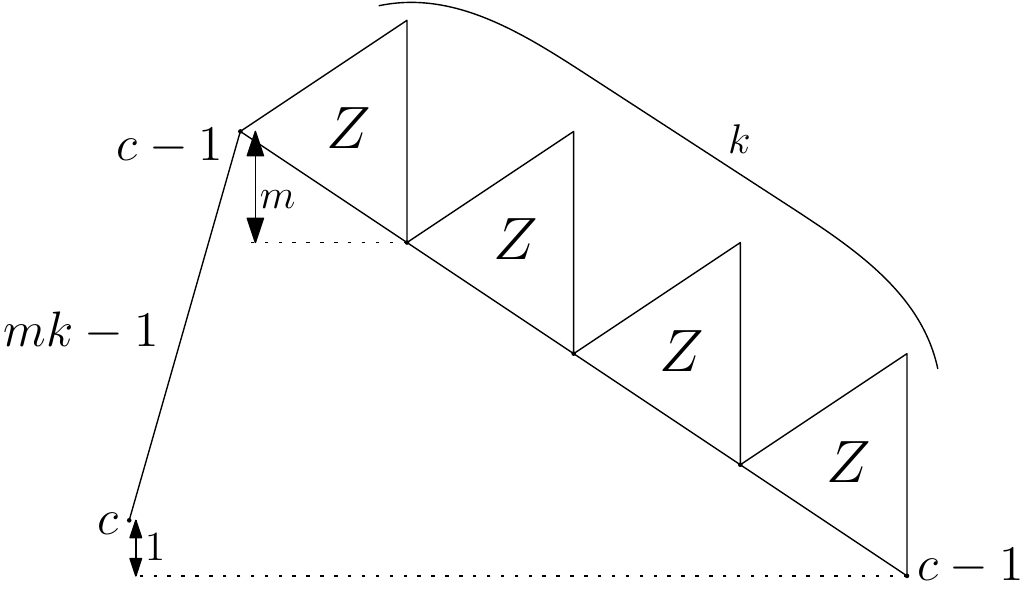} \end{center}

\noindent which translates into the equation:
\begin{equation*}
    Z^{(c)}(x) = x^{-1/m} \sum_{k=0}^{D_2} \tilde{A}_k^{(c)} 
    \tilde{Z}(x)^k
    %\begin{array}{c} \includegraphics[scale=.5]{DecompositionColoredExcursion.pdf} \end{array}
    = x^{-1/m} \tilde{A}^{(c)}(\tilde{Z}(x)).
\end{equation*}

%notice that the path may stay above $mk-1$ until it hits again this height again at color $c-1$ and proceed to go a step down below this %height for the first time. Then the path may follow an arbitrary excursion which starts at color $c-2$, until it hits again the height %$mk-2$ at color $c-2$ and proceed to go a step down below this height for the first time, and so on. After $m$ first returns, the weight %is $\tilde{Z}$ and the path has descended to height $m(k-1)-1$. We proceed with this decomposition $k$ times, until we reach height %$-1$. This gives
%\begin{equation*}
%    Z^{(c)}(x) = x^{-1/m} \sum_{k=0}^{D_2} \tilde{A}_k^{(c)} \begin{array}{c} %\includegraphics[scale=.5]{DecompositionColoredExcursion.pdf} \end{array}
%    = x^{-1/m} \tilde{A}^{(c)}(\tilde{Z}(x)).
%\end{equation*}
We can in particular deduce that $\tilde{A}^{(c)}(\tilde{Z}(x))$ is the series of $m$-excursions with a final down step below 0 for which we ignore the weight $x^{-1/m}$ and that $\tilde{Z}(x)$ is defined by
\begin{equation} \label{eq:ZtildeSeries}
    \tilde{Z}(x) = \frac{1}{x} \prod_{c=0}^{m-1} \tilde{A}^{(c)}(\tilde{Z}(x)).
\end{equation}
The series $\tilde{Z}(x)$ will play in the bijective proofs below, the same role as $Z(x)$ in Section \ref{sec:MainResults} (the fact that we need to work with these two different quantities comes from the change of variable~\eqref{eq:changeOfVariables}).

With what we have done so far, it would be relatively easy to enumerate planar $m$-constellations which are both rooted and pointed. But the true power of the slice constructions (with origins in~\cite{BouttierGuitterContinuedFractions}) lies in the possibility to count maps which are \emph{rooted only}, via subtle difference arguments.
We apply this program here to $m$-constellations (again following~\cite{AlbenqueBouttier2022}). 
\begin{prop} \label{thm:BijectiveW01} We have
\begin{equation*}
    xW_{0,1}(x) = \tilde{A}^{(c)}(\tilde{Z}(x)) - \bar{u}_c - \tilde{A}^{(c)}(\tilde{Z}(x)) \sum_{s=1}^{D_1} p_s \sum_{k\geq 0} \tilde{Z}(x)^k [\alpha^{mk+1}] \prod_{r=1}^{ms-1} \Bigl(\sum_{i=0}^{D_2} \tilde{A}^{(c)}_i \alpha^{mi-1}\Bigr).
\end{equation*}
\end{prop}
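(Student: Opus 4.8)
The plan is to adapt, with colour-dependent weights, the rooted-disk computation of Albenque--Bouttier \cite{AlbenqueBouttier2022} (itself descending from the slice method of Bouttier--Guitter \cite{BouttierGuitterContinuedFractions}), reusing the excursion series $Z^{(c)}$ and $\tilde{Z}(x)$ and the elementary-slice series $\tilde{A}^{(c)}$ built above. The central difficulty is that $W_{0,1}$ enumerates constellations that are \emph{rooted only}, whereas slice decompositions naturally produce \emph{pointed} objects; the bulk of the work is therefore the ``difference trick'' that removes the pointing. Since every individual ingredient (leftmost-geodesic slicing, first-passage and first-return decompositions, and the difference trick) is carried out in \cite{AlbenqueBouttier2022} for $m=1$ without colours, and since our only modification is the multiplicative insertion of the weights $\bar{u}_c$ and $x^{-1/m}$ along the relevant paths, I expect the computation to transport essentially verbatim, which is why we may defer the detailed bijections and path manipulations to that reference.

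First I would make the pointed version explicit. Pointing a rooted planar $m$-constellation at a vertex $v^*$ equips each vertex with its label $\ell_{v^*}$, and cutting along leftmost geodesics towards $v^*$ turns the object into a cyclic concatenation of slices glued along their left/right boundaries. Reading the contour of the marked white face in the label coordinate produces an $m$-path, along which the colour decreases by $1 \bmod m$ exactly as in the definition of weighted $m$-paths, and the slices hanging off each base edge contribute precisely the series $\tilde{A}^{(c-r+1)}$. This identifies the pointed-rooted generating function with a sum over contour label sequences, expressible through $\tilde{A}^{(c)}$ and $\tilde{Z}(x)$ by the same first-passage/first-return decompositions that already produced $\tilde{Z}(x) = \tfrac{1}{x}\prod_{c}\tilde{A}^{(c)}(\tilde{Z}(x))$ in \eqref{eq:ZtildeSeries}.

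Next comes the difference argument. A rooted-only constellation with $V$ vertices is recovered from its $V$ pointings, but rather than dividing by $V$ I would exploit that the pointed series depends on the contour labels only through their \emph{differences}: organising the pointed generating function by the label of the origin of the root slice and forming the appropriate telescoping difference $G_\ell - G_{\ell-1}$ removes the pointing freedom, this being exactly the mechanism by which a rooted-only count is extracted in \cite{BouttierGuitterContinuedFractions, AlbenqueBouttier2022}. The surviving contributions then split into three pieces matching the right-hand side: the ``maximal'' excursion ending with a down-step below $0$ gives the leading term $\tilde{A}^{(c)}(\tilde{Z}(x))$; the degenerate single-edge slice (of weight $\bar{u}_c$) is removed, producing the subtracted constant $\bar{u}_c$; and the remaining term is the localised correction arising when the white face along which the two boundaries of the maximal slice meet has degree $ms$ (hence weight $p_s$), which forces the prefactor $\tilde{A}^{(c)}(\tilde{Z}(x))\,\tilde{Z}(x)^k$ together with the coefficient extraction $[\alpha^{mk+1}]\prod_{r=1}^{ms-1}\bigl(\sum_i \tilde{A}^{(c)}_i \alpha^{mi-1}\bigr)$. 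Assembling the three pieces yields the claimed formula for $xW_{0,1}(x)$.

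Throughout, I would check that the colour bookkeeping is internally consistent: the slice bases carry colours $c, c-1, \dots$ decreasing modulo $m$, and the reference colour $c\in[0..m-1]$ on the right-hand side is arbitrary because gluing elementary slices along full boundaries preserves colour compatibility. The main obstacle is making the difference trick rigorous in the coloured, $q_s$-weighted setting, namely verifying that the telescoping in the label variable still exactly cancels the overcounting by pointings once the per-colour weights $\bar{u}_c$ and the per-step weights $x^{-1/m}$ are present, and that the correction term is correctly attributed to the unique white face carrying the meeting point of the two boundaries. Once this is granted, the explicit path generating function computation is routine and follows \cite{AlbenqueBouttier2022} line by line.
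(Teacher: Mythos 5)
Your overall strategy is indeed the paper's: embed the coloured model into the Albenque--Bouttier slice formalism, express the pointed-rooted series through $m$-paths and the excursion series $\tilde{Z}(x)$, and remove the pointing by a difference argument. But there is a genuine gap in your final assembly. The difference argument does \emph{not} produce the three terms of the stated formula directly. What it produces (in the paper: $xW_{0,1}(x)=P^{(c)}(x)-\sum_{d\geq1}P^{(c)}_d(x)$, where $P^{(c)}_d$ counts pointed-rooted constellations whose root vertex has label $d$ and whose root-face labels are all $\geq d$, the key point being that the rooted-only series is exactly the $d=0$ term) is the intermediate identity \eqref{eq:BijectiveW01intermediate}, whose first term is $\bigl(\tilde{A}^{(c)}(\tilde{Z}(x))/\tilde{A}^{(c)}_0-1\bigr)\bar{u}_c$, not $\tilde{A}^{(c)}(\tilde{Z}(x))-\bar{u}_c$, and whose correction sum runs over $k>0$ only: in the vertex-splitting construction the split vertex acquires a label $d+mk$ with $k$ strictly positive, so no pointed object with $d\geq1$ ever contributes a $k=0$ term. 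Your claim that the surviving contributions "split into three pieces matching the right-hand side", with the $k\geq0$ sum arising directly from the combinatorics, is therefore incorrect as stated.

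Bridging the intermediate identity to the proposition requires the separate relation \eqref{eq:SecondDecomposition}, namely $\tilde{A}^{(c)}_0=\bar{u}_c+\tilde{A}^{(c)}_0\sum_{s=1}^{D_1}p_s\,[\alpha^1]\prod_{r=1}^{ms-1}\bigl(\sum_{i}\tilde{A}^{(c-r)}_i\alpha^{mi-1}\bigr)$, which absorbs exactly the missing $k=0$ term; this is not an algebraic triviality but is itself proved by a further slice decomposition, applied to elementary slices of increment $-1$ after unraveling the white face to the right of the bottom edge of the left boundary, as in \cite{AlbenqueBouttier2022}. Your proposal neither states nor anticipates this step, and a proof following your plan would stall at the intermediate formula. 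A smaller discrepancy: the paper's difference mechanism is not a telescoping $G_\ell-G_{\ell-1}$ over labels but the single subtraction of the $d\geq1$ contributions from the full pointed series, justified by the identification of rooted-only objects with the $d=0$ stratum; your telescoping description would need to be recast in this form for the colour- and $x^{-1/m}$-weighted bookkeeping to go through.
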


\begin{proof}
In a rooted $m$-constellation, we recall that the root is a marked right path. It is incident to exactly one white face which we call the root face. For $c\in[0..m-1]$, a root vertex can also be designated to be the vertex of color $c$ on the root.

Let $\mathcal{M}^*$ be the set of connected, pointed rooted planar $m$-constellations with a white root, and $\mathcal{M}$ the set of connected, rooted planar $m$-constellations with a white root. Let $d\geq 0$ and $c\in[0..m-1]$. We introduce $\mathcal{M}^*_{d}$ the set of connected rooted, pointed $m$-constellations with pointed vertex $v^*$ and a root vertex $\vec{v}$ of color $c$, such that
\begin{itemize}
    \item $\ell_{v^*}(\vec{v}) = d$ where $\vec{v}$ is the root vertex,
    \item all vertices $v$ on the root face satisfy $\ell_{v^*}(v) \geq d$.
\end{itemize}
Notice that $\mathcal{M} = \mathcal{M}^*_{d=0}$ because $d=0$ enforces $v^*$ to coincide with the root vertex. Denoting
\begin{equation*}
    P^{(c)}_d(x) = \sum_{M\in \mathcal{M}^\bullet_d} x^{-f(M)} w(M),
\end{equation*}
with $f(M)$ being the degree of the root face, and $P^{(c)}(x) = \sum_{d\geq 0} P^{(c)}_d(x)$, we have (for any choice of $c$)
\begin{equation*}
   x W_{0,1}(x) = P^{(c)}(x) - \sum_{d\geq 1} P^{(c)}_d(x).
\end{equation*}
The two contributions can be evaluated independently.

First $P^{(c)}(x)$ counts $m$-constellations where the label of the root vertex is minimal among the labels of the root face. In terms of slices, they correspond to white slices of increment 0, whose base is formed by the edges of the root face, and such that the sequence of labels along the base forms an $m$-excursion (with no final down step below 0). This gives, as in \cite{AlbenqueBouttier2022},
\begin{equation*}
        P^{(c)}(x) = \left(\frac{\tilde{A}^{(c)}(\tilde{Z}(x))}{\tilde{A}^{(c)}_0}-1\right) \bar{u}_c.
\end{equation*}

%%%%%%%%%%%%%%%%% Previous, more detailed sketch of proof, before June 29 %%%%%%%%%%%%%
%Consider the leftmost geodesic from the root vertex to the pointed vertex $v^*$ and slit the map along it. By opening up the slit and the root face, we obtain a white slice with $v^*$ as the origin and $l, r$ being two copies of the root corner and the base being the boundary of the root face. The increment is thus 0.

%Compared to the slices of the previous section, this one has the property that the labels along its base are not less than the label of $l$. Up to a shift, the sequence of labels along the base thus forms an $m$-excursion (with no final down step below 0). %Another difference with the slices of the previous section is that every step of the $m$-paths is now counted with an extra factor $x^{-1/m}$. 
%We thus get
%\begin{equation*}
%        P^{(c)}(x) = \left(\frac{\tilde{A}^{(c)}(\tilde{Z}(x))}{\tilde{A}^{(c)}_0}-1\right) \bar{u}_c,
%\end{equation*}
%where the denominator accounts for the fact that, compared to the combinatorial interpretation previously given of $\tilde{Z}(x)$, here the final down step below $0$ is not counted (note also the weight $\bar{u}_c$ for the root vertex of colour $c$).

The second term $\sum_{d\geq 1} P^{(c)}_d(x)$ consists of maps where the pointed vertex is not on the root face. 
By repeating the steps of \cite{AlbenqueBouttier2022} with colored vertices, it can be evaluated as follows.
%The second term $\sum_{d\geq 1} P^{(c)}_d(x)$ consists of maps where the pointed vertex is not on the root face. 
Let $d>0$ be the label of the root vertex. It is then connected by an edge to at least one vertex of label $d-1$ which is not on the root face. Consider the rightmost edge of this type, and the white face $f$ to its right, which has a degree, say, $ms$ for $s\in [1..D_1]$. 
%The sequence of labels along it (starting from the root vertex) is $d, d-1$ then anything until it comes back to $d$ at the root vertex. 
%This situation is represented in Figure~\ref{fig:BijectionBG}.
%
%\begin{figure}
%    \centering
%    \includegraphics[scale=.7]{} \hspace{2cm}
%    \includegraphics[scale=.7]{}
%    \caption{The root vertex is indicated with an arrow. On the left we have an $m$-constellation with root label $d>0$, so that there exists a rightmost neighbor with label $d-1$ and $f$ is the white face to its right. On the right, we have split the root vertex into $v$ and $v'$. The latter now has label $d+mk$ for $k>0$.}
%    \label{fig:BijectionBG}
%\end{figure}
%
%We now use a bijection due to \cite{BouttierGuitterContinuedFractions}. It consists in splitting 
One then splits the root vertex into two vertices $v,v'$ so that $f$ is merged with the root face (after splitting, the vertex $v'$ has a new label of the form $d+mk$). By analyzing the sequence of label variations along the face obtained after splitting and using last passage decompositions, one obtains as in~\cite{AlbenqueBouttier2022}
%, as shown in Figure \ref{fig:BijectionBG}. The vertices $v$ and $v'$ have respectively labels $d$ and $d+j$ for some $j>0$. Moreover, the latter is of the form $\ell_{v^*}(v') = d+mk$, for $k\geq 1$. The newly formed face has thus as sequence of labels
%\begin{itemize}
%    \item from $v'$ to $v$ a path between heights $d+mk$ and $d$ which stays above $d$. A first passage decomposition gives the weight
%    \begin{equation*}
%        \frac{\tilde{A}^{(c)}(\tilde{Z}(x))}{\tilde{A}^{(c)}_0} \prod_{c'=0}^{m-1} Z^{(c')}(x)^{k} = \frac{\tilde{A}^{(c)}(\tilde{Z}(x))}{\tilde{A}^{(c)}_0} \tilde{Z}(x)^k,
%    \end{equation*}
%    where the factor $\frac{\tilde{A}^{(c)}(\tilde{Z}(x))}{\tilde{A}^{(c)}_0}$ accounts for a final excursion at height 0 with no final down step below~$0$;
%    \item then a down step to $d-1$, with weight $\tilde{A}^{(c)}_0$ (no $x$ since it does not come from the root face);
%    \item then an $m$-path between the heights $d-1$ and $d+mk$ of length $ms-1$, with weight
%    \begin{equation*}
%        \sum_{s=1}^{D_1} p_s [\alpha^{mk+1}] \prod_{r=1}^{ms-1} \Bigl(\sum_{i=0}^{D_2} \tilde{A}^{(c-r)}_i \alpha^{mi-1}\Bigr) %= \sum_{s=1}^{D_1} p_s [\alpha^k] \alpha^{-s} \frac{\prod_{c'=0}^{m-1} W^{(c')}(\alpha)^s}{A^{(c)}(\alpha)}
%    \end{equation*}
%    (again no $x$ since it does not come from the root face).
%\end{itemize}
%As a consequence of this construction being a bijection, we find
\begin{equation*}
    \sum_{d\geq 1} P^{(c)}_d(x) = \tilde{A}^{(c)}(\tilde{Z}(x)) \sum_{k>0} \tilde{Z}(x)^k \sum_{s=1}^{D_1} p_s [\alpha^{mk+1}] \prod_{r=1}^{ms-1} \Bigl(\sum_{i=0}^{D_2} \tilde{A}^{(c-r)}_i \alpha^{mi-1}\Bigr)
    \end{equation*}
and therefore
\begin{equation} \label{eq:BijectiveW01intermediate}
    xW_{0,1}(x) = \left(\frac{\tilde{A}^{(c)}(\tilde{Z}(x))}{\tilde{A}^{(c)}_0}-1\right)\bar{u}_c - \tilde{A}^{(c)}(\tilde{Z}(x)) \sum_{k>0} \tilde{Z}(x)^k \sum_{s=1}^{D_1} p_s [\alpha^{mk+1}] \prod_{r=1}^{ms-1} \Bigl(\sum_{i=0}^{D_2} \tilde{A}^{(c-r)}_i \alpha^{mi-1}\Bigr).
\end{equation}
This is not quite the RHS of \eqref{thm:BijectiveW01} yet. To turn \eqref{eq:BijectiveW01intermediate} into the RHS of \eqref{thm:BijectiveW01}, we use
\begin{equation} \label{eq:SecondDecomposition}
    \tilde{A}^{(c)}_0 = \bar{u}_c + \tilde{A}^{(c)}_0\sum_{s=1}^{D_1} p_s [\alpha^{1}] \prod_{r=1}^{ms-1} \Bigl(\sum_{i=0}^{D_2} \tilde{A}^{(c-r)}_i \alpha^{mi-1}\Bigr).
\end{equation}
This equation can be proved by performing the slice decomposition on elementary slices of increment $-1$, after unraveling the white face to the right of the bottom edge of the left boundary as in \cite{AlbenqueBouttier2022}.
%Consider a white slice of increment $-1$. It may be empty, i.e.~the corners $o$ and $r$ coincide in which case this gives the term $u_c$ above. If not, the left boundary has an edge from $l$ which goes to a vertex of label $\ell_o(l)-1$. We unravel the white face to its right and perform the slice decomposition, see Figure \ref{fig:LeftSliceDecomposition}.
%
%If this face has degree $s$, then the slice decomposition yields $ms-1$ new elementary white slices, obtained by drawing the leftmost geodesics from the vertices around the face to the origin. There is an extra elementary slice whose base is that of the original slice. It has increment $-1$ and accounts for the factor $\tilde{A}_0^{(c)}$ in front of the sum in the second term of \eqref{eq:SecondDecomposition}. In terms of $m$-paths, this slice decomposition is supported by an $m$-path of total increment $+1$, from $\ell_o(l)-1$ to $\ell_o(l)$. This explains the coefficient extraction of $[\alpha^1]$ above.
%\begin{figure}
%    \centering
%    \includegraphics[scale=.85]{}
%    \caption{We apply the slice decomposition to after revealing the white face incident to the bottom left corner along the left boundary.}
%    \label{fig:LeftSliceDecomposition}
%\end{figure}
\end{proof}

\begin{proof}[Proof of Theorem \ref{thm:PolynomialW01}]
A direct calculation shows that $xY(Z(x)) - \sum_{s=1}^{D_1} p_s x^s$ yields the RHS of Proposition \ref{thm:BijectiveW01}. Indeed, we find
\begin{equation*}
    x Y(Z(x)) = \bar{u}_c A^{(c)}(Z(x)) B^{(c)}(Z(x)) - \bar{u}_c = \tilde{A}^{(c)}(\tilde{Z}(x)) \tilde{B}^{(c)}(\tilde{Z}(x)) - \bar{u}_c,
\end{equation*}
where the first equality is from Definition \ref{def:Hrat} and \ref{def:SpectralCurve}, and the second from Proposition \ref{thm:CompositeSlices} and observing that
\begin{equation*}
\tilde{Z}(x) = Z(x) \prod_{c=0}^{m-1} \bar{u}_c,
\end{equation*}
so that $\bar{u}_c A^{(c)}(Z(x))= \tilde{A}^{(c)}(\tilde{Z}(x))$ and $B^{(c)}(Z(x))= \tilde{B}^{(c)}(\tilde{Z}(x))$.

We then eliminate $\tilde{B}^{(c)}(\tilde{Z}(x))$ with \eqref{eq:tildeAtildeB}, and use $[f(z)]^{<} = f(z) - [f(z)]^\geq$ for a Laurent polynomial, to obtain
%\begin{align*}
%    &x Y(Z(x)) = \tilde{A}^{(c)}(\tilde{Z}(x)) - \bar{u}_c + \tilde{A}^{(c)}(\tilde{Z}(x)) \sum_{s=1}^{D_1} p_s \biggl[z^{-s} \frac{\prod_{c'=0}^{m-1} \tilde{A}^{(c')}(z)^s}{\tilde{A}^{(c)}(z)}\biggr]^{<}_{z=\tilde{Z}(x)}\\
%    &= \tilde{A}^{(c)}(\tilde{Z}(x)) - \bar{u}_c +
%   \sum_{s=1}^{D_1} p_s \tilde{Z}(x)^{-s} \prod_{c'=0}^{m-1} \tilde{A}^{(c')}(\tilde{Z}(x))^s
%   - \tilde{A}^{(c)}(\tilde{Z}(x)) \sum_{s=1}^{D_1} p_s \biggl[z^{-s} \frac{\prod_{c'=0}^{m-1} \tilde{A}^{(c')}(z)^s}{\tilde{A}^{(c)}(z)}\biggr]^\geq_{z=\tilde{Z}(x)}. 
%\end{align*}
%By definition of $\tilde{Z}$, we have $\prod_{c=0}^{m-1} \tilde{A}^{(c)}(\tilde{Z}(x)) = x\tilde{Z}(x)$, therefore
\begin{equation} \label{eq:Y}
   x Y(Z(x)) - \sum_{s=1}^{D_1} p_s x^s = \tilde{A}^{(c)}(\tilde{Z}(x)) - \bar{u}_c - \tilde{A}^{(c)}(\tilde{Z}(x)) \sum_{s=1}^{D_1} p_s \biggl[z^{-s} \frac{\prod_{c'=0}^{m-1} \tilde{A}^{(c')}(z)^s}{\tilde{A}^{(c)}(z)}\biggr]^\geq_{z=\tilde{Z}(x)}.
\end{equation}
Finally we simply rewrite the last term as
\begin{align*}
    \biggl[z^{-s} \frac{\prod_{c'=0}^{m-1} \tilde{A}^{(c')}(z)^s}{\tilde{A}^{(c)}(z)}\biggr]^\geq_{z=\tilde{Z}(x)} &= \sum_{k\geq 0} \tilde{Z}(x)^k [\alpha^{k}] \biggl[z^{-s} \frac{\prod_{c'=0}^{m-1} \tilde{A}^{(c')}(z)^s}{\tilde{A}^{(c)}(z)}\biggr] \\
    &= \sum_{k\geq 0} \tilde{Z}(x)^k [\alpha^{mk+1}] \prod_{r=1}^{ms-1} \Bigl(\sum_{i\geq 0} \tilde{A}^{(c)}_i \alpha^{mi-1}\Bigr).
\end{align*}
This turns the RHS of \eqref{eq:Y} into the RHS of Proposition \ref{thm:BijectiveW01}, thus proving Theorem \ref{thm:PolynomialW01}.
\end{proof}

\subsection{Expression of $W_{0,2}$ with slices}

Let $p>0$. A $p$-\emph{annular} constellation is a white rooted constellation with a marked black face of degree $mp$ such that any oriented cycle separating the root face from the marked face with the root face to its right has length larger than or equal to $mp$. A $p$-\emph{strict annular} constellation is a white rooted constellation with a marked white face of degree $mp$ such that any oriented cycle separating the root face from the marked face with the root face to its left has length larger than $mp$.

The following proposition is originally due to Bouttier and Guitter \cite[Section 7]{BouttierGuitter2014} in the context of irreducible maps, and it was generalised to hypermaps in \cite{AlbenqueBouttier2022}, which we again follow.

\begin{prop} \label{thm:Annular}
There is a bijection between white slices of increment $-mp$ and $p$-annular maps. There is a bijection between white slices of increment $mp$ and $p$-strict annular maps.
\end{prop}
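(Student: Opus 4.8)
The plan is to follow the slice-cutting construction of \cite{AlbenqueBouttier2022} (itself generalising \cite[Section 7]{BouttierGuitter2014}), simply transporting the vertex colours through all the operations. I would treat the first statement (white slices of increment $-mp$ versus $p$-annular constellations) in detail, and obtain the strict case by the analogous construction with the roles of black and white faces, and of the two orientation conditions, interchanged.

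First I would describe the map from annular constellations to slices. Given a $p$-annular constellation with white root face and marked black face of degree $mp$, I point the object and use the canonical distance function $\ell$ relative to the marked face. From the root I draw the leftmost oriented geodesic running towards the marked face, and I cut the constellation open along this geodesic, unrolling the annulus into a planar $m$-constellation with a virtual boundary. The two copies of the cut path become the left and right boundaries of a slice, the boundary of the marked black face becomes the base, and the origin $o$ is located where the geodesic reaches the marked face. The crucial point, which I would verify exactly as in \cite{AlbenqueBouttier2022}, is that the hypothesis that every oriented cycle separating the root face from the marked face with the root face to its right has length at least $mp$ is \emph{precisely} what forces the two cut paths to be genuine geodesics meeting only at $o$; this is what makes the output a bona fide white slice. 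A bookkeeping of the label variations along the base (the marked face has degree $mp$ and each oriented edge lowers $\ell$ by $1$ along a geodesic) then yields $\ell_o(r)-\ell_o(l)=-mp$, so the increment is $-mp$ as claimed; I would check this sign computation following \cite{AlbenqueBouttier2022}.

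The inverse operation glues a white slice of increment $-mp$ back into an annulus. One identifies the left and right boundaries: this is legitimate because both are geodesics to the common origin $o$, and because the colour drops by $1 \bmod m$ along each oriented edge, so two boundaries that agree at one endpoint agree throughout, exactly as in the gluing argument used in the proof of Proposition~\ref{thm:CompositeSlices}. The base of length $mp$ then closes into a black face of degree $mp$, which is the marked face, and the geodesic property of the glued boundaries re-establishes the separating-cycle condition. I would then verify that cutting and gluing are mutually inverse, again as in \cite{AlbenqueBouttier2022}; since our only addition is colour weights, and both operations transport colour labels unchanged, nothing further is required at the combinatorial level.

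The hard part will be the verification that the combinatorial hypothesis on separating oriented cycles is genuinely \emph{equivalent} to the geodesic condition defining slices, together with the bookkeeping that distinguishes the two statements: a marked \emph{black} face with the root face on the \emph{right} and the non-strict bound $\geq mp$ produces the negative increment $-mp$, whereas a marked \emph{white} face with the root face on the \emph{left} and the strict bound $>mp$ produces the positive increment $+mp$. The strictness in the second case reflects the built-in asymmetry between the two boundaries of a slice — the right boundary being the \emph{unique} geodesic from $r$ to $o$, while the left boundary is merely \emph{a} geodesic — and this asymmetry is exactly the source of the strict versus non-strict distinction. I would take particular care to match it correctly on both sides of each bijection, as it is the only genuinely delicate point of the argument.
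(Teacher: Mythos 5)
Your overall strategy (follow the Bouttier--Guitter/Albenque--Bouttier cut-and-glue construction, transport the colours, and trace the strict/non-strict distinction to the asymmetry between the left boundary, \emph{a} geodesic, and the right boundary, the \emph{unique} geodesic) is exactly the paper's. But the construction you actually describe misassigns the pieces, and this is a genuine error, not a different convention. In the correct bijection, the \emph{root face} boundary becomes the base; one copy of the cut geodesic becomes the right boundary; and the other copy of the cut geodesic \emph{concatenated with the boundary of the marked black face} becomes the left boundary. That concatenation is the whole point: it is what makes the left boundary exactly $mp$ edges longer than the right one, i.e.\ what produces $\ell_o(r)-\ell_o(l)=-mp$. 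You instead declare that ``the boundary of the marked black face becomes the base'' and that ``the two copies of the cut path become the left and right boundaries''. Two copies of the same path have equal length, so with your designation the increment would be $0$, not $-mp$, and your corners cannot be placed consistently: if $o$ sits where the geodesic meets the marked face, then $l$ and $r$ are the two copies of the root vertex, and the stretch of virtual boundary joining them is the cut-open \emph{root}-face boundary, not the marked-face boundary, which your description leaves unaccounted for. The bookkeeping needed downstream would also be destroyed: Proposition~\ref{prop:W02sum} requires the base length to equal the root-face degree $mf_i$ while the increment records the degree $mp$ of the marked face, not the other way around.

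The same confusion invalidates your gluing direction, which is the one the paper actually writes out. You propose to ``identify the left and right boundaries'': this is impossible for a slice of increment $-mp\neq 0$, since $\ell_o(l)=\ell_o(r)+mp$ and the two boundaries have different lengths. The correct operation identifies the right boundary with the \emph{initial} segment of the left boundary, starting from the identification $l=r$; the leftover terminal segment of the left boundary, of length $mp$, then closes into a cycle at the origin and bounds the ``hole'', which is coloured black and becomes the marked face, while the base closes into the root face. Your sentence ``the base of length $mp$ then closes into a black face of degree $mp$'' both exchanges the two faces and assumes the base has length $mp$, which it need not. The remaining ingredients of your write-up (the equivalence between the separating-cycle hypothesis and the geodesic property, the colour-compatibility of the gluing, strictness versus non-strictness) are salvageable, but only after the assignment of base, left and right boundaries is corrected.
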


Notice that a slice of increment a multiple of $m$ has base of length also a multiple of $m$.

One direction in those bijections is simple to explain. Consider a white slice of increment $-pm<0$ and glue the left and right boundaries together starting by identifying the corners $l$ and $r$ (note that this gluing respects the colours of vertices of the boundary since the increment is a multiple of $m$). Since the left boundary is longer than the right boundary, one obtains a white rooted $m$-constellation with a ``hole''. The edges around it all have white faces on their other side. The hole can therefore be colored black and this gives a white rooted $m$-constellation with an additional marked black face.

Let $v^*$ be the origin of the slice and $v$ the root vertex. After the gluing, one obtains a geodesic of length $\ell_o(r)$ from $v$ to $v^*$, while the degree of the marked face is minus the increment, $\ell_o(l) - \ell_o(r) = -mp$. Assume that there is a cycle separating the root face from the marked face, of length less than $mp$. Then this would create a path from $v$ to $v^*$ winding around the marked face which would be shorter than $\ell_o(l)$. In the slice, this in turn would correspond to a path from $l$ to $o$ of length shorter than $\ell_o(l)$, which is impossible. One thus obtains a $p$-annular map. The argument is similar for slices of increment $mp$, by exchanging the roles of the left and right boundaries. However, since the right boundary is the unique geodesic from $r$ to $o$ (as opposed to a geodesic for the left boundary), one obtains a $p$-strict annular map.

The other direction in the bijections (i.e.~from annular maps to slices) is explained in \cite[Section 7]{BouttierGuitter2014} and \cite{AlbenqueBouttier2022}.

Let $\mathcal{M}_{0,2}$ be the set of connected, planar $m$-constellations with two labeled, marked right paths which do not lie in the same white face. For $M\in\mathcal{M}_{0,2}$, we denote $F_1, F_2$ the two white faces which have the marked right paths, and $mf_1(M), mf_2(M)$ their degrees. The series $W_{0,2}(x_1,x_2)$ is the generating series of those objects, counted with $x_1^{-f_1(M)-1} x_2^{-f_2(M)-1}$.

\begin{prop}\label{prop:W02sum} We have
\begin{equation*}
    W_{0,2}(x_1, x_2) = \sum_{f_1, f_2} x_1^{-f_1-1} x_2^{-f_2-1} \sum_{p\geq 1} p \left([\alpha^{-p}] \alpha^{-f_1} \prod_{c=0}^{m-1} \tilde{A}^{(c)}(\alpha)^{f_1}\right) \left( [z^{p}] z^{-f_2} \prod_{c=0}^{m-1} \tilde{A}^{(c)}(z)^{f_2}\right).
\end{equation*}
\end{prop}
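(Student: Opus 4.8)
The plan is to enumerate the objects of $\mathcal{M}_{0,2}$ by cutting each of them along a canonical minimal separating cycle between the two marked white faces, and to recognise the two resulting pieces as the annular maps already treated in Proposition~\ref{thm:Annular}. So I would fix $M\in\mathcal{M}_{0,2}$ with marked white faces $F_1,F_2$ of degrees $mf_1,mf_2$. Among the oriented cycles separating $F_1$ from $F_2$ there is one of minimal length; since along every oriented edge the colour drops by $1\bmod m$, the length of such a cycle is a multiple of $m$, say $mp$ with $p\geq 1$, and it carries exactly $p$ vertices of colour $0$, hence $p$ right paths. I would select the canonical (say leftmost) minimal separating cycle $\mathcal{C}$ and cut along it, splitting $M$ into a piece $M_1$ carrying $F_1$ (with $\mathcal{C}$ now bounding a black face of degree $mp$) and a piece $M_2$ carrying $F_2$ (with $\mathcal{C}$ bounding a white face of degree $mp$). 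Minimality of $\mathcal{C}$ translates exactly into the defining inequalities of Proposition~\ref{thm:Annular}: $M_1$ is a $p$-annular map (separating cycles with the root face to their right have length $\geq mp$) and $M_2$ is a $p$-strict annular map (separating cycles with the root face to their left have length $>mp$). The asymmetry between $\geq$ and $>$ is precisely what makes $\mathcal{C}$ recoverable without ambiguity, so that cutting is injective.

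Next I would invoke Proposition~\ref{thm:Annular} to replace $M_1$ by a white slice of increment $-mp$ and $M_2$ by a white slice of increment $+mp$, with base lengths $mf_1$ and $mf_2$ so that the root faces keep degrees $mf_1,mf_2$. By Proposition~\ref{thm:CompositeSlices} and the path-series expressions \eqref{eq:PathsSeries}, the corresponding generating functions are, after the substitution $\alpha^m\mapsto\alpha$ which collapses a full colour period into the symmetric product $\prod_{c=0}^{m-1}\tilde A^{(c)}$,
\[
A^{(c)}_{mf_1,-mp}=[\alpha^{-p}]\,\alpha^{-f_1}\prod_{c=0}^{m-1}\tilde A^{(c)}(\alpha)^{f_1},
\qquad
A^{(c)}_{mf_2,mp}=[z^{p}]\,z^{-f_2}\prod_{c=0}^{m-1}\tilde A^{(c)}(z)^{f_2},
\]
which are exactly the two bracketed factors in the statement (both independent of the auxiliary colour $c$). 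The reverse operation glues $M_1$ and $M_2$ along $\mathcal{C}$, identifying the removed black face of $M_1$ with the removed white face of $M_2$; this is colour- and orientation-compatible because one boundary carries white faces on its outside and the other black faces. The number of colour-compatible rotational alignments of two oriented cycles of length $mp$ equals the number of right paths on $\mathcal{C}$, namely $p$. Each alignment produces a distinct element of $\mathcal{M}_{0,2}$, and every such element arises once from its canonical cut, so cutting and gluing establish a bijection between $\mathcal{M}_{0,2}$ and pairs (annular, strict annular) equipped with one of $p$ alignments. Weighting by $x_1^{-f_1-1}x_2^{-f_2-1}$ and summing over $f_1,f_2,p$ then yields the claimed double sum, the multiplicity $p$ becoming the factor in front.

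The main obstacle I expect is the careful set-up of this cut-and-glue bijection: showing that the leftmost minimal separating cycle is well defined, that the inequalities of Proposition~\ref{thm:Annular} characterise exactly the pieces that reglue with neither over- nor under-counting, and that the weight factorises as $w(M)=w(M_1)\,w(M_2)$ with the shared cycle $\mathcal{C}$ and its two adjacent faces counted once. All of this is the content of the slice decomposition of~\cite{BouttierGuitter2014} and its hypermap extension in~\cite{AlbenqueBouttier2022}; my role is only to transport the vertex colours through their argument, the colour period $m$ being responsible both for the multiplicity $p$ and for the collapse of the corner-colour dependence into the symmetric products $\prod_{c=0}^{m-1}\tilde A^{(c)}$.
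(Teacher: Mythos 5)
Your proposal is correct and follows essentially the same route as the paper's proof: cutting along a canonical minimal separating cycle of length $mp$, identifying the two pieces with a $p$-annular and a $p$-strict annular map via Proposition~\ref{thm:Annular}, translating these into slice/path generating functions, and regluing with multiplicity $p$ forced by colour-compatibility. The only cosmetic difference is your explicit justification of the coefficient-extraction identities via the substitution $\alpha^m\mapsto\alpha$, which the paper states more briefly.
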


\begin{proof}
Let $M\in\mathcal{M}_{0,2}$. We consider the unique shortest cycle separating $F_1$ and $F_2$ which has $F_2$ on its left and is as close as possible to it. Say it has length $mp$ for $p\geq 1$. Let us cut $M$ along that cycle. By definition, we obtain to its left a $p$-strict annular map and to its right a $p$-annular map whose bases have respective degrees $mf_1(M), mf_2(M)$.

Using Proposition \ref{thm:Annular}, we now have to enumerate white slices of increment $-p$ with base of length $mf_1(M)$, and white slices of increment $p$ with base of length $mf_2(M)$. The former correspond to $m$-paths starting at $(0,0)$ and ending at $(mf_1(M), -mp)$, while the latter end at $(mf_2(M), mp)$. They are respectively
\begin{align*}
    [\alpha^{-mp}] \prod_{r=1}^{mf_1} \Bigl(\sum_{i\geq 0} \tilde{A}_i^{(c-r+1)} \alpha^{mi-1}\Bigr) &= [\alpha^{-p}] \alpha^{-f_1} \prod_{c=0}^{m-1} \tilde{A}^{(c)}(\alpha)^{f_1}\\
    \text{and}\quad 
    [\alpha^{mp}] \prod_{r=1}^{mf_2} \Bigl(\sum_{i\geq 0} \tilde{A}_i^{(c-r+1)} \alpha^{mi-1}\Bigr) &= [\alpha^{p}] \alpha^{-f_2} \prod_{c=0}^{m-1} \tilde{A}^{(c)}(\alpha)^{f_2}.
\end{align*}
Notice that they do not depend on the color chosen to start the paths at, as expected.

Given a $p$-annular constellation and a $p$-strict annular constellation, we can glue them along their marked faces, since they both have degree $mp$, one is white and the other black. There are moreover $p$ ways to do so (and not $mp$ because the colors of the vertices have to match for the gluing to be allowed).
\end{proof}

\subsection{Rational generating functions and final expression of~$W_{0,2}$}

By performing the (formal) summation over $f_1,f_2$, Proposition~\ref{prop:W02sum} rewrites
\begin{equation}\label{eq:W02new}
    W_{0,2}(x_1, x_2) = x_1^{-1}x_2^{-1}
   \sum_{p\geq 1} p \left([z^{-p}] F(x_1,z) \right)\left([z^{p}] F(x_2,z) \right),
    %
    %\sum_{f_1} x_1^{-f_1}z^{-f_1} \prod_{c=0}^{m-1} W^{(c)}(z)^{f_1}\right) \left( [z^{p}] \sum_{f_2} x_2^{-f_2} %z^{-f_2} \prod_{c=0}^{m-1} W^{(c)}(z)^{f_2}\right).
\end{equation}
where (note that in this section we consider series primarily in $\bar{x}$)
$$F(x,z)\coloneqq\frac{1}{1-\bar{x}\tilde{A}(z)/z} \ \  \in \mathbb{Q}[\mathbf{p},\mathbf{q},\mathbf{\bar{u}}][[t]][z,z^{-1}][[\bar{x}]]
$$
and $\tilde{A}(z)\coloneqq\prod_{c=0}^{m-1} \tilde{A}^{(c)}(z) \in \mathbb{Q}[\mathbf{p},\mathbf{q},\mathbf{\bar{u}}][[t]][z]$. We will now compute the quantity $[z^{p}] F(x,z)$ for $p\in \mathbb{Z}$ using standard arguments from combinatorics of paths and rational generating functions.
%involving a discussion on \emph{small} and \emph{large} roots. 
Write $F(x,z)= \left[F(x,z)\right]^\leq + \left[F(x,z)\right]^>$, where we recall that the negative and positive symbols are relative to the exponents of the variable $z$.
\begin{lem}  We have
\begin{align}
\left[F(x,z)\right]^< = \frac{D(x)}{1-\tilde{Z}(x)/z},
\end{align}
where $D(x)=-x\dfrac{\tilde{Z}'(x)}{\tilde{Z}(x)}$.
\end{lem}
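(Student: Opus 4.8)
My plan is to reduce everything to a single coefficient extraction and then recognise a geometric series. Since $\tilde A(z)/z$ has coefficients that are Laurent polynomials in $z$ and $\bar x$ is the formal small parameter, I would first expand $F$ as a geometric series in $\bar x$ inside $\mathbb{Q}[\mathbf{p},\mathbf{q},\bar{\mathbf u}][[t]][z,z^{-1}][[\bar x]]$:
\[
F(x,z)=\sum_{n\ge 0}\bar x^{\,n}\,\frac{\tilde A(z)^n}{z^{\,n}},\qquad\text{so that}\qquad [z^{-p}]F(x,z)=\sum_{n\ge 0}\bar x^{\,n}\,[z^{\,n-p}]\tilde A(z)^n .
\]
This turns the whole statement into the computation of the coefficients $[z^{-p}]F$ for $p\ge 1$ and the identification of the resulting series in $z^{-1}$.

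The key tool is Lagrange inversion applied to the defining equation \eqref{eq:ZtildeSeries}, i.e.\ $\tilde Z=\bar x\,\tilde A(\tilde Z)$, which presents $\tilde Z$ as the Lagrange inverse attached to $\phi=\tilde A$ (here $\phi(0)=\prod_{c}\bar u_c+O(t)$ is invertible in the coefficient ring, so the inversion is legitimate). Taking $H(w)=w^p$ in the Lagrange--Bürmann formula gives $[w^{\,n-p}]\tilde A(w)^n=\tfrac{n}{p}\,[\bar x^{\,n}]\tilde Z^{\,p}$ for $n\ge 1$. Substituting this above and using $\sum_{n} n\,\bar x^{\,n}[\bar x^{\,n}]\tilde Z^{\,p}=\bar x\,\tfrac{\dd}{\dd\bar x}\tilde Z^{\,p}=p\,\bar x\,\tilde Z^{\,p-1}\tfrac{\dd\tilde Z}{\dd\bar x}$, I would obtain $[z^{-p}]F=\bar x\,\tilde Z^{\,p-1}\tfrac{\dd\tilde Z}{\dd\bar x}$ for every $p\ge 1$. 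Rewriting the definition $D(x)=-x\,\tilde Z'(x)/\tilde Z(x)$ in the variable $\bar x=1/x$ (so $\tfrac{\dd}{\dd x}=-\bar x^{2}\tfrac{\dd}{\dd\bar x}$) yields the clean identity $D=\bar x\,(\dd\tilde Z/\dd\bar x)/\tilde Z$, and therefore $[z^{-p}]F=D(x)\,\tilde Z(x)^{\,p}$.

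It then remains to assemble the coefficients. Summing the geometric series gives $\sum_{p\ge 1}D\,\tilde Z^{\,p}z^{-p}=D\,\dfrac{\tilde Z/z}{1-\tilde Z/z}$, and I would combine this with the $z^{0}$ coefficient, obtained from the companion ``diagonal'' Lagrange formula $[z^{0}]F=\sum_{n\ge0}\bar x^{\,n}[z^{\,n}]\tilde A(z)^n=\bigl(1-\bar x\,\tilde A'(\tilde Z)\bigr)^{-1}$, which equals $D$ after differentiating \eqref{eq:ZtildeSeries} with respect to $\bar x$. The negative part together with this constant term then sum to exactly $\dfrac{D(x)}{1-\tilde Z(x)/z}$, in accordance with the splitting $F=[F]^{\le}+[F]^{>}$ written above.

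The hard part will not be any single computation but the bookkeeping that makes these manipulations rigorous: I would need to check that Lagrange--Bürmann applies in the graded ring $\mathbb{Q}[\dotsb][[t]][z,z^{-1}][[\bar x]]$ (treating $\bar x$ as the inversion variable and $z$ as an inert parameter, which works because $\tilde A$ has $t$-graded polynomial coefficients and $\tilde Z=O(\bar x)$), that the interchange of the sum over $n$ with the extraction of $[z^{\,n-p}]$ is valid order by order in $\bar x$, and that the $p=0$ contribution is accounted for so that the strictly negative powers and the $z^{0}$ term merge into the stated quotient. Everything else reduces to a routine geometric summation.
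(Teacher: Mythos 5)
Your proof is correct, but it follows a genuinely different route from the paper's. The paper works in the algebraically closed field of Puiseux series in $\bar{x}$: it factorises the denominator $1-\bar{x}\tilde{A}(z)/z$ over its $mD_2$ roots, identifies $\tilde{Z}$ as the unique \emph{small} root (the others being large), and performs a partial fraction decomposition isolating the factor $(1-\tilde{Z}/z)^{-1}$; this makes the splitting into nonpositive and positive parts of $z$ transparent, and $D(x)$ is then obtained by the substitution $z=\tilde{Z}(x)$ followed by implicit differentiation of $\tilde{Z}=\bar{x}\tilde{A}(\tilde{Z})$. You instead compute every coefficient $[z^{-p}]F$ directly: geometric expansion in $\bar{x}$, then Lagrange--B\"urmann applied to $\tilde{Z}=\bar{x}\tilde{A}(\tilde{Z})$ (the standard form for $p\geq 1$, the diagonal form $\sum_n \bar{x}^n[w^n]\phi(w)^n=(1-\bar{x}\phi'(\tilde{Z}))^{-1}$ for $p=0$), and resum. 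Your identity $[z^{-p}]F=D(x)\tilde{Z}(x)^p$ for $p\geq 0$ is exactly the coefficient-wise reformulation that the paper's remark states is equivalent to the lemma, and indeed Lagrange inversion is the natural generating-function avatar of the ``last passage decomposition'' proof that the remark leaves to combinatorially inclined readers, so in a sense you supply the proof the authors deliberately omitted, in algebraic rather than bijective form. What each approach buys: yours stays entirely inside the formal ring $\mathbb{Q}[\mathbf{p},\mathbf{q},\bar{\mathbf{u}}][[t]][z,z^{-1}][[\bar{x}]]$, needing no algebraic closure, no Puiseux series, and no discussion of small versus large roots; the paper's buys the global partial-fraction structure of $F$ in one stroke (both the nonpositive part and the form of the positive part), and delivers $D(x)$ by a two-line substitution rather than via two separate inversion formulas. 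The bookkeeping issues you flag (validity of Lagrange inversion over the coefficient ring, which holds since $\tilde{A}(0)=\prod_c\bar{u}_c+O(t)$ is a unit; order-by-order interchange of sums; merging the $z^0$ term with the strictly negative part, consistent with the paper's splitting $F=[F]^{\leq}+[F]^{>}$) are all routine and correctly identified, so there is no gap.
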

\begin{rem}
The statement is equivalent to saying that for $p\geq 0$, we have
$[z^{-p}] F(x,z) = D(x) \tilde{Z}(x)^p.$ Readers familiar with combinatorics of paths will immediately recognise here an identity of \emph{last passage decomposition}, in the spirit of the previous sections. We leave our combinatorially experienced readers the pleasure to prove the lemma along such lines.
%: Indeed the coefficient $[z^{-p}] F(x,z) $ can be interpreted as the generating function of paths, with steps in $\mathbb{Z}_{\geq -1}$, starting at $0$ and ending at position $-p$, where steps of height $k$ receive a weight $[z^{k+1}]\tilde{A}(z)$. Such a path can be decomposed as a \emph{bridge} (path from $0$ to $0$) ending at the last passage at $0$, and a sequence of $p$ excursions (path staying above its starting point except at its last step where is takes the value one less), separated by the \emph{last pasage times} at values $0,-1,\dots,-p+1$. It is quite direct to see from~\eqref{eq:ZtildeSeries} that $\tilde{Z}(x)$ is the generating function of excursions, and our combinatorially experienced readers might try to prove directly that $D(x)$ is indeed the generating function of bridges. 
Instead, we will propose another proof based on rational generating functions and a discussion on \emph{small} and \emph{large} roots, which we find interesting on its own and which gives directly the value of $D(x)$ by algebraic computations.
\end{rem}
\begin{proof}
Since the field of Puiseux series is algebraically closed,  the polynomial equation 
\begin{align}\label{eq:roots}
1-\bar{x}\tilde{A}(z)/z =0
\end{align}
has $mD_2$ roots in $\hat{\mathbb{K}}((\bar{x}^*))$, where $\hat{\mathbb{K}}$ is an algebraic closure of $\mathbb{Q}(\mathbf{p}, \mathbf{q}, \mathbf{u})((t))$. One of those roots is the series $\tilde{Z}$ defined in~\eqref{eq:ZtildeSeries}, and it is the unique root which is a \emph{power} series in $\bar{x}$ (involving no negative exponent of $\bar{x}$; this is usually called a \emph{small root}),
$$
\tilde{Z} = c \bar{x} + O(\bar{x}^2),
$$
with $c=\prod_{i\in I} \bar{u}_i$.
Indeed, uniqueness is clear since the expansion can be computed recursively from the equation.
We let $\zeta_1,\dots,\zeta_{mD_2-1}\in \hat{\mathbb{K}}((\bar{x}^*))$ be the other roots of~\eqref{eq:roots}, and they are necessarily all \emph{large}, in the sense that they start with a negative (possibly fractional) exponent of $\bar{x}$.
We can then write, by factorizing the polynomial~\eqref{eq:roots} over its roots,
\begin{align}
F(x,z) = \frac{E}{(1-\tilde{Z}/z) \prod_{i=1}^{mD_2-1} (1- z/\zeta_i)},
\end{align}
for some series $E\in \hat{\mathbb{K}}((\bar{x}^*))$.
This equality is valid in $\hat{\mathbb{K}}[z,z^{-1}]((\bar{x}^*))$. Note that all factors $(1-\cdot)$ in the denominator involve a quantity which starts with a positive power of $\bar{x}$.
We can thus perform a (partial\footnote{At this stage we could perform a (full) partial fraction expansion of $[F(x,z)]^{\geq}$ and get a closed expression for $[z^{p}] F(x,z)$ for positive $p$ involving the large roots, but we will not need to do that.}) partial fraction decomposition of the form:
\begin{align}\label{eq:parfrac}
F(x,z) = \frac{D(x)}{(1-\tilde{Z}/z)} + \frac{C(x) z}{\prod_{i=1}^{mD_2-1} (1- z/\zeta_i)},
\end{align}
isolating the contribution of the unique small root, for some series $B(x),C(x)$ which are again in $\hat{\mathbb{K}}((\bar{x}^*))$.
The last equation clearly separates $F(x,z)$ into a nonpositive and positive part, so the only thing remaining to show is that $D(x)$ is given by the equation of the Lemma.
From~\eqref{eq:parfrac} we have 
$$
D(x) = \Big( (1-\tilde{Z}(x)/z) F(x,z)\Big)_{z=\tilde{Z}(x)}
$$
(note that the substitution is valid since the quantity in larger parentheses
%lies in $\hat{\mathbb{K}}[z]((\bar{x}^*))$, i.e.~it 
involves no negative powers of $z$ in its expansion).
Therefore, going back to the definition of $F$ we find
$$D(x)=
\Big( \frac{z-\tilde{Z}(x)}{z-\bar{x} \tilde{A}(z)} \Big)_{z=\tilde{Z}(x)}
=
\frac{1}{1-\bar{x}\tilde{A}'(\tilde{Z}(x))}.
$$
Now, by differentiating the relation $\tilde{Z}(x) = \bar{x} \tilde{A}(\tilde{Z}(x))$ with respect to $x$
we obtain
$\tilde{Z}'(x) ( 1-\bar{x} \tilde{A}'(\tilde{Z}(x)) ) = - \bar{x} \tilde{Z}(x)
$,
which immediately gives 
$D(x) = - x \tilde{Z}'(x)/\tilde{Z}(x)$.
\end{proof}

Going back to~\eqref{eq:W02new} we can write (with $\tilde{Z}_i=\tilde{Z}(x_i)$ and $D_i=D(x_i)$, for $i=1,2$ )
\begin{align*}
    W_{0,2}(x_1, x_2) &= x_1^{-1}x_2^{-1}
   \sum_{p\geq 1} p D_1 \tilde{Z}_1^p \left([z^{p}] F(x_2,z) \right)\\
   &= x_1^{-1}x_2^{-1} D_1 \left(z\frac{ d}{dz} [F(x_2,z)]^>\right)_{z=\tilde{Z}_1} \\
 &= x_1^{-1}x_2^{-1} D_1 \left(z\frac{ d}{dz} \left(\frac{1}{1-\bar{x}_2\tilde{A}(z)/z} - \frac{D_2}{(1-\tilde{Z}_2/z)} \right)\right)_{z=\tilde{Z}_1} \\
% &= x_1^{-1}x_2^{-1} B_1 \left(z\frac{ d}{dz} \left(\frac{1}{1-\bar{x_2}W(z)/z} 
% \right)\right)_{z=Z_1} - 
% \frac{x_1^{-1}x_2^{-1} B_1 B_2 Z_1Z_2}{(Z_1-Z_2)^1}\\
  &= x_1^{-1}x_2^{-1} D_1 \left(\frac{\tilde{Z}_1}{\tilde{Z}'(x_1)} \frac{d}{dx_1} \left( \frac{1}{1-\bar{x_2}x_1} \right)
  + 
 \frac{D_2 \tilde{Z}_1\tilde{Z}_2}{(\tilde{Z}_1-\tilde{Z}_2)^2} \right)\\
  &=  \frac{x_1^{-1}D_1 \tilde{Z_1}}{\tilde{Z}'(x_1)} \frac{1}{(x_2-x_1)^2} 
  + 
 \frac{x_1^{-1}x_2^{-1} D_1 D_2 \tilde{Z}_1\tilde{Z}_2}{(\tilde{Z}_1-\tilde{Z}_2)^2},
     %
    %\sum_{f_1} x_1^{-f_1}z^{-f_1} \prod_{c=0}^{m-1} W^{(c)}(z)^{f_1}\right) \left( [z^{p}] \sum_{f_2} x_2^{-f_2} %z^{-f_2} \prod_{c=0}^{m-1} W^{(c)}(z)^{f_2}\right).
\end{align*}
where in the fourth equality we have used that $\tilde{Z}(x_1)=\bar{x_1} \tilde{A}(\tilde{Z}(x_1))$.
Substituting the expression  $D(x)=-x\frac{\tilde{Z}'(x)}{\tilde{Z}(x)}$, we obtain the same expression from $W_{0,2}$ as that given in Theorem~\ref{thm:cylinder}, except that it involves the function $\tilde{Z}(x_1)$ and $\tilde{Z}(x_2)$. Due to homogeneity  we can replace them with $Z(x_1)$ and $Z(x_2)$ (note, going back to definitions, that $\tilde{Z}(x) = \left(\prod_{i\in I} \bar{u}_i\right) Z( x)$). This concludes the proof of this theorem.
%Finally, by differentiating the equation $Z(x)=\bar{x}W(Z(x))$ we obtain
%$$
%Z'(x) = -\bar{x}^2 W(Z(x))+\bar{x} Z'(x) W(Z(x))
%$$

\section{Proofs for $W_{0,1}$ and $W_{0,2}$ for rational $G(z)$}
\label{sec:rational}

\subsection{Proof of  Theorem~\ref{thm:Discrat} }

In the case of polynomial $G(z)$ (i.e.~$r=|J|=0$, or equivalently $M=m$), Theorem~\ref{thm:Discrat} is proved in Section~\ref{sec:constellations} by combinatorial techniques. We will now deduce the general case, i.e. $G(z)$ rational, from the polynomial case.
In order to prove
\begin{equation}\label{eq:toprove}
W_{0,1}(x) + \sum_{k=1}^{D_1}p_k x^{k-1} = \frac{1}{x} H^c(\tilde{Z}(x)),
\end{equation}
the main task will be to show that the coefficients on both sides depend ``nicely'' in the parameters $\mathbf{u}$. Together with the fact that this equation holds in the polynomial case, and with the fact that we are able to take $m$ arbitrarily large, this will be enough to conclude.

\subsubsection{Analysis of the coefficients}

We start with the left-hand side of~\eqref{eq:toprove}.
\begin{lem}
Let $k\geq 0$ and let $P^{(m,r)}_k\coloneqq [t^k] W_{0,1}(x) + \sum_{k=1}^{D_1}p_k x^{k-1}$.
Then $P^{(m,r)}_k$ is a polynomial (with coefficients in $\mathbb{Q}[\mathbf{p}, \mathbf{q},\bar{x}]$) in the variables $\mathbf{u}_I$ and $\mathbf{u}_J$, which is symmetric in the $\mathbf{u}_I$, symmetric in the $\mathbf{u}_J$, and which has homogeneous degree at most $2k-1$ in these variables.
\end{lem}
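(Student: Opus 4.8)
The plan is to read off polynomiality, symmetry and the degree bound directly from the combinatorial meaning of $W_{0,1}$, rather than from the algebraic system of Definition~\ref{def:WkBk}. In principle one could argue through the identity $Y(Z(x))=\bar x\,H^{(c)}(Z(x))$ of Theorem~\ref{thm:Discrat} together with Remark~\ref{rem:Hsymmetry}, but there the degree in $\mathbf u$ is masked by substantial cancellations (already at order $t^1$, where the naive expansion of $\bar u_c(A^{(c)}B^{(c)}-1)$ produces terms of $\mathbf u$-degree well above the eventual value, which must cancel); the combinatorial route avoids this entirely. First I would isolate the case $k=0$: since $F_0=O(t)$ and hence $W_{0,1}=O(t)$, one has $P^{(m,r)}_0=\sum_{s=1}^{D_1}p_s x^{s-1}$, the exceptional constant term, and I would then assume $k\ge 1$, so that $P^{(m,r)}_k=[t^k]W_{0,1}(x)$.

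For $k\ge1$, the interpretation recalled in Section~\ref{sec:MainResults} expresses $[t^k]W_{0,1}(x)$ as a finite $\mathbb Q$-linear combination, over connected genus-zero $(m,r)$-factorisations with $|\lambda|=|\mu|=k$ and one distinguished part of $\lambda$ (the boundary), of monomials of the shape $(-1)^{\sum_{j\in J}\ell_j}\,p_{\lambda'}\,q_\mu\,\bar x^{\,\kappa+1}\prod_{i\in I}u_i^{\ell_i}\prod_{j\in J}u_j^{\ell_j}$, where $\kappa$ is the size of the marked part, $\lambda'$ is $\lambda$ with that part removed, $\ell_i\ge0$ counts the missing cycles of $\sigma_i$ and $\ell_j\ge0$ the length of the monotone run $\rho^{(j)}$. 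Since every exponent of a $u$-variable is a nonnegative integer, and (by the degree bound below) only finitely many monomials occur at each order in $t$, this already shows that $P^{(m,r)}_k$ is a polynomial in $\mathbf u_I,\mathbf u_J$ with coefficients in $\mathbb Q[\mathbf p,\mathbf q,\bar x]$ (only nonnegative powers of $\bar x$ appear, coming from $\nabla_x$).

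The symmetry is then immediate and requires nothing about the solution of the system: the content weight $G(c)=\prod_{i\in I}(1+c\,u_i)/\prod_{j\in J}(1+c\,u_j)$ in \eqref{eq:tildeG} is a symmetric function of $\mathbf u_I$ and, separately, of $\mathbf u_J$, hence so is $\prod_{\Box\in\lambda}G(c(\Box))$, and therefore so are $\tau$, $\log\tau$, each $F_g$ of \eqref{eq:Fg}, and finally $W_{0,1}$ and every $P^{(m,r)}_k$. For the degree bound I would use the Euler--Riemann--Hurwitz relation built into the definition \eqref{eq:Fg} of $F_0$: the rescaling $p_i\mapsto p_i/N$, $q_i\mapsto q_i/N$, $u_i\mapsto u_iN$ followed by extraction of $[N^{-2}]$ forces, on every contributing monomial,
\[
\sum_{a=0}^{M-1}\ell_a=\ell(\lambda)+\ell(\mu)-2 .
\]
Since $\ell(\lambda)\le|\lambda|=k$ and $\ell(\mu)\le|\mu|=k$, the total $\mathbf u$-degree $\sum_a\ell_a$ of each monomial is at most $2k-2$, and both the marking operator $\nabla_x$ and the truncations $p_{>D_1}=q_{>D_2}=0$ leave this degree unchanged or decrease it. This gives $\deg_{\mathbf u}P^{(m,r)}_k\le 2k-2\le 2k-1$, as claimed.

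The only genuine subtleties are bookkeeping ones: I must make sure the Euler relation is applied to the \emph{unmarked} factorisation (so that $\nabla_x$, which removes a part of $\lambda$ of size $\kappa$ and reweights it by $\bar x^{\,\kappa+1}$, does not alter the $\mathbf u$-degree), and I should flag that the case $k=0$ sits slightly outside the literal statement (its coefficients lie in $\mathbb Q[\mathbf p,\mathbf q,x]$ and its degree is $0$), being exactly the correction term $\sum_{s}p_sx^{s-1}$ added on the left of \eqref{eq:toprove}. No analytic input is needed, and in particular the bound holds uniformly in $m$ and $r$, which is what will later allow the interpolation in $m$ used to deduce the rational case from the polynomial one.
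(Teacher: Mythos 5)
Your proof is correct and takes essentially the same route as the paper's: symmetry is read off directly from the symmetry of $G$ in $\mathbf{u}_I$ and $\mathbf{u}_J$, and the degree bound comes from the Euler--Riemann--Hurwitz relation $\sum_a \ell_a = \ell(\lambda)+\ell(\mu)+2g-2$ built into the definition of $F_g$ (the paper applies it to the marked monomials of $W_{0,1}$, writing $\ell(\lambda)+\ell(\mu)+1+2g-2$ for the reduced partition, which is the same count and yields the same bound $\le 2k-2\le 2k-1$). Your additional observations --- the explicit polynomiality argument and the flag that $k=0$ sits outside the literal statement, since $P^{(m,r)}_0=\sum_{s=1}^{D_1}p_s x^{s-1}$ has $\mathbf{u}$-degree $0$ and positive powers of $x$ --- are sound refinements of points the paper leaves implicit.
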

\begin{proof}
Symmetry is clear.
The fact that $W_{0,1}$ is extracted from the function $F_0$ implies that for a monomial of the form $t^n p_\lambda q_\mu x^{-\ell-1} \prod_i u_i^{d_i}\prod_i v_i^{e_i}$ one has
$$
\sum d_i + \sum e_i = \ell(\lambda)+\ell(\mu) +1 + 2g -2,
$$
with here $g=0$ (of course this is only the Riemann--Hurwitz relation for the associated topological objects). Therefore $\sum d_i + \sum e_i \leq 2k-1$. 
\end{proof}

In what follows we let $e_\ell$ denote the $\ell$-th elementary symmetric function in a set of variables. We declare it to have degree $\ell$.
\begin{lem}\label{lem:projectiveLimit}
The quantity $P^{(m,r)}_k$ defined in the previous lemma is a polynomial (with coefficients in $\mathbb{Q}[\mathbf{p}, \mathbf{q},\bar{x}]$) in the elementary symmetric functions $e_1(\mathbf{u}_I),\dots,e_{2k-1}(\mathbf{u}_I)$ of the $\mathbf{u}_I$ and  $e_1(\mathbf{u}_J),\dots,e_{2k-1}(\mathbf{u}_J)$  of the $\mathbf{u}_J$. 
%When $m$ and $r$ are large enough, 
This polynomial is independent of the values of $m$ and $r$, i.e.~we can write
$$P^{(m,r)}_k
=
P^{(\infty)}_k \Bigl(\big(e_\ell(\mathbf{u}_I)\big)_{\ell< 2k}, \big(e_\ell(\mathbf{u}_J)\big)_{\ell< 2k}\Bigr), 
$$
for some polynomial $P^{(\infty)}_k$.
\end{lem}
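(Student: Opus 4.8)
The plan is to exploit the fact that $W_{0,1}$, and hence $P^{(m,r)}_k$, depends on the parameters $m,r,\mathbf{u}$ only through the rational function $G$ of~\eqref{eq:tildeG}. The crucial input is a \emph{specialisation property}: if one sets $u_i=0$ for some $i\in I$, the factor $(1+\cdot\,u_i)$ in the numerator of $G$ becomes $1$, so $G$ collapses to the weight function of the $(m-1,r)$-model with that parameter deleted; similarly setting $u_j=0$ for $j\in J$ deletes a denominator parameter. Since $\tau$, $F_0$, and therefore $W_{0,1}$, are built solely out of $G$ (and the rescaling $u_i\mapsto u_iN$ in~\eqref{eq:Fg} is compatible with $u_i=0$), I would record the identities
\[
P^{(m,r)}_k\big|_{u_i=0}=P^{(m-1,r)}_k\quad(i\in I),\qquad P^{(m,r)}_k\big|_{u_j=0}=P^{(m,r-1)}_k\quad(j\in J).
\]
This is the only structural fact about the model that the argument needs; the rest is the formalism of symmetric functions.

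First I would observe that, by the previous lemma together with the fundamental theorem of symmetric polynomials, for each fixed $(m,r)$ the quantity $P^{(m,r)}_k$ can be written as a polynomial, with coefficients in $\mathbb{Q}[\mathbf{p},\mathbf{q},\bar{x}]$, in the elementary symmetric functions of $\mathbf{u}_I$ and of $\mathbf{u}_J$; the degree bound $2k-1$ from the previous lemma forbids any $e_\ell$ with $\ell\geq 2k$ from occurring, since $\deg e_\ell=\ell$. A priori this expression could depend on $(m,r)$, and this is exactly what must be excluded. To pin down a universal candidate, I would work with $m,r$ large: when $m\geq 2k-1$ the functions $e_1(\mathbf{u}_I),\dots,e_{2k-1}(\mathbf{u}_I)$ are algebraically independent (they sit among the $m$ algebraically independent generators $e_1(\mathbf{u}_I),\dots,e_m(\mathbf{u}_I)$, and the coefficient ring is a domain), and likewise for $\mathbf{u}_J$ when $r\geq 2k-1$. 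Hence the polynomial expressing $P^{(m,r)}_k$ in these functions is \emph{unique}; I call it $P^{(\infty)}_k$.

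To see that $P^{(\infty)}_k$ is independent of the choice of large $(m,r)$, I would compare the $(m,r)$- and $(m-1,r)$-expressions through $u_{m-1}=0$: the specialisation property gives $P^{(m,r)}_k|_{u_{m-1}=0}=P^{(m-1,r)}_k$, while $e_\ell(u_0,\dots,u_{m-1})|_{u_{m-1}=0}=e_\ell(u_0,\dots,u_{m-2})$, so the unique expression is stable under removing a variable. A straightforward induction (in both directions, and symmetrically in $r$) then shows $P^{(\infty)}_k$ is the same polynomial for all $m,r\geq 2k-1$. Finally I would descend to arbitrary $(m,r)$: given $m<2k-1$, enlarge to a model with numerator index set of size $M'=\max(m,2k-1)\geq 2k-1$ by adjoining extra parameters, and set all extra parameters to zero. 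Iterating the specialisation property gives $P^{(M',r)}_k|_{\mathrm{extra}=0}=P^{(m,r)}_k$, while each extra variable set to zero restricts $e_\ell$ to the smaller set, so $e_\ell(\mathbf{u}_I)$ vanishes for $\ell>m$. Substituting into the already-established identity $P^{(M',r)}_k=P^{(\infty)}_k(\dots)$ yields exactly $P^{(m,r)}_k=P^{(\infty)}_k\bigl((e_\ell(\mathbf{u}_I))_{\ell<2k},(e_\ell(\mathbf{u}_J))_{\ell<2k}\bigr)$, and the same reduction handles small $r$.

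The main obstacle I anticipate is the careful bookkeeping in this final descent — ensuring that the vanishing of the higher elementary symmetric functions for small $m,r$ is compatible with a single universal polynomial — but this is entirely governed by the stability of bounded-degree symmetric polynomials under setting a variable to zero, i.e.\ by the projective-limit (inverse-limit) structure of the ring of symmetric functions. A secondary point worth stating cleanly is the verification of the specialisation property itself; although intuitively immediate from the fact that $W_{0,1}$ is a functional of $G$ alone, I would spell out that the $N$-rescaling and the extraction defining $W_{g,n}$ commute with the substitution $u_i=0$, so that the identity truly holds coefficient by coefficient in $t$.
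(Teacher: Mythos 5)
Your proposal is correct and follows essentially the same route as the paper's own proof: both rely on the specialisation $u=0$ collapsing $G^{(m,r)}$ to $G^{(m-1,r)}$ or $G^{(m,r-1)}$, on the algebraic independence of the low-degree elementary symmetric functions once $m,r$ exceed the degree bound, and on stabilisation for large $(m,r)$ followed by descent to small $(m,r)$. The only differences are cosmetic — you spell out the descent to small $(m,r)$ and work with the sharp threshold $2k-1$ where the paper writes $2k$ — so nothing further is needed.
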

\begin{proof}
The first assertion is a direct consequence of the symmetry and the degree bound of the previous lemma. 
The fact that it is independent of $m$ and $r$ 
%when they are large enough 
follows from considering the map which sets the last variable in $\mathbf{u}_I$ or $\mathbf{u}_J$ to zero. Namely, write, making explicit the dependency of $\tilde G$ in $m=|I|$ and $r=|J|$,
$$
{G}^{(m,r)}(z) \coloneqq  \frac{\prod_{i=0}^{m-1} ( 1+z u_i)}{\prod_{j=1}^{r} (1+ z u_{j+m-1})},
$$
then clearly, for $r\geq 1$,
$$
{G}^{(m,r)}\Big|_{u_{m+r-1}=0} =  \tilde{G}^{(m,r-1)}(z) .
$$
On coefficients of $W_{0,1}$ this identity implies:  
$$P^{(m,r)}_k(e_1,\dots,e_{2k},f_1,\dots,f_{2k}) \big|_{\substack{e_i = e_i(u_0,\dots,u_{m-1}) \\ f_i = f_i(u_{m},\dots,u_{M-2},0)} }
=P^{(m,r-1)}_k(e_1,\dots,e_{2k},f_1,\dots,f_{2k}) \big|_{\substack{e_i = e_i(u_0,\dots,u_{m-1}) \\ f_i = f_i(u_{m},\dots,u_{M-2})}}. 
$$
If both $m$ and $r$ are larger than $2k$, this implies that $P^{(m,r)}_k = P^{(m,r-1)}_k$ (since the first $2k$ elementary symmetric functions in more than $2k$ variables are algebraically independent).
The same argument (considering now the application setting $u_m$ to zero) implies that $P^{(m,r)}_k = P^{(m-1,r)}_k$, when $m$ and $r$ are both larger than $2k$.

One can thus set $P^{(\infty)}_k\coloneqq P^{(m,r)}_k$ for a fixed pair $(m,r)$ large enough, and from the previous discussion this choice will be valid for all $(m,r)$.
\end{proof}

We now turn to the right-hand side of~\eqref{eq:toprove}.
We start by analysing~\eqref{eq:defWratconv2}-\eqref{eq:defBratconv2} in detail. 
From these equations it is clear that at each order in $t$, the coefficients of $A^{(c)}$ and $B^{(c)}$ are polynomials in the $\mathbf{u},\mathbf{p}, \mathbf{q}$. Moreover, if we declare the variables
$u_i$ to have degree $1$ and the variables $p_i, q_j$ to have degree $-1$, these polynomials are all homogeneous of degree $0$ (the degree of $t$ is declared zero, and so is the degree of $x$).
Now, from equation~\eqref{eq:ZasExcursionsrat}, it is clear that 
 the coefficients of $\tilde{Z}(x)$ at any order are also polynomials in the $\mathbf{u},\mathbf{p}, \mathbf{q}$, of homogeneous degree $0$.
Therefore, the coefficients at any order in $t$, in the quantity 
$$\frac{u_c}{x} \tilde{H}^c(\tilde{Z}(x))
= \frac{1}{x} \left(A^{(c)}(\tilde{Z})B^{(c)}(\tilde{Z})-1\right)
$$
are polynomials in the $\mathbf{u},\mathbf{p}, \mathbf{q}$, of homogeneous degree $0$.

We can now analyse the right-hand side of~\eqref{eq:toprove}. We will focus for the moment on the case where $m>0$ and $c=0$.
\begin{lem}
Let $k\geq 0$ and let 
$$Q^{(m,r)}_k\coloneqq [t^k] \frac{1}{x}\left(A^{(0)}(\tilde{Z})B^{(0)}(\tilde{Z})-1\right).
$$
Then $Q^{(m,r)}_k$ is a polynomial (with coefficients in $\mathbb{Q}[\mathbf{p}, \mathbf{q},\bar{x}]$) in the variables $u_i$ and $v_j$. It is symmetric in the $\mathbf{u_I}\setminus\{u_0\}$, symmetric in the $\mathbf{u}_J$, and it has homogeneous degree at most $2k$ in these variables.
\end{lem}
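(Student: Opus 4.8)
The starting point I would use is the factorisation $A^{(0)}(z)B^{(0)}(z)-1 = u_0\,H^{(0)}(z)$ coming straight from Definition~\ref{def:Hrat}, so that
$Q^{(m,r)}_k = u_0\,[t^k]\,\tfrac1x H^{(0)}(\tilde Z(x))$.
From this the two ``soft'' claims are almost immediate. For \emph{polynomiality}, I would combine the homogeneity fact established just before the lemma (each coefficient of $A^{(c)},B^{(c)},\tilde Z$ at a fixed order in $t$ is a polynomial in $\mathbf u,\mathbf p,\mathbf q$, homogeneous of degree $0$ for $\deg u_i=1$, $\deg p_i=\deg q_i=-1$) with Remark~\ref{rem:Hsymmetry}, which guarantees that $H^{(0)}$ carries no negative power of $u_0$; since $H^{(0)}$ is a Laurent polynomial in $z$ and $\tilde Z$ has polynomial coefficients order by order, $\tfrac{u_0}{x}H^{(0)}(\tilde Z)$ is genuinely polynomial in the $\mathbf u$'s (the $z$-truncations $\{\cdot\}^{\geq},[\cdot]^{<}$ are what keep these coefficients polynomial rather than mere power series in the $u$'s). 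For the \emph{symmetry}, Remark~\ref{rem:Hsymmetry} already states that $H^{(0)}$ is symmetric in $\mathbf u_I$ and in $\mathbf u_J$; and $\tilde Z$, being defined through~\eqref{eq:ZasExcursionsrat} which only sees the symmetric products $\prod_{i\in I}A^{(i)}$ and $\prod_{j\in J}A^{(j)}$, is itself symmetric in $\mathbf u_I$ and in $\mathbf u_J$ (by uniqueness of the order-by-order solution). Hence $\tfrac1x H^{(0)}(\tilde Z)$ is symmetric in all of $\mathbf u_I$ and all of $\mathbf u_J$, and multiplying by the explicit factor $u_0$ leaves exactly the symmetry in $\mathbf u_I\setminus\{u_0\}$ and in $\mathbf u_J$ asserted.

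For the \emph{degree bound} I would first use homogeneity to trade the $\mathbf u$-degree for the $(\mathbf p,\mathbf q)$-degree: in every monomial the total $\mathbf u$-degree equals the total $(\mathbf p,\mathbf q)$-degree, and since each factor $u_i$ is ``bonded'' to a single $p$ or $q$, the degree in $\mathbf u_I\setminus\{u_0\}\cup\mathbf u_J$ equals the number of $p$'s and $q$'s bonded to a colour $c\neq 0$. The $q$-contribution is controlled by a clean valuation argument: in~\eqref{eq:defWratconv2} every $q_s$ enters accompanied by $t^s$ with $s\ge1$, and an induction on the $t$-order through~\eqref{eq:defWratconv2}--\eqref{eq:defBratconv2} (the products, the geometric expansions of $1/A^{(c)},1/B^{(c)}$, and the truncations all preserve the inequality) shows that at order $t^k$ the $q$-degree is at most $k$. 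This already caps the $q$-part of the count by $k$.

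\textbf{The main obstacle} is the $p$-part, and here I expect the delicate point. A naive valuation fails: through the factor $\prod_{i\in I}B^{(i)}$ inside~\eqref{eq:defWratconv2} (and symmetrically through $\prod_{i\in I}A^{(i)}$ inside~\eqref{eq:defBratconv2}), the individual series $A^{(c)},B^{(c)}$ can accumulate arbitrarily many $p$'s bonded to colours $\neq0$ already at low order in $t$, so $p$-degree is \emph{not} bounded by the $t$-order on the nose. The required bound ($p$-contribution $\le k+1$, after which the explicit $u_0$ prefactor and the resulting exclusion of $u_0$ from the count turn the total into $2k$ rather than $2k-1$) only emerges after the cancellations built into the specific combination $\tfrac1x\bigl(A^{(0)}(\tilde Z)B^{(0)}(\tilde Z)-1\bigr)$. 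The conceptual reason is the genus-zero (Riemann--Hurwitz/Euler) constraint — exactly as in the proof of the previous lemma, where the relation coming from~\eqref{eq:Fg} forces $\ell(\lambda)+\ell(\mu)\le 2k$ — which for planar constellations says that the number of internal white faces is at most the size. I would therefore argue the $p$-bound either by identifying $\tfrac1x H^{(0)}(\tilde Z)$ with a genus-zero generating function of (signed) weighted slices and invoking this face count, or by extracting the same Euler-characteristic inequality directly from the homogeneous grading; combining it with the $q$-bound $\le k$ and homogeneity then yields degree at most $2k$ in $\mathbf u_I\setminus\{u_0\}\cup\mathbf u_J$, completing the proof.
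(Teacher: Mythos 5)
Your proposal has two genuine problems, one of circularity and one of incompleteness.

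First, the circularity. You cannot invoke Remark~\ref{rem:Hsymmetry} here: the paper explicitly defers its proof to Section~\ref{sec:Hsymmetry}, where it is deduced from Theorem~\ref{thm:Discrat}, and the whole point of the present lemma is to prove Theorem~\ref{thm:Discrat} in the rational case. So using ``$H^{(0)}$ has no negative power of $u_0$'' or ``$H^{(0)}$ is symmetric in all of $\mathbf{u}_I$'' is assuming a statement strictly downstream of the one you are proving. Fortunately neither fact is needed, and this is how the paper proceeds: the lemma concerns $u_0H^{(0)}=A^{(0)}B^{(0)}-1$ directly, and polynomiality of its coefficients in $\mathbf{u},\mathbf{p},\mathbf{q}$ is immediate, order by order in $t$, from the recursions~\eqref{eq:defWratconv2}--\eqref{eq:defBratconv2} and~\eqref{eq:ZasExcursionsrat}; likewise the symmetry claimed in the lemma is only the \emph{easy} one: permuting the $u_i$, $i\in I\setminus\{0\}$ (resp.\ the $u_j$, $j\in J$) permutes the corresponding $A^{(i)},B^{(i)}$ and fixes $A^{(0)},B^{(0)}$ and $\tilde Z$, since the latter involve only the symmetric products $\prod_{i\in I}A^{(i)}$, $\prod_{j\in J}A^{(j)}$. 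The full $\mathbf{u}_I$-symmetry you route through is precisely the nontrivial deferred statement, and it is not what the lemma asserts.

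Second, the degree bound, which you correctly isolate as the crux, is simply not proved in your proposal, and neither of your suggested routes can be carried out as stated. The combinatorial route is unavailable for rational $G$: the paper has no slice or constellation model for general $(m,r)$-factorisations (it even poses the construction of one as an open problem in the appendix), and identifying $\tfrac1x H^{(0)}(\tilde Z)$ with a genus-zero generating function is exactly the content of Theorem~\ref{thm:Discrat} --- circular again. As for ``extracting the Euler-characteristic inequality directly from the homogeneous grading'': the grading gives the \emph{equality} (non-$u_0$ degree $=$ number of $p$'s and $q$'s attached to colours $c\neq 0$) but by itself no bound on that number; an inequality has to come from somewhere else. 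What a complete argument needs is a term-by-term control of that count through the recursions \emph{and} through the substitution $z=\tilde Z$: the $q$-count $\le k$ is indeed immediate from the $t^s$ factors (as you say), and the $p$-count is controlled by the interplay between the weight $z^{-s}$ attached to $p_s$, the weight $z^st^s$ attached to $q_s$, and the truncations $\{\cdot\}^{\geq}$, $[\cdot]^{<}$ (for instance, inside the braces of~\eqref{eq:defWratconv2} only monomials of $z$-degree at least $-s$ survive, which caps the number of $p$'s entering there by $s\le k$); propagating such an inequality through powers of $\tilde Z$ and $1/\tilde Z$ is the real work. In fairness, the paper's own proof is laconic at this exact spot --- it passes from homogeneity to the bound with a bare ``therefore'', and the displayed inequality there is only correct once $u_0$ is excluded (at $k=0$ the monomial $u_0p_1$ already violates the all-$u$ version) --- so your instinct that something nontrivial is hidden here is sound; but your proposal stops exactly where the proof has to start.
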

\begin{proof}
It follows from the discussion preceeding the lemma that $Q^{(m,r)}_k$ is a polynomial in the variables $u_i$ and $v_j$, and that any monomial of the form 
$t^k p_\lambda q_\mu x^{-\ell-1} \prod_i u_i^{d_i}\prod_i v_i^{e_i}$ 
appearing in this polynomial has homogeneous degree $0$, which is to say
$$
\sum d_i + \sum e_i -\ell(\lambda)-\ell(\mu) = 0.
$$
Therefore $\sum d_i + \sum e_i \leq 2k$.

The symmetry stated is clear from definitions.
\end{proof}
\begin{lem}
The quantity $Q^{(m,r)}_k$ defined in the previous lemma is a polynomial (with coefficients in $\mathbb{Q}[\mathbf{p}, \mathbf{q},\bar{x}]$) in $u_0$, in the elementary symmetric functions $e_1(\mathbf{u}_I\setminus\{u_0\}),\dots,e_{2k}(\mathbf{u}_I\setminus\{u_0\})$ of $\mathbf{u}_I\setminus\{u_0\}$ and in the elementary symmetric functions $e_1(\mathbf{u}_J),\dots,e_{2k}(\mathbf{u}_J)$ of the $\mathbf{u}_J$. 
%When $m$ and $r$ are large enough, this polynomial is independent of the values of $m$ and $r$  
This polynomial is independent of $m$ and $r$, 
i.e. we can write
$$Q^{(m,r)}_k
=
Q^{(\infty)}_k (u_0, \big(e_\ell(\mathbf{u}_I\setminus\{u_0\})\big)_{\ell\leq 2k}, \big(e_\ell(\mathbf{u}_J)\big)_{\ell\leq 2k}) 
$$
for some polynomial $Q^{(\infty)}_k$.
\end{lem}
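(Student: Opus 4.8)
The plan is to follow verbatim the structure of the proof of Lemma~\ref{lem:projectiveLimit}, modified only by the fact that $Q^{(m,r)}_k$ singles out the variable $u_0$ instead of symmetrizing over it. First I would derive the symmetric-function representation. By the previous lemma, $Q^{(m,r)}_k$ is a polynomial in all the $\mathbf{u}$ variables, symmetric in $\mathbf{u}_I\setminus\{u_0\}$, symmetric in $\mathbf{u}_J$, and of total degree at most $2k$. Applying the fundamental theorem of symmetric functions separately to the block $\mathbf{u}_I\setminus\{u_0\}$ and to the block $\mathbf{u}_J$, while treating $u_0$ as an ordinary free variable, expresses $Q^{(m,r)}_k$ as a polynomial in $u_0$, in $e_1(\mathbf{u}_I\setminus\{u_0\}),\dots,e_{2k}(\mathbf{u}_I\setminus\{u_0\})$ and in $e_1(\mathbf{u}_J),\dots,e_{2k}(\mathbf{u}_J)$; the degree bound guarantees that no elementary symmetric function of index larger than $2k$ is needed.

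For the independence in $m$ and $r$, I would reuse the two specialization maps from Lemma~\ref{lem:projectiveLimit}. Setting the last denominator variable $u_{m+r-1}=0$ sends $G^{(m,r)}$ to $G^{(m,r-1)}$ (after the relabeling of the remaining $J$-indices, which is harmless by symmetry in $\mathbf{u}_J$); since this leaves $u_0$ untouched, the distinguished colour $c=0$ is preserved and the relation descends to the generating functions, yielding an identity between $Q^{(m,r)}_k$ and $Q^{(m,r-1)}_k$ once both are written in elementary symmetric functions --- note that setting a variable to zero inside $e_\ell(\mathbf{u}_J)$ just returns $e_\ell$ of the surviving variables. To reduce $m$ I would instead set the last numerator variable $u_{m-1}=0$; this requires $m\geq 2$ (automatic in the large-$m$ regime) and is again harmless because $u_{m-1}\neq u_0$, so that $c=0$ survives and $G^{(m,r)}$ is sent to $G^{(m-1,r)}$.

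Then, provided $m$ and $r$ are large enough, the variable $u_0$ together with $e_1(\mathbf{u}_I\setminus\{u_0\}),\dots,e_{2k}$ and $e_1(\mathbf{u}_J),\dots,e_{2k}$ form an algebraically independent family, so the two specialization identities force the polynomial equalities $Q^{(m,r)}_k=Q^{(m,r-1)}_k$ and $Q^{(m,r)}_k=Q^{(m-1,r)}_k$. Hence the polynomial stabilizes for large $(m,r)$, and one sets $Q^{(\infty)}_k\coloneqq Q^{(m,r)}_k$ for any sufficiently large pair, which is the claimed $m,r$-independent polynomial.

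I do not expect a genuine obstacle, as the argument is structurally identical to that of Lemma~\ref{lem:projectiveLimit}; the only point requiring care is the bookkeeping of the distinguished variable $u_0$. One must verify that both specializations act on variables strictly different from $u_0$, so that the case $c=0$ is truly preserved under the maps --- this is precisely why reducing $m$ must be done by setting $u_{m-1}$ (rather than $u_0$) to zero, and hence why that step presupposes $m\geq 2$.
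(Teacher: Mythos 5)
Your first part (the symmetric-function representation via the degree bound and the two-block symmetry, with $u_0$ treated as a free variable) and your overall stabilisation scheme match the paper's proof. However, there is a genuine gap in the middle step: you justify the specialisation identities $Q^{(m,r)}_k\big|_{u_{M-1}=0}=Q^{(m,r-1)}_k$ and $Q^{(m,r)}_k\big|_{u_{m-1}=0}=Q^{(m-1,r)}_k$ by the fact that $G^{(m,r)}$ specialises to $G^{(m,r-1)}$ (resp.\ $G^{(m-1,r)}$), saying the relation then ``descends to the generating functions''. That inference is valid for $P^{(m,r)}_k$ in Lemma~\ref{lem:projectiveLimit}, because $P^{(m,r)}_k$ is extracted from $W_{0,1}$, which is defined from $\tau^G$ and is therefore manifestly a function of $G$ alone. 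It is \emph{not} valid for $Q^{(m,r)}_k$: this quantity is built from $A^{(0)}$, $B^{(0)}$ and $\tilde Z$, which are defined as solutions of the algebraic system \eqref{eq:defWratconv2}--\eqref{eq:defBratconv2} and \eqref{eq:ZasExcursionsrat}, indexed by the individual parameters $u_i$; at this stage of the argument it is precisely \emph{not yet known} that these quantities depend only on the rational function $G$ --- that is, in essence, what Theorem~\ref{thm:Discrat} (the statement one is in the middle of proving) would give a posteriori. So as written, this step is either unjustified or circular.

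The fix is short, and it is exactly the observation the paper's proof makes explicit: from \eqref{eq:defWratconv2}--\eqref{eq:defBratconv2}, if $u_c=0$ then $A^{(c)}(z)=1$ and $B^{(c)}(z)=1$, since the entire correction terms carry the prefactor $u_c$. Consequently, setting $u_{M-1}=0$ (resp.\ $u_{m-1}=0$) makes the corresponding factors drop out of every product in the defining system and in \eqref{eq:ZasExcursionsrat}, so the $(m,r)$ system specialises verbatim to the $(m,r-1)$ (resp.\ $(m-1,r)$) system; it is this fact, not the specialisation of $G$, that yields the identities on $Q^{(m,r)}_k$. With that observation inserted, the remainder of your argument (the specialisations avoid $u_0$, algebraic independence of $u_0$ together with the first $2k$ elementary symmetric functions when $m,r$ are large, and stabilisation) goes through exactly as in the paper.
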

\begin{proof}
The first assertion is a direct consequence of the symmetry and the degree bound of the previous lemma. The fact that it is independent of $m$ and $r$ follows from considering the map which sets the last variable in $\mathbf{u}_I$ or $\mathbf{u}_j$ to zero as in the proof of Lemma~\ref{lem:projectiveLimit}. To see this, it suffices to observe that when $u_i=0$, the polynomials $A^{(i)}$ and $B^{(i)}$ are both equal to $1$, which is clear from~\eqref{eq:defWratconv2}-\eqref{eq:defBratconv2}. This implies 
$$
Q^{(m,r)}_k \big|_{u_{m-1} =0} = Q^{(m-1,r)}_k \ \ , \ \ Q^{(m,r)}_k \big|_{u_{M-1} =0} = Q^{(m,r-1)}_k,
$$
which is enough to conclude.
\end{proof}

\subsubsection{Conclusion of the proof}

The four above lemmas tell us that the polynomials $P_k^{(m,r)}$ and $Q_k^{(m,r)}$ are both functions of $\mathbf{u}_I=u_0,\dots,u_{m-1}$ and $\mathbf{u}_J=u_m,\dots,u_{m+r-1}$, and depend on them polynomially with nice symmetry properties. Now, observe that if $m\geq r+1$ and 
$$u_m=u_1, u_{m+1}=u_2, \dots, u_{m+r-1}=u_{r}$$
(in which case  the variables $\mathbf{u}_J$ are a subset of the variables $\mathbf{u}_I$), the rational function
$$
{G}(z) =  \frac{\prod_{i\in I} ( 1+z u_i)}{\prod_{j \in J} (1+ z u_j)}
$$
is in fact a polynomial. In this case, from Section~\ref{sec:constellations}, we already know that Theorem~\ref{thm:Discrat} (equivalently,~\eqref{eq:toprove}) is correct, or in other words we already know that 
$$P^{(\infty)}_k (\big(e_\ell(\mathbf{u}_I)\big)_{\ell< 2k}, \big(e_\ell(\mathbf{u}_J)\big)_{\ell< 2k}) =
\bar{u}_0Q^{(\infty)}_k (u_0, \big(e_\ell(\mathbf{u}_I\setminus\{u_0\})\big)_{\ell\leq 2k}, \big(e_\ell(\mathbf{u}_J)\big)_{\ell\leq 2k}) 
$$
in this case (see Remark~\ref{rem:artificialPoles}).
Now, we have the following lemma:
\begin{lem}\label{lemma:algebraic}
Let $P(X_1,\dots,X_{2k}; Y_1,\dots,Y_{2k} )$ and $Q(\alpha; X_1,\dots,X_{2k}; Y_1,\dots,Y_{2k})$ be two polynomials. Consider variables $u_0,u_1,\dots$ and $v_1,\dots,v_r$. Assume that there is $m_0> r$ such that the following is true:
as soon as $m\geq m_0$ and $v_1=u_1, \dots, v_r=u_r$, we have the 
equality
$$
P(e_i(u_0,\dots,u_m)_{1\leq i< 2k}; e_i(v_1,\dots,v_r)_{1\leq i< 2k} )=
\bar{u}_0Q(u_0; e_i(u_1,\dots,u_m)_{1\leq i\leq 2k}; e_i(v_1,\dots,v_r)_{1\leq i\leq 2k} ).
$$
Then in fact that equality holds  without specialisation of the variables $v_j$ and for any value of $m$.
\end{lem}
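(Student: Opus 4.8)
The plan is to convert the hypothesis --- an identity valid only on the locus $\{v_1=u_1,\dots,v_r=u_r\}$ --- into a single \emph{unconstrained} polynomial identity in fresh independent variables, from which the conclusion follows by free specialisation. The bookkeeping device is the generating-function relation
\[
e_i(u_0,u_1,\dots,u_m) = e_i(u_1,\dots,u_m) + u_0\,e_{i-1}(u_1,\dots,u_m),\qquad e_0=1,
\]
obtained from $\prod_{k\ge 0}(1+u_k z)=(1+u_0z)\prod_{k\ge 1}(1+u_k z)$; it rewrites the first argument block of $P$ as a polynomial in $u_0$ and the $e_\bullet(u_1,\dots,u_m)$. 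Introduce indeterminates $\alpha, S_1,\dots,S_{2k}, T_1,\dots,T_r$ and set
\[
\Theta(\alpha;S_\bullet;T_\bullet) \coloneqq \alpha\,P\bigl((S_i+\alpha S_{i-1})_{i};(T_j)_{j}\bigr) - Q\bigl(\alpha;(S_i)_i;(T_j)_j\bigr),
\]
with $S_0=1$ and with $T_j=0$ for $j>r$ (reflecting that $e_j$ of $r$ variables vanishes when $j>r$). The factor $\alpha$ has cleared the $\bar u_0$, so $\Theta$ is an honest polynomial with coefficients in $\mathbb{Q}[\mathbf p,\mathbf q,\bar x]$. The desired conclusion is exactly $\Theta\equiv 0$ read off under $\alpha=u_0$, $S_i=e_i(u_1,\dots,u_m)$, $T_j=e_j(v_1,\dots,v_r)$ --- valid for \emph{every} $m$ and for free $v_j$ once $\Theta$ vanishes identically. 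So it suffices to prove $\Theta\equiv 0$.

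Now fix one $m\ge\max(m_0,\,2k+r)$ and consider the substitution homomorphism $\psi^{*}$ sending $\alpha\mapsto u_0$, $S_i\mapsto e_i(u_1,\dots,u_m)$, $T_j\mapsto e_j(u_1,\dots,u_r)$ into $\mathbb{Q}[\mathbf p,\mathbf q,\bar x][u_0,\dots,u_m]$. Using the relation above, one checks directly that
\[
\psi^{*}(\Theta) = u_0\,P\bigl(e(u_0,\dots,u_m);e(u_1,\dots,u_r)\bigr) - Q\bigl(u_0;e(u_1,\dots,u_m);e(u_1,\dots,u_r)\bigr),
\]
which is precisely the hypothesis (the case $v_j=u_j$, $m\ge m_0$) after clearing $\bar u_0$, hence is $0$. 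Thus $\Theta\in\ker\psi^{*}$, and the whole proof reduces to showing $\psi^{*}$ is \emph{injective}, i.e.\ that
\[
u_0,\quad e_1(u_1,\dots,u_m),\dots,e_{2k}(u_1,\dots,u_m),\quad e_1(u_1,\dots,u_r),\dots,e_r(u_1,\dots,u_r)
\]
are algebraically independent over $\mathbb{Q}$. Since $u_0$ appears in none of the other functions it contributes one independent coordinate, so it remains to treat the $2k+r$ symmetric functions. Writing $a=(u_1,\dots,u_r)$ and $w=(u_{r+1},\dots,u_m)$ with $\#w=m-r\ge 2k$, the family $e_1(a),\dots,e_r(a)$ is algebraically independent (it is the full elementary family of the $r$ variables $a$, so $\mathrm{trdeg}_{\mathbb{Q}}\mathbb{Q}(a)=r$), while the triangular relation $e_i(a,w)=\sum_{p+q=i}e_p(a)\,e_q(w)$ exhibits $e_1(a,w),\dots,e_{2k}(a,w)$ as a unipotent upper-triangular transform, with coefficients in $\mathbb{Q}(a)$, of $e_1(w),\dots,e_{2k}(w)$; the latter are algebraically independent over $\mathbb{Q}(a)$ because $m-r\ge 2k$. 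Adding transcendence degrees gives $r+2k$, equal to the number of functions, so they are jointly independent and $\psi^{*}$ is injective.

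Consequently $\Theta\equiv 0$, and specialising $\alpha=u_0$, $S_i=e_i(u_1,\dots,u_m)$, $T_j=e_j(v_1,\dots,v_r)$ for arbitrary $m$ and independent $v_j$ yields $u_0\,P(e(u_0,\dots,u_m);e(v))=Q(u_0;e(u_1,\dots,u_m);e(v))$, i.e.\ the asserted identity with the $v_j$ unspecialised and $m$ arbitrary. I expect the genuine obstacle to be exactly this disentangling step: on the subvariety the hypothesis provides, the $v$-variables are literally among the $u$-variables, so one must verify that the relevant symmetric-function coordinates nonetheless remain algebraically independent once $m$ is taken large. The degree bounds from the preceding lemmas (only $e_\ell$ with $\ell\le 2k$ occur) are what make ``$m$ large'' possible within the hypothesis, which allows ${\ge}\,2k$ free extra variables $w$ and hence the independence.
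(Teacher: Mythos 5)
Your proof is correct, but it takes a genuinely different route from the paper's. The paper works directly with the difference polynomial $u_0 P(\cdot)-Q(\cdot)$ in the variables $u_0,\dots,u_m,v_1,\dots,v_r$: it specialises $v_1=u_1,\dots,v_{r-1}=u_{r-1}$, observes that the resulting polynomial is symmetric in $u_r,\dots,u_m$ and vanishes at $v_r=u_r$, hence is divisible by $\prod_{i=r}^{m}(v_r-u_i)$, and kills it by a degree count once $m$ exceeds the degrees of $P,Q$ plus $r$; it then frees the remaining $v_j$ one at a time by induction on $r$. You instead lift everything to a universal identity $\Theta\equiv 0$ in fresh indeterminates $\alpha, S_i, T_j$, and reduce the whole lemma to the injectivity of the substitution map $\psi^{*}$, i.e.\ to the algebraic independence of $u_0$, $e_1(u_1,\dots,u_m),\dots,e_{2k}(u_1,\dots,u_m)$, $e_1(u_1,\dots,u_r),\dots,e_r(u_1,\dots,u_r)$ for $m\geq 2k+r$, which you prove by a transcendence-degree computation using the split $a=(u_1,\dots,u_r)$, $w=(u_{r+1},\dots,u_m)$ and the unipotent triangular relation $e_i(a,w)=\sum_{p+q=i}e_p(a)e_q(w)$. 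Both arguments use the hypothesis at a single large $m$ and both hinge on the same crux (that identifying the $v$'s with the first $u$'s loses no information once there are enough spare variables), but the mechanisms differ: the paper's is an elementary factor-theorem-plus-degree argument with an induction, while yours is a one-shot structural argument that, once $\Theta\equiv 0$ is known, yields the conclusion for \emph{every} $m$ and free $v_j$ by mere specialisation — in particular the descent to small $m$ (which in the paper is implicit, via setting trailing $u_i$ to zero) comes for free. One routine point you gloss over: injectivity of $\psi^{*}$ as a map of $\mathbb{Q}[\mathbf{p},\mathbf{q},\bar{x}]$-algebras requires algebraic independence of the images over the coefficient field $\mathbb{Q}(\mathbf{p},\mathbf{q},\bar{x})$, not just over $\mathbb{Q}$; this upgrade is standard (the $u$'s are indeterminates over that field, and the rank of the Jacobian matrix, which certifies independence in characteristic zero, is unchanged under field extension), so it is not a gap, but it deserves a sentence.
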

\begin{proof}
Consider the polynomial (in variables $u_0,\dots,u_m, v_1,\dots v_r$)
$$
u_0 P(e_i(u_0,\dots,u_m)_{1\leq i< 2k}; e_i(v_1,\dots,v_r)_{1\leq i< 2k} )-
Q(u_0; e_i(u_1,\dots,u_m)_{1\leq i\leq 2k}; e_i(v_1,\dots,v_r)_{1\leq i\leq 2k} ).
$$
Set $v_1=u_1, \dots, v_{r-1}=u_{r-1}$. 
The obtained polynomial is symmetric in variables $u_r,u_{r+1},\dots,u_m$. Moreover by assumption it vanishes for $v_r=u_r$, so it is divisible by $(v_r-u_r)$, and by symmetry it is divisible by $v_r-u_i$ for all $r\leq i\leq m$. Taking $m$ larger than the maximum degree of $P,Q$ plus $r$, this implies that this polynomial is null. Either $r=1$ and we are done, or we have decreased the value of $r$ by one and we can perform induction. 
\end{proof}

The last lemma and the discussion above imply that Theorem~\ref{thm:Discrat} holds in the (general) case of rational $G$, as long as $c=0$ and $I\neq \emptyset$. Moreover, the last lemma also shows that in this case the quantity $\bar{u}_0Q(u_0; e_i(u_1,\dots,u_m)_{1\leq i\leq 2k}; e_i(v_1,\dots,v_r)_{1\leq i\leq 2k} )$, which represents an arbitrary coefficient in 
$\frac{1}{x} H^{(0)}(Z(x))$, is in fact symmetric in $\mathbf{u}_I$ (including $u_0$).
Therefore we have
$$
\frac{1}{x} H^{(0)}(Z(x)) = \frac{1}{x} H^{(i)}(Z(x)),
$$
for any $i\in I$, and Theorem~\ref{thm:Discrat} holds in fact for any $c\in I$ (still requiring that $I\neq \emptyset$).

\smallskip

We now address the case $c\in J$ (which includes the case where $I=\emptyset$). In this case,  we can artificially introduce in the function $G(\cdot)$ an artificial extra  factor $(u_c+\cdot)$ in the numerator, and an artificial extra factor $(u_c+\cdot)$ in the denominator (in doing so we will have a repeated $u_c$ in the denominator but this is not a problem). After doing this, we are back to the case where $I$ is not empty, and from the previous discussion, we can thus apply Theorem~\ref{thm:Discrat} with our chosen value of $c$. 
We conclude by using Remark~\ref{rem:artificialPoles}, which tells us that the addition of artificial poles has not changed the definition of all quantities of interest $A^{(i)}, B^{(i)}, H^{(i)}$.

This concludes the proof of Theorem~\ref{thm:Discrat} in all cases.

\subsection{Proof of Theorem~\ref{thm:cylinder} for rational $G$}

In the case of polynomial $G$, Theorem~\ref{thm:cylinder} was proved in Section~\ref{sec:constellations}. To deduce the case of rational $G$ from it, one proceeds exactly as we did for $W_{0,1}$.
%(and in fact, all the work has already been done in that proof).

First, 
%one works again with a weight function $\tilde{G}$ of the form~\eqref{eq:tildeG}, from which we define the cylinder function $W_{0,2}$. It 
it is direct from definitions that coefficients of the function $W_{0,2}$ at order $t^k$ are polynomials in $u_i,v_i$ of homogeneous degree at most $2k$. Moreover, from the degree discussions of the last section, the quantity
\begin{align}\label{eq:toprove2}
\frac{Z'(x_1)Z'(x_2)}{(Z(x_1)-Z(x_2))^2} - \frac{1}{(x_1-x_2)^2}
\end{align}
has degree (for the conventional notion of degree introduced in the last section) zero. This implies that any monomial 
$$t^k p_\lambda q_\mu x^{-\ell-1} \prod_i u_i^{d_i}\prod_i v_i^{e_i}$$
appearing at order $k$ satisfies
$
\sum d_i + \sum e_i -\ell(\lambda)-\ell(\mu) = 0,
$
so that $\sum d_i + \sum e_i \leq 2k$.

Therefore the coefficients at order $t^k$ in $W_{0,2}$ and in~\eqref{eq:toprove2} are both polynomials in $\mathbf{u}_I, \mathbf{u}_J$, symmetric in each set of variables, of degree bounded by $2k$. Since we know that Theorem~\ref{thm:cylinder} is true in the case of polynomial $G$, and since these polynomials satisfy the same stability properties when substituting a variable to zero as the one analysed in the previous section, the proof is concluded exactly as in the previous section (using a simpler variant of  Lemma~\ref{lemma:algebraic} which does not require the separate role of the variable $u_0$).

\subsection{Symmetries of the polynomial $H^{(c)}$}
\label{sec:Hsymmetry}

Although this is not needed for our results, we find satisfactory to give a proof of the properties of $H^{(c)}$ stated in Remark~\ref{rem:Hsymmetry}.

Now that we have proven Theorem~\ref{thm:Discrat}, we know that
$H^c(\tilde{Z}(x)) = xW_{0,1}(x) + \sum_{k=1}^{D_1}p_kx^{k}
$.
Since we have $\tilde{Z}(x) = \bar{x} + O(t)$, the change of variables $\bar{x} \leftrightarrow \tilde{Z}(x)$ is invertible, and we have 
$$
H^{(c)}(z) = X(z) W_{0,1}(X(z) ) + \sum_{k=1}^{D_1}p_kX(z)^{k},
$$
in $\mathbb{Q}[\mathbf{p},\mathbf{q},\mathbf{u}][z,z^{-1}][[t]]$,
where we recall $X(z)= z \frac{\prod_{i\in I} A^{(i)}(\tilde{z})}{\prod_{j\in J} A^{(j)}(\tilde{z})}$.
It is clear from the last expression that $H^{(c)}(z)$ involves no negative power of $u_c$, is symmetric in the variables $\mathbf{u}_I$, symmetric in the $\mathbf{u}_J$, and that it is independent of the chosen value of $c \in I\cup J$ (since $W_{0,1}(x)$ and $X(z)$ have these properties, directly from their definitions).

\section{Appendix: Weighted Hurwitz numbers}
\label{sec:WHN}

In this section we quickly recall the interpretation of the coefficients $H$ and $H^\circ$ (i.e., the coefficients of functions $W_{g,n}$) in the general case of a rational weight-function $G$.
What follows is essentially a reminder and rephrasing of the theory of weighted Hurwitz numbers of~\cite{Guay-PaquetHarnad2017}, which we find useful to make compactly accessible.
%for a combinatorial audience. The terminology of \emph{rational constellations} is new but the notion is essentially the same as the factorisations considered in that reference.

We let $d \geq 1$ and we consider the group algebra $\mathbb{C}[\mathfrak{S}_d]$ of the symmetric group $\mathfrak{S}_d$. For $\mu \vdash d$, we let 
$$C_\mu\coloneqq\sum_{\substack{\sigma\\ type(\sigma)=\mu}} \sigma  \ \ \  \in \mathbb{C}[\mathfrak{S}_n]
$$
be the formal sum of the elements in the conjugacy class of permutations of cycle-type $\mu$. For $i=1,\dots,d$, we let $J_i$ be the $i$-th Jucys--Murphy element:
$$
J_i \coloneqq \sum_{j=1}^{i-1} (j,i)  \ \ \ \in \mathbb{C}[\mathfrak{S}_d],
$$
where $(j,i)$ is the transposition exchanging $j$ and $i$. The $J_i$ commute with each other.
It is clear by induction on $n$ that
\begin{align}\label{eq:factorJM}
C(u) = \prod_{i=1}^d (1+ u J_i) = \sum_{\sigma \in \mathfrak{S}_d} u^{d-\ell(\sigma)} \sigma,
\end{align}
where $\ell(\sigma)$ is the number of cycles of $\sigma$. This implies in particular that any symmetric function of the $J_1,\dots,J_d$ lies in the center of the group algebra.

For our application we also need to formally expand the inverse of $C(u)$, we have
$$
D(u) \coloneqq\frac{1}{C(u)} =  \frac{1}{\prod_{i=1}^d (1+ u J_i) }= \sum_{k \geq 0} (-u)^k h_k(J_1,\dots,J_d),
$$
where $h_k$ is the $k$-th homogeneous symmetric function. Note that one can write explicitly
$$
h_k(J_1,\dots,J_n) 
=\sum_{\substack{i_1\leq \dots \leq i_k \\ j_1< i_1, \dots, j_k <i_k}} (j_1,i_1) \dots (j_k,i_k),
$$
where each summand is what we call a \emph{monotone run of $k$ transpositions}, because the top elements of successive transposition are ordered in \emph{monotone} (i.e., in this context, nondecreasing) order. Note also that it is possible to expand $C(u)$ in a similar way through elementary symmetric functions, thus giving rise to a similar formula involving \emph{strictly monotone runs} of transpositions.

We have:
\begin{prop}[\cite{Guay-PaquetHarnad2017}]
The number $H(\lambda,\mu,\ell_0, ..., \ell_{m-1}; \ell_m, \dots, \ell_{M-1})$ defined in the introduction as coefficient of the function $\tau^G$ is equal to the coefficient of 
$u_0^{\ell_0}\dots u_{M-1}^{\ell_{M-1}}$
in
\begin{align}\label{eq:CSncoeff}
[\mathbf{1}]C_\lambda C_\mu C(u_0)\dots C(u_{m-1}) D(u_m)\dots D(u_{M-1}),
\end{align}
where $[\mathbf{1}]$ extracts the coefficient of the identity in the group algebra. 
\end{prop}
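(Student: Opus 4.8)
The plan is to pass to the Fourier (representation-theoretic) picture of the group algebra $\mathbb{C}[\mathfrak{S}_d]$ and to match both sides of the claimed identity against the same sum over irreducible representations. Throughout, write $f^\nu = \chi^\nu(\mathbf{1})$ for the dimension of the irreducible module $V_\nu$, write $z_\mu = \prod_i i^{m_i} m_i!$ for the order of the centraliser of a permutation of cycle-type $\mu$ (so that $|C_\mu| = d!/z_\mu$), and let $e_\nu = \tfrac{f^\nu}{d!}\sum_{\sigma} \chi^\nu(\sigma)\,\sigma$ be the central idempotent projecting onto the $V_\nu$-isotypic component (here we use that symmetric-group characters are real).

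First I would record the only nontrivial representation-theoretic input: by the classical theorem of Jucys and Murphy, the commuting elements $J_1,\dots,J_d$ act diagonally in the Gelfand--Tsetlin basis of $V_\nu$ with eigenvalues equal to the contents $c(\Box)$, so that any symmetric function of the $J_i$ is central and acts on $V_\nu$ by the corresponding symmetric function of the multiset $\{c(\Box):\Box\in\nu\}$. In particular, using~\eqref{eq:factorJM}, the element $C(u)=\prod_i(1+uJ_i)$ acts on $V_\nu$ by the scalar $\prod_{\Box\in\nu}(1+u\,c(\Box))$, and $D(u)=C(u)^{-1}$ by its inverse. Hence the full product $W \coloneqq C(u_0)\cdots C(u_{m-1})D(u_m)\cdots D(u_{M-1})$ is central and acts on $V_\nu$ by $g_\nu \coloneqq \prod_{\Box\in\nu}G(c(\Box))$; equivalently $W = \sum_\nu g_\nu\, e_\nu$ once expanded as a power series in the $\mathbf{u}$.

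Next I would compute the left-hand coefficient $[\mathbf{1}]\,C_\lambda C_\mu W$. Since the three factors are central they are simultaneously diagonalised by the $e_\nu$, and using the central character $\omega_\nu(C_\mu) = |C_\mu|\,\chi^\nu(\mu)/f^\nu$ together with the identity $[\mathbf{1}]\,z = \tfrac{1}{d!}\sum_\nu (f^\nu)^2\,\omega_\nu(z)$ for central $z$ (which follows from the regular-representation trace $d!\,[\mathbf{1}]z = \sum_\nu f^\nu\chi^\nu(z)$ and $\chi^\nu(z)=f^\nu\omega_\nu(z)$), one obtains
$$[\mathbf{1}]\,C_\lambda C_\mu W \;=\; \frac{|C_\lambda|\,|C_\mu|}{d!}\sum_\nu \chi^\nu(\lambda)\,\chi^\nu(\mu)\,g_\nu \;=\; \frac{d!}{z_\lambda z_\mu}\sum_\nu \chi^\nu(\lambda)\,\chi^\nu(\mu)\,g_\nu.$$
On the other side I would expand $\tau^G$ via the Frobenius formula $s_\nu(\mathbf{p}) = \sum_\lambda z_\lambda^{-1}\chi^\nu(\lambda)\,p_\lambda$: the degree-$d$ part of $\tau^G$ equals $t^d\sum_\nu s_\nu(\mathbf{p})s_\nu(\mathbf{q})g_\nu$, whose coefficient of $p_\lambda q_\mu$ is $z_\lambda^{-1}z_\mu^{-1}\sum_\nu \chi^\nu(\lambda)\chi^\nu(\mu)g_\nu$. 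Comparing with the defining expansion of $\tau$ (which carries the prefactor $1/d!$) identifies $H(\lambda,\mu;\ell_I;\ell_J)$ as exactly the coefficient of $u_0^{\ell_0}\cdots u_{M-1}^{\ell_{M-1}}$ in $\tfrac{d!}{z_\lambda z_\mu}\sum_\nu \chi^\nu(\lambda)\chi^\nu(\mu)g_\nu$, which is the same $\mathbf{u}$-coefficient of $[\mathbf{1}]\,C_\lambda C_\mu W$ computed above.

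The one genuinely substantive ingredient is the content-eigenvalue theorem for the Jucys--Murphy elements; everything else is bookkeeping with the Frobenius character formula and the orthogonality of the central idempotents. I would close by remarking that expanding $C_\lambda,C_\mu$ as conjugacy-class sums, $C(u_i)$ via~\eqref{eq:factorJM}, and $D(u_j)$ through monotone runs of transpositions, then reading off $[\mathbf{1}]$ as the requirement that the product of all chosen factors equal the identity, recovers verbatim the combinatorial description of $H$ as a signed count of $(m,r)$-factorisations given in the introduction; the global sign $(-1)^{\sum_{j\in J}\ell_j}$ is precisely the $(-u_j)^{k}$ appearing in the expansion of each $D(u_j)$.
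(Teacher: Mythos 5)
Your proof is correct and follows essentially the same route as the paper's own argument: both rest on the Jucys--Murphy content-eigenvalue theorem to make $C(u_i)$, $D(u_j)$ act as scalars on each irreducible, compute $[\mathbf{1}]\,C_\lambda C_\mu W$ as the sum $\frac{|C_\lambda||C_\mu|}{d!}\sum_\nu \chi^\nu(\lambda)\chi^\nu(\mu)\prod_{\Box\in\nu}G(c(\Box))$ over irreducibles, and match it against the Schur expansion of $\tau^G$ via the Frobenius change of basis. Your writeup merely makes explicit some bookkeeping the paper leaves implicit (the central idempotents and the identity $[\mathbf{1}]z=\tfrac{1}{d!}\sum_\nu (f^\nu)^2\omega_\nu(z)$), so no further comment is needed.
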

\begin{proof}[Sketch of proof]
The proof is conceptual enough so we can sketch it.
For a partition $\alpha\vdash d$, consider the irreducible module $V^\alpha$ of $\mathfrak{S}_d$. 
The group algebra can be decomposed as $\mathbb{C}[\mathfrak{S}_d]=\bigoplus_\alpha End(V^\alpha)$. Now, a famous (and beautiful) theorem of Jucys and Murphy (\cite{Jucys, Murphy}) states that any \emph{symmetric} polynomial $f(J_1,\dots,J_d)$ in the Jucys--Murphy elements acts on the irreducible module $V^\alpha$ as the scalar $f(c(\Box), \Box \in \alpha)$, i.e.~as the same symmetric function evaluated on the contents on the partition $\alpha$. Moreover, the conjugacy class $C_\mu$, being central, also acts as a scalar, namely as the normalised character $\frac{|C_\mu|}{\dim V^\alpha}\chi^\alpha(\mu)$. Putting all this together, the coefficient of $[\mathbf{1}]$ in the expression~\eqref{eq:CSncoeff} can be expressed as a sum over all irreducibles, an more precisely it is found to be equal to
\begin{align}\label{eq:characters}
\sum_{\alpha\vdash d} \frac{ |C_\mu| \cdot |C_\lambda|}{d!} \chi^\alpha(\mu) \chi^{\alpha}(\lambda) \prod_{\Box\in \alpha} 
\frac{\prod_{i\in I} 1+u_i c(\Box)}{\prod_{j\in J} 1+u_j c(\Box)},
\end{align}
where the only thing we have done is to compute the trace (coefficient of $1$) additively over the decomposition $\bigoplus_\alpha End(V^\alpha)$. Now remember the change of basis formula between Schur functions and powersum symmetric functions:
$$
s_\alpha(\mathbf{p}) = \sum_{\lambda}   \frac{|C_\lambda|}{d!} \chi^\alpha(\lambda) p_\lambda.
$$
Therefore, multiplying~\eqref{eq:characters} by $p_\lambda q_\mu$ and summing over $\lambda, \mu$, one obtains the expression 
\begin{align*}
d!\sum_{\alpha\vdash d} s_\alpha(\mathbf{p}) s_\alpha(\mathbf{q}) \prod_{\Box\in \alpha} 
\frac{\prod_{i\in I} 1+u_i c(\Box)}{\prod_{j\in J} 1+u_j c(\Box)},
\end{align*}
and summing over $\alpha$ times $t^{d}/d!$ gives precisely the function $\tau^G$.
\end{proof}

From what precedes, the above expression equivalently says that  $H(\lambda,\mu,\ell_0,\dots,  \ell_{m-1}; \ell_m, \dots, \ell_{M-1})$ is equal to $(-1)^{\ell_{m}+\dots+\ell_{M-1}}$ times the number of factorisations of the identity in $\mathfrak{S}_d$ of the form:
\begin{align}\label{eq:factorun}
\sigma_{-2} \sigma_{-1} \sigma_0 \dots \sigma_{m-1} \underline{\rho^{(m)}} \dots \underline{\rho^{(M-1)}}=\mathbf{1},
\end{align}
where $\sigma_{-2}$ has type $\lambda$, $\sigma_{-1}$ has type $\mu$, $\sigma_0,\dots,\sigma_{m-1}$ have respectively $d-\ell_0, \dots, d-\ell_{m-1}$ cycles, and where $\rho^{(1)}, \dots, \rho^{(M-1)}$ are monotone runs of respective lengths $\ell_{m}, \dots, \ell_{M-1}$ (as in the introduction the underlined notation $\underline{\rho}$ denotes the products of elements in the run $\rho$, which itself is a tuple of transpositions).
Such a tuple 
$$(\sigma_{-2}, \sigma_{-1} ,\sigma_0, \dots ,\sigma_{m-1}, \rho^{(m)}, \dots \rho^{(M-1)})$$
made by $m+2$ permutations, $r$ monotone runs, whose total product is the identity,
is what we call an \emph{$(m,r)$-factorisation}.
An $(m,r)$-factorisation is \emph{transitive} (or \emph{connected}) if the subgroup generated by all the permutations $\sigma_i$ and all the transpositions appearing in the runs $\rho^{(j)}$ acts transitively on $[d]$.
The \emph{genus} $g\geq0$ of a connected factorisation is defined by the formula
$$
\sum_{i=-2}^{m-1}(d-\ell(\sigma_i)) + \sum_{j=m}^M \ell_j  = 2d+2g-2, 
$$
where $\ell_j$ is the length of the run $\rho^{(j)}$.

The notions of connectedness and genus are consistent with the interpretation of these factorisations as branched covers of the sphere, see e.g.~\cite{LandoZvonkin2004} or again~\cite{Guay-PaquetHarnad2017}. In the case $r=0$, it is common in the combinatorics literature to interpret $(m,0)$-factorisations as certain coloured graphs embedded on surfaces called \emph{constellations} (either $m$- or $(m+1)-$ or $(m+2)-$ constellations depending on references), and this is recalled in Section~\ref{sec:constellations}. By analogy with this case, we sometimes abusively use the terminology \emph{white/black faces} to mention the cycles of $\sigma^{-2}, \sigma^{-1}$, respectively, and think as the quantity $n-\ell_i$ for $i\in I\cup J$ as the number of \emph{vertices of colour $j$}.

We leave as an open problem the construction of a fully combinatorial (and natural) description of $(m,r)$-factorisations as a model of embedded graphs (which could naturally be called $(m,r)$-constellations), as well as the design as an effective theory of slice decompositions for these objects, that would give bijective proofs of our expression of $W_{0,1}$ in the case of rational $G$.

\bigskip

The next statement is the basis of all the absolute convergence assumptions made in this paper. We are not aware of an existing proof of it in the literature.
Let $\hat F_g (t)= F_g\Big|_{p_i\equiv1, q_i\equiv 1, u_i \equiv 1}$ be the generating function of \emph{all} $(m,r)$-factorisations of genus $g$, where the only variable not specialised to $1$ is $t$ (whose exponent controls the size, i.e. the index $d$ of the underlying symmetric group $\mathfrak{S}_d$). We have
\begin{lem}[Absolute convergence]\label{lemma:absoluteConvergence}
There is a constant $c_{m,r}>0$, independent of $g$, such that the series $\hat F_g$ has radius of convergence at least $c_{m,r}$.
\end{lem}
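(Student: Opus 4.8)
The plan is to bound, uniformly in $g$, the \emph{total} number $N(d,g)$ of connected $(m,r)$-factorisations of size $d$ and genus $g$, counted \emph{without} signs, and to prove $N(d,g)\le d!\,C^{d}\,\mathrm{poly}_g(d)$ for a constant $C=C_{m,r}$ independent of $g$. Since every coefficient of $\hat F_g$ satisfies $d!\,\bigl|[t^d]\hat F_g\bigr|\le N(d,g)$ (the specialisation $p_i=q_i=u_i=1$ can only create cancellations), this gives $c_{m,r}\ge 1/C$, as wanted. Recall that the genus fixes the total ``defect'': $\sum_i(d-\ell(\sigma_i))+\sum_j\ell_j=T$ with $T=2d-2+2g$.

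First I would get a clean formula for the \emph{disconnected} count via the Jucys--Murphy calculus of Appendix~\ref{sec:WHN}. Marking the defect of each of the $m+2$ permutation-factors and the length of each of the $r$ runs by one variable $w$, and using $C(w)=\prod_i(1+wJ_i)=\sum_\sigma w^{d-\ell(\sigma)}\sigma$ together with $\sum_\ell w^\ell h_\ell(J)=\prod_i(1-wJ_i)^{-1}$ for the runs, the number of \emph{all} (not necessarily connected) factorisations of the identity of total defect $T$ equals
\[
[w^{T}]\,[\mathbf 1]\,\prod_{i=1}^{d}\frac{(1+wJ_i)^{m+2}}{(1-wJ_i)^{r}}=[w^{T}]\,\frac{1}{d!}\sum_{\lambda\vdash d}(\dim\lambda)^2\prod_{\Box\in\lambda}\frac{\bigl(1+w\,c(\Box)\bigr)^{m+2}}{\bigl(1-w\,c(\Box)\bigr)^{r}},
\]
since any symmetric function of the $J_i$ acts on the irreducible $\lambda$ by the same function of the contents. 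A Cauchy estimate on the circle $|w|=1/(2d)$, where $|w\,c(\Box)|\le 1/2$, bounds the inner product by $(2d)^{T}C_0^{\,d}$ with $C_0=(3/2)^{m+2}2^{r}$; since $\sum_\lambda(\dim\lambda)^2=d!$ this gives a disconnected count of order $(d!)^2\,\widetilde C^{\,d}\,\mathrm{poly}_g(d)$.

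The decisive step, and the main obstacle, is to improve this to the connected count $N(d,g)$, which must be smaller by a factor of order $d!$: this is exactly the passage from a partition function to its free energy, and it is where transitivity has to be used. When $r=0$ the exponential formula applies cleanly --- a disconnected factorisation is a disjoint union of connected ones over the orbits, so the bivariate series satisfies $\Psi(t,w)=\exp\Phi(t,w)$ --- and one is reduced to showing that the fixed-genus coefficients of $\Phi=\log\Psi$ grow like $d!\,C^d$; equivalently, via Proposition~\ref{thm:FactorizationsConstellations}, that labelled genus-$g$ $m$-constellations of size $d$ are $d!$ times an exponentially growing number of combinatorial types, which is the classical fixed-genus asymptotic enumeration of maps with growth rate uniform in $g$, in the spirit of~\cite{BenderCanfield1986,BenderCanfield1991,Gao1993,Chapuy2009}. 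The delicate point is precisely this uniformity: the exponential rate $C$ must not depend on the genus, only the subexponential prefactor may.

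The genuinely new difficulty is the presence of monotone runs ($r>0$): a run does not restrict to the orbits as a disjoint union (its nondecreasing top-entries interleave across components), so the naive exponential formula breaks and there is no available model of $(m,r)$-factorisations as embedded graphs (flagged as an open problem in Appendix~\ref{sec:WHN}). I would handle this either by analysing directly the explicit Plancherel-type series $\Psi(t,w)=\sum_{d}\frac{t^d}{d!}\,\mathbb E^{\mathrm{Plancherel}}_d\!\Bigl[\prod_\Box\tfrac{(1+wc)^{m+2}}{(1-wc)^{r}}\Bigr]$ and its logarithm, controlling the relevant cumulants uniformly in the genus, or by reducing the rational case to the polynomial one, comparing the number of monotone runs of a given length with that of permutations of the same defect --- both having leading asymptotics $\sim(d^2/2)^{\ell}/\ell!$ --- so that the map-based bound for $r=0$ transfers. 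Establishing such a transfer, or the uniform cumulant bound, is the crux of the proof.
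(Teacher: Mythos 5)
Your proposal does not reach a proof: the step you explicitly defer --- improving the disconnected count (of order $(d!)^2\,\widetilde C^{\,d}$, which your Cauchy estimate on the character sum correctly gives) to a connected count of order $d!\,C^{d}$ \emph{uniformly in the genus} --- is not a technical detail to be outsourced; it is precisely the content of the lemma. Neither of your two suggested bridges is established. The ``transfer'' from $r=0$ by comparing monotone runs with permutations of equal defect fails in the relevant regime: the equivalence $\sim (d^2/2)^{\ell}/\ell!$ holds only for $\ell$ fixed, whereas at fixed genus the individual defects can be of order $d$, and indeed $e_\ell(0,1,\dots,d-1)=0$ for $\ell\geq d$ while monotone runs of such lengths abound, so no run-by-run comparison with permutations can work there. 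Moreover, your claim that the exponential formula breaks for $r>0$ is incorrect: transpositions lying in distinct orbits have disjoint supports, hence distinct top entries, so monotone runs on disjoint orbits merge into a joint monotone run in a \emph{unique} way; the exponential formula does hold for $(m,r)$-factorisations (this is exactly why $\log\tau$ generates the connected objects). The only real obstacle is genus-uniformity, and your plan leaves it open.

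The paper's proof avoids the disconnected-to-connected passage altogether by an injective reduction to a model where genus-uniform convergence is already known. Expand each permutation $\sigma_i$ into its unique strictly monotone run of transpositions, so that a transitive $(m,r)$-factorisation of size $d$ and genus $g$ becomes a transitive factorisation of the identity into $R=2d+2g-2$ transpositions. Then canonically sort the sequence of top entries by simple swaps acting through the Hurwitz action; this produces a \emph{monotone} transitive factorisation of the same size and genus, and the sorting is invertible given the sorted factorisation together with the map $\{1,\dots,R\}\to\{0,\dots,m+r-1\}$ recording the run of origin of each transposition. Hence the genus-$g$ count is at most $(m+r)^{2d+2g-2}$ times the number of monotone factorisations of the same size and genus, and since the fixed-genus generating functions of monotone Hurwitz numbers have radius of convergence $\tfrac{2}{27}$ independently of $g$ \cite{GGPN:convergence}, the lemma follows with $c_{m,r}=(m+r)^{-2}\cdot\tfrac{2}{27}$. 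This sorting reduction, which converts all the factors (permutations and runs alike) into a single monotone factorisation, is the idea your proposal is missing.
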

\begin{proof}
The proof is inspired from the proof of the main result in~\cite{GGPN:convergence}, which covers the case $(m,r)=(0,1)$.
Consider a transitive $(m,r)$-factorisation of genus $g$,
$$
\sigma_{-2} \sigma_{-1} \sigma_0 \dots \sigma_{m-1} \underline{\rho^{(m)}} \dots \underline{\rho^{(M-1)}} =1,
$$
where we recall that the $\sigma_i$ are permutations and the $\rho^{(j)}$ are monotone runs of transpositions. Expanding naively~\eqref{eq:factorJM} (or remembering its natural proof), any permutation $\sigma$ has a \emph{unique} factorisation into a \emph{strictly monotone}  run of transpositions of length $n-\ell(\sigma)$. Making all transpositions explicit in the notation, our $(m,r)$-factorisation can be written as a product
\begin{align}\label{eq:factoToSort}
(j_1, i_1) (j_2, i_2) \dots  \dots (j_R, i_R)  = 1
\end{align}
with $R=2d+2g-2$ factors (and as usual $j_k < i_k$ for all $k$).
Consider the sequence $\mathbf{i}=(i_1,i_2,\dots,i_R)$, this sequence can be divided into $m$ stricly increasing runs of successive length $n-\ell(\sigma_1), \dots, n-\ell(\sigma_{m-1})$, and $r$ nondecreasing runs of successive length $\ell_m,\dots,\ell_{M-1}$.

Now, there is a canonical way to sort the sequence $\mathbf{i}$ in nondecreasing order acting on it with a sequence of simple swaps of the form $s_k = (k,k+1) \in \mathfrak{S}_R$ (for example use lexicographic leftmost minimal displacements). It is explained in~\cite[Section 2]{GGPN:convergence}, how the action of simple swaps can be promoted from $\{1,\dots,R\}$ to the full factorisation~\eqref{eq:factoToSort}, \textit{via} the Hurwitz action. The sequence of swaps thus transforms~\eqref{eq:factoToSort} into a factorisation of the identity whose new ``$\mathbf{i}$-sequence'' is the sorted version of $\mathbf{i}$, and in particular, into a monotone factorisation. In order to invert this process it is enough to be able to recover the (inverse) swap sequence, which, because we sorted canonically, only requires the knowledge of the final factorisation and of the mapping $\{1,\dots,R\}\longrightarrow \{0,\dots,m+r-1\}$ which associates to each transposition the run from which it was originating (this is because the initial runs are all nondecreasing). Moreover, the sorted factorisation is still transitive and of genus $g$.  See~\cite[Section 2]{GGPN:convergence} for details on the Hurwitz action.

Therefore, the total number of $(m,r)$-factorisations of size $d$ and genus $g$ is at most $(m+r)^{2d+2g-2}$ times the number of monotone factorisations of the identity of same size and genus, and the radius of convergence of $\hat{F}_g$ is at least $(m+r)^{-2}\rho$, where $\rho$ is the radius of convergence of the generating function of monotone factorisations of the identity. This number is known to be positive (in fact $\rho=\frac{2}{27}$, see~\cite{GGPN:convergence} again, or \cite{GouldenGuayPaquetNovak:polynomiality}), and the proof is complete.
\end{proof}

\begin{rem}
The rough lemma above is needed \emph{a priori} for our proofs, but once our results are obtained they enable one \emph{a posteriori} to compute very precisely the radius of convergence of $\hat F_g$ \textit{via} algebraic equations.
Indeed for general $(m,r)$, taking $D_1=D_2=1$, $\mathbf{u}_I\equiv1$ and  $\mathbf{u}_J\equiv -1$,   our spectral curve~\eqref{eq:spectralX}-\eqref{eq:spectralY} takes the form of the two equations
$U= 1+ t\frac{U^m}{V^r}\left(\frac{m-1}{U}+\frac{r}{V}\right)$, $V= 1-t \frac{U^m}{V^r}\left(\frac{m}{U}+\frac{r+1}{V}\right)$, where $U=A^{(i)}_0$ and $V=A^{(j)}_0$ for $i\in I$ and $j\in J$. Looking for the critical point of this system and eliminating variables, one finds a degree three polynomial equation (with coefficients depending on $m$ and $r$) for the critical values of $U$ and $V$, from which the critical value $t=t_{m,r}$ of $t$ can be expressed.
%is given by
%$$
%t_{m,r}=\frac{(r+1)^{r+m} (m-r-2)^{m-r-2}}{(m-1)^{r+m}}.
%$$
This is the radius of convergence of the series of all factorisations
$$
\sigma_0 \dots \sigma_{m-1} \underline{\rho^{(m)}} \dots \underline{\rho^{(M-1)}} =1,
$$
on a fixed surface. For $r=0$ the equation is explicitly solvable for general $m$ and we recover the value $t_{m,0}=\frac{(m-2)^{m-2}}{(m-1)^m}$ given in~\cite[Corollary 9.8]{Chapuy2009} (note the shift of convention by $1$ in the value of $m$), and for $(m,r)=(0,1)$ we recover the value $t_{0,1}=\frac{2^1}{3^3}=\frac{2}{27}$ from~\cite{GGPN:convergence}. 
\end{rem}

\section{Appendix: the case of a rational-exponential function $G$}
\label{sec:exp}

On the day this paper was made public on the server arxiv,  Bychkov, Dunin-Barkowski, Kazarian and Shadrin made public the paper~\cite{BDBKS4}, which proves results strongly related to ours, with completely different techniques. These authors also consider the case where the underlying weight function $G$ is a rational function times an exponential. Although we had not considered this case at first, the purpose of this section is to show that this case is, in fact, also covered by our techniques. We will only state the results here, indicating the (few) modifications to introduce to handle this case, leaving certain details to the reader.

We now consider a function $G$ of the form 
\begin{align}\label{eq:tildeGexp}
G(\cdot)  =  \frac{\prod_{i=0}^{m-1} ( 1+\cdot\, u_i)}{\prod_{j=m}^{M-1} (1+ \cdot\, u_j)} e^{u_{-1} \cdot }=  \frac{\prod_{i \in I} ( 1+\cdot\, u_i)}{\prod_{j\in J} (1+ \cdot\, u_j)} e^{u_{-1} \cdot },
\end{align}
and we define the functions $\tau, F_g, W_{g,n}$ as in~\eqref{eq:tau}, \eqref{eq:Fg}, \eqref{eq:Wgn}.

These functions can be interpreted as generating functions very similarly as in the introduction, except that we are now counting tuples 
$$(\sigma_{-2}, \sigma_{-1}, \rho^{(-1)}, \sigma_0,\dots,\sigma_{m-1}, \rho^{(m)}, \dots, \rho^{(M-1)})$$
with an additional element $\rho^{(-1)}$. Here
the $\sigma_i$ and the $\rho^{(j)}$ are as in the introduction (and receive the same weights), except that $\rho^{(-1)}$ is now a \emph{non-necessarily monotone run} of transpositions, i.e.~an arbitrary tuple of transpositions in $\mathfrak{S}_d$. If this run has length $\ell$, it receives a weight $\frac{u_{-1}^\ell}{\ell!}$.
Moreover, as before,  the total product is equal to the identity:
$$\sigma_{-2} \sigma_{-1} \underline{\rho^{(-1)}}\sigma_0\dots \sigma_{m-1} \underline{\rho^{(m)}} \dots \underline{\rho^{(M-1)}} = \mathbf{1}_{\mathfrak{S}_d}.$$
This interpretation is obtained similarly as in Section~\ref{sec:WHN}, noting that the group algebra element $\exp( u_{-1}T)$ where $T$ is the sum of all transpositions in $\mathfrak{S}_d$ (which is equal to the sum of all Jucys--Murphy elements) acts on the irreducible module $V^\lambda$ as $\exp ( u_{-1}\sum_{\Box\in\lambda}c(\Box))$.

\medskip

\subsection{Spectral curve and planar generating functions}

To write the spectral curve of the model, we consider the following variants of Equations~\eqref{eq:defWratconv2}-\eqref{eq:defBratconv2}, which define polynomials $A^{(c)}(z)$ in $z$ and $B^{(c)}(z)$ in $z^{-1}$, with $c\in I\cup J$, and polynomials $\eta(z)$ in $z$ and $\theta(z)$ in $z^{-1}$,
%where now $c\in \{-1\} \cup I \cup J$:
\begin{align}
  \label{eq:defWratconv2exp}
    A^{(c)}(z) &= 1 + u_c \sum_{s=1}^{D_2} q_{s} t^s   \biggl\{z^s \frac{\prod_{i\in I}  B^{(i)}(z)^s}{\prod_{j\in J}  B^{(j)}(z)^s}\frac{e^{su_{-1}\theta(z)}}{B^{(c)}(z)}\biggr\}^{\geq},\\
    \label{eq:defBratconv2exp}
	B^{(c)}(z) &= 1 + u_c \sum_{s=1}^{D_1} p_s \biggl[z^{-s} \frac{\prod_{i \in I} A^{(i)}(z)^s}{\prod_{j \in J} A^{(j)}(z)^s}\frac{e^{s u_{-1}\eta(z)}}{A^{(c)}(z)}\biggr]^{<},
\end{align}
and
\begin{align} \label{eq:defAlpha}
    \eta(z) &= \sum_{s=1}^{D_2} q_{s} t^s   \biggl\{z^s \frac{\prod_{i\in I}  B^{(i)}(z)^s}{\prod_{j\in J}  B^{(j)}(z)^s}e^{s u_{-1}\theta(z)}\biggr\}^{\geq}\\
    \label{eq:defBeta}
    \theta(z) &= \sum_{s=1}^{D_1} p_s \biggl[z^{-s} \frac{\prod_{i \in I} A^{(i)}(z)^s}{\prod_{j \in J} A^{(j)}(z)^s}e^{s u_{-1}\eta(z)}\biggr]^{<}.
\end{align}
These polynomials (whose coefficients are as before formal power series in $t$) have, as before, degree $D_2$ and $D_1$, respectively.
We then introduce the series $Z(x)\in \mathbb{Q}[\mathbf{p},\mathbf{q},\mathbf{u},\bar{x}][[t]]$ defined by the following equation
\begin{align} \label{eq:ZasExcursionsratexp}
	Z(x) = \bar{x} e^{u_{-1}\eta(Z)} \frac{\prod_{i\in I} A^{(i)}(Z)}{\prod_{j\in J} A^{(j)}(Z)},
\end{align}
and we define the Laurent polynomials
$H^{(c)}(z) \in \mathbb{Q}[\mathbf{p},\mathbf{q},\mathbf{u},\bar{\mathbf{u}}][[t]][z,z^{-1}]$ by 
\begin{equation}\label{eq:defHexp}
H^{(c)}(z) \coloneqq \bar{u}_c\left( A^{(c)}(z) B^{(c)}(z) - 1 \right),
\end{equation}
with $c\in I \cup J$.

Once these new quantities are introduced, the spectral curve of the model can be formulated as before.
\begin{defn}[Spectral curve of our model, rational-exponential case] \label{def:SpectralCurveexp}
	We consider the system of equations	defined by
	\begin{align}\label{eq:spectralXexp}
		z X(z)&=  e^{u_{-1}\eta(z)}\frac{\prod_{i  \in I} A^{(i)}(z)}{\prod_{j \in J} A^{(j)}(z)}
		, \\ \label{eq:spectralYexp}
		X(z) Y(z) &= H^{(c)}(z),
\end{align}
	where we recall that the quantities $\eta$, $A^{(i)}$ and $H^{(c)}$ are given by~\eqref{eq:defWratconv2exp}-\eqref{eq:defBeta} and~\eqref{eq:defHexp}, and where $c$ is any value in $I\cup J$.
\end{defn}

Then we have:
\begin{thm}[Disk generating function $W_{0,1}$, rational exponential-case] \label{thm:Discratexp}
	The disk generating function $W_{0,1}(x)$ is, up to an explicit shift, given by the parametrisation $(Y(z),X(z))$ given above. Namely, we have:
	$$W_{0,1}(x) + \sum_{k=1}^{D_1}p_kx^{k-1} = Y(Z(x)),$$
	in $\mathbb{Q}[\mathbf{p},\mathbf{q},\mathbf{u},x,\bar{x}][[t]]$.
\end{thm}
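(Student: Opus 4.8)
The plan is to derive Theorem~\ref{thm:Discratexp} from the already-established rational case (Theorem~\ref{thm:Discrat}) by realizing the exponential factor $e^{u_{-1}\cdot}$ as a limit of purely rational weight functions. Concretely, for a large integer $N$ I would add to $G$ the $N$ auxiliary numerator parameters $w_1=\cdots=w_N=u_{-1}/N$, replacing $G$ by the rational weight function
\[
G^{(N)}(\cdot)=\frac{\prod_{i\in I}(1+\cdot\,u_i)}{\prod_{j\in J}(1+\cdot\,u_j)}\Bigl(1+\tfrac{u_{-1}}{N}\cdot\Bigr)^{N},
\]
to which Theorem~\ref{thm:Discrat} applies verbatim (with $m$ replaced by $m+N$). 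Since, at any fixed order $t^d$ in~\eqref{eq:tau}, the box contents $c(\Box)$ of partitions of size $d$ range over a finite set, one has $\prod_{k=1}^{N}(1+\frac{u_{-1}}{N}c(\Box))\to e^{u_{-1}c(\Box)}$, so the content-weights converge coefficient-by-coefficient; hence $\tau^{G^{(N)}}\to\tau^{G}$, and consequently $F_0$ and $W_{0,1}$ computed from $G^{(N)}$ converge to those computed from $G$, as formal power series in $t$ with polynomial coefficients. At the level of the group algebra of Section~\ref{sec:WHN} this is exactly the identity $C(u_{-1}/N)^N\to\exp(u_{-1}T)$, with $T$ the sum of all transpositions, which recovers the degeneration described at the start of this section: the $N$ auxiliary permutation factors coalesce into the single unrestricted run $\rho^{(-1)}$ weighted by $u_{-1}^{\ell}/\ell!$.

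The second step is to check that all algebraic data of the spectral curve converge to the exponential-case quantities of~\eqref{eq:defWratconv2exp}--\eqref{eq:defBeta}. Writing $A^{(w_k)},B^{(w_k)}$ for the series attached to the extra colors, the equations~\eqref{eq:defWratconv2}--\eqref{eq:defBratconv2} for $G^{(N)}$ differ from those for $G$ only by the factors $\prod_{k=1}^{N}B^{(w_k)}(z)^s$ and $\prod_{k=1}^{N}A^{(w_k)}(z)^s$ (for an original color $c\in I\cup J$ the factor $1/B^{(c)}$, resp. $1/A^{(c)}$, and the series $H^{(c)}$ are unchanged in form). I would prove, by induction on the order in $t$, that $A^{(w_k)},B^{(w_k)}\to 1$ individually while
\[
\sum_{k=1}^{N}\bigl(A^{(w_k)}(z)-1\bigr)\to u_{-1}\,\eta(z),\qquad
\sum_{k=1}^{N}\bigl(B^{(w_k)}(z)-1\bigr)\to u_{-1}\,\theta(z),
\]
with $\eta,\theta$ solving~\eqref{eq:defAlpha}--\eqref{eq:defBeta}; consequently $\prod_k A^{(w_k)}(z)^s\to e^{su_{-1}\eta(z)}$ and $\prod_k B^{(w_k)}(z)^s\to e^{su_{-1}\theta(z)}$. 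Passing to the limit then turns~\eqref{eq:defWratconv2}--\eqref{eq:defBratconv2} into precisely~\eqref{eq:defWratconv2exp}--\eqref{eq:defBratconv2exp}, the equation~\eqref{eq:ZasExcursionsrat} for $Z$ into~\eqref{eq:ZasExcursionsratexp}, the curve of Definition~\ref{def:SpectralCurve} into that of Definition~\ref{def:SpectralCurveexp}, and $H^{(c)}$ into~\eqref{eq:defHexp}; in particular $Y^{(N)}$ converges as well, since it depends on the extra parameters only through these convergent products.

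Finally I would apply Theorem~\ref{thm:Discrat} to $G^{(N)}$, giving $W_{0,1}^{(N)}(x)+\sum_{k}p_k x^{k-1}=Y^{(N)}(Z^{(N)}(x))$, and let $N\to\infty$: the left-hand side converges to $W_{0,1}(x)+\sum_{k}p_k x^{k-1}$ by the first step and the right-hand side to $Y(Z(x))$ by the second, which is the claimed identity. The main obstacle — and the only genuine content beyond the rational case — is the rigorous justification of interchanging the limit $N\to\infty$ with the order-by-order resolution of the defining system: one must argue that, at each fixed power of $t$, the recursively determined (hence unique) solutions of the rational systems for $G^{(N)}$ depend continuously on the $w_k$ and converge to the unique solution of the exponential system~\eqref{eq:defWratconv2exp}--\eqref{eq:defBeta}. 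This is handled by a straightforward induction on the $t$-order, exploiting the triangular structure of these equations. An alternative that avoids infinite products is to re-run the stabilization argument of Section~\ref{sec:rational} verbatim, additionally tracking the dependence on $u_{-1}$: there the exponential enters because adjoining the alphabet $\{u_{-1}/N\}^{N}$ affects only the first power sum in the limit, and the elimination principle of Lemma~\ref{lemma:algebraic} again forces the desired identity.
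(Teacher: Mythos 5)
Your proposal is correct and follows essentially the same route as the paper's own proof: the paper likewise approximates $G$ by the rational function $G_N(\cdot)=\frac{\prod_{i\in I}(1+\cdot\,u_i)}{\prod_{j\in J}(1+\cdot\,u_j)}\bigl(1+\cdot\,u_{-1}/N\bigr)^N$, applies Theorem~\ref{thm:Discrat} to it, identifies $\eta(z)$ and $\theta(z)$ as $\lim_{N\to\infty} N(A^{(-1)}(z)-1)/u_{-1}$ and $\lim_{N\to\infty} N(B^{(-1)}(z)-1)/u_{-1}$, and passes to the coefficient-wise limit $N\to\infty$ in the rational-case identity. The only cosmetic difference is that the paper uses the fact that all $N$ auxiliary series coincide with a single $A^{(-1)}, B^{(-1)}$, whereas you express the same quantities as sums $\sum_k\bigl(A^{(w_k)}-1\bigr)$ over the auxiliary colours, which is equivalent since all $w_k$ are equal to $u_{-1}/N$.
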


\begin{thm}[Cylinder generating function $W_{0,2}$, rational-exponential case]\label{thm:cylinderexp}
	The cylinder generating function $W_{0,2}$ is
	$$W_{0,2}(x_1,x_2)= \frac{Z'(x_1)Z'(x_2)}{(Z(x_1)-Z(x_2))^2} - \frac{1}{(x_1-x_2)^2},$$
	in $\mathbb{Q}[\mathbf{p},\mathbf{q}, \mathbf{u}](x_1,x_2)[[t]]$.
\end{thm}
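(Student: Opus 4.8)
The plan is to deduce Theorem~\ref{thm:cylinderexp} from its purely rational counterpart Theorem~\ref{thm:cylinder}, by realising the rational--exponential model as a limit of rational ones. Concretely, I would use the identity $e^{u_{-1}z}=\lim_{N\to\infty}(1+\tfrac{u_{-1}}{N}z)^{N}$ and, for each $N$, consider the rational weight function $G_N$ obtained from~\eqref{eq:tildeGexp} by replacing the exponential factor $e^{u_{-1}\cdot}$ with $N$ additional numerator factors $(1+\tfrac{u_{-1}}{N}\cdot)$, all carrying the parameter $u_{-1}/N$. Each $G_N$ is rational, so Theorem~\ref{thm:cylinder} applies and gives the cylinder identity for the associated series $W_{0,2}^{(N)}$ and $Z_N$ (the series~\eqref{eq:ZasExcursionsrat} attached to $G_N$). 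It then suffices to pass to the limit $N\to\infty$ on both sides.

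First I would check that the left-hand side converges: since $\tau^{G_N}\to\tau^{G}$ coefficient by coefficient in $(\mathbf{p},\mathbf{q},t)$ (at fixed order $t^d$ the defining sum over partitions is finite and $\prod_{\Box}G_N(c(\Box))\to\prod_{\Box}G(c(\Box))$), one gets $W_{0,2}^{(N)}\to W_{0,2}$ order by order in $t$. For the right-hand side one must show $Z_N\to Z$, with $Z$ the series~\eqref{eq:ZasExcursionsratexp}. This is exactly where the auxiliary functions $\eta,\theta$ of~\eqref{eq:defAlpha}--\eqref{eq:defBeta} enter: writing $A_N^{(c)},B_N^{(c)}$ for the series of Definition~\ref{def:WkBk} attached to $G_N$, the $N$ new indices all satisfy $A_N^{(i)}(z)=1+O(1/N)$ and $B_N^{(i)}(z)=1+O(1/N)$, so that $\prod_{\text{new }i}B_N^{(i)}(z)^{s}\to e^{s u_{-1}\theta(z)}$ and $\prod_{\text{new }i}A_N^{(i)}(z)^{s}\to e^{s u_{-1}\eta(z)}$, with $\eta,\theta$ precisely the limits of the corresponding sums over the new indices. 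Substituting these limits into~\eqref{eq:defWratconv2}--\eqref{eq:defBratconv2} recovers the exponential-case equations~\eqref{eq:defWratconv2exp}--\eqref{eq:defBratconv2exp}, and doing the same in~\eqref{eq:ZasExcursionsrat} yields~\eqref{eq:ZasExcursionsratexp}; hence $A_N^{(c)}\to A^{(c)}$, $B_N^{(c)}\to B^{(c)}$ and $Z_N\to Z$ for the original indices $c\in I\cup J$.

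The cleanest way to make these limits rigorous is to argue order by order in $t$: all series involved are defined by the triangular recursions of Definition~\ref{def:WkBk} and~\eqref{eq:ZasExcursionsrat}, so at each fixed order in $t$ only finitely many algebraic operations occur, and an induction on the order of $t$ shows that the rational spectral data converge termwise to the exponential-case data of~\eqref{eq:defWratconv2exp}--\eqref{eq:defBeta} and~\eqref{eq:ZasExcursionsratexp}. Passing to the limit in the finite-$N$ cylinder identity then gives the claimed formula. I expect the main obstacle to be precisely this uniform bookkeeping: one must verify that the contributions of the $N$ coalescing parameters $u_{-1}/N$ organise, in the limit, into the single exponential series $\eta,\theta$ rather than producing spurious cross-terms, which amounts to controlling the elementary symmetric functions of the new variables in the spirit of the degree arguments of Section~\ref{sec:rational}.

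An alternative that avoids analytic limits is to mimic Section~\ref{sec:rational} directly. At fixed order $t^k$ the run $\rho^{(-1)}$ has bounded length, so the coefficients of both sides are polynomials of bounded degree in $u_{-1}$ (as well as in $\mathbf{u}_I,\mathbf{u}_J$), symmetric in each block and stable under setting a variable to zero, exactly as in the rational case. Since the identity is already known whenever the exponential is replaced by a genuine product of linear factors, a variant of Lemma~\ref{lemma:algebraic}, applied after expanding $e^{u_{-1}z}$ order by order in $t$, closes the argument. Either route reduces Theorem~\ref{thm:cylinderexp} to the rational case already established, and the same reasoning (with $W_{0,1}$ in place of $W_{0,2}$) proves the disk statement Theorem~\ref{thm:Discratexp}.
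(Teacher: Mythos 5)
Your proposal is correct and follows essentially the same route as the paper's own proof: the paper likewise approximates $e^{u_{-1}\cdot}$ by $\left(1+\tfrac{u_{-1}\cdot}{N}\right)^{N}$, applies the rational-case Theorem~\ref{thm:cylinder} to the resulting $G_N$, identifies the $N\to\infty$ limits of the spectral data (with $\eta,\theta$ arising from the coalescing indices carrying $u_{-1}/N$) with equations~\eqref{eq:defWratconv2exp}--\eqref{eq:defBeta} and~\eqref{eq:ZasExcursionsratexp}, and passes to the limit coefficient by coefficient in $t$. The only difference is cosmetic: the paper defines $\eta,\theta$ via $\eta(z)=\lim_N N(A^{(-1)}(z)-1)/u_{-1}$ and $\theta(z)=\lim_N N(B^{(-1)}(z)-1)/u_{-1}$ rather than directly as limits of the products, which is equivalent to your formulation.
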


\begin{proof}[Sketch of the proof of Theorems~\ref{thm:Discratexp} and~\ref{thm:cylinderexp}]
We introduce a new integer parameter $N$ and we introduce the rational function
$$
	G_N(\cdot)  =   \frac{\prod_{i \in I} ( 1+\cdot\, u_i)}{\prod_{j\in J} (1+ \cdot\, u_j)} \left(1+\frac{ \cdot\,u_{-1}}{N}\right)^N.
$$
	Then one has $G_N \longrightarrow G$ when $N$ goes to infinity, coefficient by coefficient. This easily implies that $\tau^{G_N} \longrightarrow \tau^{G}$ and, with self-explanatory notation, $W_{g,n}^{G_N}\longrightarrow W_{g,n}$, again for each coefficient (indeed, each coefficient of the function $W_{g,n}$ depends on only finitely many evaluations of the function $G$).

	We can then introduce functions $A^{(c)}$ and $B^{(c)}$ as in~\eqref{eq:defWratconv2}-\eqref{eq:defBratconv2}, where $c$ belongs to $I' \cup J$, where $I'=I \cup K$ with $K=\{-2,\dots,-N-1\}$ and $u_i=u_{-1}/N$ for $i\in K$. The equations write, for $c\in I\cup J$
\begin{align}\label{eq:july1}
	A^{(c)}(z) &= 1 + u_c \sum_{s=1}^{D_2} q_{s} t^s   \biggl\{z^s \frac{\prod_{i\in I}  B^{(i)}(z)^s}{\prod_{j\in J}  B^{(j)}(z)^s}(B^{(-1)}(z))^{Ns} \frac{1}{B^{(c)}(z)}\biggr\}^{\geq},\\
    B^{(c)}(z) &= 1 + u_c \sum_{s=1}^{D_1} p_s \biggl[z^{-s} \frac{\prod_{i \in I} A^{(i)}(z)^s}{\prod_{j \in J} A^{(j)}(z)^s} (A^{(-1)}(z))^{Ns}\frac{1}{A^{(c)}(z)}\biggr]^{<},
\end{align}
	and for $c \in K$ (it suffices to consider $c=-2$ as all $c\in K$ give the same value; we abusively use the upper index $^{(-1)}$ for this common value)
\begin{align}
	A^{(-1)}(z) &= 1 + \frac{u_{-1}}{N} \sum_{s=1}^{D_2} q_{s} t^s   \biggl\{z^s \frac{\prod_{i\in I}  B^{(i)}(z)^s}{\prod_{j\in J}  B^{(j)}(z)^s}(B^{(-1)}(z))^{Ns} \frac{1}{B^{(-1)}(z)}\biggr\}^{\geq},\\\label{eq:july2}
	B^{(-1)}(z) &= 1 + \frac{u_{-1}}{N} \sum_{s=1}^{D_1} p_s \biggl[z^{-s} \frac{\prod_{i \in I} A^{(i)}(z)^s}{\prod_{j \in J} A^{(j)}(z)^s} (A^{(-1)}(z))^{Ns}\frac{1}{A^{(-1)}(z)}\biggr]^{<},
\end{align}
    Set $\eta(z) = \lim_{N\to\infty} N(A^{(-1)}(z)-1)/u_{-1}$ and $\theta(z) = \lim_{N\to\infty} N(B^{(-1)}(z)-1)/u_{-1}$.
	It is easy to see, using again that $(1+\frac{u}{N})^{Ns}\rightarrow e^{us}$, that the  generating functions $A^{(c)}$, $B^{(c)}$ thus defined converge, coefficient by coefficient when $N$ goes to infinity, to the quantities defined in~\eqref{eq:defWratconv2exp}-\eqref{eq:defBratconv2exp} while $\eta(z)$ and $\theta(z)$ satisfy~\eqref{eq:defAlpha},~\eqref{eq:defBeta}.
	%This is proved by using $F(u_{-1}/N)^N \to e^{F(u_{-1})-1}$ on $A^{(c)}(z), B^{(c)}(z)$ (although the dependence on $u_{-1}$ is not explicit in the notation).

This in turns implies that the quantity $Z$ defined by
\begin{align}\label{eq:july3}
	Z(x) = \bar{x} \left(A^{(-1)}(Z)\right)^{N} \frac{\prod_{i\in I} A^{(i)}(Z)}{\prod_{j\in J} A^{(j)}(Z)},
\end{align}
	converges (for the same notion of convergence) to the quantity defined in~\eqref{eq:ZasExcursionsratexp}.

 Now we can apply Theorems~\ref{thm:Discrat} and~\ref{thm:cylinder} for the function $G_N$ (which is rational!), thus expressing $W_{0,1}$ and $W_{0,2}$ in terms of the quantities  $A^{(c)}$, $B^{(c)}$ and $Z$ of~\eqref{eq:july1}--\eqref{eq:july2}, \eqref{eq:july3} (all these quantities depend implicitly on $N$).
	Theorems~\ref{thm:Discratexp} and~\ref{thm:cylinderexp} follow by taking the limit of these expressions when $N$ tends to infinity and applying the convergence $W_{g,n}^{G_N}\longrightarrow W_{g,n}$ observed at the beginning of the proof.
\end{proof}

\subsection{Topological recursion}

The topological recursion holds in this more general case, with the spectral curve~\eqref{eq:spectralXexp}-\eqref{eq:spectralYexp}. The proof given in Section~\ref{sec:toprecproof} works again, up to clarifying the analytic assumptions and checking the case $\alpha=0$. Namely: 
\begin{itemize}
	\item Introducing the scaling $p_i \mapsto \alpha p_i$ as in Section~\ref{sec:toprecproof}, the spectral curve for $\alpha=0$ becomes
\[	\begin{split}
	z X_0(z)&=  e^{u_{-1}\sum_{r=1}^{D_2} q_r t^r z^{r}}\frac{\prod_{i  \in I} \left(1 + u_i\sum_{r=1}^{D_2} q_r t^r z^{r}\right)}{\prod_{j  \in J} \left(1 + u_j\sum_{r=1}^{D_2} q_r t^r z^{r}\right)}
=G(Q(t z)),
	\\
X_0(z) Y_0(z) &=
 \sum_{i=1}^{D_2}t^i  q_i z^{i}
%	\left(u_c + \sum_{i=1}^{D_2}t^i  q_i z^{i}\right) - u_c
%	\left(1 + \sum_{i=1}^{D_1} B_i^{(c)} z^{-i} \right),
	= Q(tz),
\end{split}\]
where $Q(z)=\sum_{r= 1}^{D_2} q_{r}z^{r}$. It coincides with the spectral curve given in~\cite{BDBKS2} for this case. We can then, again, use the fact that TR is known for $\alpha=0$.
The equation for the initial ramification points now writes:
\[
-a^{-1} +u_{-1}\sum\limits_{k=1}^{D_2} k\, q_k a^{k-1}+ \sum_{i\in I} \frac{\sum_{k=1}^{D_2} k q_k a^{k-1}}{1/u_i + \sum_{k=1}^{D_2} q_k a^k}-\sum_{j\in J} \frac{\sum_{k=1}^{D_2} k q_k a^{k-1}}{1/u_j + \sum_{k=1}^{D_2} q_k a^k} =0.
\]
This polynomial equation has $(M+1)D_2$ solutions $a_1,\dots,a_{(M+1)D_2}$.
	\item The fact that the ramification points -- solutions of the polynomial equation $\frac{X'}{X}=0$ -- have an expansion of the form $b_i\sim \frac{a_i}{t}$ for $i=1,\dots,(M+1)D_2$ follows similarly as in Section~\ref{sec:tr:rampoints}, observing that the series $A^{(c)}_k$ and $B^{(c)}_k$ still have expansions of the form 
\[A^{(c)}_0 = 1+O(t),\qquad A^{(c)}_{k} = u_c q_k t^{k} + O(t^{k+1}),\qquad B^{(c)}(z) = 1+u_c\sum\limits_{s=1}^{D_1}p_s z^{-s}+O(t). \]	
Those expansions also enable one to check that $Y'$ is generically nonzero at the ramification points.
\end{itemize}
Once all these details are checked, the analytic assumptions are the same as in Section~\ref{sec:tr:rampoints} and the proof of Section \ref{sec:toprecproof} goes through the same way. Observe that in this case $X$ is no longer a polynomial but $X'/X$ and $XY$ still are. These two conditions imply that $\omega_{0,1}=Y \dd X$ is meromorphic, which places us in the usual setting of topological recursion.

\subsection{Absolute convergence}
The only remaining check to carry out is the analogue of Lemma~\ref{lemma:absoluteConvergence} which shows that all the generating functions considered are absolutely convergent.
The proof is very similar to the one of that lemma. Just observe that a factorisation of the form
$$\underline{\rho^{(-1)}}\sigma_0\dots \sigma_{m-1} \underline{\rho^{(m)}} \dots \underline{\rho^{(M-1)}} = \mathbf{1}_{\mathfrak{S}_d},$$
where $\sigma_{0},\dots, \sigma_{m-1}$ are strictly monotone runs of transpositions, $\rho^{(m)}$, $\rho^{(M-1)}$ are monotone runs, and $\rho^{(-1)}$ is an arbitrary run of length $\ell$, can be reordered into a monotone run using the Hurwitz action as in the proof of Lemma~\ref{lemma:absoluteConvergence}. The number of ways to invert this process is at most
\[
{2n \choose \ell} \ell! (m+r)^{2n},
\]
where the factor ${2n\choose \ell}$ accounts for the choice of the transpositions which will be moved towards to the run $\rho^{(-1)}$, and $\ell!$ for their ordering. The factor $(m+r)^{2n}$ is as in the proof of Lemma~\ref{lemma:absoluteConvergence}. Since the run $\rho^{(-1)}$ is counted with a weight $\frac{1}{\ell!}$ in the generating function, the remaining weight is at most 
\[
2^{2n} (m+r)^{2n},
\]
which is enough to conclude.
%\printbibliography
\bibliographystyle{alpha}
\bibliography{biblio}

\end{document}